\numberwithin{equation}{section}
\theoremstyle{plain}
\DeclareMathOperator{\supp}{supp} 
\numberwithin{equation}{section}
\newtheorem{thm}{Theorem}[section]
\newtheorem{pro}{Proposition}[section]
\newtheorem{cor}{Corollary}[section]
\newtheorem{rem}{Remark}[section]
\newtheorem{fact}{Properties}[section]
\def\R {{\mathbb R}}
\def\N {{\Bbb N}}
\begin{document}
\baselineskip 16pt
\title{Inverse problems in  multifractal analysis}

\author{Julien Barral}
\address{LAGA (UMR 7539), D\'epartement de Math\'ematiques, Institut Galil\'ee, Universit\'e Paris 13 et Paris-Sorbonne-Cit\'e, 99 avenue Jean-Baptiste Cl\'ement , 93430  Villetaneuse, France}
\email{barral@math.univ-paris13.fr}

\keywords{Multifractal formalism,  Multifractal analysis,  Hausdorff dimension,  packing dimension, large deviations, inverse problems}
\thanks{2000 {\it Mathematics Subject Classification}:  28A78, 60F10}
\thanks {
The author is grateful to De-Jun Feng and Jacques Peyri\`ere for valuable comments.}

\date{}
\begin{abstract}
Multifractal formalism is designed to describe the distribution at small scales of the elements of $\mathcal M^+_c(\R^d)$, the set of positive, finite and compactly supported Borel measures on $\R^d$. It is valid for such a measure $\mu$ when its  Hausdorff spectrum is the upper semi-continuous  function given by the concave Legendre-Fenchel transform of the  free energy function $\tau_\mu$ associated with $\mu$; this is the case for fundamental classes of exact dimensional measures. 

For any  function $\tau$  candidate to be the free energy function of some $\mu\in \mathcal M^+_c(\R^d)$, we build  such a measure, exact dimensional, and obeying the multifractal formalism. This result is extended to a refined formalism considering  jointly   Hausdorff and packing spectra. Also,  for any upper semi-continuous function candidate to be the lower Hausdorff spectrum of some exact dimensional $\mu\in\mathcal M^+_c(\R^d)$, we build such a measure.

Our results transfer to the analoguous inverse problems in multifractal analysis   of H\"older continuous functions.

 \end{abstract}
\maketitle

\tableofcontents

\section{Introduction and main statements}\label{introduc}
\subsection{Inverse problems in multifractal analysis of measures} Let $\mathcal M^+_c(\R^d)$ stand for the set of compactly supported  Borel  positive and finite measures on $\mathbb{R}^d$ ($d\ge 1)$, and for $\mu\in \mathcal M^+_c(\R^d)$ denote by $\supp(\mu)$ the topological support of $\mu$ (i.e. the compact set obtained as the complement of those points $x$ for which $\mu(B(x,r))=0$ for some $r>0$, where $B(x,r)$ stands for the closed ball of radius $r$ centered at $x$). 

The upper and lower  box dimensions of a bounded set $E\subset \R^d$ will be denoted $\overline\dim_B E$ and $\underline \dim_B E$ respectively, and  its  Hausdorff and packing dimensions will be  denoted by $\dim_H E$ and $\dim_P E$ respectively (see \cite{Falcbook,Mat,Pesin,Tricot} for introductions to dimension theory).

Multifractal analysis is a natural framework to finely  describe geometrically the heterogeneity in the distribution at small scales of the elements of  $\mathcal M^+_c(\R^d)$. 
Specifically, if $\mu\in \mathcal M^+_c(\R^d)$, this heterogeneity can be described via the lower and upper local dimensions of $\mu$, namely 
$$
\underline d(\mu,x)= \liminf_{r\to 0^+}\frac{\log (\mu(B(x,r)))}{\log (r)}\quad \text{and}\quad   \overline d(\mu,x)=\limsup_{r\to 0^+}\frac{\log (\mu(B(x,r)))}{\log (r)}, 
$$
and the level sets 
\begin{equation*}\label{ealpha}
E(\mu,\alpha,\beta)=\Big \{x\in\supp(\mu): \underline d(\mu,x)=\alpha,\ \overline d(\mu,x)= \beta\Big \} \quad (\alpha\le \beta \in \R\cup\{\infty\}),
\end{equation*}
which form a partition of $\supp(\mu)$ (notice that  $E(\mu,\alpha,\beta)=\emptyset$ whenever $\alpha<0$). The sets 
$$
 \underline E(\mu,\alpha)=\Big \{x\in\supp(\mu): \underline d(\mu,x)=\alpha\Big\},\quad   \overline E(\mu,\alpha)=\Big \{x\in\supp(\mu): \overline d(\mu,x)=\alpha\Big\},
$$
and 
$$
E(\mu,\alpha)= \underline E(\mu,\alpha)\cap \overline E(\mu,\alpha)=E(\mu,\alpha,\alpha)\quad (\alpha \in \R\cup\{\infty\})
$$
are also very natural, and the most studied in the literature (although the sets defined above are empty  if $\alpha<0$ because $\mu$ is a bounded function of Borel sets, it is convenient to include negative values of $\alpha$ in connection with the using along the paper of the Legendre-Fenchel transform of  functions defined on $\R$ or $\R\cup\{\infty\}$).

The {\it lower Hausdorff spectrum} of $\mu$ is the mapping defined as
$$
\underline f_\mu^H: \alpha\in\R\cup\{\infty\} \mapsto \dim_H \underline E(\mu,\alpha),
$$
with the convention that $\dim_H \emptyset=-\infty$, so that $\underline f_\mu^H(\alpha)=-\infty$ if $\alpha<0$. This spectrum provides a geometric hierarchy between the sets $\underline E(\mu,\alpha)
$, which partition the support of~$\mu$. Here, the lower local dimension is emphasized for it provides at any point the best pointwise H\"older control one can have on the measure $\mu$ at  small scales.  However, the upper local dimension is of course of interest, and much attention is paid in general to the sets $E(\mu,\alpha)$ of points at which one has an exact local dimension  $\underline d(\mu,x)= \overline d(\mu,x)$, especially when studying ergodic measures in the context of hyperbolic and more generally non uniformly hyperbolic dynamical systems.

The {\it Hausdorff spectrum}  of $\mu$ is the mapping defined as
$$
f_\mu^H:\alpha\in\R\cup\{\infty\} \mapsto \dim_H E(\mu,\alpha).
$$

Inspired by the observations made by physicists of turbulence and statistical mechanics \cite{hentschel,FrischParisi,HaJeKaPrSh}, mathematicians derived, and in many situations justified the heuristic claiming that for a measure  possessing a self-conformal like property,  its Hausdorff  spectrum should be obtained as the Legendre transform of a kind of free energy function, called $L^q$-spectrum. This gave birth to an abundant literature on the so-called multifractal formalisms \cite{Falcbook,BMP,Be,Olsen,Pesin,LN,bbh,Pey,JLV-T}, which aim at linking the asymptotic statistical properties of  a given measure with its fine geometric properties.

To be more specific we need some definitions. Given $I\in\{\R,\,\R\cup\{\infty\}\}$ and a  fonction $f: I\to \R\cup\{-\infty\}$, the domain of $f$ is defined as $\mathrm{dom}(f)=\{x\in I: f(x)>-\infty\}$.  

Let $\tau: \R\to \R\cup\{-\infty\}$. If $\mbox{dom}(\tau)\neq\emptyset$, the concave Legendre-Fenchel transform, or concave  conjugate function, of $\tau$ is the upper-semi continuous concave function defined as $\tau^*: \alpha\in \R\mapsto \inf\{\alpha q-\tau(q):q\in\mbox{dom}(\tau)\}$ (see \cite{Roc}). We will need a slight extension of this definition.

If $\tau:\R\to \R\cup\{-\infty\}$, $\mathrm{dom}(\tau)\neq\emptyset$, and $0\in \mathrm{dom}(\tau)$,  we define its (extended) concave Legendre-Fenchel transform as
$$
\tau^*:\alpha\in \R\cup\{\infty\}\mapsto 
\begin{cases}
\inf\{\alpha q-\tau(q):q\in\mbox{dom}(\tau)\} &\text{if }\alpha\in \R,\\
\inf\{\alpha q-\tau(q):q\in\mbox{dom}(\tau)\cap \R_-\}&\text{if }\alpha=\infty,
\end{cases} 
$$
with the conventions $\infty \times q=-\infty$ if $q<0$ and $\infty \times 0=0$.  Consequently, $\infty\in \mathrm{dom}(\tau^*)$ if and only if $0=\min (\mathrm{dom}(\tau))$, and in this case $\tau^*(\infty)=-\tau(0)=\max (\tau^*)$. In any case, $\tau^*$ is upper semi-continuous over $\mathrm{dom}(\tau^*)$, and concave over the interval $\mathrm{dom}(\tau^*)\setminus \{\infty\}$ (here the notion of upper semi-continuous function is relative to $\R\cup\{\infty\}$ endowed with the topology generated by the open subsets of $\R$ and the sets $(\alpha,\infty)\cup\{\infty\}$, $\alpha\in\R$).

Now, define  the (lower) {\it $L^q$-spectrum} of $\mu\in\mathcal M^+_c(\R^d)$ as 
$$
\tau_\mu:q\in\mathbb{R}\mapsto \liminf_{r\to 0^+}\frac{\log \sup\Big \{\sum_i \mu(B(x_i,r))^q\Big \}}{\log(r)},
$$
where the supremum is taken over all the centered packings of $\supp(\mu)$ by closed balls of radius  $r$. 

By construction, $\tau_\mu$ is concave and non decreasing,  and $-d\le \tau_\mu (0)=-\overline{\dim}_B\supp(\mu) \le 0=\tau_\mu(1)$, so that one always has $\R_+\subset \mathrm{dom} (\tau_\mu)$;  also  $\tau_\mu^*$ takes values in $[0,d]\cup \{-\infty\}$, and $\mathrm{dom}(\tau_\mu^*)$ is a closed subinterval of $ \R_+\cup\{\infty\}$  (see Propositions~\ref{condtau} and \ref{condtau1}). 

For $\alpha\in\R$ we always have (see \cite[Section 2.7]{Olsen} or \cite[Section 3]{LN}) 
\begin{equation}\label{MF0}
f_\mu^H(\alpha)\le \underline f_\mu^H(\alpha)\le \tau_\mu^*(\alpha)\le\max (\alpha, -\tau_\mu(0))\le \max (\alpha,d);
\end{equation}
we also have
$$
f_\mu^H(\infty) \le \tau_\mu^*(\infty),
$$
a dimension equal to $-\infty$ meaning that the set is empty (the second inequality is not standard, and will be proved in Section~\ref{proof3}; the inequality $ \tau_\mu^*(\alpha)\le\max (\alpha, -\tau_\mu(0))$ is a direct consequence of the definition of $\tau_\mu^*$ and the fact that $\tau_\mu(1)=0$).


We notice that due to \eqref{MF0}, if $f^H_\mu(\alpha)\ge \alpha$ at some $\alpha$, then $0\le \alpha\le d$ and $f^H_\mu(\alpha)= \tau_\mu^*(\alpha)=\alpha$, so that $\alpha$ is a fixed point of $\tau_\mu^*$. Moreover, since $\tau_\mu(1)=0$ and $\tau_\mu$ is concave, the set of fixed points of $\tau_\mu^*$ is the interval $[\tau_\mu'(1^+),\tau_\mu'(1^-)]$. 

We will say that $\mu$  obeys the multifractal formalism at $\alpha\in \R\cup\{\infty\}$ if $ \underline f_\mu^H(\alpha)=\tau_\mu^*(\alpha)$, and that the multifractal formalism holds (globally)  for  $\mu$ if it holds at any $\alpha\in \R\cup\{\infty\}$. 

If  $\underline f_\mu^H(\alpha)$ can be replaced by $ f_\mu^H(\alpha)$ in the previous definition, we will say that the multifractal formalism holds strongly, and it is in this form that this formalism has been introduced and studied the most. It turns out that in this case one has 
\begin{equation*}
\dim_H E(\mu,\alpha)=\dim_P E(\mu,\alpha)=\dim_H \underline E(\mu,\alpha)=\dim_H \overline E(\mu,\alpha)=\tau_\mu^*(\alpha).
\end{equation*} 
However, in general, nice families of discrete measures only obey the multifractal formalism associated with the lower Hausdorff spectrum as defined above.

The multifractal formalism turns out to hold globally, or on some non trivial subinterval of $\mathrm{dom}(\tau_\mu^*)$, for some important classes of continuous measures possessing (or close to have) self-conformal properties, namely  some classes of self-conformal measures (among which some Bernoulli convolutions), Gibbs  and weak Gibbs measures on conformal repellers (e.g. the harmonic measure on such a disconnected set) or attractors of axiom A diffeomorphisms   \cite{collet,EM,Rand,Po,MEH,BMP,LP1,LP2,MR,PW,Pesin,Pat,Ma,He1,Kes,FengO,Fengadv,Iommi,Shm,Testud1,Feng,FengLau,JR,Feng2}, harmonic measure on the Brownian frontier \cite{L},  and scale invariant  limits of certain multiplicative chaos \cite{HoWa,Falc,Mol,AP,B2,barman,barman2, RV13,AB}; in these cases it also holds strongly. It also holds for scale invariant discrete measures obtained as  limits in law of Gibbs measures in the context of random directed polymers \cite{JAFFLevy,BaSe07,barhova12} (see also \cite{AvB,J,Falc2,BS2,OS} for other classes of discrete measures obeying the multifractal formalism). Other examples are special self-affine or Gibbs measures on self-affine Sierpinski carpets \cite{Kin95,Ols98,BaMe07,BaFe}, or on  almost all the attractors of  IFS associated with certain  families of $d\times d$ invertible matrices with small enough singular values \cite{Fal99,BF13}, as well as generic probability measures on a compact subset of $\R^d$   \cite{BucNa,BucS,Bay}. 

The measures mentioned above share the geometric  property to be exact dimensional, i.e. for such a measure $\mu$, there exists $D\in [0,d]$ such that $\displaystyle \lim_{r\to 0^+}\frac{\log (\mu(B(x,r)))}{\log (r)}=D$, $\mu$-almost everywhere. This implies $f_\mu^H(D)\ge D$, hence  $D\in[\tau_\mu'(1^+),\tau_\mu'(1^-)]$ and $\mu$ strongly obeys the multifractal formalism at $D$ by a remark made above. In fact, for any $\mu\in  \mathcal M^+_c(\R^d)$, for $\mu$-almost every $x$ one has $\tau_\mu'(1^+)\le \underline d(\mu,x)\le \overline d(\mu,x)\le \tau_\mu'(1^-)$ (\cite{Ngai}), and for most of the continuous measures in the previous references, $\tau_\mu'(1)$ exists, hence equals $D$; also, $\tau_\mu$ is piecewise $C^1$, and even analytic in certain cases, a typical example being Gibbs measures associated with H\"older potentials on repellers of $C^{1+\alpha}$ conformal mappings.

Another property of the previous measures is, when they obey globally the multifractal formalism, to be homogeneously multifractal (HM), this meaning that the lower Hausdorff spectrum of the restriction of $\mu$ to any closed ball whose interior intersects $\mathrm{\supp}(\mu)$ is equal to the lower Hausdorff spectrum of~$\mu$.

In this paper we solve the inverse problem consisting in constructing, for any concave function $\tau$ satisfying the necessary conditions to be the $L^q$-spectrum of an element of  $\mathcal M^+_c(\R^d)$, an exact dimensional  and (HM) measure whose $L^q$-spectrum equals $\tau$, and which strongly satisfies the  multifractal formalism. More specifically:
\begin{thm}\label{cor}
Let $\tau:\R\to\R\cup \{-\infty\}$ be a concave  function satisfying the necessary properties (see  Propositon~\ref{condtau}) to be the $L^q$-spectrum of some element of $\mathcal M^+_c(\R^d)$. Let $D\in[\tau'(1^+),\tau'(1^-)]$. There exists an (HM) measure $\mu\in \mathcal M^+_c(\R^d)$, exact dimensional with dimension $D$,  and which strongly satisfies the multifractal formalism with $\tau_\mu=\tau$. 
\end{thm}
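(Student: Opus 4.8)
The plan is to build $\mu$ as a Moran-type measure on a Cantor set in $\R^d$ obtained by a sequence of nested net constructions at rapidly decreasing scales $r_0 > r_1 > \cdots$, and to distribute the mass at each generation so that the multinomial (large deviations) statistics of the Moran weights produces, scale by scale, precisely the Legendre transform $\tau^*$ as its Hausdorff spectrum. Concretely, I would first use Proposition~\ref{condtau} to record the admissible shape of $\tau$: it is concave, nondecreasing, $\tau(1)=0$, $-d\le\tau(0)\le 0$, and $\tau(q)\le q\cdot\tau'(1^-)$-type bounds control the asymptotic slopes; set $\alpha_{\min}=\tau'(\infty^-)$ (or the right endpoint of $\mathrm{dom}(\tau^*)\setminus\{\infty\}$ reached as $q\to+\infty$) and $\alpha_{\max}=\tau'(0^+)$, so $\mathrm{dom}(\tau^*)\subset[\alpha_{\min},\alpha_{\max}]$ (possibly $\cup\{\infty\}$ when $0=\min\mathrm{dom}(\tau)$). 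The target exponent $D\in[\tau'(1^+),\tau'(1^-)]$ is a fixed point of $\tau^*$, which is what makes exact-dimensionality at $D$ compatible with the formalism.

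Next I would do the core construction. Choose a sequence $N_k\to\infty$ of "branching numbers" and scale ratios so that a generation-$k$ cylinder is (roughly) a ball of radius $r_k$ containing about $N_k$ disjoint generation-$(k+1)$ balls of radius $r_{k+1}$, with $\log r_{k+1}/\log r_k\to 1$ fast and $\log N_k / |\log r_k|$ arranged to realize a prescribed "geometric dimension profile" equal to the desired support dimension $-\tau(0)$. Then attach to each child a random (or deterministic but equidistributed) weight drawn so that the empirical distribution of $\log(\text{weight})/\log(\text{ratio})$ over the $N_k$ children approximates a probability vector $p^{(k)}$ on a finite grid of candidate local dimensions $\alpha$; the key point is to let the weight profile drift through all of $\mathrm{dom}(\tau^*)$ as $k\to\infty$, spending enough generations near each $\alpha$ that the Besicovitch-type covering/mass-distribution arguments give $\dim_H \underline E(\mu,\alpha)\ge\tau^*(\alpha)$, while the overall Moran product is engineered so $\tau_\mu=\tau$ exactly. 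The $L^q$ computation reduces to: $\sum_i \mu(B(x_i,r_k))^q \asymp \prod_{j<k}\big(\sum_{\text{child }c} w_c^{q}\big)$, and choosing the weight multiset at generation $j$ to have $\sum_c w_c^q \asymp (r_j/r_{j-1})^{-\tau(q)}$ uniformly in $q$ on compacts (achievable because $\tau$ is concave, hence a sup of affine functions, so a suitable discrete "pressure-matching" weight vector exists) yields $\tau_\mu(q)=\tau(q)$ for $q\in\mathrm{dom}(\tau)$ and $\tau_\mu(q)=-\infty$ outside (controlled by the growth of $N_k$). For exact-dimensionality at $D$ and the (HM) property one forces the "typical" weight at every generation to sit at the $\alpha=D$ slot with asymptotic frequency $1$ in the $\mu$-measure sense (a Borel–Cantelli / second-moment argument), and one makes the construction self-similar enough across cylinders that restricting to any ball reproduces the same asymptotic weight statistics.

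Then I would verify the three conclusions. Upper bounds $\underline f_\mu^H(\alpha)\le\tau_\mu^*(\alpha)$ and $f_\mu^H(D)\ge D$ are automatic from \eqref{MF0} once $\tau_\mu=\tau$ is known and $D\in[\tau'(1^+),\tau'(1^-)]$, so the real work is the lower bounds $\dim_H E(\mu,\alpha)\ge\tau^*(\alpha)$ for every $\alpha\in\mathrm{dom}(\tau^*)$ (including $\alpha=\infty$, handled by letting the weights occasionally become superpolynomially small so that $\underline d(\mu,x)=\infty$ on a set of the right dimension $-\tau(0)=\tau^*(\infty)$), obtained by exhibiting, for each such $\alpha$, an auxiliary Moran submeasure $\nu_\alpha$ carried by $\underline E(\mu,\alpha)$ with $\liminf_{r\to0}\log\nu_\alpha(B(x,r))/\log r\ge\tau^*(\alpha)$ $\nu_\alpha$-a.e., then invoking the mass distribution principle. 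Finally, exact-dimensionality at $D$ gives $f_\mu^H(D)=D=\tau^*(D)$ and, combined with the strong-formalism upgrade noted in the excerpt, $\dim_H E(\mu,\alpha)=\dim_P E(\mu,\alpha)=\tau^*(\alpha)$ on the whole spectrum; the (HM) property follows from the homogeneity built into the weight scheme.

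**Main obstacle.** The hard part will be achieving \emph{all three} requirements simultaneously and \emph{exactly}: matching $\tau_\mu=\tau$ on the nose (not merely up to the Legendre biconjugate) requires the generationwise weight multisets to satisfy $\sum_c w_c^{q}\asymp (\text{ratio})^{-\tau(q)}$ \emph{uniformly in $q$}, which is a delicate simultaneous-approximation problem when $\tau$ has an unbounded domain or affine pieces; and forcing exact-dimension $D$ plus (HM) while still making every $\alpha\in\mathrm{dom}(\tau^*)$ a "rich" level set pulls the weight statistics in competing directions, so the scheduling of which $\alpha$ is emphasized at which block of generations—together with the Borel–Cantelli bookkeeping ensuring the $\mu$-generic exponent stays pinned at $D$—is the technical heart of the argument.
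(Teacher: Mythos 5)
Your overall architecture is the same as the paper's: a nested Moran/Cantor construction in which each generation's weight multiset realizes, via multinomial large deviations, the counting profile $\#\{\text{children with weight}\approx\rho^{\alpha}\}\approx\rho^{-\tau^*(\alpha)}$ over a growing finite grid of exponents; concentration of the $\mu$-mass on the $\alpha=D$ branch plus Borel--Cantelli for exact dimensionality; auxiliary measures following a fixed branch plus the mass distribution principle for the lower bounds; and Legendre duality $(\tau^*)^*=\tau$ (Proposition~\ref{condtau1}) together with a Varadhan-type estimate to recover $\tau_\mu=\tau$ from the counting profile. The paper implements the generation-wise weight multisets with restricted Bernoulli products $\nu^{\otimes d}_{p_{\alpha,\gamma}}$, which is one concrete way to solve your ``pressure-matching'' problem, and it handles $\infty\in\mathrm{dom}(\tau^*)$ by inserting exponents $\alpha_m(\infty)\to\infty$ exactly as you suggest.

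The one genuine gap is in your treatment of the \emph{strong} formalism. Your lower-bound step produces an auxiliary measure carried by $\underline E(\mu,\alpha)$, which only bounds $\dim_H\underline E(\mu,\alpha)$ from below; you then invoke ``the strong-formalism upgrade noted in the excerpt,'' but that remark has the form ``\emph{if} the strong formalism holds, \emph{then} the four dimensions coincide'' --- it cannot be used to pass from $\underline E(\mu,\alpha)$ to $E(\mu,\alpha)$. This distinction is precisely where the paper's two constructions diverge: in the construction behind Theorem~\ref{thm1}, consecutive generation scales satisfy $g_{m+1}/g_m\to\infty$, and as a consequence (Corollary~\ref{dimloc}(3)) every point whose branch exponents converge to a non-fixed point $\alpha$ of $\tau^*$ has $\overline d(\mu,x)=\infty$, so $E(\mu,\alpha)=\emptyset$ and the strong formalism fails there. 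The cure is the condition you do impose, $\log r_{k+1}/\log r_k\to 1$ (realized in the paper by repeating each generation's motive $R_m$ times in the construction for Theorem~\ref{thmHP}), which prevents the intermediate-scale blow-up of $\log\mu(B(x,r))/\log r$ and makes the auxiliary measure for a branch $\alpha_m\to\alpha$ live on $E(\mu,\alpha)$ itself, with both lower and upper local dimensions pinned at $\alpha$. So your construction can be made to work, but the key step $\dim_H E(\mu,\alpha)\ge\tau^*(\alpha)$ must be justified through this scale control and a statement that the auxiliary measure charges $E(\mu,\alpha)$ (not merely $\underline E(\mu,\alpha)$), rather than through the quoted remark.
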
 
Theorem~\ref{cor} will be obtained as a consequence of more general statements which also describe the Hausdorff and packing dimensions of the sets  $E(\mu,\alpha,\beta)$ (Theorem~\ref{thmmain} and Corollary~\ref{cor1} of Section~\ref{State}). We  will also study the inverse problem associated with a finer multifractal formalism designed to describe the more general situation where the  Hausdorff spectrum $f_\mu^H$ and the packing spectrum  $f_\mu^P:\alpha\mapsto \dim_P E(\mu,\alpha)$ differ (Theorem~\ref{thmHP} and Corollary~\ref{corHP} of Section~\ref{State}). As a by product of these results  new multifractal behaviors are exhibited.

In general, $\mathrm{dom}(\underline f_\mu^H)=\{\alpha\in \R\cup\{\infty\}: \underline E(\mu,\alpha)\neq\emptyset\}$ is not necessarily a closed subinterval of $[0,\infty]$, and even when it is the case, the restriction of $\underline f_\mu^H$ to $\mathrm{dom}(\underline f_\mu^H)\cap\R_+$ is not necessarily concave.  Consequently, we  also study the inverse problem consisting in  associating  to  a  function~$f:\R\cup\{\infty\}\to [0,d]\cup\{-\infty\}$ whose domain is a subset of $\R_+\cup\{\infty\}$ and such that $f(\alpha)\le \alpha$ for all $\alpha\ge 0$, an (HM) measure whose lower Hausdorff spectrum is equal to $f$.   We construct such a measure $\mu$ when $\mathrm{dom} (f)$ is a closed subset of $\R_+\cup\{\infty\}$,  $f$ is upper semi-continuous, and $f$ has at least one fixed point,  three properties shared with $\tau_\mu^*$.  Moreover, the measure $\mu$ is exact dimensional. 


Thus, we will prescribe lower Hausdorff spectra in the family: 
$$
\mathcal F(d)=\left \{f: \R\cup\{\infty\}\to [0,d]\cup\{-\infty\}:
\begin{cases}
 \mathrm{Fix}(f)\neq\emptyset\\
  \mathrm{dom}(f)\text{ is a closed subset of $[0,\infty]$}\\
 f \text{ is upper semi-continuous}\\
 \text{$f(\alpha)\le \alpha$ for all $\alpha\in  \mathrm{dom}(f)$}\\
 \end{cases}
 \right \},
$$
where $\mathrm{Fix}(f)$ ($\subset [0,d]$) stands for the set of fixed points of $f$. 

%
\begin{thm}\label{short-thm1}
Let $f\in \mathcal F(d)$. For each  $D\in \mathrm{Fix}(f)$,  there exists an (HM) measure $\mu\in\mathcal M^+_c(\R^d)$, exact dimensional with dimension $D$, such that $\underline f_\mu^H=f$. 
\end{thm}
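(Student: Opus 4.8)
The plan is to build $\mu$ as a Moran-type construction on $[0,1]^d$ (or a subcube of it), distributing mass across a nested family of dyadic-like subcubes according to a carefully chosen, position-dependent multinomial rule so that the prescribed spectrum $f$ is realized by a countable superposition of ``homogeneous pieces''. More precisely, since $\mathrm{dom}(f)$ is a closed subset of $[0,\infty]$, I would pick a countable set $\{\alpha_n\}_{n\ge 1}$ that is dense in $\mathrm{dom}(f)$, chosen so that each value $f(\alpha_n)$ is attained (using upper semi-continuity of $f$ and the fact that $f(\alpha)\le\alpha$ to handle $\alpha_n=\infty$ when $\infty\in\mathrm{dom}(f)$), and so that $D$ itself is among the $\alpha_n$'s with $f(D)=D$. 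For each $n$ I would design a ``building block'' measure $\nu_n$ supported on a Cantor-like set $K_n$ with the property that $\underline d(\nu_n,x)=\alpha_n$ for $\nu_n$-a.e. $x$, that $\dim_H\underline E(\nu_n,\alpha_n)\ge f(\alpha_n)$ (realized on a suitable subset where one also controls the exceptional Hausdorff dimensions of the other level sets), and such that $\nu_n$ is itself exact dimensional with dimension $D$ when glued correctly — or more safely, such that the \emph{global} superposition inherits exact dimensionality $D$ from a dominant ``backbone'' block associated with the fixed point.

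The core technical step is the concatenation/substitution scheme: along the nested cube generations, I would allocate a rapidly increasing (but summable-in-mass) sequence of generation windows, and within the $k$-th window emulate the block $\nu_{n(k)}$ where the schedule $k\mapsto n(k)$ visits every index $n$ infinitely often but spends a \emph{vanishing asymptotic fraction} of generations on each non-backbone block — this is the standard trick (as in Barral--Seuret, Barral--Bhouri type constructions) that lets one create a whole prescribed family of local behaviors simultaneously while keeping the $L^q$-spectrum and the $\mu$-typical local dimension pinned to the backbone value $D$. The Hausdorff dimension lower bounds $\dim_H \underline E(\mu,\alpha_n)\ge f(\alpha_n)$ come from a mass-distribution (Billingsley/Frostman) argument applied to the restriction of $\mu$ to the subset built from the $n$-blocks; the matching upper bounds, and the emptiness of $\underline E(\mu,\alpha)$ for $\alpha\notin\mathrm{dom}(f)$, follow from a covering argument that exploits the precise generation-by-generation mass ratios together with upper semi-continuity of $f$ to pass from the dense set $\{\alpha_n\}$ to all of $\mathrm{dom}(f)$ (a $\limsup$-over-covers estimate gives $\dim_H\underline E(\mu,\alpha)\le \limsup_{\alpha_n\to\alpha} f(\alpha_n)\le f(\alpha)$). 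Exact dimensionality with dimension $D$ is then read off from the backbone: for $\mu$-a.e. $x$ the orbit under the generation schedule is asymptotically dominated by the fixed-point block, forcing $\lim_{r\to0}\log\mu(B(x,r))/\log r=D$. Homogeneity (HM) is built in by making the construction rule on each cube a rescaled copy of the rule on the ambient cube, up to the finite prefix that any given ball sees, so restricting to a ball only discards a finite prefix and does not change $\limsup/\liminf$ quantities.

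The main obstacle I anticipate is the simultaneous control of \emph{all} the level sets $\underline E(\mu,\alpha)$: it is easy to make at least one prescribed local dimension appear, but one must ensure that mixing in block $\nu_n$ does not inadvertently create points with $\underline d(\mu,x)$ equal to some $\alpha\notin\mathrm{dom}(f)$, nor inflate $\dim_H\underline E(\mu,\alpha_m)$ beyond $f(\alpha_m)$ for $m\ne n$. This requires the schedule $k\mapsto n(k)$ to switch blocks slowly enough that at the scales where block $\nu_n$ is ``active'' the measure of a ball is sandwiched, up to $o(\log r)$ error, between the predictions of the currently active block and the backbone — so the only accumulation points of $\log\mu(B(x,r))/\log r$ are among $\{\alpha_n\}\cup\{D\}$, and the liminf is controlled. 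Making the error terms genuinely $o(\log r)$ uniformly, while keeping $\mathrm{Fix}(f)\ni D$ as the a.e. value, is the delicate bookkeeping; the rest (Frostman lower bounds, covering upper bounds, HM, transfer of the estimates) is routine along the lines already developed for the $L^q$-spectrum inverse problem in the earlier sections.
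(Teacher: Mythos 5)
Your overall architecture (countable dense set of exponents chosen via upper semi-continuity, a generation-by-generation concatenation with a dominant ``backbone'' at the fixed point $D$, mass distribution principle for the lower bounds, covering plus upper semi-continuity for the upper bounds, and homogeneity by making the rule self-similar) matches the paper's. But there are two gaps in the core mechanism, and as written the scheme would not produce the prescribed spectrum.

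First, the scheduling. You propose that in the $k$-th window of generations the \emph{whole} construction emulates a single block $\nu_{n(k)}$, with each index visited infinitely often but occupying a vanishing fraction of generations. With a common schedule at all points, the sequence of active blocks seen along scales is the same for every $x$, so the value of $\liminf_{r\to0}\log\mu(B(x,r))/\log r$ is governed only by the internal multifractal fluctuations of the blocks convolved with the schedule; you cannot decouple the level sets and prescribe $\dim_H\underline E(\mu,\alpha_n)=f(\alpha_n)$ independently for each $n$. Worse, if each non-backbone block occupies an asymptotically vanishing fraction of generations, its exponent never survives in the $\liminf$ at all. What is actually needed is \emph{parallel branching}: at step $m+1$ every cube $I_m$ of generation $g_m$ spawns, in disjoint well-separated subcubes, children for \emph{every} exponent $\alpha\in A_{m+1}$, and the generations $N_{m+1}$ grow so fast that $g_m=o(N_{m+1})$ — so that a point which \emph{chooses} block $\alpha$ at step $m+1$ has $\log\mu(I_n(x))/(-n\log 2)\approx\alpha$ on the dominant range of scales. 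Then $\underline d(\mu,x)=\liminf_m\alpha_m(x)$ where $\alpha_m(x)$ is the branch chosen at step $m$, and different points realize different exponents. Exact dimensionality with dimension $D$ is obtained not from the generation schedule but from mass weights ($\rho_m(\alpha)\le 2^{-m}/\#A_m$ off the $D$-branch) plus Borel--Cantelli, which force $\mu$-a.e.\ $x$ to eventually follow only the $D$-branch.

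Second, each block needs \emph{two} measures, not one. You ask for a block $\nu_n$ with $\underline d(\nu_n,x)=\alpha_n$ for $\nu_n$-a.e.\ $x$; such a measure has dimension $\alpha_n$, and any set of positive $\nu_n$-measure has Hausdorff dimension at least $\alpha_n$, which overshoots the target $f(\alpha_n)<\alpha_n$ off the fixed points. The decoupling of ``mass exponent $\alpha$'' from ``size $f(\alpha)$ of the level set'' is achieved by a pair $(\mu_{\alpha,\gamma},\nu_{\alpha,\gamma})$ of Bernoulli-type measures with $\gamma=f(\alpha)$: one selects roughly $2^{Nf(\alpha)}$ cubes of generation $N$ on which $\nu_{\alpha,\gamma}$ (of dimension $f(\alpha)$) is concentrated while $\mu_{\alpha,\gamma}$ gives each of them mass $\approx 2^{-N\alpha}$; the $\mu$-mass of the children is defined from $\mu_{\alpha,\gamma}$ (yielding local dimension $\alpha$), and the auxiliary measure built from the $\nu_{\alpha,\gamma}$'s witnesses the lower bound $f(\alpha)$ via the mass distribution principle, while the cube count $\approx 2^{Nf(\alpha)}$ gives the matching covering upper bound. (A further wrinkle you would hit: when $f(\alpha)=0$ one must replace $f(\alpha)$ by a small $\gamma_m(\alpha)>0$ tending to $0$ to keep the auxiliary measures non-atomic.) Without this two-measure device the ``delicate bookkeeping'' you defer cannot be carried out.
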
 
This result will be strengthened in Theorem~\ref{thm1} of Section~\ref{State}. It turns out that  the approach used in this paper does not make it possible to replace  $\underline f_\mu^H=f$ by $f^H_\mu=f$ in the previous statement unless one of the following properties hold: $\mathrm{dom}(f)=\mathrm{Fix}(f)$ (see Theorem~\ref{thm1}), or $\mathrm{dom}(f)$ is an interval and $f$ is concave over $\mathrm{dom}(f)\cap\R_+$ (in this case we will get a measure obeying the strong multifractal formalism, see Theorem~\ref{thmmain}). 

Theorems~\ref{cor1} and \ref{short-thm1} have counterparts  in multifractal analysis of H\"older continuous functions; this will be the object of  Section~\ref{MAfonc}, which is independent of Section~\ref{State}. 

\medskip

Before developing further results and comments, let us outline the main ideas leading to the construction of the measure $\mu$ provided by Theorem~\ref{short-thm1}. To establish Theorem~\ref{cor} one must improve this approach in order to control both the finer level sets $E(\mu,\alpha)$ and the upper large deviations spectrum of $\mu$ (to be defined in Section~\ref{addi}) when $f$ is the concave function $\tau^*$, and then use the duality property linking the $L^q$-spectrum and the upper large deviations spectrum to show that the multifractal formalism holds strongly.   

For simplicity, we assume that  $\mathrm{dom}(f)$ is a non trivial compact interval $[\alpha_{\min},\alpha_{\max}]\subset \R_+$, $f$ is continuous over $ [\alpha_{\min},\alpha_{\max}]$,  $0\le f (\alpha)\le \min(\alpha,d)$  over $[\alpha_{\min},\alpha_{\max}]$, and $f(D)=D$ for a unique point $D$ in $[\alpha_{\min},\alpha_{\max}]$.  The homogeneity of the construction of the measure $\mu$ automatically implies that the measure is (HM). 

At first one shows (independently of $f$) that for any $\gamma\in [0,d]$ and $\alpha\ge \gamma$, one can find two Borel probability measures $\mu_{\alpha,\gamma}$ and $\nu_{\alpha,\gamma}$ supported on $[0,1]^d$ such that $\mu_{\gamma,\gamma}=\nu_{\gamma,\gamma}$, $\nu_{\alpha,\gamma}$ is exact dimensional with dimension $\gamma$, and $\nu_{\alpha,\gamma}$ is concentrated on $E(\mu_{\alpha,\gamma},\alpha)$, as well as on the set defined similarly but with $\alpha(\mu,x)$ replaced by $\lim_{n\to\infty}\frac{\log(\mu(I_n(x)))}{-n\log(2)}$, where $I_n(x)$ stands for the closure of dyadic cube semi-open to the right containing $x$. 

Set $A_1=\{\alpha_1=D\}$, and for each integer $m\ge 1$, define $A_{m+1}=A_m\cup\{\alpha_{m+1}\}$, where $\alpha_{m+1}\in [\alpha_{\min},\alpha_{\max}]\setminus A_m$, in such a way  that the set  $\{\alpha_m: m\ge 1\}$ be dense in $[\alpha_{\min},\alpha_{\max}]$.  By using the previous property with $\gamma=f(\alpha)$, for all $m\ge 1$ one gets an integer $n_m$ such that for all $\alpha\in A_m$, for all $n\ge n_m$, there is a collection $G_{m,n}(\alpha)$ of about $2^{n f(\alpha)}$ dyadic subcubes of $[0,1]^d$ such that for all $I\in G_{m,n}(\alpha)$ one has $\mu_{\alpha,f(\alpha)}(I)\approx 2^{-n\alpha}$, $\nu_{\alpha,f(\alpha)}(I)\approx 2^{-nf(\alpha)}$, and $\sum_{I\in G_{m,n}(\alpha)}\nu_{\alpha,f(\alpha)}(I)\in [1/2,1]$. 

For every integer $m\ge 2$, one considers $m$ dyadic closed  subcubes of $[0,1]^d$ of the same generation $n'_m$,   $L_{\alpha_1},\ldots,L_{\alpha_m}$, so that the  $2^{-n'_m/5}$ neighborhood  of  each $L_{\alpha_i}$ does not intersect any of the other $L_{\alpha_j}$.

The measure $\mu$ is constructed on a Cantor set $K=\bigcap_{m\ge 1}\bigcup_{I\in\mathbf{G}_m}$, where the $\mathbf{G}_m$ are families of closed dyadic subcubes of $[0,1]^d$ of generation $g_m$ tending to $\infty$ as $m\to\infty$, constructed recursively according to a scheme roughly as follows:

One obtains $\mathbf{G}_1$ by considering the measure $\mu_{\alpha_1,f(\alpha_1)}=\mu_{D,D}$,  an integer $N_1\ge n_1$ much bigger than $n'_2$ and setting $\mathbf{G}_1=G_{1,N_1}(\alpha_1)=G_{1,N_1}(D)$.  This yields the probability measure $\mu_1$ defined on  $\mathbf{G}_1$ as 
$$
\mu_1(I)=\frac{\mu_{D,D}(I)}{\Big (\sum_{I'\in \mathbf{G}_1}\mu_{D,D}(I')\Big )}.
$$ 
This measure satisfies $\mu_1(I)\approx 2^{-N_1 D}$.  

Suppose now that the set $\mathbf{G}_m$ has been constructed, as well as a probability measure $\mu_m$ on its elements.  One takes $N_{m+1}\ge n_{m+1}$ and integer much bigger than $\max(g_m,n'_{m+2})$, and for each $1\le i\le m+1$, one considers the measure $\mu_{\alpha_i,f(\alpha_i)}$ and the associated set $G_{m+1}(\alpha_i):=G_{m+1,N_{m+1}}(\alpha_i)$. For each $1\le i\le m+1$ and $I_m\in  \mathbf{G}_m$, one defines the set of the elements of $\mathbf{G}_{m+1}$ contained in $I_m$ as $\bigcup_{i=1}^{m+1}\mathbf{G}_{m+1}(I_m,\alpha_i)$, where $\mathbf{G}_{m+1}(I_m,\alpha_i)=\{I_m\cdot L_{\alpha_i}\cdot  I:I\in G_{m+1}(\alpha_i)\}$, and the concatenation $J\cdot J'$ of two closed subcubes of $[0,1]^d$ is obtained as the cube $f_J(J')$, where $f_J$ is the natural contracting similitude mapping $[0,1]^d$ onto $J$ (this operation is associative). One gets a probability measure $\mu_{m+1}$ on  $\mathbf{G}_{m+1}$ by setting, for $I\in G_{m+1}(\alpha_i)$:
\begin{equation}\label{mm1}
\mu_{m+1}(I_m\cdot L_{\alpha_i}\cdot I)= \mu_m(I_m) \frac{\mu_{\alpha_i,f(\alpha_i)}(I)}{\sum_{\alpha\in A_{m+1}}\sum_{I'\in G_{m+1}(\alpha)}\mu_{\alpha,f(\alpha)}(I')}.
\end{equation}
This makes it possible to define a Borel probability measure carried on $K$ and coinciding with $\mu_m$ over $\mathbf{G}_m$ for all $m\ge 1$.

Since $f(\alpha)<\alpha$ except for $\alpha=\alpha_1=D$,  if $N_{m+1}$ is  taken big enough, in \eqref{mm1} for each $i>1$ the contribution of the elements of $G_{m+1}(\alpha)$ is roughly $2^{N_{m+1}(f(\alpha)-\alpha)}$ hence is negligible so that the denominator is equivalent to the single contribution of $\sum_{I'\in G_{m+1}(D)}\mu_{D,D}(I')\in [1/2,1]$. Consequently, for $I_{m+1}\in \mathbf{G}_{m+1}$ of the form $I_m\cdot L_{\alpha_i}\cdot I$, $I\in G_{m+1}(\alpha_i)$, we have the following estimate:
\begin{equation}\label{est}
\mu(I_{m+1})\approx  \mu_m(I_m) \mu_{\alpha_i,f(\alpha_i)}(I)\approx  \mu_m(I_m) 2^{-\alpha_i N_{m+1}}\approx 2^{-\alpha_i g_{m+1}}
\end{equation}
because $g_m\ll N_{m+1}$. Also,  we have that $\#G_{m+1}(\alpha_i)\approx 2^{f(\alpha_i)N_{m+1}}$, hence 
$$
\#\{I\in\mathbf{G}_{m+1}: I\in \mathbf{G}_{m+1}(I_m,\alpha_i)\text{ for some }I_m\in\mathbf{G}_m\}=(\#\mathbf{G}_m) (\#G_{m+1}(\alpha_i)) \approx 2^{f(\alpha_i)g_{m+1}},
$$
again because $g_m\ll N_{m+1}$. The previous estimate and the continuity of $f$ essentially  yield that $f$ is an upper bound for  $\underline f^H$.  Combined with \eqref{est}, it shows that  at generation $m+1$, the mass of $\mu$ is essentially carried by the intervals $I_m\cdot L_{D}\cdot I$, $I\in G_{m+1}(D)$, since we have $1=\|\mu\|\approx \sum_{i=1}^{m+1} 2^{f(\alpha_i)g_{m+1}} 2^{-\alpha_i g_{m+1}}=\sum_{i=1}^{m+1} 2^{(f(\alpha_i)-\alpha_i)g_{m+1}} \approx 2^{(f(\alpha_1)-\alpha_1)g_{m+1}}=1$ (recall that $\alpha_1=f(\alpha_1)=D$). This can be strengthened to show that $\mu$ is exact $D$-dimensional. 

Another important fact is  the natural existence of a family of auxiliary measures used to find a sharp lower bound for $\underline f^H$: with each  $\widehat \beta=(\beta_m)_{m\ge 1}\in \prod_{m=1}^\infty A_m$ is associated the Cantor subset of $K$ defined as 
$$
K_{\widehat\beta}=\bigcap_{m\ge 1}\bigcup_{I\in \mathbf{G}_{\widehat\beta,m}}I,
$$
where $ \mathbf{G}_{\widehat\beta,m}$ is the subset of $\mathbf{G}_m$ obtained by selecting only the intervals of the construction for which one  considers the exponent $\beta_i\in A_i$ at step $i$ for all $1\le i\le m$. Using \eqref{est} and finer properties of the measures $\mu_{\alpha,\gamma}$ one can show that  $K_{\widehat\beta}\subset\underline E(\mu,\beta)$, where $\beta=\liminf_{m\to\infty} \beta_m$. Moreover, the  measures $\nu_{\beta_m, f(\beta_m)}$ can be used to  construct a nice  auxiliary probability measure $\nu_{\widehat\beta}$ carried by $K_{\widehat\beta}$. At first one defines recursively a sequence of measures  $(\nu_{\widehat\beta,m})_{m\ge 1}$ on the atoms  of the sets $\mathbf{G}_{\widehat\beta,m}$, $m\ge 1$, as follows: $\nu_{\widehat \beta,1}$ is the restriction of $\nu_{D,D}$ to $\mathbf{G}_{\widehat\beta,1}(=\mathbf{G}_1)$, and assuming that $\nu_{\widehat\beta,m}$ is constructed on $\mathbf{G}_{\widehat\beta,m}$, if $I_m\in \mathbf{G}_{\widehat\beta,m}$, for $I\in G_{m+1}(\beta_{m+1})$  one sets 
$$
\nu_{\widehat\beta,m+1}(I_m\cdot L_{\beta_{m+1}}\cdot I)= \nu_{\widehat\beta,m}(I_m) \frac{\nu_{\beta_{m+1},f(\beta_{m+1})}(I)}{\sum_{I'\in G_{m+1}(\beta_{m+1})}\nu_{\beta_{m+1},f(\beta_{m+1})}(I')}.
$$
This yields a Borel probability measure $\nu_{\widehat\beta}$ supported on $K_{\widehat \beta}$ such that  $\nu_{\widehat\beta}(I_m\cdot L_{\beta_{m+1}}\cdot I) =\nu_{\widehat\beta,m+1}(I_m\cdot L_{\beta_{m+1}}\cdot I)\approx  \nu_{\widehat\beta,m}(I_m)  \nu_{\beta_{m+1}, f(\beta_{m+1})}(I)$, so that $\nu_{\widehat\beta}(I_m\cdot L_{\beta_{m+1}}\cdot I)\approx \nu_{\beta_{m+1},f(\beta_{m+1})}(I)\approx 2^{-f(\beta_{m+1})g_{m+1}}$ (again since $g_m\ll N_{m+1}$). This can be strengthened to  $\dim_H(\nu_{\widehat\beta})=\liminf_{m\to\infty} f(\beta_m)$, hence $\dim_H  K_{\widehat\beta}\ge \liminf_{m\to\infty} f(\beta_m)$ by the mass distribution principle (see Section~\ref{MDP}). Finally, if $\beta\in[\alpha_{\min},\alpha_{\max}]$ and $\lim_{m\to\infty}\beta_m=\beta$, the continuity of $f$ yields $\underline f^H(\beta)=\dim_H\underline E(\mu,\beta)\ge f(\beta)$.

\subsection{Main statements, and comments}\label{State} 

New definitions and properties are needed to state our main results.

\subsubsection{Additional definitions and properties related to the multifractal formalism} \label{addi}

For $\mu\in \mathcal M^+_c(\R^d)$, recall that let $f^H_\mu$ and $ f^P_\mu$ stand for the Hausdorff spectrum $\alpha\in\R\cup\{\infty\}\mapsto \dim_H E(\mu,\alpha)$ and  the packing spectrum $\alpha\in\R\cup\{\infty\}\mapsto \dim_PE(\mu,\alpha)$ respectively. 

The functions defined below, as well as  some variants, are well known in the literature (\cite{Falcbook,BMP,Be,Olsen,LN}).  They naturally complete $\tau_\mu$ and $\tau_\mu^*$ to describe, in terms of large deviations, the asymptotic behavior of the distribution of the measure $\mu$ at small scales. They  also yield a finer multifractal formalism, which connects geometric properties of the sets $E(\mu,\alpha)$ to large deviations properties associated with $\mu$,  both from the Hausdorff and packing dimensions point of views. In Remark~\ref{link}  (Section~\ref{fullill}) we will explain the connection with another multifractal formalism emphasized in \cite{Olsen}, which is based on a purely geometric approach.

Define  also the upper $L^q$-spectrum of $\mu$ as
\begin{eqnarray*}
q\in\mathbb{R}\mapsto \overline\tau_\mu(q)&=&\limsup_{r\to 0^+}\frac{\log \sup\Big \{\sum_i \mu(B(x_i,r))^q\Big \}}{\log(r)}
\end{eqnarray*}
(this function is not concave in general), as well as  the lower and upper large  deviations spectra $\underline{f}^{\rm LD}_\mu$ and $\overline{f}^{\rm LD}_\mu$:
\begin{eqnarray*}
\alpha\in\R\mapsto\underline{f}^{\rm LD}_\mu(\alpha)&=&\lim_{\epsilon\to 0^+} \liminf_{r\to 0^+}\frac{\log \sup\#\Big \{i:r^{\alpha+\epsilon}\le  \mu(B(x_i,r))\le r^{\alpha-\epsilon}\Big \}}{-\log(r)},
\\
\alpha\in\R\mapsto\overline{ f}^{\rm LD}_\mu(\alpha)&=&\lim_{\epsilon\to 0^+} \limsup_{r\to 0^+}\frac{\log \sup \#\Big\{i:r^{\alpha+\epsilon}\le  \mu(B(x_i,r))\le r^{\alpha-\epsilon}\Big \}}{-\log(r)},
\end{eqnarray*}
\begin{eqnarray*}
\underline{f}^{\rm LD}_\mu(\infty)&=&\lim_{A\to\infty} \liminf_{r\to 0^+}\frac{\log \sup\#\Big \{i:  \mu(B(x_i,r))\le r^{A}\Big \}}{-\log(r)},
\\
\overline{ f}^{\rm LD}_\mu(\infty)&=&\lim_{A\to\infty} \limsup_{r\to 0^+}\frac{\log \sup\#\Big \{i:  \mu(B(x_i,r))\le r^{A}\Big \}}{-\log(r)},
\end{eqnarray*}
where the suprema are taken over all the centered packings of $\supp(\mu)$ by closed balls of radius  $r$. Notice that $0\le \underline\dim_B\supp(\mu)= -\overline\tau(0)\le d$, and $\overline \tau_\mu(1)=0$ (by the same arguments as for the equality $\tau_\mu(1)=0$, see \cite[Section 2.7]{Olsen} or \cite[Section 3]{LN}). 

One always has $\overline \tau_\mu^*\le \tau_\mu^*$, and
\begin{eqnarray}
&&\label{MuFo1}\forall\,\alpha\in\R\cup\{\infty\},\  f^H_\mu(\alpha)\le \underline{ f}^{\rm LD}_\mu(\alpha)\le  \overline\tau_\mu^*(\alpha)\le \max (\alpha, -\overline \tau_\mu(0))\le \max (\alpha,d) ,\\
&&\label{MuFo2}\forall\,\alpha\in\R\cup\{\infty\},\ 
 f^P_\mu(\alpha)\le\overline{ f}^{\rm LD}_\mu(\alpha)\le  \tau_\mu^*(\alpha)\le\max (\alpha, -\tau_\mu(0))\le \max (\alpha,d).
\end{eqnarray}
We will say that $\mu$ obeys the refined multifractal formalism at $\alpha\in\R\cup\{\infty\}$ if $f^H_\mu(\alpha)=\overline \tau_\mu^*(\alpha)$ and $f^P_\mu(\alpha)= \tau_\mu^*(\alpha)$. If $\alpha\in \mathrm{dom}(\tau_\mu^*)\setminus \mathrm{dom}(\overline\tau_\mu^*)$, one necessarily has $E(\mu,\alpha)=\emptyset$, so that in \eqref{MuFo2}, one can only expect the large deviations property $\overline{ f}^{\rm LD}_\mu(\alpha)=  \tau_\mu^*(\alpha)$ to hold. 

The inequalities $f^H_\mu(\alpha)\le \overline \tau_\mu^*(\alpha)$ and  $f^P_\mu(\alpha)\le  \tau_\mu^*(\alpha)$ are established  for $\alpha<\infty$ when $\mu$ is doubling in \cite[Section 2.7]{Olsen}.  The inequalities  $f^H_\mu(\alpha)\le \overline{ f}^{\rm LD}_\mu(\alpha)\le  \tau_\mu^*(\alpha)$ are established in \cite[Section 3]{LN} when $\alpha<\infty$. The other inequalities will be justified in Sections~\ref{proof33} and~\ref{proof34}.  

We notice that if $\mu$ is exact dimensional with dimension $D$, then $D=f^H(D)=\underline{ f}^{\rm LD}_\mu(D)= \overline\tau_\mu^*(D)=\overline{ f}^{\rm LD}_\mu(D)= \tau_\mu^*(D)$, and in the case where $\overline\tau_\mu$ is concave, we have  $D\in [\overline\tau_\mu'(1^+),\overline \tau_\mu'(1^-)](\subset [\tau_\mu'(1^+),\tau_\mu'(1^-)])$, since $\overline \tau_\mu(1)=0$ implies that $\overline \tau_\mu^*(\alpha)=\alpha$  if and only if  $\alpha\in  [\overline\tau_\mu'(1^+),\overline \tau_\mu'(1^-)]$.

Let us  now  describe  the possible behaviors of the $L^q$-spectrum and its Legendre transform. Before stating the corresponding propositions, we need to extend the notion of Legendre-Fentchel transform to functions  $f:\R\cup\{\infty\}\to \R\cup\{-\infty\}$. 

If $f:\R\cup\{\infty\}\to \R\cup\{-\infty\}$ and $\mathrm{dom}(f)\cap \R\neq\emptyset$, we define the  concave Legendre-Fenchel transform of $f$ as 
$$
f^*: q\in \R\mapsto \inf\{q\alpha-f(\alpha):\alpha\in\mbox{dom}(f)\},
$$
with the conventions $q \times \infty= \frac{q}{|q|}\times \infty$ if $q\neq 0$ and $0\times \infty=0$. 

Consequently, if $\infty\in \mathrm{dom}(f)$ and $f$ is bounded from above, then $0=\min (\mathrm{dom} (f^*))$ and $f^*(0)= -\max (\sup(f_{|\R}),f(\infty)$); moreover,  $f^*$ is concave over  $\mathrm{dom}(f^*)$, upper semi-continuous over $\mathrm{dom}(f^*)\setminus \{0\}$, and upper semi-continuous at $0$ only if  and only if $f(\infty)=\max (f)$.

\begin{pro} \label{condtau} Let $\mu\in \mathcal M^+_c(\R^d)$.  
\begin{enumerate}
\item  $\tau_\mu$ is concave, non-decreasing, $\tau_\mu(1)=0$, and  $-d\le \tau_\mu(0)=-\overline\dim_B\supp(\mu) \le 0$.

\item One has either $\mbox{\rm dom}(\tau_\mu) =\R$, or $\mbox{\rm dom}(\tau_\mu) =\R_+$, according to whether the exponent $\limsup_{r\to0^+}\displaystyle  \frac{\log (\inf\{\mu(B(x,r)):x\in\supp(\mu)\})}{\log(r)}$ is finite  or not. Moreover $\tau_\mu^*$ is non-negative on its domain. 
\end{enumerate}
\end{pro}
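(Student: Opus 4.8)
\textbf{Proof plan for Proposition~\ref{condtau}.}

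The plan is to derive everything directly from the definition of $\tau_\mu$ as a liminf of ratios of logarithms, using the sublinearity/superadditivity properties of the packing sums and a few elementary facts about centered packings.

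\emph{Part (1).} First I would establish concavity. For fixed $r>0$, write $S_r(q)=\sup\{\sum_i \mu(B(x_i,r))^q\}$ over centered packings of $\supp(\mu)$. Since for any fixed packing the map $q\mapsto \sum_i \mu(B(x_i,r))^q$ is a sum of functions $q\mapsto m_i^q = e^{q\log m_i}$, each of which is log-convex, the sum is log-convex in $q$; taking the supremum over packings preserves log-convexity, so $q\mapsto \log S_r(q)$ is convex. Dividing by $\log r <0$ (for $r<1$) reverses the inequality, making $q\mapsto \frac{\log S_r(q)}{\log r}$ concave, and the liminf of concave functions is concave; hence $\tau_\mu$ is concave. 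Next, monotonicity: for $q\le q'$ and $\mu(B(x_i,r))\le 1$ eventually (true once $r$ is small, since $\mu$ is finite and the balls in a packing are essentially disjoint so their masses tend to $0$), one has $\mu(B(x_i,r))^{q'}\le \mu(B(x_i,r))^{q}$, hence $S_r(q')\le S_r(q)$, and dividing by $\log r<0$ gives $\tau_\mu(q)\le\tau_\mu(q')$; so $\tau_\mu$ is non-decreasing. For $\tau_\mu(1)=0$: for any centered packing, $\sum_i \mu(B(x_i,r))\le C\|\mu\|$ where $C$ is a bound on the number of balls of a packing that can contain a given point (a bounded overlap constant, finite since balls of equal radius with centers in them have controlled multiplicity in $\R^d$), so $S_r(1)\le C\|\mu\|$, giving $\tau_\mu(1)\ge 0$; conversely, taking a maximal packing, the balls $B(x_i,2r)$ cover $\supp(\mu)$, so $\sum_i \mu(B(x_i,2r))\ge \|\mu\|$, and combined with a doubling-free comparison between radius $r$ and $2r$ via the bounded overlap one gets $S_r(1)\ge c\|\mu\|$ for some $c>0$, hence $\tau_\mu(1)\le 0$; therefore $\tau_\mu(1)=0$. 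Finally, $\tau_\mu(0)$: by definition $\sum_i \mu(B(x_i,r))^0$ is the cardinality of the packing, and $\sup$ of this over centered packings by balls of radius $r$ is, up to the bounded overlap and covering comparison above, of the same logarithmic order as the covering number $N_r(\supp(\mu))$; hence $\frac{\log S_r(0)}{\log r} = -\frac{\log N_r(\supp \mu)}{-\log r} + o(1)$ and taking $\liminf$ gives $\tau_\mu(0)=-\overline\dim_B\supp(\mu)$. The bounds $-d\le \tau_\mu(0)\le 0$ then follow from $1\le N_r\le C r^{-d}$.

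\emph{Part (2).} Let $\theta=\limsup_{r\to 0^+}\frac{\log(\inf\{\mu(B(x,r)):x\in\supp(\mu)\})}{\log r}$. For $q<0$ the quantity $\mu(B(x_i,r))^q$ is largest when $\mu(B(x_i,r))$ is smallest, so $S_r(q)\le N_r(\supp\mu)\cdot \big(\inf_{x}\mu(B(x,r))\big)^q$, and dually a maximal packing realizes balls of small mass, giving a matching lower bound in logarithmic scale; dividing by $\log r$ one obtains, for $q<0$,
$$
\tau_\mu(q)= \liminf_{r\to 0^+}\frac{-\log N_r(\supp\mu)+q\log\big(\inf_x \mu(B(x,r))\big)}{-\log r}+o(1),
$$
so that $\tau_\mu(q)>-\infty$ for all $q<0$ precisely when $\theta<\infty$ (if $\theta<\infty$ the second term is $O(|q|)$ and $\tau_\mu(q)\ge -\overline\dim_B\supp\mu + q\theta'$ for a suitable finite $\theta'$; if $\theta=\infty$ then along a sequence $r_n\to 0$ the inner infimum decays faster than any power of $r_n$, forcing $\tau_\mu(q)=-\infty$ for every $q<0$). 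Since $\R_+\subset\mathrm{dom}(\tau_\mu)$ always and $\mathrm{dom}(\tau_\mu)$ is an interval (by concavity), this dichotomy gives $\mathrm{dom}(\tau_\mu)\in\{\R,\R_+\}$ exactly as stated. Non-negativity of $\tau_\mu^*$ on its domain: for $\alpha\in\mathrm{dom}(\tau_\mu^*)\cap\R$, evaluating the infimum defining $\tau_\mu^*(\alpha)$ at $q=1$ gives $\tau_\mu^*(\alpha)\le \alpha\cdot 1-\tau_\mu(1)=\alpha$, which is an upper bound, not what we want; instead use that $\tau_\mu$ is non-decreasing and concave with $\tau_\mu(1)=0$, so $\tau_\mu(q)\le 0$ for $q\le 1$ and $\tau_\mu(q)\le \tau_\mu'(1^-)(q-1)$ for all $q$; then for $q\ge 0$, $\alpha q-\tau_\mu(q)\ge \alpha q - \tau_\mu'(1^-)(q-1)\ge 0$ provided $\alpha\ge 0$ is in the Legendre domain, and for $q<0$ (when in $\mathrm{dom}(\tau_\mu)$) one has $\tau_\mu(q)\le 0$ so $\alpha q-\tau_\mu(q)\ge \alpha q$ which can be negative — so more care is needed here: one shows that the infimum is actually attained (or approached) at $q\ge 0$ because of the monotonicity of $\tau_\mu$, or one invokes directly the general fact that the concave conjugate of a non-decreasing function is $-\infty$ on $\R_-^*$ and hence $\mathrm{dom}(\tau_\mu^*)\subset\R_+\cup\{\infty\}$, on which the slope $q$ effectively ranges over $\mathrm{dom}(\tau_\mu)\cap\R_+$ where $\alpha q-\tau_\mu(q)\ge \alpha q-\tau_\mu(q)\big|_{q=1}\cdot(\text{convexity})\ge 0$ when $\alpha$ is a subgradient direction; cleanly, $\tau_\mu^*(\alpha)\ge \inf_{q\ge 0}(\alpha q-\tau_\mu(q))\ge \alpha\cdot 0-\tau_\mu(0)\ge 0$ is false in general, so the correct statement is that the infimum over $q\ge 0$ of $\alpha q-\tau_\mu(q)$ is $\ge 0$ because at $q=1$ the value is $\alpha\ge 0$ and the function $q\mapsto \alpha q-\tau_\mu(q)$ is convex with a minimum, whose value is $\tau_\mu^*(\alpha)\ge -\tau_\mu(0)\cdot\mathbf{1}+\dots$ — I would finalize this by the standard argument that $\tau_\mu^*(\alpha)=\alpha q_\alpha - \tau_\mu(q_\alpha)$ for some $q_\alpha\in[0,1]$ whenever $\alpha\le\tau_\mu'(0^+)$ and $q_\alpha\ge 1$ otherwise, and in both cases $\alpha q_\alpha-\tau_\mu(q_\alpha)\ge 0$ using $\tau_\mu\le 0$ on $[0,1]$ and $\tau_\mu(q)\le \alpha(q-1)$ beyond.

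\emph{Main obstacle.} The routine parts (concavity, monotonicity, $\tau_\mu(1)=0$, $\tau_\mu(0)=-\overline\dim_B\supp\mu$) require only the bounded-overlap/covering-number comparisons, which are standard. The genuinely delicate point is the clean proof that $\tau_\mu^*\ge 0$ on its whole domain including the behaviour for $\alpha$ near $0$ and at $\alpha=\infty$: one must locate, for each admissible $\alpha$, a near-optimal slope $q\in[0,1]$ (or the relevant half-line) and exploit both $\tau_\mu(1)=0$ and concavity simultaneously; the $\alpha=\infty$ case additionally needs the convention $\infty\cdot q=-\infty$ for $q<0$ together with $\mathrm{dom}(\tau_\mu)=\R_+$ being equivalent to $0=\min\mathrm{dom}(\tau_\mu)$, which is exactly the content of Part (2). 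I would organise the final write-up so that Part (2)'s dichotomy is proved first, and non-negativity of $\tau_\mu^*$ is then deduced uniformly from "$\tau_\mu$ concave, non-decreasing, $\tau_\mu(1)=0$, $\mathrm{dom}(\tau_\mu)\supset\R_+$" by a short convex-analysis lemma rather than case analysis.
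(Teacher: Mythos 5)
Your part (1) and the dichotomy in part (2) are fine and follow essentially the paper's route (part (1) is in any case only cited there from Lau--Ngai; for the dichotomy the paper argues exactly as you do, from the two one-sided bounds $S_r(q)\ge\big(\inf_x\mu(B(x,r))\big)^q$ and $S_r(q)\le N_r(\supp(\mu))\cdot\big(\inf_x\mu(B(x,r))\big)^q$ for $q<0$ --- your claimed matching asymptotic identity with a ``$+o(1)$'' is not justified, but it is also not needed).

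The genuine gap is the non-negativity of $\tau_\mu^*$. You propose to deduce it ``uniformly from $\tau_\mu$ concave, non-decreasing, $\tau_\mu(1)=0$, $\mathrm{dom}(\tau_\mu)\supset\R_+$ by a short convex-analysis lemma''; no such lemma exists. Take $d=3$ and $\tau(q)=3(q-1)$ for $q\le 2$, $\tau(q)=3+\tfrac12(q-2)$ for $q\ge 2$: this function is concave, non-decreasing, finite on all of $\R$, satisfies $\tau(1)=0$ and $-d\le\tau(0)=-3\le 0$, yet $\tau^*(1)=\inf_q\big(q-\tau(q)\big)=2-\tau(2)=-1<0$ while $1\in\mathrm{dom}(\tau^*)=[1/2,3]$. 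Your fallback inequality ``$\tau_\mu(q)\le\alpha(q-1)$ beyond $q=1$'' is exactly what fails here: it holds only when $\alpha\ge\tau_\mu'(1^+)$, i.e.\ when the minimizing slope point $q_\alpha$ is $\le 1$; when $q_\alpha>1$ it is equivalent to the very statement being proved. So non-negativity is an honest additional necessary condition that must be extracted from the measure, not from the shape of $\tau_\mu$. The paper's argument is a bootstrap through the large-deviations spectrum: assume $-\infty<\tau_\mu^*(\alpha)<0$ and set $\alpha_0=\inf\{\beta>\alpha:\tau_\mu^*(\beta)\ge0\}$; for every $\beta<\alpha_0$ one has $\overline{f}^{\rm LD}_\mu(\beta)\le\tau_\mu^*(\beta)<0$, which by definition of $\overline{f}^{\rm LD}_\mu$ means that for small $r$ no ball of a centered packing has mass in $[r^{\beta+\epsilon},r^{\beta-\epsilon}]$; a compactness argument upgrades this to $\mu(B(x,r))\le r^{\alpha_0-\epsilon}$ uniformly on $\supp(\mu)$, whence $\tau_\mu(q)\ge\tau_\mu(0)+(\alpha_0-\epsilon)q$ for $q\ge0$ and therefore $\tau_\mu^*\equiv-\infty$ on $(-\infty,\alpha_0)$, contradicting $\tau_\mu^*(\alpha)>-\infty$. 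You need some such passage back and forth between $\tau_\mu^*$ and the pointwise behaviour of $\mu$; purely formal manipulation of $\tau_\mu$ cannot close this step.
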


\begin{pro} \label{condtau1}
Suppose that $\tau:\R\to\R\cup\{-\infty\}$ satisfies the properties of the $L^q$-spectrum described in Proposition~\ref{condtau}.
\begin{enumerate}
\item Suppose that  $\mbox{\rm dom}(\tau) =\R$. Then $\mbox{\rm dom}(\tau^*)$ is the compact interval $I=[\tau'(\infty),\tau'(-\infty)]$, $\tau^*$ is concave and continuous on its domain, and $(\tau^*)^*=\tau$ on $\R$. 

\item Suppose that $\mbox{\rm dom}(\tau) =\R_+$. Then $\infty\in \mbox{\rm dom}(\tau^*)$ with $\tau^*(\infty)=-\tau(0)$ and: 
\begin{enumerate}
\item If $\tau(0)=0$ then $\tau=0$ over $\R_+$, $\mbox{\rm dom}(\tau^*)=\R_+\cup\{\infty\}$ and $ \tau^*=0$ over $\R_+\cup\{\infty\}$. 

\item If $\tau(0)<0$ and $\tau$ is continuous at $0^+$, then $\mbox{\rm dom}(\tau^*)$ is the interval $[\tau'(\infty),\infty]$, $\tau^*$ is concave, continuous,  and increasing over $[\tau'(\infty),\tau'(0^+))$,  $\tau^*(\alpha)=-\tau(0)=\tau^*(\infty)=-\tau(0)$ for all $\alpha\in [\tau'(0^+), \infty)$ and $\tau^*$ is continuous at $\infty$; there are two distinct behaviors according to whether $\tau'(0^+)<\infty$ or not.

\item If $\tau(0)<0$ and $\tau$ is discontinuous at $0^+$, then $\mbox{\rm dom}(\tau^*)$ is the interval $I=[\tau'(\infty),\infty]$. Moreover, $\tau^*(\alpha)=-\tau(0^+)<\tau^*(\infty)=-\tau(0)$ for all $\alpha\in [\lim_{q\to 0^+}\tau'(q^-), \infty)$, so that $\tau^*$ is concave and continuous on $[\tau'(\infty),\infty)$ and discontinuous at $\infty$ (there are also two cases, according to $\lim_{q\to 0^+}\tau'(q^-)=\infty$ or not). 

\item In all the previous cases, $(\tau^*)^*=\tau$ on $\R$.  
\end{enumerate}
\end{enumerate}
\end{pro}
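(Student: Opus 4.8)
The statement is essentially a complete case analysis for the Legendre--Fenchel duality between a concave non-decreasing function $\tau$ and its (extended) conjugate $\tau^*$, the only non-generic inputs being the normalization $\tau(1)=0$, the bound $\tau(0)\in[-d,0]$, and, crucially, the requirement from Proposition~\ref{condtau} that $\tau^*$ be non-negative on its domain. Throughout I would use that $\tau$, being concave and non-decreasing, has at every interior point of $\mathrm{dom}(\tau)$ a non-negative left and right derivative, with $q\mapsto\tau'(q^\pm)$ non-increasing; that $\tau^*$, as a conjugate, is concave and upper semi-continuous (see \cite{Roc}); and that testing $q=0$ and $q=1$ in the infimum defining $\tau^*$ gives $\tau^*(\alpha)\le\min(\alpha,-\tau(0))$, so $\tau^*$ is bounded above by $-\tau(0)<\infty$ on all of $\R$.

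\emph{Part (1) and the closedness of $\mathrm{dom}(\tau^*)$.} First I would note that $\mathrm{dom}(\tau^*)$ is always closed: at a finite endpoint $a$ of the interval $\mathrm{dom}(\tau^*)$ approached from inside, the concave function $\tau^*$ has a one-sided limit lying in $[0,-\tau(0)]$ (non-negativity on the domain gives the lower bound, the universal upper bound gives the other), hence finite, and upper semi-continuity of $\tau^*$ forces $a\in\mathrm{dom}(\tau^*)$. When $\mathrm{dom}(\tau)=\R$ I would then exclude $\tau'(-\infty)=+\infty$: otherwise every finite $\alpha>\tau'(\infty)$ lies in $\mathrm{dom}(\tau^*)$ and the minimizer $q(\alpha)$ of $q\mapsto q\alpha-\tau(q)$ is finite with $q(\alpha)\to-\infty$ as $\alpha\to\infty$; since $q(\alpha)$ is a supergradient of the concave function $\tau^*$ at $\alpha$, this would force $\tau^*(\alpha)\to-\infty$, contradicting $\tau^*\ge0$ on $\mathrm{dom}(\tau^*)$. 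Combined with $\tau'(\infty)=\inf\tau'\in[0,\infty)$, the elementary analysis of the behaviour of $q\mapsto q\alpha-\tau(q)$ at $\pm\infty$ then shows $\mathrm{dom}(\tau^*)=[\tau'(\infty),\tau'(-\infty)]$, a compact interval. Finally $\tau$, being finite and concave on $\R$, is continuous hence closed, so the biconjugation theorem gives $(\tau^*)^*=\tau$ on $\R$, and a closed concave function finite on a compact interval is continuous there; this proves~(1).

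\emph{Part (2), description of $\tau^*$.} When $\mathrm{dom}(\tau)=\R_+$ one has $\tau^*(\infty)=-\tau(0)$ directly, since $q=0$ is the only point of $\mathrm{dom}(\tau)\cap\R_-$; as noted before the proposition this is $\max\tau^*$. For finite $\alpha$ I would analyse the convex function $h_\alpha(q)=q\alpha-\tau(q)$ on $[0,\infty)$: its slope at $+\infty$ gives $\tau^*(\alpha)>-\infty$ iff $\alpha\ge\tau'(\infty)$, so $\mathrm{dom}(\tau^*)=[\tau'(\infty),\infty]$ (closed at the finite end by the previous paragraph), while the location of the minimizer of $h_\alpha$ is dictated by comparing $\alpha$ with the slopes of $\tau$ near $0$. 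This yields the three cases. If $\tau(0)=0$, monotonicity and $\tau(1)=0$ squeeze $\tau\equiv0$ on $[0,1]$ and concavity keeps it $0$ on $[1,\infty)$, so $\tau\equiv0$ on $\R_+$ and $\tau^*\equiv0$ on $[0,\infty]$ (case~(a)). If $\tau(0)<0$ and $\tau$ is continuous at $0^+$, the infimum of $h_\alpha$ is attained at an interior minimizer $q(\alpha)$ (with $\alpha\in\partial\tau(q(\alpha))$) for $\alpha\in[\tau'(\infty),\tau'(0^+))$, where $\tau^*$ is concave, increasing and has supergradient $q(\alpha)$, and at $q=0$ for $\alpha\ge\tau'(0^+)$, so $\tau^*\equiv-\tau(0)$ there; continuity at $\infty$ in the sub-case $\tau'(0^+)=\infty$ comes from $q\,\tau'(q)\to0$ as $q\to0^+$, an integrability consequence of continuity of $\tau$ at $0$ (case~(b)). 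If $\tau(0)<\tau(0^+):=\lim_{q\to0^+}\tau(q)\le0$, then $h_\alpha(0^+)=-\tau(0^+)<-\tau(0)=h_\alpha(0)$ and $h_\alpha$ is non-decreasing on $(0,\infty)$ once $\alpha\ge\lim_{q\to0^+}\tau'(q^-)$, so $\tau^*\equiv-\tau(0^+)$ on $[\lim_{q\to0^+}\tau'(q^-),\infty)$, $\tau^*$ is continuous on $[\tau'(\infty),\infty)$ and jumps up to $-\tau(0)$ at $\infty$ (case~(c)). The two announced sub-behaviours in (b) and (c) correspond to whether $\tau'(0^+)$, resp. $\lim_{q\to0^+}\tau'(q^-)$, is finite.

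\emph{Part (2)(d) and the main difficulty.} To finish I would check $(\tau^*)^*=\tau$ on $\R$ using the extended transform: for $q>0$ the point $\alpha=\infty$ contributes $+\infty$ to $\inf\{q\alpha-\tau^*(\alpha)\}$, so $(\tau^*)^*(q)$ is the ordinary concave biconjugate of $\tau$ at $q$, equal to the upper semi-continuous hull of $\tau$ at $q$, which is $\tau(q)$ by continuity of $\tau$ on the interior of $\R_+$; for $q<0$ the point $\alpha=\infty$ contributes $-\infty$, so $(\tau^*)^*(q)=-\infty=\tau(q)$; and $(\tau^*)^*(0)=-\sup\{\tau^*(\alpha):\alpha\in\mathrm{dom}(\tau^*)\}=-\tau^*(\infty)=\tau(0)$, the supremum being attained at $\alpha=\infty$. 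I expect this last identity to be the only genuinely delicate point: in case~(c) the function $\tau$ is not upper semi-continuous at $0$, so its ordinary biconjugate would return $\tau(0^+)>\tau(0)$ there, and it is precisely the presence of $\infty$ in $\mathrm{dom}(\tau^*)$ with value $-\tau(0)$ that repairs this. The remainder is one-variable convex analysis; the bulk of the work is the careful matching of the one-sided derivatives of $\tau$ near $0$ and $\pm\infty$ with the attainment sets of the infima defining $\tau^*$, which is what produces the list of cases.
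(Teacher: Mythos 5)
Your proposal is correct and follows essentially the same route as the paper: standard one-variable concave duality, the observation that non-negativity of $\tau^*$ on its domain bounds $\mathrm{dom}(\tau^*)$ (forcing $\tau'(-\infty)<\infty$ in case (1)), the case analysis driven by the behaviour of $\tau$ at $0^+$, and the key point that including $\alpha=\infty$ with $\tau^*(\infty)=-\tau(0)$ is exactly what restores $(\tau^*)^*(0)=\tau(0)$ when $\tau$ fails to be upper semi-continuous at $0$. The paper merely compresses several of your explicit verifications into ``standard considerations in convex analysis,'' so no substantive difference remains.
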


Propositions~\ref{condtau}(1) is standard and proved for instance in \cite{LN} (Proposition 3.2). Propositions~\ref{condtau}(2) and \ref{condtau1}(1) are essentially restatements of Propositions 3.3--3.5 in \cite{LN}. However, for the reader convenience we will provide a proof in Section~\ref{proof3}, where the whole proofs of Propositions~\ref{condtau}(2) and \ref{condtau1} are given.

\subsubsection{Full illustration of the multifractal formalism. Complements to Theorem~\ref{cor}}\label{fullill}

When $\mu$ is a Gibbs measure on a conformal repeller or a self-similar measure on an attractor satisfying  suitable separation conditions, the Hausdorff and packing dimensions are also known for all the sets $E(\mu,\alpha,\beta)$: 
\begin{equation}\label{Eab}
\begin{split}
&\dim_H E(\mu,\alpha,\beta)=\min\{\tau_\mu^*(\gamma):\gamma\in[\alpha,\beta]\},\\
 &\dim_P E(\mu,\alpha,\beta)=\max\{\tau_\mu^*(\gamma):\gamma\in[\alpha,\beta]\},\\
 &\dim_H \underline E(\mu,\alpha)=\tau_\mu^*(\alpha)=  \dim_H \overline E(\mu,\alpha),\\
 &\dim_P \underline E(\mu,\alpha)=\max\{\tau_\mu^*(\gamma):\gamma\ge \alpha\},\  \dim_P \overline E(\mu,\alpha)=\max\{\tau_\mu^*(\gamma):\gamma\le \alpha\}.
 \end{split}
\end{equation}
for all $\alpha\le \beta\in\R_+\cup\{\infty\}$ (see \cite{Ol03,BOS,ZC12} and also \cite{BaSc00,FW} for closely related questions).  We notice that \eqref{Eab} implies that $\dim_H\supp(\mu)=\underline\dim_B \supp(\mu)=\dim_P\supp(\mu)=\overline\dim_B \supp(\mu)=-\tau_\mu(0)$.

It turns out that properties~\eqref{Eab} enter in our exhaustive illustration of the multifractal formalism.

\begin{thm}\label{thmmain}
Let $f\in\mathcal F(d)$. Suppose that $\mathrm{dom}(f)$ is  a non empty  closed subinterval of $[0,\infty]$ and $f$ is concave over $\mathcal{I}\cap\R_+$.

For each fixed point $D$ of $f$,  there exists $\mu\in\mathcal M^+_c(\R^d)$,  exact dimensional with dimension~$D$,  such that $\tau_\mu=f^*=\overline\tau_\mu$, $\tau_\mu^*=f$, and \eqref{Eab} holds for all $\alpha\le \beta\in \R_+\cup\{\infty\}$. Moreover,  the same properties hold if  $\mu$ is replaced by its restriction to any closed ball whose interior intersects $\supp(\mu)$. 
\end{thm}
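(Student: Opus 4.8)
The plan is to realize the desired measure $\mu$ by a Moran-type construction on $[0,1]^d$ that is \emph{homogeneous in scale} but \emph{inhomogeneous in exponent}, in the spirit of the sketch given in the introduction, but with two refinements: (i) since $\mathrm{dom}(f)$ is now an interval and $f$ is concave on it, one can hope for the \emph{strong} formalism and for the exact values \eqref{Eab}; (ii) one must simultaneously control $\overline\tau_\mu$ from above so that $\overline\tau_\mu=\tau_\mu=f^*$, which forces the construction to be ``uniform'' along a sequence of scales $g_m\to\infty$ rather than only along a subsequence. First I would record the reductions: by Proposition~\ref{condtau1} the hypotheses on $f$ (namely $f\in\mathcal F(d)$, $\mathrm{dom}(f)$ a closed subinterval of $[0,\infty]$, $f$ concave on $\mathrm{dom}(f)\cap\R_+$, $D\in\mathrm{Fix}(f)$) are exactly the ones making $f^*$ a legitimate $L^q$-spectrum with $(f^*)^*=f$; so it suffices to build $\mu$ with $\tau_\mu=\overline\tau_\mu=f^*$, then deduce $\tau_\mu^*=f$ and the fixed-point/exact-dimensionality statements automatically from \eqref{MF0}, \eqref{MuFo1}, \eqref{MuFo2} and the discussion of fixed points in the excerpt. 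The $\mathrm{dom}(f)=\R_+\cup\{\infty\}$ or unbounded-interval cases are handled by truncation/limiting arguments once the compact case is done, so I would treat $\mathrm{dom}(f)=[\alpha_{\min},\alpha_{\max}]\subset\R_+$ first.

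The core construction: fix a countable dense set $\{\alpha_m:m\ge1\}$ in $[\alpha_{\min},\alpha_{\max}]$ with $\alpha_1=D$, and the building-block measures $\mu_{\alpha,f(\alpha)},\nu_{\alpha,f(\alpha)}$ on $[0,1]^d$ supplied by the ``at first one shows'' paragraph (binomial-type measures on dyadic cubes realizing a cube of mass $\approx2^{-n\alpha}$ with cardinality $\approx2^{nf(\alpha)}$, and an auxiliary exact-dimensional measure $\nu$ of dimension $f(\alpha)$ living on $E(\mu_{\alpha,f(\alpha)},\alpha)$). Build $\mathbf G_m$ recursively as in \eqref{mm1}, inserting separating prefixes $L_{\alpha_i}$ of generation $n'_m$ so that distinct branches are $2^{-n'_m/5}$-separated (this buys the doubling-type ball-versus-cube comparison and also propagates to all restrictions of $\mu$, giving the last sentence of the theorem essentially for free). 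Then I would prove, in order: (1) the mass estimate \eqref{est} and $\#\mathbf G_{m+1}(I_m,\alpha_i)\approx2^{f(\alpha_i)g_{m+1}}$; (2) exact dimensionality with dimension $D$, from the fact that $\sum_i2^{(f(\alpha_i)-\alpha_i)g_{m+1}}\approx1$ is dominated by the $\alpha_1=D$ term together with a Borel--Cantelli / concentration argument showing $\mu$-a.e.\ point eventually only sees the $D$-branch at the level of the $\log\mu(I_n(x))/(-n\log2)$ normalization, and then passing from dyadic cubes to balls via the separation; (3) the upper bound $\tau_\mu(q)\le f^*(q)$ and $\overline\tau_\mu(q)\le f^*(q)$ for all $q$, by estimating $\sum_I\mu(B_I)^q$ over a centered packing at scale $r\approx2^{-g_{m+1}}$: the contribution of the $\alpha_i$-branch is $\approx2^{f(\alpha_i)g_{m+1}}2^{-q\alpha_ig_{m+1}}=2^{(f(\alpha_i)-q\alpha_i)g_{m+1}}$, and summing over the dense $\alpha_i$ inside the single generation gives the supremum $2^{g_{m+1}\sup_\alpha(f(\alpha)-q\alpha)}=2^{-g_{m+1}f^*(q)}$ — crucially this estimate is valid \emph{uniformly} in $m$, hence controls the $\limsup$, not just the $\liminf$; one must also handle intermediate scales $2^{-g_{m+1}}<r<2^{-g_m}$, which is where $N_{m+1}\gg g_m$ and the single-exponent structure on each block $[g_m+|L|,g_{m+1}]$ keeps the partition function on the correct line; (4) the matching lower bounds, which split into $\tau_\mu(q)\ge f^*(q)$ (evaluate the partition function on the packing coming from the construction itself: the $\alpha_i$-branch alone already gives $\ge2^{(f(\alpha_i)-q\alpha_i)g_{m+1}}$ along the scales $g_{m+1}$, hence $\tau_\mu(q)\le f^*(q)$... wait, sign: this gives the $\liminf\le$, i.e.\ combined with (3) pins $\tau_\mu(q)=f^*(q)$) and the geometric lower bounds for the $E(\mu,\alpha)$ via the auxiliary measures $\nu_{\widehat\beta}$ with constant $\widehat\beta\equiv\alpha$ (for $\alpha$ in the dense set, then all $\alpha\in\mathrm{dom}(f)$ by continuity of $f$ and a concatenation/diagonal argument over the dense set), plus the two-index families $K_{\widehat\beta}$ with $\beta_m$ alternating between two values $\alpha,\beta$ to produce points with $\underline d=\alpha$, $\overline d=\beta$, giving the Hausdorff lower bound $\min_{[\alpha,\beta]}f$ in \eqref{Eab}; (5) the packing-dimension lower bounds in \eqref{Eab}, obtained from the same $K_{\widehat\beta}$'s by a standard computation of the lower box dimension of $K_{\widehat\beta}$ along a well-chosen subsequence of scales where the $\beta_m$ realizing $\max_{[\alpha,\beta]}f$ has just been used, combined with the fact (needing the concavity of $f$) that $\dim_P E\ge\underline\dim_B$ on a suitable subset.

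For the formalism itself: the upper bounds $f_\mu^H(\alpha)\le\tau_\mu^*(\alpha)$ etc.\ are \eqref{MF0}, \eqref{MuFo1}, \eqref{MuFo2}; since I will have shown $\tau_\mu=\overline\tau_\mu=f^*$ one gets $\tau_\mu^*=\overline\tau_\mu^*=f$ (upper semicontinuity and the $\infty$ case are handled by Proposition~\ref{condtau1}), so the lower bounds from step (4)–(5) close the gap and yield the strong and refined formalism at once, as well as \eqref{Eab}; the identities $\dim_H\underline E(\mu,\alpha)=\tau_\mu^*(\alpha)$ and the $\max$ formulas for $\dim_P\underline E$, $\dim_P\overline E$ follow by taking, in the $K_{\widehat\beta}$ family, $\widehat\beta$ with $\liminf\beta_m=\alpha$ (resp.\ the appropriate one-sided constraint) and optimizing. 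Finally, the (HM) property and the stability under restriction to a ball $B$ meeting $\mathrm{int}\,\supp(\mu)$: because the construction is performed identically inside every descendant cube, the restriction of $\mu$ to any cube $I_m\cdot L_{\alpha_i}$ is, up to a similarity and a bounded multiplicative renormalization, the ``same'' object built from the shifted sequence $(\alpha_{m+1},\alpha_{m+2},\dots)$, which is still dense, so all the spectra are unchanged; a general ball $B$ contains such a cube and is covered by boundedly many, and the local dimensions / partition functions are insensitive to this, so $\mu|_B$ has the same $\tau,\overline\tau,\tau^*$ and the same \eqref{Eab}.

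The main obstacle, I expect, is step (3) — forcing $\overline\tau_\mu(q)=f^*(q)$ rather than merely $\tau_\mu(q)$. Controlling the $\limsup$ of the partition function requires that on \emph{every} scale $r$, not just a subsequence, the logarithmic partition function is $\le -f^*(q)\log r+o(\log r)$; this means the bad intermediate scales $2^{-g_{m+1}}<r<2^{-(g_m+|L_{\alpha_i}|)}$ — where a cube of $\mathbf G_m$ has been subdivided only partially — must be shown to contribute nothing worse, which is delicate because within a block the measure behaves like a single self-similar-type $\mu_{\alpha_i,f(\alpha_i)}$ whose own partition function is the line $q\mapsto f(\alpha_i)-q\alpha_i$ (tangent to $f^*$ at the relevant $q$, hence $\le f^*(q)$ by concavity of $f$ — this is exactly where concavity of $f$ is used), and one has to glue these linear pieces across the transition $g_m\to g_{m+1}$ without overshoot, using $g_m\ll N_{m+1}$ to make the ``memory'' term $\mu_m(I_m)$ a lower-order perturbation. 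Getting the separation parameters $n'_m$, the block lengths $N_{m+1}$, and the $\epsilon$'s in the definitions of the building blocks to fit together so that all four spectra are simultaneously correct is the real bookkeeping burden; conceptually everything else is a by-now-standard Moran/mass-distribution argument.
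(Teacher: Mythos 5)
You have correctly isolated the hard point (uniform control of the partition function at \emph{every} scale, so that $\overline\tau_\mu=\tau_\mu=f^*$, and non-emptiness of the sets $E(\mu,\alpha)$), but the construction you propose --- one pass of the level-$(m+1)$ pattern per level, with $N_{m+1}\gg g_m$ --- cannot deliver it, and your proposed resolution via concavity does not apply where it is needed. At a mid-block scale $n$ with $g_m\ll n\ll g_{m+1}$, a surviving cube of generation $n$ in the $\alpha_i$-branch does \emph{not} carry mass $\approx 2^{-(n-g_m)\alpha_i}$: the mass of $\mu_{\alpha_i,f(\alpha_i)}$ that survives the selection $G_{m+1}(\alpha_i)$ inside such a cube is $\approx 2^{-(n-g_m)f(\alpha_i)}\,2^{-N_{m+1}(\alpha_i-f(\alpha_i))}$ (this is \eqref{0''} and \eqref{goodcontrols3}), while the number of surviving cubes is only $\approx 2^{(n-g_m)f(\alpha_i)}$. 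Two fatal consequences. First, for $x$ in a branch with $\alpha_i\notin\mathrm{Fix}(f)$ one gets $\log\mu(I_n(x))/(-n\log 2)\gtrsim N_{m+1}(\alpha_i-f(\alpha_i))/n\to\infty$ at $n\approx\sqrt{N_{m+1}}$, so $\overline d(\mu,x)=\infty$; hence $E(\mu,\alpha)=\emptyset$ for every $\alpha\in\mathrm{dom}(f)\setminus\mathrm{Fix}(f)$ (exactly Corollary~\ref{dimloc}(3)), and \eqref{Eab} fails --- your alternating-$\widehat\beta$ device inherits the same defect. Second, for $q>0$ the factor $2^{-qN_{m+1}(\alpha_i-f(\alpha_i))}$ annihilates every branch except $\alpha_1=D$ at mid-block scales, so the partition function there is $2^{nD(1-q)+o(n)}$ and $\overline\tau_\mu(q)\ge qD-D$, which is strictly larger than $f^*(q)$ whenever the infimum defining $f^*(q)$ is not attained at $D$. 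The ``tangent line'' picture you invoke is valid only at the terminal scale $N_{m+1}$ of a block, not inside it, so concavity cannot glue the scales for you; and the condition $g_m\ll N_{m+1}$ is the cause of the problem, not the cure.

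The missing idea is structural: each level-$m$ pattern must be \emph{repeated} $R_m$ times, with $N_m$ chosen small compared to the length already accumulated (so that each individual sub-block perturbs the running exponent negligibly, killing both pathologies above) while $R_mN_m$ eventually dominates the history (so that the level-$m$ exponents are actually realized as local dimensions). This is how the paper proceeds: Theorem~\ref{thmmain} is the case $f=g$ of Theorem~\ref{thmHP}, whose construction in Section~\ref{proof2} interleaves $R^f_m$ and $R^g_m$ repetitions subject to \eqref{condHP}, computes the large-deviations spectra at the block boundaries $s_m,s'_m$ using the concavity of $f$ and $g$ there (this is where concavity genuinely enters), and only then recovers $\tau_\mu=g^*$ and $\overline\tau_\mu=f^*$ by a Laplace--Varadhan argument. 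A secondary point: your packing lower bound via ``$\dim_P\ge\underline\dim_B$ on a suitable subset'' is not a valid inequality as stated; the paper instead computes $\dim_P\nu_{\widehat\alpha}=\limsup_m\gamma_m(\alpha_m)$ and applies the mass distribution principle for packing dimension.
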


The following corollary, of which Theorem~\ref{cor} is a consequence, then follows from the fact that if $\tau$ satisfies the properties of Proposition~\ref{condtau} (and so falls into the different situations described in Proposition~\ref{condtau1})(1) and (2), then $\tau^*$ satisfies the assumptions of Theorem~\ref{thmmain}.  
\begin{cor}\label{cor1}
Let $\tau:\R\to\R\cup \{-\infty\}$ be a  function satisfying properties (1) and (2) of Proposition~\ref{condtau}. Let $D\in[\tau'(1^+),\tau'(1^-)]$. 

There exists $\mu\in \mathcal M^+_c(\R^d)$, exact dimensional with dimension $D$,  satisfying \eqref{Eab} for all $\alpha\le \beta\in \R_+\cup\{\infty\}$ with $\tau_\mu=\tau=\overline\tau_\mu$. Moreover,  the same properties hold if  $\mu$ is replaced by its restriction to any closed ball whose interior intersects $\supp(\mu)$. \end{cor}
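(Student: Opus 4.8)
The plan is to deduce Corollary~\ref{cor1} from Theorem~\ref{thmmain} applied with $f=\tau^*$; the only thing to do is to check that $\tau^*$ meets the hypotheses of that theorem and that applying the Legendre--Fenchel transform twice to $\tau$ returns $\tau$.

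First I would verify that $\tau^*$ belongs to $\mathcal F(d)$, that $\mathrm{dom}(\tau^*)$ is a non empty closed subinterval of $[0,\infty]$, and that $\tau^*$ is concave over $\mathrm{dom}(\tau^*)\cap\R_+$. Concavity over $\mathrm{dom}(\tau^*)\setminus\{\infty\}$ and upper semi-continuity over $\mathrm{dom}(\tau^*)$ are built into the definition of the extended concave Legendre--Fenchel transform recalled before Proposition~\ref{condtau}. That $\mathrm{dom}(\tau^*)$ is a closed interval is exactly the content of the case analysis of Proposition~\ref{condtau1}: it equals $[\tau'(\infty),\tau'(-\infty)]$ when $\mathrm{dom}(\tau)=\R$, equals $\R_+\cup\{\infty\}$ in case (2)(a), and equals $[\tau'(\infty),\infty]$ in cases (2)(b)--(c). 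Taking $q=1\in\mathrm{dom}(\tau)$ in the infimum defining $\tau^*$ gives $\tau^*(\alpha)\le\alpha\cdot 1-\tau(1)=\alpha$ for every $\alpha\in\R\cup\{\infty\}$, which is the constraint $f(\alpha)\le\alpha$ on $\mathrm{dom}(f)$; letting $q\to 0$ (from the left when $\mathrm{dom}(\tau)=\R$, using $q=0$ otherwise) and using continuity of $\tau$ at $0$ where needed gives $\tau^*(\alpha)\le-\tau(0)\le d$ on $\mathrm{dom}(\tau^*)$, while Proposition~\ref{condtau}(2) gives $\tau^*\ge 0$ there, so $\tau^*$ takes values in $[0,d]\cup\{-\infty\}$. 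Finally, since $\tau(1)=0$ and $\tau$ is concave, the fixed-point set of $\tau^*$ is the interval $[\tau'(1^+),\tau'(1^-)]$ (same argument as the one recalled for $\tau_\mu^*$ in Section~\ref{introduc}), which is non empty because $1\in\mathrm{dom}(\tau)$ and is contained in $[0,d]$ since $\tau$ is non decreasing gives $\tau'(1^+)\ge 0$, and concavity gives $\tau'(1^-)\le\tau(1)-\tau(0)=-\tau(0)\le d$. Hence $\tau^*\in\mathcal F(d)$ with the required additional properties, and Theorem~\ref{thmmain} applies to $f=\tau^*$.

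Next I would apply Theorem~\ref{thmmain}: the assumption $D\in[\tau'(1^+),\tau'(1^-)]$ of Corollary~\ref{cor1} says precisely that $D$ is a fixed point of $\tau^*$, so the theorem provides $\mu\in\mathcal M^+_c(\R^d)$, exact dimensional with dimension $D$, with $\tau_\mu=f^*=\overline\tau_\mu$, $\tau_\mu^*=f=\tau^*$, and \eqref{Eab} valid for all $\alpha\le\beta\in\R_+\cup\{\infty\}$, all of this persisting under restriction of $\mu$ to any closed ball whose interior meets $\supp(\mu)$. To finish it remains to identify $f^*$ with $\tau$: in the notation before Proposition~\ref{condtau}, $f^*=(\tau^*)^*$ is the concave Legendre--Fenchel transform of $\tau^*\colon\R\cup\{\infty\}\to\R\cup\{-\infty\}$, and the biduality clause of Proposition~\ref{condtau1} states $(\tau^*)^*=\tau$ on $\R$ in every one of its cases. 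Thus $\tau_\mu=\tau=\overline\tau_\mu$ on $\R$, where the $L^q$-spectra are defined, and since $\tau_\mu=\tau$ forces $\tau_\mu^*=\tau^*$, property \eqref{Eab} is the one asserted in Corollary~\ref{cor1}.

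There is no genuine obstacle here: the proof is an unwinding of definitions resting on the structural Propositions~\ref{condtau}--\ref{condtau1}. The only point requiring a little care is the case-by-case bookkeeping --- $\mathrm{dom}(\tau)=\R$ versus $\R_+$, then $\tau(0)=0$ versus $\tau(0)<0$, then $\tau$ continuous versus discontinuous at $0^+$ --- needed to see that $\mathrm{dom}(\tau^*)$ is a closed interval, that $\tau^*\le d$, and that biduality holds, together with a trivial check in the degenerate case $\tau\equiv 0$ on $\R_+$ (where $D=0$, $\mathrm{dom}(\tau^*)=\R_+\cup\{\infty\}$ and $\mathrm{Fix}(\tau^*)=\{0\}$).
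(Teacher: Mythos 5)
Your proposal is correct and follows exactly the paper's route: the paper derives Corollary~\ref{cor1} by observing that when $\tau$ satisfies Proposition~\ref{condtau}(1)--(2), the function $\tau^*$ satisfies the hypotheses of Theorem~\ref{thmmain} (via the case analysis of Proposition~\ref{condtau1}, including biduality $(\tau^*)^*=\tau$), which is precisely your argument with the verifications written out in more detail.
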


\begin{rem}
{\rm The behavior described in  Proposition~\ref{condtau1}(1) is illustrated, for instance, by Gibbs and weak Gibbs measures on conformal repellers (see \cite{Olsen,Pesin,FengO}). Such examples, which live on dynamical systems semi-conjugate to subshifts of finite type, cannot exhibit behaviors like those corresponding to Proposition~\ref{condtau1}(2). The behaviors described by  Proposition~\ref{condtau1}(2)(b) are illustrated  by some Gibbs measures on countable Markov shifts  and their geometric realizations \cite{Iommi}, which also obey the multifractal formalism, though in \cite{Iommi} the set $E(\mu,\infty)$ is not studied. The  fact that the behaviors described in Proposition~\ref{condtau1}(2)(a) and (c) be illustrated by measures obeing the mutifractal formalism seems to be new. We notice that the extension of the Legendre transform including $\infty$ in the domain in this case yields Legendre transforms which are not necessarily upper semi-continuous, like $\tau$ at $0$ in case (c). 
}\end{rem}

\begin{rem}{\rm 
Our results illustrate all the possible situations, in term of the function $\tau_\mu$, for which the measure $\mu$ is exact dimensional though $\tau_\mu'(1)$ does not exists. In  \cite{He2}, when $d=1$, for each $D\in (0,1)$ one finds  an exact dimensional measure $\mu$ with dimension $D$ and $L^q$-spectrum equal to $\min (q-1,0)$ over $\R_+$. It is also worth mentioning that in \cite{BatTes} one finds examples of  inhomogeneous Bernoulli measures over $[0,1]$ with an $L^q$-spectrum presenting countably many points of non differentiability over $[1,+\infty)$. 
}\end{rem}

In the previous results, due to \eqref{MuFo1} and  \eqref{MuFo2} we have $f^H_\mu=f^P_\mu=\underline f^{\mathrm{LD}}_\mu=\overline f^{\mathrm{LD}}_\mu$, which reflects  a strong homogeneity of the sets $E(\mu,\alpha)$. The purpose of the refined multifractal formalism is to describe situations irregular enough so that the Hausdorff and packing dimensions of $E(\mu,\alpha)$ differ for most of the $\alpha$. 

The next two results extend in a non trivial way the two previous ones , in particular by exhibiting a new formula for $\dim_H E(\mu,\alpha,\beta)$.  They invoke an extension of \eqref{MuFo1} and \eqref{MuFo2}, which illustrates  the following complement to the multifractal formalism:
If $0\le \alpha<\infty$ and $\alpha\le \beta \le\infty$, $1>r>0$, and $\epsilon>0$,  set 
$$
f_\mu(\alpha,\beta,\epsilon,r)=
\frac{\log \sup\#\Big \{i:r^{\beta+\epsilon}\le  \mu(B(x_i,r))\le r^{\alpha-\epsilon}\Big \}}{-\log(r)},
$$
where the suprema are taken over all the centered packing of $\supp(\mu)$ by closed balls of radius  $r$, and with the convention that $r^\infty=0$.  Then define 
$$
\underline {f}^{\rm{LD}}_\mu(\alpha,\beta)=
\lim_{\epsilon\to 0}\liminf_{r\to 0+}f_\mu(\alpha,\beta,\epsilon,r).
$$
Also, define 
$\underline {f}^{\rm{LD}}_\mu(\infty,\infty)=\underline {f}^{\rm{LD}}_\mu(\infty)$. 
\begin{pro}\label{MFcomp}
Let $\mu\in \mathcal M^+_c(\R^d)$. For any $0\le \alpha\le \beta \le \infty$, one has 
\begin{enumerate}
\item 
\begin{equation*}
\begin{split}
\dim_H E(\mu,\alpha,\beta)&\le f_H(\alpha,\beta):=\min (\overline{ f}^{\rm LD}_\mu(\alpha),\overline{ f}^{\rm LD}_\mu(\beta),\underline {f}^{\rm LD}_\mu(\alpha,\beta))\\
\dim_P E(\mu,\alpha,\beta)&\le  f_P(\alpha,\beta):=\max\{\overline{ f}^{\rm LD}_\mu(\alpha'):\alpha'\in[\alpha,\beta]\}
\end{split}.
\end{equation*}
\item 
\begin{equation*}
\begin{split}
& \dim_L \underline E(\mu,\alpha)\le  \sup\{ f_L(\alpha,\beta):\beta\ge \alpha\}Ê\text{ for }L\in\{H,P\},\\
& \dim_L \overline E(\mu,\alpha)\le  \sup\{f_L(\beta,\alpha):\:\beta\le \alpha\} \text{ for }L\in\{H,P\}
\end{split}.
\end{equation*}
\end{enumerate}
\end{pro}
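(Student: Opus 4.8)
The plan is to prove Proposition~\ref{MFcomp} by a covering argument at dyadic (or more precisely geometric) scales, transferring the pointwise conditions defining $E(\mu,\alpha,\beta)$ into counting estimates on centered packings, and then invoking the definitions of the large deviations spectra $\overline f^{\rm LD}_\mu$ and $\underline f^{\rm LD}_\mu(\alpha,\beta)$ as the sole inputs. Since the second assertion (2) is an immediate consequence of (1) together with the decompositions $\underline E(\mu,\alpha)=\bigcup_{\beta\ge\alpha}E(\mu,\alpha,\beta)$ and $\overline E(\mu,\alpha)=\bigcup_{\beta\le\alpha}E(\mu,\alpha,\beta)$, the countable stability of Hausdorff dimension (taking a countable dense set of $\beta$'s and using upper semi-continuity/monotonicity of the bounds in $\beta$) and the analogous stability of packing dimension, the heart of the matter is part (1), and I will concentrate on that.

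First I would fix $\alpha\le\beta$ finite (the cases involving $\infty$ being handled by the same scheme with the convention $r^\infty=0$, using the definitions of $\overline f^{\rm LD}_\mu(\infty)$ and $\underline f^{\rm LD}_\mu(\infty,\infty)$), and fix $\epsilon>0$. For $x\in E(\mu,\alpha,\beta)$ one has $\underline d(\mu,x)=\alpha$ and $\overline d(\mu,x)=\beta$; hence there is $r_0(x)>0$ such that for all $0<r\le r_0(x)$, $\mu(B(x,r))\le r^{\alpha-\epsilon}$, and also $r^{\beta+\epsilon}\le \mu(B(x,r))$ for infinitely many $r\to 0$ (indeed $\overline d(\mu,x)=\beta$ forces $\mu(B(x,r))\ge r^{\beta+\epsilon}$ along a sequence $r_k\to 0$), while $\underline d(\mu,x)=\alpha$ forces $\mu(B(x,r))\le r^{\alpha-\epsilon}$ for all small $r$. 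I would stratify $E(\mu,\alpha,\beta)$ by $r_0(x)\ge 1/N$ to get sets $E_N$ increasing to $E(\mu,\alpha,\beta)$; it suffices to bound $\dim_H E_N$. Then, for the Hausdorff bound, along the geometric scales $r=\rho^n$ ($\rho<1$ fixed) I would build, for $\delta=$ the candidate bound plus a small amount, a cover of $E_N$ by balls $B(x_i,r)$ with $x_i\in E_N$: since each $x\in E_N$ satisfies $\mu(B(x,r))\le r^{\alpha-\epsilon}$ at scale $r$, a Besicovitch/Vitali-type selection produces a centered packing of $\supp(\mu)$ whose balls of radius $r$ cover $E_N$ and on which $\mu(B(x_i,r))\le r^{\alpha-\epsilon}$, hence the number of such balls is at most the supremum defining $f_\mu(\alpha,\beta,\epsilon,r)$ after also using that, along a suitable subsequence of scales adapted to $N$, one additionally has $\mu(B(x_i,r))\ge r^{\beta+\epsilon}$, which forces the count to be $\le \rho^{-n(\underline f^{\rm LD}_\mu(\alpha,\beta)+o(1))}$; similarly, discarding the lower bound on the mass gives the count $\le\rho^{-n(\overline f^{\rm LD}_\mu(\alpha)+o(1))}$ using scales realizing the $\limsup$ in $\overline f^{\rm LD}_\mu(\alpha)$, and the analogous argument centered on the condition $\mu(B(x_i,r))\ge r^{\beta+\epsilon}$ (using $\overline d(\mu,x)=\beta$) gives $\le\rho^{-n(\overline f^{\rm LD}_\mu(\beta)+o(1))}$. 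Taking the best of the three and letting $\epsilon\to0$ then $\delta\to$ the bound yields the first line of (1). For the packing bound, I would instead use that $E(\mu,\alpha,\beta)\subset\bigcup_{\alpha'\in[\alpha,\beta]}\overline E(\mu,\alpha')$ does not directly help; rather, one uses that every $x\in E(\mu,\alpha,\beta)$ has $\overline d(\mu,x)\in[\alpha,\beta]$ and a standard argument bounding packing dimension of $\{x:\overline d(\mu,x)=\alpha'\}$ by $\overline f^{\rm LD}_\mu(\alpha')$ (covering pre-packings at a fixed scale by centered packings with $\mu(B(x_i,r))\approx r^{\alpha'}$ and counting), then take the supremum over $\alpha'\in[\alpha,\beta]$; since packing dimension is the countably stable regularization of the packing pre-measure, I would first prove the bound for the pre-dimension on each piece and then assemble via the countable stability, using a dense set of $\alpha'$ and the upper semi-continuity of $\alpha'\mapsto\overline f^{\rm LD}_\mu(\alpha')$ (which follows from its definition as a $\lim_{\epsilon\to0}$ of nondecreasing-in-$\epsilon$ quantities, hence it is an infimum of the continuous-in-$\alpha'$ functions $\liminf_r(\cdots)$... ) to pass to the supremum over the compact interval $[\alpha,\beta]$.

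The main obstacle I anticipate is the bookkeeping of scales: for the Hausdorff estimate one must simultaneously exploit a lower bound $\mu(B(x,r))\ge r^{\beta+\epsilon}$ that only holds along a sequence $r_k\to0$ depending on $x$, and an upper bound $\mu(B(x,r))\le r^{\alpha-\epsilon}$ that holds for all small $r$; to get a single efficient cover of $E_N$ one restricts to $x$ whose ``good'' scales $r_k$ can be chosen in a common geometric sequence up to bounded multiplicative error (stratifying $E_N$ further by the density of good scales), and one must check that the Besicovitch covering constant does not spoil the exponential count. This is exactly where the $\liminf_{r\to0}$ in $\underline f^{\rm LD}_\mu(\alpha,\beta)$ must be used rather than a $\limsup$, and the interplay $\min(\overline f^{\rm LD}_\mu(\alpha),\overline f^{\rm LD}_\mu(\beta),\underline f^{\rm LD}_\mu(\alpha,\beta))$ arises because along a given subsequence of scales one can only guarantee one of the three counting constraints at a time, so one optimizes over which constraint to enforce. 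The passage from centered packings by balls of radius $r$ to $\delta$-covers in the definition of Hausdorff measure, and from pre-packings to packing pre-measure, is routine once the counting bounds are in hand; I would also note that the inequalities \eqref{MuFo1} and \eqref{MuFo2} are the special cases $\alpha=\beta$ of this proposition (using $\underline f^{\rm LD}_\mu(\alpha,\alpha)=\underline f^{\rm LD}_\mu(\alpha)$), which both serves as a consistency check and as a warm-up for the argument.
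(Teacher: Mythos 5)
Your overall strategy (stratify, cover at geometric scales, count via Besicovitch and the large deviations spectra) is the same as the paper's, but the step you yourself flag as the main obstacle rests on a misreading of the hypotheses, and the machinery you propose to overcome it is both unnecessary and not viable as stated. If $\overline d(\mu,x)\le\beta$, then for every $\epsilon>0$ one has $\mu(B(x,r))\ge r^{\beta+\epsilon}$ for \emph{all} sufficiently small $r$, not merely along a subsequence: the inequality $\log\mu(B(x,r))/\log r\le\beta+\epsilon$ reverses when multiplied by $\log r<0$ (it is the complementary bound $\mu(B(x,r))\le r^{\beta-\epsilon}$ that is only subsequential). The paper exploits this by bounding the Hausdorff dimension of the larger set $F(\alpha,\beta)=\{x:\alpha\le\underline d(\mu,x)\le\overline d(\mu,x)\le\beta\}$: on each stratum $F_N$ both constraints $r^{\beta+\epsilon}\le\mu(B(x,r))\le r^{\alpha-\epsilon}$ hold simultaneously at every scale $r\le 2^{-N}$, so one simply evaluates the count at a sequence $r_j\to 0$ realizing the $\liminf$ in $\underline f^{\rm LD}_\mu(\alpha,\beta)$ and applies Besicovitch. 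Your proposed further stratification ``by the density of good scales'' so that they fall into a common geometric sequence is not a countable decomposition of $E_N$ and is never needed. Relatedly, your explanation of the three-way minimum is off: the terms $\overline f^{\rm LD}_\mu(\alpha)$ and $\overline f^{\rm LD}_\mu(\beta)$ do not arise from ``choosing which constraint to enforce along a subsequence''; they come from the inclusion $E(\mu,\alpha,\beta)\subset G_\alpha\cap G_\beta$, where $G_{\alpha'}$ is the set of points whose local exponent lies in $[\alpha'-\epsilon,\alpha'+\epsilon]$ at infinitely many scales, and the bound $\dim_H G_{\alpha'}\le\overline f^{\rm LD}_\mu(\alpha')$ requires summing the counting bound over \emph{all} scales $p\ge n$ --- which is exactly why the $\limsup$ (a bound valid at every small scale) is the right object there, not scales ``realizing the $\limsup$''.

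Part (2) is also not ``immediate'' from (1). The decomposition $\underline E(\mu,\alpha)=\bigcup_{\beta\ge\alpha}E(\mu,\alpha,\beta)$ is an uncountable union of exact level sets, and restricting $\beta$ to a countable dense set loses points, so countable stability does not apply in the way you invoke it. The working route is the increasing union $\underline E(\mu,\alpha)=E(\mu,\alpha,\infty)\cup\bigcup_{\beta}\uparrow\{x:\underline d(\mu,x)=\alpha,\ \overline d(\mu,x)\le\beta\}$ over a countable increasing sequence of finite $\beta$; for the packing bound this reduces directly to the estimate for $F(\alpha,\beta)$, but for the Hausdorff bound one must in addition cover $[\alpha,\beta]$ by finitely many windows $[\alpha'-\epsilon(\alpha'),\alpha'+\epsilon(\alpha')]$ and bound each piece by $f_H(\alpha,\alpha')+\eta$, because $\beta'\mapsto f_H(\alpha,\beta')$ involves $\overline f^{\rm LD}_\mu(\beta')$ and is not monotone in $\beta'$. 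These gaps are fixable, and your packing estimate in (1) is essentially right, but as written both the first inequality of (1) and the reduction in (2) are incomplete.
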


\begin{thm}\label{thmHP}
Let $d\in\N_+$. Let $\mathcal{I}\subset \mathcal{J}$  be two  non empty closed subintervals of $[0,\infty]$.  Let $f$ and $g\in  \mathcal F(d)$ such that $\mathrm{dom} (f)= \mathcal{I}$, $\mathrm{dom} (g)= \mathcal{J}$, and $f\le g$.  Suppose also that $f$ and $g$ are concave  over $\mathcal{I}\cap\R_+$ and $\mathcal{J}\cap\R_+$ respectively.

For each  $D\in \mathrm{Fix}(f)$,  there exists $\mu\in \mathcal M^+_c(\R^d)$, exact dimensional with dimension $D$, such that  

\begin{enumerate}
\item $\mathrm{dom}(f^H_\mu)=\mathrm{dom}(f^P_\mu)=\mathrm{dom} (\underline{f}^{\rm LD}_\mu)=\mathcal{I}$ and  $\mathrm{dom} (\overline{f}^{\rm LD}_\mu)=\mathcal{J}$. 

\item For all $\alpha\in\mathcal{I}$, $f_\mu^H(\alpha)=\underline{f}^{\rm LD}_\mu(\alpha)=f(\alpha)$, $f_\mu^P(\alpha)=\overline{f}^{\rm LD}_\mu(\alpha)=g(\alpha)$ and $\overline{f}^{\rm LD}_\mu(\alpha)=g(\alpha)$ for $\alpha\in \mathcal{J}\setminus \mathcal{I}$. 

\item More generally, for all $\alpha\le \beta\in\R_+\cup\{\infty\}$, 
\begin{equation*}\label{Eab'}
\begin{split}
&\underline {f}^{\rm LD}_\mu(\alpha,\beta)= f(\alpha,\beta):=\max\{f(\alpha'):\alpha'\in[\alpha,\beta]\},\\&
\dim_H E(\mu,\alpha,\beta)
=\begin{cases}
\min (g(\alpha),g(\beta), f(\alpha,\beta))&\text{ if $[\alpha,\beta]\subset \mathcal{J}$ and $[\alpha,\beta]\cap \mathcal{I}\neq\emptyset$}\\
-\infty &\text{ otherwise}
\end{cases},\\
& \dim_P E(\mu,\alpha,\beta)=
\begin{cases}
\max\{g(\alpha'):\alpha'\in[\alpha,\beta]\}&\text{ if $[\alpha,\beta]\subset \mathcal{J}$ and $[\alpha,\beta]\cap \mathcal{I}\neq\emptyset$} \\
-\infty&\text{ otherwise}
\end{cases},\\
& \dim_H\underline E(\mu,\alpha)= \max\{\dim_H E(\mu,\alpha,\beta):\beta\ge \alpha\}=
 \min (g(\alpha), \max\{ f(\beta):\beta\ge\alpha\}),\\
 &\dim_H\overline E(\mu,\alpha)= \max\{\dim_H E(\mu,\beta,\alpha):\beta\le \alpha\}=
\min(g(\alpha), \max\{ f(\beta):\beta\le\alpha\}), \\
 &
 \dim_P\underline E(\mu,\alpha)=\max\{\dim_P E(\mu,\alpha,\beta):\beta\ge \alpha\}=
 \begin{cases}
 \max \{g(\beta):\beta\ge \alpha\}&\text{ if }\alpha\in [\min (\mathcal J),\max (\mathcal I)]\\
 -\infty &\text{ otherwise}
 \end{cases},\\
 &
 \dim_P\overline E(\mu,\alpha)=\max\{\dim_P E(\mu,\beta,\alpha):\beta\le \alpha\}=
 \begin{cases}
 \max \{g(\beta):\beta\le \alpha\}&\text{ if }\alpha\in [\min ( \mathcal I),\max  ( \mathcal J)]\\
 -\infty &\text{ otherwise}
 \end{cases}.
\end{split}
\end{equation*} 
\item $\overline\tau_\mu=f^*$,  $\overline\tau_\mu^*=f$, $\tau_\mu=g^*$ and $\tau_\mu^*=g$.
\end{enumerate}
Moreover, all the previous properties  hold if $\mu$ is replaced by its restriction to any closed ball whose interior intersects $\supp(\mu)$. 
\end{thm}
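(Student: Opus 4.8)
The plan is to build on the construction sketched in the introduction, but now running it with \emph{two} families of exponents simultaneously so that the Hausdorff and packing spectra can be forced apart. Concretely, I would fix a countable dense subset $\{\alpha_m:m\ge 1\}$ of $\mathcal J\cap\R$ with $\alpha_1=D$, and, for each $\alpha$ in this set, invoke the building-block lemma (the first bullet in the sketch, to be proved independently) to obtain probability measures $\mu_{\alpha,f(\alpha)}$, $\nu_{\alpha,f(\alpha)}$, $\mu_{\alpha,g(\alpha)}$, $\nu_{\alpha,g(\alpha)}$ on $[0,1]^d$: the $f$-family is used to seed the Hausdorff-typical behavior, the $g$-family the packing-typical behavior. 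At generation $m\!+\!1$ one concatenates into each surviving cube $I_m\in\mathbf G_m$ the blocks $I_m\cdot L_{\alpha_i}\cdot I$ for $I\in G_{m+1}(\alpha_i)$, but now with \emph{two} sub-collections per $\alpha_i$: one of cardinality $\approx 2^{f(\alpha_i)N_{m+1}}$ carrying mass $\approx 2^{-\alpha_i N_{m+1}}$ on each cube (this contributes to $\dim_H$), and one of cardinality $\approx 2^{g(\alpha_i)N_{m+1}}$ also carrying mass $\approx 2^{-\alpha_i N_{m+1}}$ on each cube (this contributes to $\dim_P$ but, having too small a total mass relative to its size, does \emph{not} lift the Hausdorff dimension). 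The well-separated spacers $L_{\alpha_i}$ and the strong lacunarity $g_m\ll N_{m+1}$ are what let box-counting at scales $2^{-g_{m+1}}$ see the full $g(\alpha_i)$ while any Hausdorff measure argument only sees $f(\alpha_i)$.

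Once $\mu$ is constructed, the proof proceeds in the familiar four movements. \emph{Upper bounds:} use Proposition~\ref{MFcomp} together with the large deviations computations at scales $r=2^{-g_m}$ — counting, for each $\epsilon$, how many construction cubes $I_{m+1}$ satisfy $r^{\beta+\epsilon}\le\mu(I_{m+1})\le r^{\alpha-\epsilon}$ — to get $\underline f^{\rm LD}_\mu(\alpha,\beta)\le f(\alpha,\beta)$, $\overline f^{\rm LD}_\mu(\alpha)\le g(\alpha)$, and hence the stated upper bounds for $\dim_H E(\mu,\alpha,\beta)$, $\dim_P E(\mu,\alpha,\beta)$, and the $\underline E,\overline E$ variants; here one must be careful that intermediate scales $2^{-n}$ with $g_m<n<g_{m+1}$ do not produce spurious exponents, which is exactly where the largeness of $N_{m+1}$ is used. \emph{Lower bounds for packing:} for a target $\beta$ with $f(\beta)$ (resp.\ $g(\beta)$) to be realized, build the Cantor subset $K_{\widehat\beta}$ selecting exponent $\beta$ cofinally along a subsequence; the $g$-sub-collections give, via a Billingsley/packing mass-distribution argument, $\dim_P K_{\widehat\beta}\ge g(\beta)$, and taking sup over $\beta$ in the relevant range yields the packing formulas. \emph{Lower bounds for Hausdorff:} the auxiliary measures $\nu_{\widehat\beta}$ built from the $\nu_{\beta_m,f(\beta_m)}$ give $\dim_H\nu_{\widehat\beta}=\liminf f(\beta_m)$, and a \emph{diagonal} choice of $\widehat\beta$ oscillating between two prescribed values $\alpha$ and $\beta$ realizes $E(\mu,\alpha,\beta)$ with $\dim_H\ge\min(g(\alpha),g(\beta),f(\alpha,\beta))$ — the $g(\alpha),g(\beta)$ ceiling appearing because the local dimension at $\nu_{\widehat\beta}$-typical points is genuinely controlled by the $g$-sized neighbors at the rare scales. \emph{$L^q$-spectra:} from the realized spectra and \eqref{MuFo1}--\eqref{MuFo2} one gets $\overline\tau_\mu^*\ge f$ and $\tau_\mu^*\ge g$; for the reverse one computes $\tau_\mu(q)$ and $\overline\tau_\mu(q)$ directly at scales $2^{-g_m}$ (the sum $\sum\mu(\cdot)^q$ is dominated by the $\alpha_i$ maximizing $g(\alpha_i)-q\alpha_i$, resp.\ $f(\alpha_i)-q\alpha_i$), obtaining $\tau_\mu=g^*$, $\overline\tau_\mu=f^*$, and then $\tau_\mu^*=g^{**}=g$, $\overline\tau_\mu^*=f^{**}=f$ by concavity of $g$, $f$ on the relevant intervals and Proposition~\ref{condtau1}.

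For the exact dimensionality and the dimension value $D$: since $\alpha_1=D$ with $f(D)=D$ is in play at \emph{every} generation, the estimate $1=\|\mu\|\approx\sum_i 2^{(g(\alpha_i)-\alpha_i)g_{m+1}}+\sum_i 2^{(f(\alpha_i)-\alpha_i)g_{m+1}}\approx 2^{(f(D)-D)g_{m+1}}=1$ shows the mass concentrates on the $I_m\cdot L_D\cdot I$ cubes with $\mu(I_{m+1})\approx 2^{-Dg_{m+1}}$; pushing this through the intermediate scales (again using $g_m\ll N_{m+1}$ and that the spacers are short, $2^{-n'_m/5}$-separated) gives $\lim_{r\to0}\log\mu(B(x,r))/\log r=D$ for $\mu$-a.e.\ $x$. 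The (HM) property is automatic from the self-similar-in-scale nature of the recursion: the restriction of $\mu$ to any cube meeting $K$ is, up to a bounded factor and a finite number of generations, a rescaled copy of a measure built by the same scheme with the same data, so all spectra are unchanged; the "restriction to a ball" clause follows since such a ball contains a construction cube of $K$.

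I expect the main obstacle to be the Hausdorff \emph{lower} bound for $\dim_H E(\mu,\alpha,\beta)$, specifically proving that the $g$-sub-collections, while inflating the packing dimension, really do cap the Hausdorff dimension of the oscillating Cantor sets at $\min(g(\alpha),g(\beta))$ rather than at $\min(f(\alpha),f(\beta))$ — this requires a delicate local-dimension analysis of $\nu_{\widehat\beta}$ at the transition scales, showing that a $\nu_{\widehat\beta}$-typical point sees, infinitely often at scale $2^{-g_m}$, a ball of $\mu$-measure $\approx 2^{-g(\cdot)\,g_m/\text{(something)}}$ forcing the lower local dimension down to exactly $\alpha$ (resp.\ up to $\beta$) while the $\nu$-measure stays $\approx 2^{-g(\cdot)g_m}$. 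Getting the two families to interact correctly — the $f$-family controlling masses and the $g$-family controlling counting, without either spoiling the other's scale-by-scale estimates — is the technical heart, and it is why the separation parameter $2^{-n'_m/5}$ and the hierarchy $g_m\ll N_{m+1}\ll g_{m+1}$ must be tuned with care. The remaining ingredients (Proposition~\ref{MFcomp}, the building-block lemma, the mass distribution and Billingsley principles, the Legendre duality of Proposition~\ref{condtau1}) are either quoted or routine given the scheme.
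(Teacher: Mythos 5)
Your construction is not the paper's, and as described it breaks. The paper separates the Hausdorff and packing data \emph{temporally}: at level $m$ it runs $R^f_m$ consecutive concatenation steps in which the counting measures $\nu_{\alpha,\gamma}$ approximate $f$, followed by $R^g_m$ consecutive steps in which they approximate $g$, with $R^f_mN_m$ and then $R^g_mN_m$ each chosen to dwarf the entire preceding history. The liminf quantities ($\underline{f}^{\rm LD}_\mu$, $\tau_\mu$, the Hausdorff covering counts) are then read off at the checkpoints ending the $f$-phases and give $f$, while the limsup quantities ($\overline{f}^{\rm LD}_\mu$, $\overline\tau_\mu$... with the roles as in the statement) are read off at the checkpoints ending the $g$-phases and give $g$. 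You instead duplicate \emph{spatially}, placing inside every parent cube, at every generation, both a sub-collection of $\approx 2^{f(\alpha_i)N_{m+1}}$ cubes and one of $\approx 2^{g(\alpha_i)N_{m+1}}$ cubes, each cube of mass $\approx 2^{-\alpha_iN_{m+1}}$.

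This cannot work. First, if at \emph{every} scale $2^{-n}$ there are $\approx 2^{g(\alpha)n}$ disjoint cubes of $\mu$-mass $\approx 2^{-\alpha n}$, then the $\liminf$ in the definition of $\underline{f}^{\rm LD}_\mu(\alpha)$ is already $\ge g(\alpha)$, so $\underline{f}^{\rm LD}_\mu(\alpha)=g(\alpha)$ rather than the required $f(\alpha)$; dually $\overline\tau_\mu(q)\le g^*(q)$, not $f^*(q)$. Second, and fatally for the Hausdorff spectrum, the points that stay forever inside the $g$-sub-collections attached to a fixed exponent $\alpha$ form a homogeneous Moran subset of $E(\mu,\alpha)$ with branching $2^{g(\alpha)N_m}$ at every step; its Hausdorff dimension is $g(\alpha)$ (uniform measure plus the mass distribution principle), so $\dim_H E(\mu,\alpha)\ge g(\alpha)>f(\alpha)$. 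Your heuristic that the large sub-collection ``has too small a total mass to lift the Hausdorff dimension'' confuses the $\mu$-mass of a set with its Hausdorff dimension: coverings and auxiliary measures see the cardinality $2^{g(\alpha)N}$ regardless of how little $\mu$-mass those cubes carry. The packing/Hausdorff gap must come from making the branching numbers \emph{oscillate in $m$} between $2^{fN}$ and $2^{gN}$ (packing dimension of such a Moran set is a limsup of branching rates, Hausdorff a liminf), which is exactly the paper's $R^f_m/R^g_m$ alternation; the remaining ingredients you list (Proposition~\ref{MFcomp}, the auxiliary measures $\nu_{\widehat\alpha}$, Legendre duality) are then used essentially as you describe.
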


Notice that properties (2) and (4) of  the previous statement imply $\dim_H\supp(\mu)=\underline\dim_B \supp(\mu)=-\overline\tau(0)$ and $\dim_P\supp(\mu)=\overline\dim_B \supp(\mu)=-\tau(0)$, because $\max\{\dim_H\underline E(\alpha):\alpha\in \mathcal I\}=-\overline \tau(0)$ and $\max\{\dim_P\underline E(\alpha):\alpha\in \mathcal J\}=-\tau(0)$. 
\begin{cor}\label{corHP}
Let $\tau,\overline\tau:\R\to\R\cup \{-\infty\}$ be two  functions satisfying properties (1) and (2) of Proposition~\ref{condtau}, and such that  $\tau\le \overline\tau$.  

Let $D\in[\overline\tau'(1^+),\overline \tau'(1^-)]\subset [\tau'(1^+),\tau'(1^-)]$. There exists an  exact dimensional measure $\mu\in \mathcal M^+_c(\R^d)$ with dimension $D$  such that :
\begin{enumerate}
\item $\tau_\mu=\tau$ and $\overline\tau_\mu=\overline\tau$; 
\item $\dim_H\supp(\mu)=\underline\dim_B \supp(\mu)=-\overline\tau(0)$ and $\dim_P\supp(\mu)=\overline\dim_B \supp(\mu)=-\tau(0)$;

\item properties (1)-(3) of Theorem~\ref{thmHP} hold with $\mathcal{I}={\rm dom}(\overline \tau^*)$, $\mathcal{J}={\rm dom}(\tau^*)$, $f=\overline\tau^*$ and $g=\tau^*$. 
\end{enumerate}
Moreover, all the previous properties  hold if $\mu$ is replaced by its restriction to any closed ball whose interior intersects $\supp(\mu)$. 
\end{cor}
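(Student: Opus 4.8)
The plan is to obtain Corollary~\ref{corHP} as an essentially immediate consequence of Theorem~\ref{thmHP}: one checks that the data
$$
\mathcal{I}=\mathrm{dom}(\overline\tau^*),\quad \mathcal{J}=\mathrm{dom}(\tau^*),\quad f=\overline\tau^*,\quad g=\tau^*,
$$
together with the prescribed $D$, satisfy the hypotheses of Theorem~\ref{thmHP}, and then translates its conclusions back in terms of $\tau$ and $\overline\tau$.

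First I would verify that $f=\overline\tau^*$ and $g=\tau^*$ belong to $\mathcal{F}(d)$. By Propositions~\ref{condtau} and~\ref{condtau1}, and the general facts recalled just after the definition of the extended Legendre--Fenchel transform, each of $\overline\tau^*,\tau^*$ is upper semi-continuous on its domain, which is a closed subinterval of $[0,\infty]$, and is concave on that domain minus $\{\infty\}$. Nonnegativity on the domain is Proposition~\ref{condtau}(2); taking $q=0$ in the infimum defining the conjugate gives $\tau^*(\alpha)\le-\tau(0)\le d$ (and likewise for $\overline\tau^*$), so both functions take values in $[0,d]\cup\{-\infty\}$; taking $q=1$ gives $\tau^*(\alpha)\le\alpha-\tau(1)=\alpha$ (and likewise for $\overline\tau^*$), which is the constraint $f(\alpha)\le\alpha$; and since $\tau,\overline\tau$ are concave with $\tau(1)=\overline\tau(1)=0$, one has $\mathrm{Fix}(\tau^*)=[\tau'(1^+),\tau'(1^-)]$ and $\mathrm{Fix}(\overline\tau^*)=[\overline\tau'(1^+),\overline\tau'(1^-)]$, both nonempty (as recalled in Section~\ref{introduc}). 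Next, $\tau\le\overline\tau$ and order-reversal of the Legendre transform give $\overline\tau^*\le\tau^*$, i.e. $f\le g$, whence automatically $\mathcal{I}=\mathrm{dom}(f)\subseteq\mathrm{dom}(g)=\mathcal{J}$; concavity of $f$, $g$ over $\mathcal{I}\cap\R_+$, $\mathcal{J}\cap\R_+$ has already been noted. Finally the hypothesis $D\in[\overline\tau'(1^+),\overline\tau'(1^-)]=\mathrm{Fix}(f)$ is exactly what Theorem~\ref{thmHP} requires of $D$.

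Applying Theorem~\ref{thmHP} then yields $\mu\in\mathcal M^+_c(\R^d)$, exact dimensional with dimension $D$, for which parts~(1)--(3) of that theorem hold verbatim with $\mathcal{I},\mathcal{J},f,g$ as above; this is conclusion~(3) of the corollary. Part~(4) of Theorem~\ref{thmHP} gives $\overline\tau_\mu=f^*=(\overline\tau^*)^*$ and $\tau_\mu=g^*=(\tau^*)^*$, and by Proposition~\ref{condtau1}(1)(d)/(2)(d) one has $(\overline\tau^*)^*=\overline\tau$ and $(\tau^*)^*=\tau$ on $\R$, so $\tau_\mu=\tau$ and $\overline\tau_\mu=\overline\tau$: this is conclusion~(1). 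Conclusion~(2) is then the remark immediately following Theorem~\ref{thmHP}, which deduces from parts~(2) and~(4) that $\dim_H\supp(\mu)=\underline\dim_B\supp(\mu)=-\overline\tau_\mu(0)=-\overline\tau(0)$ and $\dim_P\supp(\mu)=\overline\dim_B\supp(\mu)=-\tau_\mu(0)=-\tau(0)$. The closing ``moreover'' clause (stability under restriction to a closed ball whose interior meets the support) is inherited directly from the corresponding clause of Theorem~\ref{thmHP}.

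Since the whole argument reduces to invoking Theorem~\ref{thmHP}, there is no genuine obstacle; the only points needing a little care are the bookkeeping for the extended Legendre transform, notably that the biconjugation identity $(\tau^*)^*=\tau$ persists on all of $\R$ even when $\infty\in\mathrm{dom}(\tau^*)$ and $\tau$ is discontinuous at $0^+$ (Proposition~\ref{condtau1}(2)(c)--(d)), and checking that the value and domain constraints defining $\mathcal{F}(d)$ are met in each of the cases of Proposition~\ref{condtau1}.
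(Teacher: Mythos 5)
Your proposal is correct and follows exactly the route the paper intends: verify that $f=\overline\tau^*$, $g=\tau^*$ satisfy the hypotheses of Theorem~\ref{thmHP} (membership in $\mathcal F(d)$, $f\le g$ by order-reversal of the conjugate, concavity, $D\in\mathrm{Fix}(f)$), apply that theorem, and recover conclusion (1) from the biconjugation identity $(\tau^*)^*=\tau$ of Proposition~\ref{condtau1} and conclusion (2) from the remark following Theorem~\ref{thmHP}. No gap.
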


\begin{rem}\label{link}\rm {\bf Link with Olsen's multifractal formalism.} In \cite{Olsen}, Olsen introduces  three ``multifractal dimensions'' functions $b_\mu\le B_\mu\le \Lambda_\mu$  derived from ``multifractal'' generalizations of Hausdorff and packing measures associated with $\mu$ ($\Lambda_\mu$ and $B_\mu$ are convex, while $b_\mu$ may be not), so that $f^H(\alpha)\le (-b_\mu)^*(\alpha)$ and $f^P(\alpha)\le (-B_\mu)^*(\alpha) \ (\le (-\Lambda_\mu)^*(\alpha))$ for all $\alpha\in\R_+$; one can then say that Olsen's multifractal formalism holds at $\alpha\in \R_+\cup\{\infty\}$ if the previous inequalities are equalities (adding $\alpha=\infty$ in his formalism does not matter). The pair $\{b_\mu,B_\mu\}$ has a geometric meaning, while $\{\tau_\mu,\overline\tau_\mu\}$ relies on large deviations properties. 

This formalism has recently found new illustrations by  inhomogeneous Bernoulli measures on $[0,1]$ \cite{BP,Shen}, and it is particularly well suited to describe $\dim_H E(\mu,\alpha)$ for self-affine measures or Gibbs measures on self-affine Sierpinski carpets and sponges  \cite{Kin95,Ols98,BaMe07,BaFe} (the packing dimension of the sets $E(\mu,\alpha)$ in these situations remains and open question in general). 

Olsen pays a particular attention to compare the  pairs of functions $\{b_\mu,\Lambda_\mu\}$ and $\{\tau_\mu,\overline\tau_\mu\}$. He proves  $b_\mu\le-\overline\tau_\mu$ and $B_\mu\le \Lambda_\mu= -\tau_\mu$ when $\mu$ is doubling, which according to both multifractal formalisms inequalities, implies $(-b_\mu)^*(\alpha)=\overline\tau_\mu^*(\alpha)$ and $(-B_\mu^*)(\alpha)=(-\Lambda_\mu)^*(\alpha))=\tau_\mu^*(\alpha)$ when the refined multifractal formalism used in this paper holds at $\alpha$, hence the validity of his  formalism.  It turns out that even if the  measure $\mu$ we are going to construct to prove Corollary~\ref{corHP} are not doubling, they possess the weaker but close property that there exists a function $\epsilon(r)$ tending to $0^+$ as $r\to 0^+$ such that $\mu(B(x,2r))\le r^{-\epsilon(r)}\mu(B(x,r))$, uniformly in $x\in\supp(\mu)$ (see Section~{reduc}). This is enough for $b_\mu\le-\overline\tau_\mu$ and $B_\mu\le \Lambda_\mu= -\tau_\mu$ to hold, hence Olsen's multifractal formalism to be valid at each $\alpha$ of $\mathrm{dom}(\overline\tau_\mu^*)$. Moreover, using  the equality $(-b_\mu)^*=\overline\tau_\mu^*$ over $\mathrm{dom}(\overline\tau_\mu^*)$,  taking the Legendre-Fenchel transforms of these functions, and using the inequality $b_\mu\le -\overline\tau_\mu=-\overline \tau$, we can get $b_\mu=-\overline\tau_\mu=-\overline \tau$. Similarly  $B_\mu=\Lambda_\mu=-\tau_\mu=-\tau$. 

General upper bounds for $\dim_H \underline E(\mu,\alpha)$ and $\dim_H \overline E(\mu,\alpha)$ are given by \cite[Theorem 2.17(ii)(iii)]{Olsen}, namely $(-(b_\mu\square B_\mu))^*(\alpha)$ and  $(-(B_\mu\square b_\mu))^*(\alpha)$ respectively, where $b\square B$ equals $b$ over $(-\infty,0)$, $b(0)\land B(0)$ at 0, and $B$ over $(0,\infty)$. The previous remarks and the formulas obtained in Theorem~\ref{thmHP} for $\dim_H \underline E(\mu,\alpha)$ and $\dim_H \overline E(\mu,\alpha)$ show that for the measure $\mu$ we construct, the upper bounds estimates $(-(b_\mu\square B_\mu))^*(\alpha)$ and  $(-(B_\mu\square b_\mu))^*(\alpha)$  do provide the correct values for the Hausdorff dimensions. 
\end{rem}

\subsubsection{Prescription of the lower Hausdorff spectrum}  Theorem~\ref{short-thm1} can be precised as follows, according to the properties of the measure we construct: 
\begin{thm}\label{thm1}
Let $f\in\mathcal F(d)$. For each  $D\in \mathrm{Fix}(f)$,  there exists an (HM) measure $\mu\in\mathcal M^+_c(\R^d)$, exact dimensional with dimension $D$, such that $\underline f_\mu^H=f$. 

Moreover, $\mu$ can be constructed so that one has: (1) if $\alpha\in \mathrm{Fix}(f)$ then $f_\mu^H(\alpha)=\alpha$ and (2) if $\alpha\in\mathrm{dom}(f)\setminus \mathrm{Fix}(f)\neq\emptyset$ then $\underline E(\mu,\alpha)=E(\mu,\alpha,\infty)$, and properties (1) and (2)  hold if  $\mu$ is replaced by its restriction to any closed ball whose interior intersects $\supp(\mu)$. 
\end{thm}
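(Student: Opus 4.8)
The plan is to construct $\mu$ by the homogeneous Moran-type scheme outlined in the introduction, carried out in the full generality of $f\in\mathcal{F}(d)$ (merely upper semi-continuous, domain a closed subset of $[0,\infty]$ possibly not an interval and possibly containing $\infty$, possibly several fixed points). After rescaling we may assume $\supp(\mu)\subseteq[0,1]^d$, and the degenerate cases ($\mathrm{dom}(f)=\{D\}$, etc.) are disposed of by hand. First one produces, independently of $f$, the building blocks: for every $\gamma\in[0,d]$ and $\alpha\in[\gamma,\infty]$, Borel probability measures $\mu_{\alpha,\gamma},\nu_{\alpha,\gamma}$ on $[0,1]^d$ with $\mu_{\gamma,\gamma}=\nu_{\gamma,\gamma}$, $\nu_{\alpha,\gamma}$ exact dimensional of dimension $\gamma$ and carried by the set where the (Euclidean and dyadic) lower and upper local dimensions of $\mu_{\alpha,\gamma}$ both equal $\alpha$, together with the associated combinatorial data (for each large $n$, a family of about $2^{n\gamma}$ dyadic cubes of generation $n$ on each of which $\mu_{\alpha,\gamma}\approx 2^{-n\alpha}$ and $\nu_{\alpha,\gamma}\approx 2^{-n\gamma}$, carrying $\nu_{\alpha,\gamma}$-mass in $[1/2,1]$). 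These are obtained from inhomogeneous Bernoulli-type measures; the block $\mu_{\infty,\gamma}$ (mass decaying faster than every power) is built the same way.

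Next, choose a countable set $\{\alpha_m:m\ge1\}$ dense in $\mathrm{dom}(f)$ with $\alpha_1=D$, containing $\mathrm{Fix}(f)$ and the set $J$ of points $\alpha$ of $\mathrm{dom}(f)$ with $f(\alpha)>\limsup_{\gamma\to\alpha,\gamma\ne\alpha}f(\gamma)$ --- a set which is at most countable because, for rationals $c<c'$, the points with $f>c'$ and $\limsup_{\gamma\to\cdot}f(\gamma)<c$ form a discrete, hence countable, subset of the separable space $[0,\infty]$. Build nested families $\mathbf{G}_m$ of dyadic cubes of very rapidly increasing generations $g_m$ following \eqref{mm1}: every $I_m\in\mathbf{G}_m$ produces the cubes $I_m\cdot L_{\alpha_i}\cdot I$ with $I$ ranging over the block of $\mu_{\alpha_i,f(\alpha_i)}$ at level $N_{m+1}$ and $1\le i\le m+1$, the $L_{\alpha_i}$ being pairwise well-separated buffer cubes of a generation $n'_{m+1}\ll N_{m+1}$, and $N_{m+1}\gg g_m$. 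Two modifications of the naive scheme are needed. First, to control the \emph{upper} local dimension at the non-fixed exponents, forbid a non-fixed $\alpha_i$ from being used at two consecutive steps: a branch using a non-fixed $\alpha_i$ at step $k$ must use the block $\mu_{\infty,f(\alpha_i)}$ at step $k+1$, while fixed points and $\infty$-blocks may be followed by anything. Second, since $\mathrm{Fix}(f)$ may be non-trivial, dissymmetrize the weights in \eqref{mm1} so that the $D$-branch always gets weight $1-O(m^{-2})$ and the others share $O(m^{-2})$; this is invisible at the $2^{o(g_m)}$ scale but concentrates the total mass of $\mu$ on the $D$-branch. The measure $\mu$ carried by $K=\bigcap_m\bigcup_{I\in\mathbf{G}_m}I$ is (HM) because the construction inside each cube is a rescaled copy of the global one.

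The estimates then proceed as follows. With the itinerary $\widehat\beta(x)=(\beta_m(x))_{m\ge1}$ of $x\in K$ and the bounds $g_{m-1}+n'_m\ll N_m$, one checks that for $r\in[2^{-g_{m+1}},2^{-g_m}]$ the quotient $\log\mu(B(x,r))/\log r$ stays within $o(1)$ of the interval with endpoints $\beta_m(x)$ and $\beta_{m+1}(x)$, so that $\underline d(\mu,x)=\liminf_m\beta_m(x)$ and $\overline d(\mu,x)=\limsup_m\beta_m(x)$. A Borel--Cantelli argument using the $O(m^{-2})$ weights gives $\beta_m(x)=D$ for all large $m$, $\mu$-a.e., so $\mu$ is exact dimensional of dimension $D$. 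For $\underline f_\mu^H(\alpha)\le f(\alpha)$ one writes $\underline E(\mu,\alpha)=\bigcup_{m_0}\{x:\beta_m(x)\ge\alpha-\epsilon\ \forall m\ge m_0,\ \beta_m(x)\le\alpha+\epsilon\text{ infinitely often}\}$ and covers each piece, at generations $g_{m^*}$ where $\beta_{m^*}\le\alpha+\epsilon$, by admissible cubes; their number is $2^{O(g_{m_0})}\,2^{o(g_{m^*})}\,2^{(\sup_{[\alpha-\epsilon,\alpha+\epsilon]}f)\,g_{m^*}(1+o(1))}$ --- the history before step $m^*$ contributing only $2^{o(g_{m^*})}$ precisely because $g_{m^*-1}\ll g_{m^*}$ --- whence $\dim_H\underline E(\mu,\alpha)\le\sup_{[\alpha-\epsilon,\alpha+\epsilon]}f$, and letting $\epsilon\to0$ and invoking upper semi-continuity gives the bound. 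For the reverse inequality, given $\alpha\in\mathrm{dom}(f)$ pick an itinerary $\widehat\beta$ with $\liminf_m\beta_m=\alpha$ and $\liminf_m f(\beta_m)=f(\alpha)$: a constant itinerary when $\alpha\in\mathrm{Fix}(f)\cup\{\infty\}$; the alternation $\alpha,\ \infty_{f(\alpha)},\ \alpha,\ \infty_{f(\alpha)},\dots$ when $\alpha$ is a non-fixed point of $\{\alpha_m\}$; and, when $\alpha\notin\{\alpha_m\}$ (so $\alpha\notin J$ and $f(\alpha)=\limsup_{\gamma\to\alpha}f(\gamma)$), the alternation $\alpha_{k(1)},\infty_{f(\alpha_{k(1)})},\alpha_{k(2)},\infty_{f(\alpha_{k(2)})},\dots$ with $\alpha_{k(m)}\to\alpha$, $f(\alpha_{k(m)})\to f(\alpha)$ and $k(m)$ increasing slowly; then $K_{\widehat\beta}\subseteq\underline E(\mu,\beta)$ and the auxiliary measure $\nu_{\widehat\beta}$ built from the $\nu_{\beta_m,f(\beta_m)}$ satisfies $\dim_H\nu_{\widehat\beta}=\liminf_m f(\beta_m)=f(\alpha)$, so the mass distribution principle yields $\underline f_\mu^H(\alpha)\ge f(\alpha)$. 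Refinement~(1) comes from the constant itineraries $K_{(\alpha)}\subseteq E(\mu,\alpha)$ of dimension $f(\alpha)=\alpha$ for $\alpha\in\mathrm{Fix}(f)$ (with exact dimensionality handling $\alpha=D$); refinement~(2) from the first modification together with the fact --- forced by $f(\gamma)\le\gamma$ and upper semi-continuity --- that no fixed point accumulates at a non-fixed point, so a point $x$ with $\underline d(\mu,x)=\alpha$ non-fixed necessarily has $\beta_m(x)$ non-fixed for infinitely many $m$, hence $\beta_m(x)=\infty$ for infinitely many $m$, i.e. $\overline d(\mu,x)=\infty$; and the assertions for restrictions of $\mu$ to balls follow from the (HM) property together with the usual sandwiching of $\mu|_B$ between copies of $\mu$ on construction cubes of two consecutive relevant generations.

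The delicate part is this last block of estimates in the general setting: arranging the scale hierarchy $g_m\ll g_{m+1}$ violently enough that, in the upper-bound covering, every generation preceding a ``good'' one is negligible, while at the same time threading the $\infty$-blocks of the first modification and the realizing itineraries for non-fixed exponents through the construction without perturbing the exact dimensionality at $D$ --- and making all the $o(1)$ and $o(g_m)$ errors uniform enough that $\underline d(\mu,\cdot)$, $\overline d(\mu,\cdot)$ and $\dim_H\underline E(\mu,\alpha)$ are simultaneously pinned down for \emph{every} point of $K$ and \emph{every} $\alpha\in[0,\infty]$.
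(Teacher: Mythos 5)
Your architecture is the paper's: Bernoulli blocks $\mu_{\alpha,\gamma},\nu_{\alpha,\gamma}$ concatenated along a growing dense exponent set, weights $1-O(m^{-2})$ on the $D$-branch, auxiliary measures $\nu_{\widehat\beta}$ plus the mass distribution principle for the lower bound, covering at the ``good'' generations for the upper bound, and Borel--Cantelli for exact dimensionality. Where you genuinely diverge is the mechanism for property (2). You modify the tree so that every non-fixed block is followed by an $\infty$-block, which forces $\limsup_m\beta_m(x)=\infty$ along the relevant itineraries; this requires a building block $\mu_{\infty,\gamma}$ with $\gamma>0$, which a homogeneous Bernoulli product cannot supply (forcing $\alpha=\infty$ forces $p\in\{0,1\}$, i.e.\ a Dirac mass), so it is not ``built the same way'' --- you would need an inhomogeneous product, or the paper's trick of using finite exponents $\alpha_m(\infty)\to\infty$. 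The paper instead leaves the tree alone and gets $\overline d(\mu,x)=\infty$ for free from an intermediate-scale estimate: inside a block $\mu_{\alpha,\gamma}$ with $\gamma=f(\alpha)<\alpha$, a cube of generation $n$ with $g_m\ll n\ll N_{m+1}$ meets only $\approx 2^{n\gamma}\cdot 2^{-n\gamma}\#G_{m+1}(\alpha)$ of the selected cubes, each of mass $\approx 2^{-N_{m+1}\alpha}$, so its mass is $\approx 2^{-n\gamma-N_{m+1}(\alpha-\gamma)}$ and the local exponent blows up like $N_{m+1}/n$ (this is \eqref{goodcontrols3} and Corollary~\ref{dimloc}(3)). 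Your route is workable but costs an extra construction; the paper's costs one extra estimate.

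This same phenomenon exposes the one real flaw in your write-up: the claim that for $r\in[2^{-g_{m+1}},2^{-g_m}]$ the exponent $\log\mu(B(x,r))/\log r$ stays within $o(1)$ of the interval with endpoints $\beta_m(x)$ and $\beta_{m+1}(x)$, hence $\overline d(\mu,x)=\limsup_m\beta_m(x)$, is false. Only the one-sided bound \eqref{goodcontrols1} (exponent $\ge$ the interpolation) holds in general; the matching upper bound \eqref{goodcontrols2} holds only at $n=g_m$, or when $\beta_{m+1}$ is a fixed point \emph{and} the selected cube meets the good set $\widetilde F_{m+1}$. The direction you actually need for refinement (2) ($\overline d\ge\limsup\beta_m$) survives, but your exact-dimensionality argument --- ``Borel--Cantelli gives $\beta_m(x)=D$ eventually, so $\mu$ is exact dimensional'' --- uses the false direction $\overline d\le\limsup\beta_m$. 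Eventual membership in the $D$-branch controls $\underline d$ but not $\overline d$: you must also show that $\mu$-a.e.\ $x$ eventually has its step-$m$ cube meeting $\widetilde F_m(D_m,D_m)$, which requires a second Borel--Cantelli argument based on \eqref{0000}; the same caveat applies to refinement (1), where the constant-itinerary Cantor set must be intersected with the full-$\nu_{\widehat\beta}$-measure set $\widetilde K_{\widehat\beta}$ on which the two-sided estimate is valid. These repairs are exactly what the paper's $\widetilde F_m$/$\widetilde K_{\widehat\beta}$ machinery provides, so the gap is fillable, but as written the step is wrong.
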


\begin{rem}
{\rm It can be shown for the measure we construct that $\dim_P \underline E(\mu,\alpha)=\max\{f(\alpha'):\alpha'\ge \alpha\}$ for all $\alpha\in \R\cup\{\infty\}$. The Hausdorff and packing dimensions of the sets $\overline E(\mu,\alpha)$ and $E(\mu,\alpha,\beta)$ can also be computed; we leave these calculations, based on Corollaries~\ref{dimloc} and \ref{diminf}, to the reader.} 
\end{rem}

\begin{rem}\label{compBucSeu}{\rm (1) The prescription of the lower Hausdorff spectrum has also been studied in \cite{BucSeu13}, where the authors work on $\R$ and construct  (HM) continuous measures, not exact dimensional, but with upper Hausdorff dimension equal to 1, and whose support is equal to $[0,1]$, with a prescribed  lower Hausdorff spectrum  in the class  $\mathcal F$ of functions $f:\R_+\to [0,1]\cup\{-\infty\}$ which satisfy: $f(1)=1$, $\mathrm{dom}(f)$ is a closed subinterval of $[0,1]$ of the form $[\alpha ,1]$ such that $\alpha>0$, and $f_{|[\alpha,1)}=\max(g_{|[\alpha,1)},0)$, where (i) $g$ is the supremum of a sequence of functions $(g_n)_{n\ge 1}$, such that each $g_n$ is constant over its domain supposed to be a closed subinterval of $[0,1]$ and $g_n(\beta)\le \beta$ for all $\beta\in [0,1]$; (ii) $[\alpha,1]$ is the smallest closed interval containing the support of $g$.  It is also shown that for an (HM) measure to be supported by the whole interval $[0,1]$, it is necessary that the support of its lower Hausdorff spectrum contains an interval of the form $[\alpha,1]$, ($0\le \alpha\le 1$). 

The authors of  \cite{BucSeu13} also study the case of non (HM) measures. In this case, they construct non exact dimensional measures with upper Hausdorff dimension 1 whose support is equal to $[0,1]$, with a prescribed lower Hausdorff spectrum in the broader class  $\widetilde {\mathcal F}$ of functions $f$ which satisfy that $f(1)=1$, $0<\inf(\mathrm{dom}(f))$, and $f_{|\mathrm{dom}(f)\setminus\{1\}}=g_{|\mathrm{dom}(f)\setminus\{1\}}$, where $g$ satisfies property (i).  This includes all such functions $f$ for which $g$ is lower semi-continuous. Simultaneously, they also construct a non (HM) measure with lower Hausdorff spectrum given by $g$. 

(2) All the spectra defined in this paper make sense if measures are replaced by non negative functions of  subsets of $\R^d$ to which a notion of support is associated. This is the case for instance of Choquet capacities. In \cite{JLV-Vojak}, the prescription of the spectrum $\alpha\mapsto \dim_H E(C,\alpha)$ is studied, where $C$ is a Choquet capacity on subsets of $[0,1]$ but not a positive measure, which makes the situation easier to tract. The authors can find a capacity with spectrum given by $f$, for any function $f=\sup_{i\ge 1} f_i$, where the functions $f_i:\R\to[0,1]\cup\{-\infty\}$ are such that $\mathrm{dom}(f_i)$ is a non empty closed subset of $\R_+$, and either $f_i=0$ over  $\mathrm{dom}(f_i)$ or $f_i$ is invertible from $\mathrm{dom}(f_i)$ onto $f(\mathrm{dom}(f_i))$, with a continuous inverse (this class of function contains $\widetilde {\mathcal F}$). Moreover the capacity is (HM). 

In \cite{JLV-T}, the authors construct non (HM) non negative functions $C$ of subsets of $[0,1]$, which are not measures, for which the spectrum $\alpha\mapsto \lim_{\epsilon\to 0^+} \dim_H \bigcup_{s>0}\bigcap_{0<r<s}\{x\in\supp(C): r^{\alpha+\epsilon} \le C(B(x,r))\le r^{\alpha-\epsilon}\}$ is prescribed in the class  of upper semi-continuous functions $f:\R_+\mapsto [0,1]\cup\{-\infty\}$ with non empty compact domain. However,  the  spectrum which is prescribed is quite rough with respect to the Hausdorff spectrum. 

}
\end{rem}

\begin{rem}
{\rm It is worth mentioning that in this paper our constructions provide continuous measures even when their dimenson equals 0,  and are based on the properties of the simplest  multifractal measures, namely Bernoulli products. These properties are combined in  recursive concatenations (roughly described in Section~\ref{introduc} and more elaborated than those used for instance to lower bound the Hausdorff dimensions of the sets $E(\mu,\alpha)$ in the study of weak Gibbs measures) in order to converge asymptotically to a prescribed multifractal structure.

We will first prove Theorem~\ref{thm1} because its proof is shorter than that of Theorem~\ref{thmHP}, and it already contains some of the main ideas involved in  the proof of Theorem~\ref{thmHP}. However, none of the two proofs can be reduced to the other one regarding the computation of $\underline f^H_\mu$.}
\end{rem}

The paper is organized as follows.  Section~\ref{sec2} introduces preliminary information about Bernoulli measures. Section~\ref{proof1} is dedicated to the proof of Theorem~\ref{thm1}, Section~\ref{proof2} to the proof of Theorem~\ref{thmHP}, and Section~\ref{proof3} contains the proofs of  Propositions~\ref{condtau}, ~\ref{condtau1} and \ref{MFcomp}, as well as some inequalities in \eqref{MuFo1} and \eqref{MuFo2}. Section~\ref{MDP} gives a short account about the mass distribution principle.

We finish this introduction with the connection of Theorems~\ref{cor} and~\ref{short-thm1} with multifractal analysis of H\"older continuous functions. 

\subsection{Application to multifractal analysis of  H\"older continuous functions}\label{MAfonc} Multifractal analysis of functions has developed in parallel to multifractal analysis of measures, mainly under the impulse of Frish and Parisi's note about multifractality in fully developed turbulence~\cite{FrischParisi}, and with its own multifractal formalisms \cite{MAB,Jafsiam1,JAFFMEYER,JAFFJMPA,JAFFJMP}. These are based on the link between pointwise H\"older regularity and the wavelet expansions of H\"older continuous functions. 

Theorems~\ref{cor} and~\ref{short-thm1} can be used to build H\"older continuous wavelet series with prescribed upper semi-continuous lower Hausdorff spectra, and also to give a full illustration of the  multifractal formalism for H\"older continuous functions based on the  so-called wavelet leaders \cite{JAFFJMP},  according to the bridge made in \cite{BSw} between this formalism and the  multifractal formalism for measures. We will restrict ourselves to the case $d=1$.

To be more specific, recall first that if $F:\R\to\R$ is a bounded H\"older continuous function, for each $x_0\in\R$, one defines the pointwise H\"older exponent of $f$ at $x_0$ as 
$$
h_F(x_0)=\sup\{h\ge 0: \, \exists\, \text{$P\in\R[X]$},\  |F(x)-P(x-x_0)|=O(|x-x_0|^h)\text{ as }|x-x_0|\to 0\},
$$
where $|x-x_0|$ stands for the Euclidean norm of $x-x_0$.   This exponent is the counterpart for functions of the lower local dimension for measures. 

One usually call singularities spectrum of $F$ the mapping
$$
h\mapsto\dim_H \{x\in \R:  h_F(x)=h\} \quad (h\in \R\cup\infty)
$$
(we keep the terminology lower Hausdorff spectrum for a slightly different spectrum defined below). Notice that if $f$ is $\gamma$-H\"older, then $\{x\in \R:  h_F(x)=h\}=\emptyset$ if $h<\gamma$. 

We are going to restrict the study to $[0,1]$.  We fix a wavelet basis $\{\psi_I\}$ ($I$ describing all the dyadic subintervals of $\R$), so that the mother wavelet is in the Schwartz class (see \cite[Ch. 3]{Me}) and the $\psi_I$ are normalized to have the same supremum norm. 

Denoting $\{\lambda_I\}$ the collection of the wavelet coefficients of $F$ in the basis $\{\psi_I\}$, we define $L_I=\sup\{|\lambda_{I'}|\}$, the supremum being taken over all the dyadic intervals included either in $I$ or in the 2 dyadic intervals  of the same generation as $I$ neighboring $I$. Then, we define $\supp(F)$ has the closed set of those $x\in[0,1]$ such that  $|L(I_n(x))|>0$ for all $n\ge 1$, where $I_n(x)$ stands for the closure of the unique semi-open to the right dyadic cube of generation $n$ which contains $x$. According to \cite{JAFFJMP}, this set does not depend on the choice of $\psi$; moreover, for $x\in\supp(F)$, one has 
$$
h_F(x)=\liminf_{n\to\infty} \frac{\log_2(|L(I_n(x))|)}{-n}.
$$ 

For $h\in\R\cup\{\infty\}$, we set
$$ 
\underline E(F,h)=\{x\in \supp(F):  h_F(x)=h\}.
$$
The lower Hausdorff spectrum of $F$ is the mapping 
$$
\underline f_F^H:h\mapsto\dim_H \underline E(F,h)\quad(h\in\R\cup\{\infty\}).
$$
We say  that $F$ is  homogeneously multifractal (HM) if for any $h\in \R\cup\{\infty\}$, the Hausdorff dimension of $\underline E(F,h)\cap B$ does not depend on the ball $B$ whose interior intersects $\supp(F)$.

A basic idea \cite{BSw} to relate multifractal analysis of functions to that of measures is to consider wavelet series of the form
$$
F_{\mu,\gamma_1,\gamma_2}=\sum_{I\subset[0,1]} |I|^{\gamma_1}\mu(I)^{\gamma_2}\psi_I,
$$
where $|I|$ stands for the diameter of $I$, $\gamma_1\ge 0$, $\gamma_2>0$, $\mu\in\mathcal{M}^+_c(\R)$ with $\supp(\mu)\subset [0,1]$, and $\gamma=\gamma_1+ \gamma_2 \liminf_{n\to\infty} \frac{\log_2 (\max\{\mu(I):I\text{ dyadic }\subset [0,1], \,|I|= 2^{-n}\})}{-n}>0$, so that the function $F_{\mu,\gamma_1,\gamma_2}$ is $\beta$-H\"older continuous for all $0<\beta<\gamma$. Then, the study achieved in \cite{BSw} yields 
\begin{equation}\label{bridge}
\underline E(F_{\mu,\gamma_1,\gamma_2},h)=\underline E\Big (\mu,\frac{h-\gamma_1}{\gamma_2}\Big )
\end{equation}
 for all $h\in\R\cup\{\infty\}$,  so that any information about the multifractal structure of measures should transfer to a similar one for this class of wavelet series. In particular, it is clear from \eqref{bridge} that $\dim_H E(F_{\mu,\gamma_1,\gamma_2},h)\le \frac{h-\gamma_1}{\gamma_2}$.

 \subsubsection{Prescription of the lower Hausdorff spectrum} 
 
\begin{thm}
 Let $f:\R_+\cup\{\infty\}\to [0,1]\cup\{-\infty\}$ be upper semi-continuous. Suppose that  $\mathrm{dom}(f)$  is a closed subset $\mathcal{I}$ of $[0,\infty]$ such that $0<\min(\mathcal{I})<\infty$.
  There exists an (HM) H\"older continuous function $F$ such that $\underline f_F^H=f$. 
\end{thm}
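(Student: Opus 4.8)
The plan is to deduce this statement directly from Theorem~\ref{short-thm1} (equivalently Theorem~\ref{thm1}) via the bridge~\eqref{bridge} coming from \cite{BSw}, exactly as the introduction suggests. Given the target spectrum $f:\R_+\cup\{\infty\}\to[0,1]\cup\{-\infty\}$ upper semi-continuous with $\mathrm{dom}(f)=\mathcal I$ a closed subset of $[0,\infty]$ and $0<\min(\mathcal I)<\infty$, the first step is to produce from $f$ a companion function $\widetilde f$ that lies in the class $\mathcal F(1)$ so that Theorem~\ref{short-thm1} applies in dimension $d=1$. The natural choice is to rescale: pick parameters $\gamma_1\ge 0$ and $\gamma_2>0$ and set $\widetilde f(\alpha)=f(\gamma_1+\gamma_2\alpha)$, whose domain is the closed set $\mathcal I'=\{\alpha:\gamma_1+\gamma_2\alpha\in\mathcal I\}\subset\R_+\cup\{\infty\}$. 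One must check the four defining properties of $\mathcal F(1)$: upper semi-continuity (inherited from $f$), closedness of the domain, the bound $\widetilde f(\alpha)\le\min(\alpha,1)$, and $\mathrm{Fix}(\widetilde f)\neq\emptyset$. The constraint $\widetilde f(\alpha)\le\alpha$, i.e. $f(\gamma_1+\gamma_2\alpha)\le\alpha=\frac{h-\gamma_1}{\gamma_2}$ with $h=\gamma_1+\gamma_2\alpha$, is precisely the inequality $\dim_H E(F_{\mu,\gamma_1,\gamma_2},h)\le\frac{h-\gamma_1}{\gamma_2}$ already noted after~\eqref{bridge}, and it forces a lower bound on $\gamma_2$ relative to $\gamma_1$ and the values of $f$; since $f\le 1$ everywhere and $\min(\mathcal I)>0$, one can choose $\gamma_1=0$ and $\gamma_2$ small enough (or $\gamma_1$, $\gamma_2$ jointly) so that $f(\gamma_2\alpha)\le\min(\alpha,1)$ holds throughout $\mathcal I'$; existence of a fixed point then follows because $f(\gamma_2\alpha_0)=\alpha_0$ for $\alpha_0$ a fixed point of $f$ scaled back, noting $f$ has a fixed point is \emph{not} part of the hypotheses here, so one instead simply invokes: the construction in Theorem~\ref{thm1} needs a point $D$ with $f(D)=D$, so we must argue $f$ has such a point or modify $\widetilde f$ on a negligible piece to create one — this is the delicate bookkeeping point.

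Once $\widetilde f\in\mathcal F(1)$ is in hand with a chosen $D\in\mathrm{Fix}(\widetilde f)$, apply Theorem~\ref{short-thm1} (or the sharper Theorem~\ref{thm1}) to obtain an (HM) exact dimensional measure $\mu\in\mathcal M^+_c(\R)$ with $\supp(\mu)$ compact, $\underline f_\mu^H=\widetilde f$, and $\dim_H$-dimension $D$. By translating and rescaling the ambient line we may assume $\supp(\mu)\subset[0,1]$. Next form the wavelet series $F=F_{\mu,\gamma_1,\gamma_2}=\sum_{I\subset[0,1]}|I|^{\gamma_1}\mu(I)^{\gamma_2}\psi_I$ with the fixed wavelet basis $\{\psi_I\}$ (mother wavelet in the Schwartz class, all $\psi_I$ normalized to a common sup-norm). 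One checks the H\"older exponent bound: with $\gamma=\gamma_1+\gamma_2\liminf_{n\to\infty}\frac{\log_2(\max\{\mu(I):|I|=2^{-n}\})}{-n}>0$ — here $\gamma>0$ because $\mathrm{dom}(\widetilde f)\subset\R_+$ forces $\tau_\mu$ to have domain $\R_+$, equivalently the relevant $\liminf$ is finite and positive, so $\gamma_1=0$, $\gamma_2>0$ already gives $\gamma=\gamma_2\min(\mathcal I')>0$ — the function $F$ is $\beta$-H\"older continuous for every $0<\beta<\gamma$, hence H\"older continuous.

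Then invoke~\eqref{bridge}: $\underline E(F,h)=\underline E(\mu,\frac{h-\gamma_1}{\gamma_2})$ for all $h\in\R\cup\{\infty\}$. Taking Hausdorff dimensions, $\underline f_F^H(h)=\dim_H\underline E(\mu,\frac{h-\gamma_1}{\gamma_2})=\underline f_\mu^H\big(\frac{h-\gamma_1}{\gamma_2}\big)=\widetilde f\big(\frac{h-\gamma_1}{\gamma_2}\big)=f(\gamma_1+\gamma_2\cdot\frac{h-\gamma_1}{\gamma_2})=f(h)$, which is the desired identity $\underline f_F^H=f$ — provided the scaling is arranged so that $\widetilde f(\alpha)=f(\gamma_1+\gamma_2\alpha)$ literally, i.e. the parameters used in the wavelet series match those used to define $\widetilde f$. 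Finally, (HM) for $F$ transfers from (HM) for $\mu$ through~\eqref{bridge}, since for any ball $B$ meeting $\supp(F)=\supp(\mu)$ one has $\underline E(F,h)\cap B=\underline E(\mu,\frac{h-\gamma_1}{\gamma_2})\cap B$ and the right-hand dimension is independent of $B$ by the (HM) property of $\mu$; one should double-check that $\supp(F)$ as defined via the wavelet-leader condition indeed coincides with $\supp(\mu)$ (this is part of what \cite{BSw,JAFFJMP} provide).

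The main obstacle is the fixed-point/normalization issue: the hypotheses of this theorem do \emph{not} include $\mathrm{Fix}(f)\neq\emptyset$, whereas Theorems~\ref{short-thm1}--\ref{thm1} and membership in $\mathcal F(1)$ require a fixed point; one must therefore either (i) show that the scaling $\alpha\mapsto\gamma_1+\gamma_2\alpha$ can always be tuned so that the graph of $\widetilde f$ meets the diagonal inside $\mathcal I'$ — which is plausible since for $\gamma_2$ large the rescaled domain shrinks toward $0$ where $\widetilde f\ge 0$ can be made to sit below the diagonal, while the constraint $\widetilde f\le\alpha$ from the H\"older bound is exactly the obstruction to pushing it all the way, so the optimal $\gamma_2$ produces tangency, i.e.\ a fixed point — or (ii) enlarge $\widetilde f$ on a tiny closed interval to force a fixed point and then observe the extra piece does not change $f$ after unscaling because it falls outside $\mathcal I'$. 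Making this rescaling argument rigorous (choosing $\gamma_1,\gamma_2$ so that simultaneously $\gamma>0$, $\widetilde f\in\mathcal F(1)$, and the resulting $F$ has $\underline f_F^H=f$ on the full domain including $h=\infty$, with the $h=\infty$ endpoint handled by the conventions on $\tau^*$ and on $\underline E(\mu,\infty)$) is the technical heart of the proof; everything else is a transcription of Theorem~\ref{thm1} and the results of \cite{BSw}.
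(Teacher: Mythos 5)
Your overall strategy is exactly the paper's: conjugate $f$ by an affine map $h\mapsto\gamma_1+\gamma_2\alpha$ to land in $\mathcal F(1)$, apply Theorem~\ref{short-thm1} to get an (HM) exact dimensional measure $\mu$ on $[0,1]$, form $F_{\mu,\gamma_1,\gamma_2}$, and transfer the spectrum and the (HM) property through \eqref{bridge}. The transfer steps you write out are correct. But the step you yourself flag as ``the technical heart'' --- producing a fixed point of $\widetilde f$, since the hypotheses here do \emph{not} grant $\mathrm{Fix}(f)\neq\emptyset$ --- is left unproved, and of the two fixes you sketch, option (ii) is actually wrong: if you enlarge $\widetilde f$ by a fixed point $\alpha_0$ outside the rescaled domain, Theorem~\ref{short-thm1} returns a measure with $\underline f_\mu^H(\alpha_0)=\alpha_0>-\infty$, and \eqref{bridge} then forces $\underline f_F^H(\gamma_1+\gamma_2\alpha_0)=\alpha_0\neq f(\gamma_1+\gamma_2\alpha_0)=-\infty$, so the prescribed spectrum is destroyed. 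The level sets created by the construction do not disappear after unscaling.

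The correct resolution is your option (i), and the paper makes it rigorous as follows. Set $\theta(\lambda)=\sup\{f(h)/(\lambda h):h\in\mathcal I\}$; since $f$ is upper semi-continuous, bounded by $1$, and $\min(\mathcal I)>0$, this supremum is attained at some finite $h$, $\theta$ is continuous in $\lambda$, and $\theta(1/\min(\mathcal I))\le 1$. If $f\not\equiv 0$ on $\mathcal I$ then $\theta(\lambda)\to\infty$ as $\lambda\to 0^+$, so by the intermediate value theorem there is $\lambda_0\in(0,1/\min(\mathcal I)]$ with $\theta(\lambda_0)=1$: this gives $f(h)\le\lambda_0 h$ on all of $\mathcal I$ \emph{with equality attained}, i.e.\ $\widetilde f=f(\lambda_0^{-1}\cdot)$ lies in $\mathcal F(1)$ and has a genuine fixed point; one then takes $\gamma_1=0$, $\gamma_2=\lambda_0^{-1}$. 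The case $f\equiv 0$ must be treated separately, because no dilation can create a tangency with the diagonal when $f$ vanishes identically and $0\notin\mathcal I$; there the paper translates instead, taking $\widetilde f$ supported on $\mathcal I-\min(\mathcal I)$ so that $0$ becomes the (unique) fixed point, and uses $F_{\mu,\min(\mathcal I),1}$. Your write-up neither carries out the tangency argument nor notices that the identically-zero case needs the translation rather than the dilation, so as it stands the proof is incomplete at precisely the point on which the whole reduction hinges.
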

\begin{proof} For $\lambda>0$ set $\theta(\lambda)=\sup\{f(h)/\lambda h: h\in \mathcal I\}$, with the convention $x:\infty=0$ for all $x\ge 0$. Since $f$ is upper semi-continuous and bounded,  $\theta(\lambda)$ is reached at some $h<\infty$. Moreover, the mapping $\theta$ is continuous, and we have $\theta (1/\min(\mathcal I)) \le 1$ by definition of $f$. Now we distinguish two cases. 

If $f\not\equiv 0$ over $\mathcal I$, then  $\theta(\lambda)$ tends to $\infty$ as $\lambda$ tends to $0^+$, so the continuity of $\theta$ yields $0<\lambda_0\le 1/\min(\mathcal I)$ such that $\theta(\lambda_0)=1$, hence $f(h)\le \lambda_0 h$ for all $h\in \mathcal I$, with equality at some $h$. Let $\widetilde f= f(\lambda_0^{-1}\cdot )$. By construction we have $\widetilde f\in\mathcal F(1)$. Put  $\widetilde f$ in Theorem~\ref{short-thm1} to get an (HM) measure in $\mathcal M^+_c(\R^1)$ supported on $[0,1]$ whose lower Hausdorff spectrum is given by $\widetilde f$. Then $F= F_{\mu,0,\lambda_0^{-1}}$ is H\"older continuous and  has $f$ as lower Hausdorff spectrum by~\eqref{bridge}. 

If $f\equiv 0$ on $\mathcal I$, then $\widetilde f=f(\cdot-\min(\mathcal{I})$ belongs to $\mathcal F(1)$ (with $0$ as unique fixed point). Put  $\widetilde f$ in Theorem~\ref{short-thm1} to get an (HM) measure in $\mathcal M^+_c(\R)$ supported on $[0,1]$ whose lower Hausdorff spectrum is given by $\widetilde f$. Then $F= F_{\mu,\min(\mathcal I),1}$ is H\"older continuous and  has $f$ as lower Hausdorff spectrum.
\end{proof}

\begin{rem}{\rm 
In  \cite{BucSeu13}, the measures described in Remark~\ref{compBucSeu}(1) are used to build (HM) functions of the form $F=F_{\mu,\gamma_1,\gamma_2}$ with $\supp(F)=[0,1]$. Previously in \cite{J2}, Jaffard build non (HM) wavelet series with prescribed spectrum in the class of functions $f: (0,\infty)\to [0,1]$  which are representable as the supremum of a
countable collection of step functions.}
\end{rem}
 
\subsubsection{Full illustration of the multifractal formalism} Our results also yield a full illustration of  the multifractal formalism for H\"older continuous  functions whose support is a subset of $[0,1]$. This requires some preliminary definitions and facts. 

If  $F=\sum_{I}\lambda_I\psi_I$ is a non trivial  such  function, i.e. $\emptyset\neq \supp(F)\subset [0,1]$,  denote by $T(q)$ the $L^q$-spectrum associated with the wavelet leaders $(L_I)_{I\subset [0,1]}$, i.e. the concave non decreasing function
$$
T_F(q)=\liminf_{n\to\infty}\frac{-1}{n} \log_2 \sum_{I\in G^*_n} L_I^q\quad (q\in \R), 
$$
where $G^*_n$ stands for the set of dyadic cubes $I$ of generation $n$ for which $L_I>0$.  Due to \cite{JAFFJMP} again, this function does not depend on the choice of $\{\psi_I\}$ if the mother wavelet is in the Schwartz class. Moreover, if $F$ takes the form $F_{\mu,\gamma_1,\gamma_2}$, one has almost immediately 
\begin{equation}\label{bridge2}
\tau_F(q)= \tau_\mu(\gamma_2 q)-\gamma_1 q\quad  ( \ q\in\R).
\end{equation}

From now on we discard the trivial situation for which $\lim_{n\to\infty}\frac{\log_2(\max\{L_I:I\in G_n^*\})}{-n}=\infty$, in which case we have $T_F=-\overline \dim_B\supp(F)\mathbf{1}_{\{0\}}+(-\infty)\mathbf{1}_{\R_-^*}+(\infty)\mathbf{1}_{\R_+^*}$ and $\supp(F)=\underline E(F,\infty)$. 

Now we have  $\liminf_{n\to\infty}\frac{\log_2(\max\{L_I:I\in G_n^*\})}{-n}<\infty$, so that there exists  $\beta\ge 0$ such that $T_F(q)\le \beta q$ for all $q\ge 0$, which ensures that $T_F$ takes values in $\R\cup\{-\infty\}$. 

We say that $F$ satisfies the multifractal formalism if  $\dim_H \underline E(F,h)=T^*_F(h)$ for all $h\in \R_+\cup\{\infty\}$. This is essentially the multifractal formalism considered in \cite{JAFFJMP}. One simple, but important, observation in \cite{BSw} is that from \eqref{bridge} and \eqref{bridge2} follows the fact that if $\mu\in\mathcal M_c^+(\R)$ is supported on $[0,1]$ and obeys the multifractal formalism for measures, then if $F_{\mu,\gamma_1,\gamma_2}$ is H\"older continuous, it obeys the multifractal formalism just defined above.

Let us now examine the features of the $L^q$-spectrum when the multifractal formalism holds. They consist in three properties denoted $(i)$, $(ii)$, $(iii)$ and explained below (in a first reading one can just read them, jumping the justifications, and directly go to Theorem~\ref{mff}).

We have the following properties:  $L_I=O(|I|^\alpha)$ for some $\alpha>0$ by the H\"older continuity assumption, hence 

$(i)$ there exists $\alpha>0$ and $c\in [0,1]$ (here $c=\overline \dim_B\supp(F)$) such that $T_F(q)\ge \alpha q -c$ for all $q\ge 0$.  Moreover, 

$(ii)$ $T_F$ satisfies the same  properties as $\tau$ in Proposition~\ref{condtau}(2), in particular $T_F^*$ is non negative over its domain, by the same arguments as in the proof of Proposition~\ref{condtau}(2). 

Due to $(i)$, we can define 
$$
q_0=\inf\{q\ge 0: T_F(q)>0\}.
$$
If $F$ satisfies the multifractal formalism, we must have the third property:

$(iii)$ Either $q_0>0$ or $T_F'(0^+)>0$ and $T_F(q)=T_F'(0^+)q$ for all $q>0$.

Let us justify this. If $q_0=0$, there exists $c'\in \R$ such that $T_F(q)=T_F'(0^+)q+c'$ for all $q> 0$, for otherwise $-\infty<T_F^*(T_F'(q^-))<0$ for all $q>0$ large enough so that $T_F'(q^-)<T_F'(0^+)$, while $T_F^*$ must be non negative over its domain. Also, since there exists $\beta>0$ such that $0<T_F(q)\le \beta q$ for $q>0$, we have $c'=0$.  If, moreover, $F$ satisfies the multifractal formalism, we must have $T_F'(0^+)>0$, otherwise $T_F=-\overline \dim_B\supp(F)$ over $\R^*_+$, and no H\"older continuous function $F$ can fulfill the multifractal formalism with $T_F$ as $L^q$-spectrum; indeed, this would imply $T_F^*(0)=\overline \dim_B\supp(F)\ge 0$ hence $\underline E(F_\mu,0)\neq\emptyset$. 
\begin{thm}\label{mff}
Suppose that a non decreasing concave function $T$ satisfies the above properties $(i)$--$(iii)$ necessary to be the $L^q$-spectrum of a H\"older continuous function whose support is a non empty subset of $[0,1]$. Then there exists an (HM) H\"older continuous  function  $F$ with $\supp(F)\subset [0,1]$, which satisfies the multifractal formalism with $T_F=T$.  
\end{thm}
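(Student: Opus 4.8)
The plan is to obtain $F$ as a wavelet series $F=F_{\mu,\gamma_1,\gamma_2}$, by combining the bridge \eqref{bridge}--\eqref{bridge2} with Corollary~\ref{cor1}. Concretely, by \eqref{bridge2} it suffices to produce $\gamma_1\ge 0$, $\gamma_2>0$ and a function $\tau:\R\to\R\cup\{-\infty\}$ satisfying properties (1) and (2) of Proposition~\ref{condtau} such that $T(q)=\tau(\gamma_2 q)+\gamma_1 q$ for all $q$: feeding $\tau$ into Corollary~\ref{cor1} (with any $D\in[\tau'(1^+),\tau'(1^-)]$) yields an (HM), exact dimensional measure $\mu\in\mathcal M^+_c(\R)$ with $\supp(\mu)\subset[0,1]$, $\tau_\mu=\tau$, obeying the multifractal formalism and~\eqref{Eab}, these properties persisting for the restriction of $\mu$ to any closed ball whose interior intersects $\supp(\mu)$. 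It then remains to check that $F:=F_{\mu,\gamma_1,\gamma_2}$ is H\"older continuous and to transfer $T_F=T$, the multifractal formalism for $F$, and the (HM) property of $F$ through \eqref{bridge}--\eqref{bridge2}. By $(iii)$ together with the standing assumption $\liminf_n\frac{\log_2(\max\{L_I:I\in G_n^*\})}{-n}<\infty$, there are exactly two normal forms for $T$, according to whether $q_0:=\inf\{q\ge 0:T(q)>0\}$ is positive or zero.

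\emph{Case $q_0>0$.} Since $T$ is concave, non-decreasing, continuous on $(0,\infty)$, with $T\le 0$ on $[0,q_0]$ and $T>0$ on $(q_0,\infty)$, one has $T(q_0)=0$. Take $\gamma_1=0$, $\gamma_2=1/q_0$ and $\tau=T(q_0\,\cdot\,)$. Using $(i)$ and $(ii)$ one checks that $\tau$ satisfies Proposition~\ref{condtau}(1)--(2): it is concave, non-decreasing, $\tau(1)=T(q_0)=0$, $\tau(0)=T(0)\in[-1,0]$, $\mathrm{dom}(\tau)$ is $\R$ or $\R_+$ as $\mathrm{dom}(T)$ is, and $\tau^*=T^*(\,\cdot\,/q_0)$ is non-negative on its domain by $(ii)$. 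Moreover $(i)$ forces the asymptotic slope of $T$ to be $\ge\alpha>0$, so $\min\mathrm{dom}(\tau^*)=q_0\,T'(\infty)>0$; since for the measure $\mu$ furnished by Corollary~\ref{cor1} one has $\liminf_n\frac{\log_2(\max\{\mu(I):|I|=2^{-n}\})}{-n}=\min\mathrm{dom}(\tau^*)$, the H\"older parameter $\gamma=\gamma_1+\gamma_2\min\mathrm{dom}(\tau^*)=T'(\infty)$ is $>0$, hence $F:=F_{\mu,0,1/q_0}$ is H\"older continuous with $\emptyset\neq\supp(F)=\supp(\mu)\subset[0,1]$. Then \eqref{bridge2} gives $T_F(q)=\tau(q_0 q)=T(q)$, and \eqref{bridge} combined with $\dim_H\underline E(\mu,\alpha)=\tau_\mu^*(\alpha)$ gives $\dim_H\underline E(F,h)=\tau_\mu^*(q_0 h)=T^*(h)$ for every $h\in\R_+\cup\{\infty\}$ (with $\dim_H\emptyset=-\infty$, since $\underline E(\mu,q_0 h)=\emptyset$ when $q_0 h\notin\mathrm{dom}(\tau^*)$); thus $F$ obeys the multifractal formalism. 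Applying \eqref{bridge} to the restrictions of $\mu$ and of $F$ to any ball whose interior meets $\supp(F)$, together with $\mu$ being (HM), shows $F$ is (HM).

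\emph{Case $q_0=0$.} By $(iii)$, $T(q)=\beta q$ for all $q>0$ with $\beta=T'(0^+)>0$, while $T(0)=-c$ with $c=\overline\dim_B\supp(F)\in[0,1]$ (the bounds on $c$ coming from $(ii)$). Here take $\gamma_1=\beta$, $\gamma_2=1$, and let $\tau$ be the function equal to $0$ on $(0,\infty)$, to $-c$ at $0$, and to $-\infty$ on $\R_-$; it satisfies Proposition~\ref{condtau}(1)--(2) (concave, non-decreasing, $\tau(1)=0$, $\tau(0)=-c\in[-1,0]$, $\mathrm{dom}(\tau)=\R_+$, $\tau^*\equiv 0$ on $\R_+$ with $\tau^*(\infty)=c\ge 0$), and its unique fixed point is $D=0$. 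Corollary~\ref{cor1} then yields an (HM) \emph{continuous} measure $\mu$ of dimension $0$, $\supp(\mu)\subset[0,1]$, with $\tau_\mu=\tau$, obeying the multifractal formalism, $\dim_H\underline E(\mu,\alpha)=0$ for $\alpha\in[0,\infty)$ and $\dim_H\underline E(\mu,\infty)=c$. Since here $\min\mathrm{dom}(\tau^*)=0$, the H\"older parameter is $\gamma=\gamma_1=\beta>0$, so $F:=F_{\mu,\beta,1}$ is H\"older continuous (hence $h_F\ge\beta$ on $\supp(F)$), $\supp(F)=\supp(\mu)\subset[0,1]$ nonempty. By \eqref{bridge2}, $T_F(q)=\tau(q)+\beta q$ equals $\beta q$ for $q>0$ and $-c$ at $q=0$, so $T_F=T$; and \eqref{bridge} gives $\underline E(F,h)=\underline E(\mu,h-\beta)$, which is empty for $h<\beta$, of Hausdorff dimension $0=T^*(h)$ for $h\in[\beta,\infty)$, and of Hausdorff dimension $c=T^*(\infty)$ for $h=\infty$; hence $F$ obeys the multifractal formalism and, as above, is (HM).

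The crux, and the reason for the dichotomy, is that \eqref{bridge2} with $\gamma_1\ge 0$ lets one prescribe only $L^q$-spectra of the restricted form $T(q)=\tau_\mu(\gamma_2 q)+\gamma_1 q$ with $\tau_\mu$ a genuine $L^q$-spectrum, in particular $\tau_\mu(1)=0$; one must therefore check that $(i)$--$(iii)$ allow precisely the two normal forms above and, in each, produce a measure whose Legendre spectrum, H\"older range and behaviour at $\alpha=\infty$ match $T^*$ after the affine substitution $\alpha\mapsto\gamma_1+\gamma_2\alpha$. In particular, when $q_0=0$ and $c>0$ the construction degenerates to a $0$-dimensional continuous measure whose only non-trivial spectral value is carried by the point $\alpha=\infty$, which is exactly where the extended Legendre--Fenchel transform of Section~\ref{State} (including $\infty$ in the domain) is indispensable. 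A routine but necessary point to verify along the way is that $\supp(F_{\mu,\gamma_1,\gamma_2})=\supp(\mu)$ and that $\mu\mapsto F_{\mu,\gamma_1,\gamma_2}$ commutes with restriction to balls up to the exponent change, so that the (HM) property indeed transfers; this is part of the analysis of \cite{BSw} behind \eqref{bridge}.
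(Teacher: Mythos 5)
Your overall strategy is exactly the paper's: split on $q_0=\inf\{q\ge 0: T(q)>0\}$, feed a renormalized $\tau$ into Theorem~\ref{cor}/Corollary~\ref{cor1}, and transfer through $F_{\mu,\gamma_1,\gamma_2}$ via \eqref{bridge}--\eqref{bridge2}. The case $q_0>0$ coincides with the paper's argument ($\tau=T(q_0\,\cdot)$, $F=F_{\mu,0,1/q_0}$, H\"older continuity from $(i)$) and is correct.

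In the case $q_0=0$ there is a genuine gap: you hard-code $\tau$ to be $-\infty$ on $\R_-$, i.e.\ you implicitly assume $\mathrm{dom}(T)=\R_+$. But property $(ii)$ allows $\mathrm{dom}(T)=\R$ (in which case continuity of the finite concave function $T$ forces $c=0$, while $T$ can be any non-decreasing concave finite function on $\R_-$ with $T(q)\le \beta q$ there, e.g.\ $T(q)=\beta q$ on all of $\R$). For such a $T$, your measure has $\tau_\mu=-\infty$ on $\R_-^*$, so \eqref{bridge2} gives $T_F=-\infty\neq T$ on $\R_-^*$; worse, $T^*$ then has the compact domain $[\beta,T'(-\infty)]$ (possibly the singleton $\{\beta\}$), whereas your $\mu$ has $\mathrm{dom}(\tau_\mu^*)=[0,\infty]$ with $\tau_\mu^*\equiv 0$, so $\underline E(F,h)$ is non-empty of dimension $0$ for every $h\in[\beta,\infty]$ while the formalism demands $\underline E(F,h)=\emptyset$ for $h>T'(-\infty)$. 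The fix is what the paper does: set $\tau(q)=T(q)-T'(0^+)q$ on \emph{all} of $\R$ (this is still concave, non-decreasing, with $\tau(1)=0$ and the correct domain), and then $T_F(q)=\tau_\mu(q)+\beta q=T(q)$ everywhere and the Legendre transforms match. With that modification your proof agrees with the paper's.
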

\begin{proof}
Let $q_0=\inf\{q\ge 0: T(q)>0\}$. 
If  $q_0>0$, then $\tau(q)=T(q_0 q)$ satisfies the properties of Proposition~\ref{condtau}, so that it is the $L^q$-spectrum of an exact dimensional measure $\mu$ of dimension $D$, for any $D\in [q_0T'(q_0^+),q_0T'(q_0^-)]\subset [0,1]$ by Theorem~\ref{cor}. Moreover, the inequality $\tau_\mu(q)\ge \alpha q_0 q-c$ implies that $\tau_\mu^*(\beta)=-\infty$ for all $\beta<\alpha q_0$. Consequently, the function $F=F_{\mu,0,1/q_0}$ is $(\alpha-\epsilon)$-H\"older continuous for all $\epsilon>0$ and, and due to \eqref{bridge} and \eqref{bridge2}  it  fulfills the multifractal formalism for wavelet leaders with $T_F:q\mapsto \tau_\mu(q/q_0)=T(q)$.

If $q_0=0$,  the function defined as $\tau(q)=T(q)-T'(0^+)q$ satisfies the conditions required by Proposition~\ref{condtau}. Take the (HM) measure  $\mu$ associated with this function $\tau$ by Theorem~\ref{cor}. Then, the function $F_{\mu,T'(0^+),1}$  is $(T'(0^+)-\epsilon)$-H\"older continuous for all $\epsilon>0$ and and due to \eqref{bridge} and \eqref{bridge2} it fulfills the multifractal formalism for wavelet leaders,  with $T_F:q\mapsto \tau_\mu (q)+T'(0^+)q=T(q)$.
\end{proof}

\begin{rem}
{\rm In \cite{JAFFJMPA}, Jaffard uses a multifractal formalism associated with wavelet coefficients (not leaders). He introduces  a class of concave functions  such that to each element $\tau$ of this class he can associate a Baire space $V$ build on Besov spaces, so that generically an element of $V$ has a non decreasing Hausdorff spectrum obtained as the Legendre transform of $\tau$ computed by taking the infimum over a subdomain of $\R_+$. }
\end{rem}

The paper is organized as follows.  Section~\ref{sec2} introduces preliminary information about Bernoulli measures. Section~\ref{proof1} is dedicated to the proof of Theorem~\ref{thm1}, Section~\ref{proof2} to the proof of Theorem~\ref{thmHP}, and Section~\ref{proof3} contains the proofs of  Propositions~\ref{condtau}, ~\ref{condtau1} and \ref{MFcomp}, as well as some inequalities in \eqref{MuFo1} and \eqref{MuFo2}. Section~\ref{MDP} gives a short account about the mass distribution principle.

\section{Some notations, and preliminary facts about Bernoulli measures}\label{sec2}

\subsection{Notations}\label{nota}
For $n\ge 1$, define
$$
\mathcal{F}_n=\left \{\prod_{i=1}^d [k_i2^{-n},(k_i+1)2^{-n}]: 0\le k_i<2^n\right\}.
$$
If $x\in \R^d$ and $n\ge 0$ we denote by $I_n(x)$ the closure of the unique dyadique cube, semi-open to the right, of generation $n$, that contains $x$. 

Given two elements $I=\prod_{i=1}^d [k_i2^{-n},(k_i+1)2^{-n}]$ and $I'=\prod_{i=1}^d [k'_i2^{-n'},(k'_i+1)2^{-n'}]$ of $\bigcup_{p\ge 0}\mathcal{F}_p$, the concatenation $I\cdot I'$ of $I$ and $I'$ is defined as
\begin{equation}
\label{conc}
I\cdot I'=\prod_{i=1}^d [k_i2^{-n}+k_i'2^{-n-n'},k_i2^{-n}+ (k_i'+1)2^{-n-n'}]. 
\end{equation}

\medskip

If $J$ is a closed dyadic cube of generation $j$, we denote by $\mathcal{N}_1(n,J)$ the set made of $J$ and the $3^d-1$ dyadic cubes of generation $n$ neighboring $J$, and denote by $\mathcal{N}_2(n,J)$ the union of $\mathcal{N}_1(n,J)$ and the $5^d-3^d$ closed dyadic cubes of generations $n$ neighboring $\mathcal{N}_1(n,J)$. 

Fix $L_0$ a closed dyadic subcube of $[0,1]^d$ of generation 2 which does not touch $\partial [0,1]^d$. For each integer $k\ge 1$, we can fix a collection $\mathcal{L}(k)$ of $k$ closed dyadic cubes of generation $\ell(k)=\Big \lfloor \frac{\log_2(6^dk)}{d}\Big \rfloor +3$, all contained in $L_0$, such that the sets $\bigcup_{I\in \mathcal{N}_2(\ell(k),L)} I$, $L\in \mathcal{L}(k)$, are pairwise disjoint. This property will imply that the measure constructed in Section~\ref{constrmu2} is ``weakly'' doubling.

\medskip

If $\nu$ is a positive Borel measure supported on $[0,1]^d$ and $x$ belongs to the support of $\nu$, 
we set 
$$
d(\mu,x,n)=\frac{\log \nu(I_n(x))}{-n\log (2)}.
$$

\medskip

For the definitions of the $s$-dimensional Hausdorff and packing measures denoted respectively as $\mathcal H^s$ and $\mathcal P^s$ in this paper, the reader is referred to \cite{Falcbook} or \cite{Mat}. 

\subsection{Some facts about Bernoulli measures}

If $q\in[0,1]$, let $H(q)=-q\log_2(q)-(1-q)\log_2(1-q)$, with the convention $0\times\infty=0$.  Also, denote by $\nu_q$ the Bernoulli measure generated by $(q,1-q)$ on $[0,1]$. 

For each $0\le \gamma\le d$ and $\alpha\ge \gamma$ , we can fix $(p,q)=(p_{\alpha,\gamma},q_{\alpha,\gamma}) \in [0,1]^2$ such that 
$$
\begin{cases}
\alpha=-d\cdot(q\log_2(p)+(1-q)\log_2(1-p))\\
\gamma= d\cdot H(q).
\end{cases}
$$
Indeed, since $\gamma/d\in [0,1]$, there are clearly two solutions to $H(q)=\gamma/d$ in $[0,1]$ if $\gamma<d$ and only one if $\gamma=d$, namely $1/2$. Fix $q$ one solution. Now we seek for $p\in [0,1]$ such that $\alpha(p)=-d(q\log_2(p)+(1-q)\log_2(1-p))$ be equal to $\alpha$. If $q\in\{0,1\}$, this is immediate. Otherwise, the mapping $\alpha(p)$ decreases over $(0,q]$  from  $\infty$ to $\alpha(q)=\gamma$, and it increases on $[q,\infty)$ from $\gamma$ to $\infty$. So in this case  there is at least one and at most two solutions to $\alpha(p)=\alpha$ since we assumed that $\gamma\le \alpha$. 

\medskip

We will use the following classical fact, which is just a consequence of the strong law of large numbers. 
\begin{pro}
Suppose that $d=1$. Let $(p,q)\in [0,1]^2$. For $\nu_{q}$-almost every $x\in [0,1]$,
\begin{equation*}
\begin{split}
&\displaystyle\lim_{n\to \infty} d({\nu_p},x,n)= -q\log_2(p)-(1-q)\log_2(1-p),\\
&\displaystyle\lim_{n\to \infty} d({\nu_q},x,n)= H(q).
\end{split}
\end{equation*}
\end{pro}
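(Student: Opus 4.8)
The statement is a direct consequence of the strong law of large numbers (SLLN) applied to the coordinates of a point chosen according to the Bernoulli measure $\nu_q$, so the plan is to reduce both limits to sums of i.i.d.\ indicator random variables and then invoke the SLLN.

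First I would set up the coding. Write $x\in[0,1]$ (almost every $x$ having a unique binary expansion) as $x=\sum_{k\ge 1}\epsilon_k(x)2^{-k}$ with $\epsilon_k(x)\in\{0,1\}$. Under $\nu_q$, the digits $(\epsilon_k)_{k\ge 1}$ are i.i.d.\ with $\nu_q(\epsilon_k=0)=q$ and $\nu_q(\epsilon_k=1)=1-q$. The dyadic interval $I_n(x)$ of generation $n$ containing $x$ is determined by $\epsilon_1(x),\dots,\epsilon_n(x)$, and by the product structure of the Bernoulli measure one has the exact identity
$$
\nu_p(I_n(x))=\prod_{k=1}^n p^{\mathbf 1_{\{\epsilon_k(x)=0\}}}(1-p)^{\mathbf 1_{\{\epsilon_k(x)=1\}}},
$$
hence
$$
d(\nu_p,x,n)=\frac{\log\nu_p(I_n(x))}{-n\log 2}=-\frac{1}{n}\sum_{k=1}^n\Big(\mathbf 1_{\{\epsilon_k(x)=0\}}\log_2(p)+\mathbf 1_{\{\epsilon_k(x)=1\}}\log_2(1-p)\Big).
$$

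Next, the random variables $Y_k(x)=\mathbf 1_{\{\epsilon_k(x)=0\}}\log_2(p)+\mathbf 1_{\{\epsilon_k(x)=1\}}\log_2(1-p)$ are i.i.d.\ under $\nu_q$ with common expectation $q\log_2(p)+(1-q)\log_2(1-p)$ (interpreting $0\cdot\infty=0$ when $p\in\{0,1\}$, which handles the degenerate cases where a digit value has $\nu_q$-probability $0$). By Kolmogorov's strong law of large numbers, $\frac1n\sum_{k=1}^nY_k(x)$ converges $\nu_q$-almost surely to this expectation, which gives the first limit $\lim_n d(\nu_p,x,n)=-q\log_2(p)-(1-q)\log_2(1-p)$. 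The second limit is the special case $p=q$ of this computation: $\lim_n d(\nu_q,x,n)=-q\log_2(q)-(1-q)\log_2(1-q)=H(q)$ by definition of $H$.

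The only points needing a little care — and the closest thing to an obstacle here — are the boundary cases $q\in\{0,1\}$ or $p\in\{0,1\}$: when $p\in\{0,1\}$ the quantity $\log_2(p)$ or $\log_2(1-p)$ is $-\infty$, but then the corresponding digit occurs with $\nu_q$-probability $q$ or $1-q$, and if that probability is positive the conclusion is that $d(\nu_p,x,n)\to+\infty$ (so the "limit" is $+\infty$, consistent with the convention $0\times\infty=0$ only when the offending digit has probability $0$); if it has probability $0$, the term never appears in the sum for $\nu_q$-a.e.\ $x$ and the SLLN applies verbatim with the convention $0\times\infty=0$. Likewise $q\in\{0,1\}$ makes $\nu_q$ a Dirac mass at $0$ or $1$ and all the limits are computed trivially and directly. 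Apart from bookkeeping these conventions, the proof is immediate.
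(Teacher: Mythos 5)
Your proof is correct and is exactly the argument the paper has in mind: the paper states this proposition as "just a consequence of the strong law of large numbers" without writing out the details, and your reduction to i.i.d.\ binary digits under $\nu_q$ followed by the SLLN (with the careful treatment of the degenerate cases $p,q\in\{0,1\}$) is the standard way to fill them in.
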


\begin{cor}\label{dim}
For every $0\le \gamma\le d$ and $\alpha\ge\gamma$, for $\nu^{\otimes d}_{q_{\alpha,\gamma}}$-almost every $x\in [0,1]^d$
\begin{equation*}
\begin{split}
&\displaystyle\lim_{n\to \infty} d({\nu^{\otimes d}_{p_{\alpha,\gamma}}},x,n)=\alpha,\\
&\displaystyle\lim_{n\to \infty}d({\nu^{\otimes d}_{q_{\alpha,\gamma}}},x,n)=\gamma.
\end{split}
\end{equation*}
\end{cor}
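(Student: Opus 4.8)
The statement to establish is Corollary~\ref{dim}, which upgrades the one-dimensional Proposition just above it to the $d$-dimensional product setting. The plan is to reduce the $d$-dimensional claim directly to the one-dimensional one by exploiting the product structure of both the measures involved and the dyadic cubes $\mathcal F_n$. First I would write, for $x=(x^{(1)},\dots,x^{(d)})\in[0,1]^d$ and $n\ge 0$, the factorisation $I_n(x)=\prod_{j=1}^d I_n(x^{(j)})$, where on the right $I_n(x^{(j)})$ denotes the generation-$n$ dyadic interval of $[0,1]$ containing $x^{(j)}$. Since $\nu_p^{\otimes d}$ is a product measure, this yields $\nu_p^{\otimes d}(I_n(x))=\prod_{j=1}^d \nu_p(I_n(x^{(j)}))$, hence
$$
d(\nu_p^{\otimes d},x,n)=\frac{\log \nu_p^{\otimes d}(I_n(x))}{-n\log 2}=\sum_{j=1}^d \frac{\log \nu_p(I_n(x^{(j)}))}{-n\log 2}=\sum_{j=1}^d d(\nu_p,x^{(j)},n),
$$
and similarly with $q$ in place of $p$.

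Next I would invoke the one-dimensional Proposition coordinatewise: fix $(p,q)=(p_{\alpha,\gamma},q_{\alpha,\gamma})$. For each fixed $j\in\{1,\dots,d\}$, the set of $t\in[0,1]$ for which $\lim_{n\to\infty} d(\nu_p,t,n)=-q\log_2(p)-(1-q)\log_2(1-p)=:\alpha/d$ and $\lim_{n\to\infty} d(\nu_q,t,n)=H(q)=\gamma/d$ has full $\nu_q$-measure. Therefore, by Fubini applied to the product measure $\nu_q^{\otimes d}$, the set of $x\in[0,1]^d$ such that this pair of limits holds in \emph{every} coordinate $j$ has full $\nu_q^{\otimes d}$-measure (a finite, indeed countable, intersection of full-measure sets is still of full measure). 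On this set, summing the coordinatewise limits via the displayed identity gives $\lim_{n\to\infty} d(\nu_p^{\otimes d},x,n)=d\cdot(\alpha/d)=\alpha$ and $\lim_{n\to\infty} d(\nu_q^{\otimes d},x,n)=d\cdot(\gamma/d)=\gamma$, which is exactly the assertion of the Corollary. The choice of $(p_{\alpha,\gamma},q_{\alpha,\gamma})$ made in the preceding paragraph guarantees these are the prescribed values $\alpha$ and $\gamma$, using the standing hypotheses $0\le\gamma\le d$ and $\alpha\ge\gamma$ that made that choice possible.

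There is essentially no serious obstacle here: the argument is a routine product/Fubini reduction, and the only points that need a word of care are (a) recording the dyadic-cube factorisation $I_n(x)=\prod_j I_n(x^{(j)})$, which is immediate from \eqref{conc} and the definition of $\mathcal F_n$, and (b) noting that $\sup (\text{limsup})$-type issues do not arise because genuine limits are being added, so additivity of limits applies with no $\liminf/\limsup$ subtleties. If one prefers to avoid even invoking Fubini, one can equivalently observe that $\bigotimes_{j=1}^d (\nu_q\text{-full set})$ is $\nu_q^{\otimes d}$-conull by definition of the product measure. Either way the conclusion follows; the mild bookkeeping is simply translating between the normalisation by $-n\log 2$ in each factor and the fact that all $d$ factors share the same scale $2^{-n}$.
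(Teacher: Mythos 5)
Your proof is correct and follows the same route the paper intends: the paper states Corollary~\ref{dim} as an immediate consequence of the one-dimensional Proposition, relying implicitly on exactly the product factorisation $I_n(x)=\prod_j I_n(x^{(j)})$, $\nu_q^{\otimes d}=\bigotimes_j\nu_q$, and the additivity of coordinatewise limits that you spell out. Your version just makes the routine Fubini/product-measure step explicit, which is fine.
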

Notice that if $\gamma>0$, both the $\nu^{\otimes d}_{p_{\alpha,\gamma}}$ mass and the $\nu^{\otimes d}_{q_{\alpha,\gamma}}$ mass of the boundaries of closed dyadic subcubes of $[0,1]^d$ vanish.

Now for  every $0\le \gamma\le d$, $\alpha\ge\gamma$, $n\in \N$ and $\epsilon>0$ define 
$$
E(\alpha,\gamma, n,\epsilon)=\left\{x\in [0,1)^d: \, \begin{cases} d({\nu^{\otimes d}_{p_{\alpha,\gamma}}},x,n) \in [\alpha-\epsilon,\alpha+\epsilon],\\
d({\nu^{\otimes d}_{q_{\alpha,\gamma}}},x,n)\in [\gamma-\epsilon,\gamma+\epsilon]
\end{cases}\right\}.
$$
Let $(\epsilon_m)_{m\ge 1}\in(0,1)^{\N_+}$ be a decreasing sequence converging to 0. 

By Corollary~\ref{dim}, for each $m\in \N_+$ we can fix an integer  $n^{0}_{m}(\alpha,\gamma)$ 
such that  
\begin{equation}\label{fund}
\nu^{\otimes d}_{q_{\alpha,\gamma}}(F_m(\alpha,\gamma))\ge 1/2, \ \mbox{ with } F_m(\alpha,\gamma)=\bigcap_{n\ge  n^0_{m}(\alpha,\gamma)} E(\alpha,\gamma,n,\epsilon_m/2).
\end{equation}
We notice that for all $n\ge n^0_{m}(\alpha,\gamma)$, since $\nu^{\otimes d}_{q_{\alpha,\gamma}}(F_m(\alpha,\gamma))\le 1$ and for each $I\in\mathcal F_n$ such that $I\cap F_m(\alpha,\gamma)\neq\emptyset$ we have $2^{-n(\gamma+\epsilon_m/2)}\le \nu^{\otimes d}_{q_{\alpha,\gamma}}(I)$,  we have 
$ \#\{I\in \mathcal F_n: I\cap F_m(\alpha,\gamma)\neq\emptyset\}\le 2^{n(\gamma+\epsilon_m/2)}$.

Then, let
$$
\widetilde E(\alpha,\gamma,n,\epsilon)=\left\{x\in F_m(\alpha,\gamma): \displaystyle\frac{\log \nu^{\otimes d}_{q_{\alpha,\gamma}}(I_n(x)\cap F_m(\alpha,\gamma))}{-n\log (2)}\in [\gamma-\epsilon_m,\gamma+\epsilon_m]
\right\}.
$$

We can find $n_m(\alpha,\gamma)\ge n^0_{m}(\alpha,\gamma)$ such that 

\begin{equation}\label{controlmuq}
\nu^{\otimes d}_{q_{\alpha,\gamma}}(  \widetilde F_m(\alpha,\gamma))\ge 1/2-1/2^{m}, \ \mbox{ with } \widetilde F_m(\alpha,\gamma)=\bigcap_{n\ge n_{m}(\alpha,\gamma)} \widetilde E(\alpha,\gamma,n,\epsilon_m).
\end{equation}
Indeed, for $n\ge \widetilde n^0_m(\alpha,\gamma)$ we have 
\begin{eqnarray}
\nonumber&&\nu^{\otimes d}_{q_{\alpha,\gamma}}(F_m(\alpha,\gamma)\setminus  \widetilde E(\alpha,\gamma,n,\epsilon_m))\\
\nonumber&=&\nu^{\otimes d}_{q_{\alpha,\gamma}}\big (\{x\in F_m(\alpha,\gamma): \nu^{\otimes d}_{q_{\alpha,\gamma}}(I_n(x)\cap F_m(\alpha,\gamma))
< 2^{-n(\gamma+\epsilon_m)}\}\big )\\
\nonumber&\le& \sum_{I\in\mathcal F_n: \  \nu^{\otimes d}_{q_{\alpha,\gamma}}(I\cap F_m(\alpha,\gamma))< 2^{-n(\gamma+\epsilon_m)}}\nu^{\otimes d}_{q_{\alpha,\gamma}}(I\cap F_m(\alpha,\gamma))\\
\nonumber&\le & ( \#\{I\in\mathcal F_n: I\cap F_m(\alpha,\gamma)\neq\emptyset\}) 2^{-n(\gamma+\epsilon_m)}\\
\label{111}&\le& 2^{n(\gamma+\epsilon_m/2)} 2^{-n(\gamma+\epsilon_m)}=2^{-n\epsilon_m/2},
\end{eqnarray}
so \eqref{controlmuq} follows if we  choose $n_m(\alpha,\gamma)$ such that $\sum_{n\ge n_m(\alpha,\gamma)}2^{-n\epsilon_m}\le 2^{-m}$.

We define 
\begin{equation}\label{muag}
\mu_{{\alpha,\gamma}}=\nu^{\otimes d}_{p_{\alpha,\gamma}}\mbox{ and }\nu_{\alpha,\gamma}=\nu^{\otimes d}_{q_{\alpha,\gamma}} (\cdot \cap F_m({\alpha,\gamma})).
\end{equation}

We can now gather a series of properties which will be used in the proofs of our main results.
\begin{fact}{\rm Let $m\in\N_+$, $0\le \gamma\le d$ and $\alpha\ge\gamma$. 

\noindent (1) If $N\ge n_m (\alpha,\gamma)$, by construction we have 
\begin{equation}\label{tm}
1/2\le \nu_{\alpha,\gamma} (F_m(\alpha,\gamma))=\sum_{I\in\mathcal F_N: I\cap F_m(\alpha,\gamma)\neq\emptyset}\nu_{\alpha,\gamma} (I)\le 1
\end{equation}
(notice that if $\alpha=\gamma$, by construction the above property  also holds for $\mu_{\alpha,\alpha}$). 
Consequently, since for each $I\in\mathcal F_N$ such that $I\cap F_m(\alpha,\gamma)\neq\emptyset$ we have $2^{-N(\gamma+\epsilon_m)}\le \nu_{\alpha,\gamma}(I)\le 2^{-N(\gamma-\epsilon_m)}$, we get
\begin{equation}\label{cardG}
2^{-1}2^{N(\gamma-\epsilon_m)}\le \#\{I\in \mathcal F_N: I\cap F_m(\alpha,\gamma)\neq\emptyset\}\le 2^{N(\gamma+\epsilon_m)}.
\end{equation}

By construction of $\widetilde F_m(\alpha,\gamma)$, we also have 
\begin{eqnarray}\label{0000}
\sum_{I\in \mathcal{F}_{N}:\, I\cap  \widetilde F_{m}(\alpha,\gamma)=\emptyset}\nu_{\alpha,\gamma}(I)\le 2^{-m}.
\end{eqnarray}
\noindent (2) If $J$ is a dyadic cube of generation $n\ge n_m(\alpha,\gamma)$, $J\cap F_m(\alpha,\gamma)\neq\emptyset$, and $N\ge n$, then  we have by construction 
\begin{equation}\label{0}
\nu_{{\alpha,\gamma}} (J)=\sum_{\substack{J\supset I\in \mathcal{F}_N,\\ I\cap F_m({\alpha,\gamma})\neq\emptyset} }\nu_{{\alpha,\gamma}} (I)\le |J|^{\gamma-\epsilon_m},
\end{equation}
and if, moreover, $J\cap \widetilde F_m({\alpha,\gamma})\neq\emptyset$, 
\begin{equation} \label{000}
|J|^{\gamma+\epsilon_m}\le \nu_{{\alpha,\gamma}} (J)=\sum_{\substack{J\supset I\in \mathcal{F}_N,\\ I\cap F_m({\alpha,\gamma})\neq\emptyset} }\nu_{{\alpha,\gamma}} (I)\le |J|^{\gamma-\epsilon_m}.
\end{equation}

\noindent (3)  If $J$ is a dyadic cube of generation $n\ge n_m(\alpha,\gamma)$, $J\cap F_m(\alpha,\gamma)\neq\emptyset$, and $N\ge n$, we also have 
\begin{equation}\label{00}
\sum_{\substack{J\supset I\in \mathcal{F}_N,\\ I\cap F_m({\alpha,\gamma})\neq\emptyset} }\mu_{{\alpha,\gamma}}(I) \le  \mu_{\alpha,\gamma}(J)\le |J|^{\alpha-\epsilon_m}.
\end{equation}
Also, \eqref{0} implies
\begin{equation*}
 \#\{J\supset I\in \mathcal F_N:  I\cap  F_m({\alpha,\gamma})\neq\emptyset \}\le |J|^{\gamma-\epsilon_m}2^{N(\gamma+\epsilon_m)},
\end{equation*}
hence
\begin{equation}\label{0''}
 \sum_{\substack{J\supset I\in \mathcal{F}_N,\\ I\cap  F_m({\alpha,\gamma})\neq\emptyset} } \mu_{{\alpha,\gamma}}(I)\le  2^{-n(\gamma-\epsilon_m)}2^{-N(\alpha-\gamma-2\epsilon_m)}.
\end{equation}

}
\end{fact}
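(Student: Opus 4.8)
The statement to prove is the list of elementary estimates $\eqref{tm}$--$\eqref{0''}$ contained in the \textbf{Properties} environment, which collect basic facts about the Bernoulli product measures $\mu_{\alpha,\gamma}=\nu^{\otimes d}_{p_{\alpha,\gamma}}$ and the truncated measures $\nu_{\alpha,\gamma}=\nu^{\otimes d}_{q_{\alpha,\gamma}}(\,\cdot\,\cap F_m(\alpha,\gamma))$. My plan is to derive each estimate directly from the definitions of $E(\alpha,\gamma,n,\epsilon)$, $F_m(\alpha,\gamma)$, $\widetilde E(\alpha,\gamma,n,\epsilon)$, $\widetilde F_m(\alpha,\gamma)$, together with the additivity of measures on disjoint dyadic cubes and the already-established bounds \eqref{fund}, \eqref{controlmuq}, \eqref{111}. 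There is no deep idea here; the work is bookkeeping of dyadic decompositions and exponents.

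For \eqref{tm}: since $N\ge n_m(\alpha,\gamma)\ge n^0_m(\alpha,\gamma)$, the set $F_m(\alpha,\gamma)$ is a union of cubes in $\mathcal F_N$ that meet it (modulo a $\nu_{\alpha,\gamma}$-null boundary set, using the remark after Corollary~\ref{dim} that $q_{\alpha,\gamma}$-Bernoulli mass of dyadic boundaries vanishes when $\gamma>0$; when $\gamma=0$ the measure $\nu_0$ or $\nu_1$ is a Dirac-type product and one checks the statement directly). Hence $\nu_{\alpha,\gamma}(F_m(\alpha,\gamma))=\sum_{I\in\mathcal F_N,\,I\cap F_m\ne\emptyset}\nu_{\alpha,\gamma}(I)$, and the lower bound $1/2$ is exactly \eqref{fund} restricted to $\nu_{\alpha,\gamma}$, while the upper bound is the total mass. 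For \eqref{cardG}: on $F_m(\alpha,\gamma)\subset E(\alpha,\gamma,N,\epsilon_m/2)$ each meeting cube $I\in\mathcal F_N$ satisfies $2^{-N(\gamma+\epsilon_m/2)}\le\nu^{\otimes d}_{q_{\alpha,\gamma}}(I)\le 2^{-N(\gamma-\epsilon_m/2)}$ by definition of $E$; combining with \eqref{tm} and the fact that $\nu_{\alpha,\gamma}(I)=\nu^{\otimes d}_{q_{\alpha,\gamma}}(I\cap F_m)\le\nu^{\otimes d}_{q_{\alpha,\gamma}}(I)$, summing over the meeting cubes sandwiches their number between $2^{-1}2^{N(\gamma-\epsilon_m)}$ and $2^{N(\gamma+\epsilon_m)}$ (with room to spare, since $\epsilon_m/2<\epsilon_m$). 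Estimate \eqref{0000} is simply the complement of \eqref{controlmuq}: $\sum_{I\in\mathcal F_N,\,I\cap\widetilde F_m=\emptyset}\nu_{\alpha,\gamma}(I)=\nu_{\alpha,\gamma}(F_m\setminus\widetilde F_m)\le 1-(1/2-2^{-m})-?$ — more precisely one uses $\nu_{\alpha,\gamma}(\widetilde F_m)\ge\nu^{\otimes d}_{q_{\alpha,\gamma}}(\widetilde F_m)\ge 1/2-2^{-m}$ together with $\nu_{\alpha,\gamma}(F_m)\le\?$; the cleaner route is that \eqref{111} summed over $n\ge n_m$ gives $\nu^{\otimes d}_{q_{\alpha,\gamma}}(F_m\setminus\widetilde F_m)\le 2^{-m}$, and the cubes not meeting $\widetilde F_m$ carry $\nu_{\alpha,\gamma}$-mass at most that.

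For part (2): if $J\in\mathcal F_n$ meets $F_m(\alpha,\gamma)$ with $n\ge n_m(\alpha,\gamma)$, then $J\supset$ some point of $F_m\subset E(\alpha,\gamma,n,\epsilon_m/2)$, so $\nu^{\otimes d}_{q_{\alpha,\gamma}}(J)\le|J|^{\gamma-\epsilon_m/2}\le|J|^{\gamma-\epsilon_m}$ (since $|J|=2^{-n}\le 1$), and $\nu_{\alpha,\gamma}(J)\le\nu^{\otimes d}_{q_{\alpha,\gamma}}(J)$ gives \eqref{0}; the sum formula is additivity over $I\in\mathcal F_N$, $I\subset J$, $I\cap F_m\ne\emptyset$, again using the null-boundary remark. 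If moreover $J$ meets $\widetilde F_m(\alpha,\gamma)$, pick $x\in J\cap\widetilde F_m$; since $\widetilde F_m\subset\widetilde E(\alpha,\gamma,n,\epsilon_m)$ and $I_n(x)=J$, the definition of $\widetilde E$ gives $\nu^{\otimes d}_{q_{\alpha,\gamma}}(J\cap F_m)\ge 2^{-n(\gamma+\epsilon_m)}=|J|^{\gamma+\epsilon_m}$, i.e. $\nu_{\alpha,\gamma}(J)\ge|J|^{\gamma+\epsilon_m}$, which is the lower bound in \eqref{000}. For part (3): the estimate $\mu_{\alpha,\gamma}(J)\le|J|^{\alpha-\epsilon_m}$ in \eqref{00} is the $p$-analogue of \eqref{0}, using that $J$ meets $E(\alpha,\gamma,n,\epsilon_m/2)$ so $\nu^{\otimes d}_{p_{\alpha,\gamma}}(J)\le|J|^{\alpha-\epsilon_m/2}\le|J|^{\alpha-\epsilon_m}$, and the left inequality is monotonicity of the measure; the counting bound follows from \eqref{0} applied at scale $N$ inside $J$: each $I\in\mathcal F_N$ meeting $F_m$ inside $J$ has $\nu^{\otimes d}_{q_{\alpha,\gamma}}$-mass $\ge 2^{-N(\gamma+\epsilon_m)}$ while their total mass is $\le\nu^{\otimes d}_{q_{\alpha,\gamma}}(J)\le|J|^{\gamma-\epsilon_m}=2^{-n(\gamma-\epsilon_m)}$, so their number is $\le 2^{-n(\gamma-\epsilon_m)}2^{N(\gamma+\epsilon_m)}$; multiplying this count by the per-cube bound $\mu_{\alpha,\gamma}(I)\le 2^{-N(\alpha-\epsilon_m)}$ and summing yields \eqref{0''}. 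The only mildly delicate point throughout is the treatment of dyadic-cube boundaries in the degenerate cases $\gamma\in\{0,d\}$ (equivalently $q_{\alpha,\gamma}\in\{0,1,1/2\}$), where one must either invoke the null-boundary remark ($\gamma>0$) or handle the atomic/uniform product measure by hand; this is the main, though very minor, obstacle, and everything else is routine manipulation of the exponents $\gamma\pm\epsilon_m/2$ versus $\gamma\pm\epsilon_m$ permitted by $|J|\le 1$.
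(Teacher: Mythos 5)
Your derivations are correct and follow exactly the route the paper intends: these estimates are asserted ``by construction'' from \eqref{fund}, \eqref{controlmuq}, \eqref{111} and the definitions of $E$, $F_m$, $\widetilde E$, $\widetilde F_m$, and you fill in precisely those details, including the right choice of which mass bound to use where (the unrestricted $\nu^{\otimes d}_{q_{\alpha,\gamma}}(I)\ge 2^{-N(\gamma+\epsilon_m/2)}$ for the upper count bound in \eqref{cardG} and in part (3), versus the restricted $\nu_{\alpha,\gamma}(I)\le\nu^{\otimes d}_{q_{\alpha,\gamma}}(I)$ for the lower count bound), the passage from $\epsilon_m/2$ to $\epsilon_m$ via $|J|\le 1$, and the correct source of \eqref{0000} (summing \eqref{111} over $n\ge n_m(\alpha,\gamma)$ rather than subtracting the two lower bounds). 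The only cosmetic issue is the abandoned half-sentence in your discussion of \eqref{0000}, which you immediately replace with the correct argument.
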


\section{Prescription of lower Hausdorff spectra: Proof of Theorems~\ref{thm1}}\label{proof1}

\subsection{Construction of $\mu$ and estimates of its local dimension}\label{constmu1} 

Setting apart some important details omitted in the  outline provided in Section~\ref{introduc}, the proof we present for the general case will be more sophisticated because: (1) $f$ is only upper semi-continuous; (2) $f$ may have more than one fixed point; (3) $\mathrm{dom}(f)$ may contain $\infty$; (4) we will manage that all the sets $\underline E(\mu,\beta)$ are substantially big when they are not empty, this meaning that they contain a Cantor set. Referring to our sketch of proof, since we will use Bernoulli measures for $\mu_{\alpha_i,f(\alpha_i)}$ and $\nu_{\alpha_i,f(\alpha_i)}$, to getting such Cantor sets necessitates to avoid using couples  $(\alpha_i,f(\alpha_i))$ for which $f(\alpha_i)=0$ in the construction; indeed  in this case we know that the Bernoulli measure $\nu_{\alpha_i,f(\alpha_i)}$ is a Dirac mass.  These couples will be replaced by couples $(\alpha_i,\gamma_m(\alpha_i))$, where $0<\gamma_m(\alpha_i)$ tends to $0$ as $m\to\infty$. In particular, to illustrate the case where $f(0)=0$ and $f(\alpha)=-\infty$ for all $\alpha\neq0$, instead of taking $\mu=\delta_{x}$ for some $x\in \R^d$, we will construct a continuous measure on a Cantor set of dimension 0.

\subsubsection{Construction of the measure $\mu$} We will denote $\mathrm{dom}(f)$ by $\mathcal I$. Let $D\in\mathrm{Fix}(f)$. Due to our assumption requiring  the upper semi-continuity of $f$, there exists a dense countable subset $\Delta$ of $\mathcal{I}\setminus \{\infty\}$ such that for all $\alpha\in\mathcal{I}\setminus \{\infty\}$, there exists a sequence $(\alpha_n)_{n\ge 1}$ in $\Delta^{\N_+}$ such that $\lim_{n\to\infty}(\alpha_n,f(\alpha_n))=(\alpha,f(\alpha))$ (if $\infty\in \mathcal{I}$ and $f$ is continuous at $\infty$ the same holds at $(\infty,f(\infty))$, but we will not need this property). This important fact is elementary (see for instance \cite[Lemma 2]{JLV-T} for a proof when $\mathrm{dom}(f)$ is a compact subset of $\R$; the general case considered here then follows immediately). 

We fix once for all such a $\Delta$, and assume without loss of generality that it contains a dense subset of $\mathrm{Fix}(f)$.

Let $\alpha_{\min}=\min (\mathcal{I})$ and $\alpha_{\max}=\max (\mathcal{I})\in {\R}_+\cup\{\infty\}$. 

If $\Delta\setminus\{0,D\}\neq\emptyset$, enumerate its elements  in a sequence $(\alpha^\Delta_j)_{j\ge 1}$ (with necessary redundences if $\Delta\setminus\{0,D\}$ is finite), and for each $m\in \N$ set 
$$
\widetilde A_m=\big \{\alpha^\Delta_j:1\le j\le m,\ \alpha^\Delta_j\ge 4\epsilon_m\big\};
$$
otherwise set $\widetilde A_m=\emptyset$. Also set 
$$
\begin{cases}
D_m= 2\epsilon_m \text{ if } D=0, \text{ and } D_m=D \text{ otherwise},\\
\alpha_m(0)=D_m \text{ if } D=0 \text{ and }\alpha_m(0)=\min(2\epsilon_m,D)/2 \text{ otherwise}, \\
\alpha_m(\infty)=\big (\max (d,m, \max (\alpha^\Delta_j:1\le j\le m)\big )^2\text{ if }\alpha_{\max}=\infty.
\end{cases}
$$
Then let 
$$
A_m=\begin{cases}\widetilde A_m\cup\{D_m\}&\mbox{if } 0<\alpha_{\min}\le \alpha_{\max}<\infty\\
\widetilde A_m\cup\{D_m\}\cup \{\alpha_m(\infty)\}&\mbox{if }0<%
\alpha_{\min}\text{ and } \alpha_{\max}=\infty,\\
\widetilde A_m\cup\{D_m\}\cup \{\alpha_m(0)\}&\mbox{if }
\alpha_{\min}=0\text{ and } \alpha_{\max}<\infty,\\
\widetilde A_m\cup\{D_m\}\cup \{\alpha_m(0)\}\cup\{\alpha_m(\infty)\}&\mbox{if }
\alpha_{\min}=0\text{ and } \alpha_{\max}=\infty,
\end{cases}.
$$ 

For $\alpha\in A_m$ let 
$$
\gamma_m(\alpha)=\begin{cases}
f(\alpha)&\text{if }\alpha\in \widetilde A_m\text{ and }f(\alpha)>0\\
f(\infty)&\text{if }\alpha= \alpha_m(\infty)\text{ and }f(\infty)>0,\\
\alpha &\text{if }\alpha=D_m,\\
 \alpha_m(0)&\text{if }\alpha=  \alpha_m(0),\\
 \epsilon_m &\text{if }\alpha\in\widetilde A_m\text{ and }f(\alpha)=0,\\
\epsilon_m &\text{if }\alpha=\alpha_m(\infty)\text{ and }f(\infty)=0
\end{cases}.
$$
Notice that $\gamma_m(\alpha)\le \alpha$ for all $\alpha\in A_m$. For the values of $\alpha\in A_m\setminus \{0,D\}$ such that $f(\alpha)=0$, we choose $\gamma_m(\alpha)>0$ so that the measure $\nu_{{\alpha,\gamma_m(\alpha)}}$ be continuous but of dimension tending to 0 as $m\to\infty$, and in our construction no level set $\underline E(\mu,\beta)$ be supported on a countable set; indeed, with this choice every non empty such set will contain a Cantor set when $f(\beta)\ge 0$, which is a satisfactory common property. The choice of $\alpha_m(0)$ and $\gamma_m(\alpha_m(0))$, and that of $D_m$ when $D=0$, correspond to the same goal.

Using the definitions of Section~\ref{sec2},  for $\alpha\in A_m$, set 
\begin{equation*}
\mu_{\alpha}=\mu_{{\alpha,\gamma_m(\alpha)}} \text{ and }\nu_{\alpha}=\nu_{{\alpha,\gamma_m(\alpha)}} \text{ for  }\alpha\not\in \mathrm{Fix}(\gamma_m)\text{ and } \mu_{\alpha}=\nu_{\alpha}=\nu_{{\alpha,\alpha}} \text{ for  }\alpha\in \mathrm{Fix}(\gamma_m). 
\end{equation*}
Strickly speaking, $\mu_\alpha$ and $\nu_\alpha$ should be written $\mu_{m,\alpha}$ and $\nu_{m,\alpha}$, but for the sake of readability we will omit the index~$m$. 

Also, let 
$$
n_m=\max\{n_{m}(\alpha,\gamma_m(\alpha)): \alpha\in A_m\}.
$$

Now let $(N_m)_{n\in\N}$ be an increasing sequence of integers defined recursively   satisfying the following properties:
\begin{equation}\label{cond}
\begin{cases}\forall\, m\ge 1,\ N_m\ge n_m,\\
\max ((m +\#A_{m}+\max(A_m))^2,n_m)=o(\sqrt{N_{m-1}}) \mbox{ as }m\to\infty,\\
(\max (\{1\}\cup A_{m-1}))\displaystyle\sum_{i=1}^{m-1}N_i =o\big (\min (\{1\}\cup \gamma_m (A_m)) \sqrt{N_m}\big )\mbox{ as }m\to\infty.
\end{cases}
\end{equation}
For each $m\ge 1$ and $\alpha\in A_m$ set 
$$
G_m(\alpha)=\{I\in \mathcal{F}_{N_m}: I\cap F_{m}(\alpha,\gamma_m(\alpha))\neq\emptyset\}
$$ 
and 
$$
\rho_m(\alpha)=\begin{cases}
1&\text{ if }\alpha=D_m\\
2^{-m}/\#A_m&\text{ otherwise}
\end{cases}.
$$
Due to \eqref{tm} we have 
\begin{equation}\label{controlmuq00}
2^{-1}\le \sum_{I\in G_m(\alpha)}\nu_\alpha(I) \le 1 \quad (\forall\ \alpha\in A_m),
\end{equation}
 so
\begin{equation}\label{controlmuq1}
2^{-1}\le \sum_{I\in G_m(D_m)} \rho_m(D_m)\,(\mu_{D_m}(I)=\nu_{D_m}(I)) \le 1.
\end{equation}
Also,  

\begin{equation}\label{concentration}
\sum_{\alpha\in A_m\setminus  \{D_m\}} \sum_{I\in G_m(\alpha)}\rho_m(\alpha) \mu_{\alpha}(I)\le \sum_{\alpha\in A_m\setminus  \{D_m\}} \frac{2^{-m}}{\#A_m}\|\mu_\alpha\|\le 2^{-m}.
\end{equation}

For each $m\in \N_+$, we enumerate the elements of $A_m$ as $\alpha_{m,1},\ldots, \alpha_{m,\#A_m}$, we denote by  $\mathcal{L}_m$ the set of disjoint closed dyadic cubes $\mathcal{L}(\# A_m)$ defined in Section~\ref{nota}, and denote its elements as $L_{m,\alpha_{m,1}},\ldots, L_{m,\alpha_{m,\#A_m}}$.  We also denote $L_{m,\alpha_{m,i}}$ by  $L_{m,i}$ and $\ell(\#A_m)$ by~$\ell_m$.

We can start the construction of $\mu$. We will construct a Cantor set $K$ by defining recursively the sequence of families of cubes $({\bf G}_m)_{m\in\N}$ such that $K=\bigcap_{m\ge 1}\bigcup_{J\in {\bf G}_m}J$, and simultaneously a consistent sequence  of measures $\mu_m$ supported on $\bigcup_{J\in {\bf G}_m}J$ to get the desired  measure $\mu$ on $K$. 

Let 
$$
{\bf G}_1= \bigcup_{\alpha_{1,i}\in A_1}\{L_{1,i} I_{1,i}:I_{1,i}\in G_1(\alpha_{1,i})\}
$$ 
(the concatenation of intervals has been defined in \eqref{conc}). By construction, the interiors of the elements of  ${\bf G}_1$ are pairwise disjoint. Then define the measure $\mu_1$ is defined on ${\bf G}_1$ by
\begin{equation}\label{mu1}
\mu_1(L_{1,i} I_{1,i})= \frac{\rho_1(\alpha_{1,i})\mu_{{\alpha_{1,i}}}(I_{1,i})}{\displaystyle \sum_{\alpha\in A_1}\sum_{I\in G_1(\alpha)}\rho_1(\alpha)\mu_{{\alpha}}(I)}
\end{equation}
Combining  \eqref{controlmuq1} and  \eqref{concentration} we have 
$$
2^{-1}\le \sum_{\alpha\in A_1}\sum_{I\in G_1(\alpha)}\rho_1(\alpha)\mu_{{\alpha}}(I)\le 1+2^{-1}.
$$
Consequently, \eqref{mu1} yields
\begin{equation*}\label{mu1'}
  \frac{2}{3}\rho_1(\alpha_{1,i}) \mu_{{\alpha_{1,i}}}(I_{1,i})\le \mu_1(L_{1,i}I_{1,i})\le 2\rho_1(\alpha_{1,i})\mu_{{\alpha_{1,i}}}(I_{1,i}).
\end{equation*}

Then,  we define recursively for $m\ge 1$: 
$$
{\bf G}_{m+1}=\bigcup_{I_m\in {\bf G}_m}\bigcup_{\alpha_{m+1,i}\in A_{m+1}}  {\bf G}_{m+1}(I_m,\alpha_{m+1,i}), 
$$
where
$$
{\bf G}_{m+1}(I_m,\alpha_{m+1,i})=\big\{I_mL_{m+1,i} I_{m+1,i}:\, I_{m+1,i}\in G_{m+1}(\alpha_{m+1,i})\big \}
$$
and a measure $\mu_{m+1}$ on ${\bf G}_{m+1}$ by setting
\begin{equation*}\label{mum+1}
\mu_{m+1}(I_m L_{m+1,i} I_{m+1,i})=\mu_m(I_m) \frac{\rho_{m+1}(\alpha_{m+1,i})\mu_{{\alpha_{m+1,i}}}(I_{m+1,i})}{\displaystyle \sum_{\alpha\in A_{m+1}}\sum_{I\in G_{m+1}(\alpha)}\rho_{m+1}(\alpha)\mu_{{\alpha}}(I)},
\end{equation*}
which by construction satisfies
\begin{equation}\label{mum+1'}
  \frac{1}{1+2^{-m}}\le \frac{\mu_{m+1}(I_m L_{m+1,i} I_{m+1,i})}{\mu_m(I_m)\rho_{m+1}(\alpha_{m+1,i}) \mu_{{\alpha_{m+1,i}}}(I_{m+1,i})}\le 2
\end{equation}
by \eqref{controlmuq1} and  \eqref{concentration}.

Each measure $\mu_m$ can be trivially extended to a probability measure on $\mathcal{F}_{g_m}$, where $g_m=-\log_2 |I_m|$, that we still denote by $\mu_m$. This measure yields an absolutely continuous Borel measure on $[0,1]^d$, denoted by $\mu_m$ again, whose density with respect to the Lebesgue measure is given by $2^{d g_m}\mu_m(I)$ over each cube $I\in\mathcal F_{g_m}$. By construction,  the measures $\mu_m$ ($m\in\N$) weakly  converge to a Borel probability measure $ \mu$ on $[0,1]^d$, supported on  the Cantor set $K$ defined as
$$
K=\bigcap_{m\ge 1}\bigcup_{I\in {\bf G}_m}I,
$$
and satisfying $\mu(I)=\mu_m(I)$ for all $m\ge 1$ and $I\in \mathcal F_{g_m}$. Moreover, since $K$ does not intersect the boundary of any dyadic cube due to the definition of the sets $\mathcal{L}_m$, $\mu$ vanishes on  such a boundary.

\subsubsection{Estimates of the local dimension of $\mu$}\label{s-estim}
Let $x\in K$ and $n>g_1=N_1+\log_2 |L_{1,i_1}|^{-1}$. There exists a unique $m\in \N$ such that $g_m<n\le g_{m+1}$. By construction, we have $I_{g_{m}}(x)\in {\bf G}_{m}$ and $I_{g_{m+1}}(x)\in {\bf G}_{m+1}$, and  $I_{g_{m+1}}(x)\subset I_n(x)\subset I_{g_m}(x)$. Moreover, there exist a unique sequence of exponents $\alpha_1(x)\in A_1,\ldots,\alpha_m(x)\in A_m,\alpha_{m+1}(x)\in A_{m+1}$, and a unique sequence of pair of intervals $\{L_j,I_j\}_{1\le j\le m+1}$ such that 
\begin{equation}\label{Igm}
I_{g_m}(x)=L_{1}I_1\cdots L_{m}I_m\quad\text{and}\quad  I_{g_{m+1}}(x)=L_{1}I_1\cdots L_{m}I_mL_{m+1}I_{m+1},
\end{equation} 
with $I_{j}\in {G}_j(\alpha_j(x))$ and $L_j\in\mathcal{L}_j$  for each $1\le j\le m+1$. The intervals invoked in \eqref{Igm} will be used in the following statements.

\begin{pro}\label{localestimate}With the notations introduced above, there exists a positive sequence $(\delta_n)_{n\ge 1}$ converging to $0$ as $n\to\infty$ such that, uniformly in $x\in K$, for $g_m\le n\le g_{m+1}$ one has   
\begin{enumerate}
\item 
\begin{equation}\label{goodcontrols1}
\frac{  \mu(I_n(x))}{\displaystyle2^{-g_m \alpha_m(x)-(n- g_m)\alpha_{m+1}(x)}}\le 2^{n\delta_n};
\end{equation}
\item  if either $n=g_m$, or  $\alpha_{m+1}(x)\in \mathrm{Fix}(\gamma_{m+1})$ and $I_{m+1}\cap \widetilde F_{m+1}(\alpha_{m+1}(x),\alpha_{m+1}(x))\neq\emptyset$, then
\begin{equation}\label{goodcontrols2}
2^{-n\delta_n}\le \frac{  \mu(I_n(x))}{2^{-g_m \alpha_m(x)-(n- g_m)\alpha_{m+1}(x)}}\le 2^{n\delta_n},
\end{equation}
\item  if $g_m+\ell_{m+1}+n_{m+1}<n\le g_{m+1}$ then 
\begin{equation}\label{goodcontrols3}
\frac{  \mu(I_n(x))}{2^{-g_m \alpha_m(x)-(n- g_m)\gamma_{m+1}(\alpha_{m+1}(x))-N_{m+1}(\alpha_{m+1}(x)-\gamma_{m+1}(\alpha_{m+1}(x))}}\le 2^{n\delta_n+2N_{m+1}\epsilon_{m+1}}.
\end{equation}
\end{enumerate}
\end{pro}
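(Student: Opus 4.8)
The plan is to estimate $\mu(I_n(x))$ by following the telescoping structure of the construction, writing $I_{g_m}(x)=L_1I_1\cdots L_mI_m$ and exploiting the quasi-multiplicativity in \eqref{mum+1'}, which gives
$$
\mu(I_{g_m}(x))\asymp \prod_{j=1}^{m}\rho_j(\alpha_j(x))\,\mu_{\alpha_j(x)}(I_j)
$$
up to a factor $\prod_{j}(1+2^{-j})^{\pm 1}\le C$, a bounded constant. Using \eqref{0}--\eqref{000} with $J=I_j$, each $\mu_{\alpha_j(x)}(I_j)$ is comparable to $|I_j|^{\alpha_j(x)}=2^{-N_j\alpha_j(x)}$ up to $2^{N_j\epsilon_j}$, and the $\rho_j$-factors contribute at most $\prod_{j\le m}\#A_j\,2^{j}$, which by the growth conditions \eqref{cond} is $2^{o(g_m)}$ since $\sum_{i\le m-1}N_i=o(\sqrt{N_m})$ and $g_m=\sum_{j\le m}(N_j+\ell_j)$. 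Collecting these, $\mu(I_{g_m}(x))=2^{-\sum_{j\le m}N_j\alpha_j(x)+o(g_m)}$, and since $g_m=N_m+\ell_m+g_{m-1}$ with $g_{m-1}\ll N_m$ and $\ell_m\ll N_m$, the dominant term is $N_m\alpha_m(x)$; one rewrites $\sum_{j\le m}N_j\alpha_j(x)=g_m\alpha_m(x)+O\big(\sum_{j\le m-1}N_j\max(A_{m-1})\big)=g_m\alpha_m(x)+o(g_m)$, using again \eqref{cond}. This yields \eqref{goodcontrols2} in the case $n=g_m$ with $\delta_n$ the corresponding $o(1)$ sequence; the upper bound \eqref{goodcontrols1} at $n=g_m$ follows a fortiori.

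For $g_m<n\le g_{m+1}$ one must handle the "unfinished" last block $L_{m+1}I_{m+1}$, i.e. control $\mu(I_n(x))/\mu(I_{g_m}(x))$. Here $I_n(x)$ sits inside $I_{g_m}(x)\cdot L_{m+1}\cdot(\text{prefix of }I_{m+1})$, and from the defining formula for $\mu_{m+1}$ together with the concentration bound \eqref{concentration} (the denominator $\sum_{\alpha\in A_{m+1}}\sum_{I\in G_{m+1}(\alpha)}\rho_{m+1}(\alpha)\mu_\alpha(I)$ lies in $[1/2,3/2]$), we get
$$
\mu(I_n(x))\asymp \mu(I_{g_m}(x))\cdot\rho_{m+1}(\alpha_{m+1}(x))\cdot\mu_{\alpha_{m+1}(x)}\big(I_n(x)\ominus(I_{g_m}(x)\cdot L_{m+1})\big),
$$
where the last factor is the $\mu_{\alpha_{m+1}(x)}$-measure of the relevant sub-cube of $[0,1]^d$ of generation $n-g_m-\ell_{m+1}$. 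If $n\le g_m+\ell_{m+1}$ this factor is at most $1$ and $\rho_{m+1}\ge 2^{-m-1}/\#A_{m+1}=2^{-o(g_{m+1})}$, giving \eqref{goodcontrols1} since the claimed bound $2^{-(n-g_m)\alpha_{m+1}(x)}$ is $\ge 2^{-\ell_{m+1}\alpha_{m+1}(x)}=2^{-o(g_{m+1})}$; when $n>g_m+\ell_{m+1}$ one applies \eqref{00} to bound this factor by $|J|^{\alpha_{m+1}(x)-\epsilon_{m+1}}\le |J|^{\alpha_{m+1}(x)}2^{n\epsilon_{m+1}}$ where $|J|=2^{-(n-g_m-\ell_{m+1})}$, and absorbs $\ell_{m+1}\alpha_{m+1}(x)=o(g_{m+1})$ and the $\rho_{m+1}$ factor into $2^{n\delta_n}$; this is \eqref{goodcontrols1} in full. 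For \eqref{goodcontrols2} in the case $\alpha_{m+1}(x)\in\mathrm{Fix}(\gamma_{m+1})$ with $I_{m+1}\cap\widetilde F_{m+1}\neq\emptyset$, one additionally needs the matching lower bound, which comes from \eqref{000}: then $\mu_{\alpha_{m+1}(x)}(J)=\nu_{\alpha_{m+1}(x)}(J)\ge |J|^{\alpha_{m+1}(x)+\epsilon_{m+1}}$, giving the two-sided estimate; one also needs $n>g_m+\ell_{m+1}$, which holds outside a range of length $\ell_{m+1}=o(g_{m+1})$ that can be absorbed.

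For \eqref{goodcontrols3}, when $g_m+\ell_{m+1}+n_{m+1}<n\le g_{m+1}$, the sub-cube $J$ of generation $n-g_m-\ell_{m+1}\ge n_{m+1}$ meets $F_{m+1}(\alpha_{m+1}(x),\gamma_{m+1}(\alpha_{m+1}(x)))$, so \eqref{0''} applies with $J$ and $N=N_{m+1}$, bounding $\sum_{J\supset I\in\mathcal F_{N_{m+1}},\,I\cap F_{m+1}\neq\emptyset}\mu_{\alpha_{m+1}(x)}(I)$ — which is exactly the contribution of the still-"open" part of $I_{m+1}$ at generation $N_{m+1}$ below $J$ — by $2^{-(n-g_m-\ell_{m+1})\gamma_{m+1}(\alpha_{m+1}(x))}\,2^{-N_{m+1}(\alpha_{m+1}(x)-\gamma_{m+1}(\alpha_{m+1}(x))-2\epsilon_{m+1})}$; combining with $\mu(I_{g_m}(x))\asymp 2^{-g_m\alpha_m(x)+o(g_m)}$, absorbing $\ell_{m+1}\gamma_{m+1}(\alpha_{m+1}(x))=o(g_{m+1})$ and the $\rho_{m+1}$ factor into $2^{n\delta_n}$, and keeping the $2^{2N_{m+1}\epsilon_{m+1}}$ explicitly, yields \eqref{goodcontrols3}. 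Throughout, one defines $\delta_n$ once, as a single sequence dominating all the $o(1)$ error rates produced above (which depend only on $m$ via $g_m,N_m,\epsilon_m,\#A_m,\max(A_m)$ and the conditions \eqref{cond}), so that it is uniform in $x\in K$.

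The main obstacle is bookkeeping the error terms: one must verify that every stray factor ($\rho_j$ products, the $\ell_j\alpha_j$ contributions from the "separating" cubes $L_j$, the $g_{j-1}$-versus-$N_j$ comparisons, and the $2^{\pm N_j\epsilon_j}$ deviations) is genuinely $2^{o(g_m)}$ or $2^{o(n)}$, uniformly in $x$, which is precisely what the three displayed growth conditions in \eqref{cond} are designed to guarantee — and the only delicate point is that when $\alpha_{m+1}(x)=\alpha_{m+1}(\infty)\to\infty$ one needs the single power $\max(\{1\}\cup A_{m-1})\sum_{i\le m-1}N_i=o(\min(\{1\}\cup\gamma_m(A_m))\sqrt{N_m})$ to still dominate the error, which it does by the third line of \eqref{cond}.
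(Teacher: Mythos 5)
Your proposal is correct and follows essentially the same route as the paper's proof: the telescoped product for $\mu(I_{g_m}(x))$ coming from \eqref{mum+1'}, the reduction of the last partial block to the $\mu_{\alpha_{m+1}}$- or $\nu_{\alpha_{m+1}}$-mass of a sub-cube $J$ of generation $n-g_m-\ell_{m+1}$ estimated via \eqref{00}, \eqref{000} and \eqref{0''}, and the absorption of the $\rho_j$, $\ell_j$ and $2^{\pm N_j\epsilon_j}$ factors through \eqref{cond}. The one point to tighten is the transitional range $g_m<n\le g_m+\ell_{m+1}+n_{m+1}$, where $J$ has generation below $n_{m+1}(\alpha_{m+1},\cdot)$ so \eqref{00} and \eqref{000} do not apply directly; there one should instead squeeze $\mu(I_n(x))$ between $\mu(I_{g_m}(x))$ and $\mu(I_{g_m+\ell_{m+1}+n_{m+1}}(x))$ by monotonicity and absorb the resulting $2^{O((\ell_{m+1}+n_{m+1})\max(A_{m+1}))}$ discrepancy into $2^{n\delta_n}$, which is what your ``absorbed'' remark implicitly requires and what the paper does.
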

\begin{cor}\label{dimloc}
For all $x\in K$ we have 
\begin{enumerate}
\item $\displaystyle \underline d(\mu,x)=\liminf_{m\to\infty}\alpha_m(x)$. 

\item   If for $m$ large enough we have $\alpha_m(x)\in \mathrm{Fix}(\gamma_m)$ and $I_m\cap \widetilde F_m(\alpha_m(x),\alpha_m(x))\neq\emptyset$,  then $\overline d(\mu,x)= \limsup_{m\to\infty}\alpha_m(x)$.

\item  If $\liminf_{m\to\infty}\alpha_m(x)\in \mathcal{I}\setminus \mathrm{Fix}(f)$  then  $\overline d(\mu,x)=\infty$. In particular, if $\alpha\in\mathcal{I}\setminus \mathrm{Fix}(f)$, then $\underline E(\mu,\alpha)=E(\mu,\alpha,\infty)$. 
\end{enumerate}
\end{cor}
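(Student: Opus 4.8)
The plan is to derive the three estimates in Proposition~\ref{localestimate} by unfolding the recursive definition of $\mu$ and then inserting the Bernoulli-measure estimates \eqref{tm}--\eqref{0''} from Section~\ref{sec2}. First I would fix $x\in K$, write $I_n(x)$ in terms of the coding \eqref{Igm}, and treat three regimes according to where $n$ sits inside the block $(g_m,g_{m+1}]$. The outermost layer: by the telescoping bounds \eqref{mum+1'}, for $I_{g_m}(x)=L_1 I_1\cdots L_m I_m$ one has $\mu(I_{g_m}(x))$ comparable (up to $\prod_{j\le m}(1+2^{-j})^{\pm 1}\le 4$) to $\prod_{j=1}^m \rho_j(\alpha_j(x))\,\mu_{\alpha_j}(I_j)$, and by \eqref{000} (or \eqref{0}) each factor $\mu_{\alpha_j}(I_j)$ lies in $[2^{-N_j(\alpha_j(x)+\epsilon_j)},2^{-N_j(\alpha_j(x)-\epsilon_j)}]$ (respectively only the upper bound holds without the $\widetilde F$ condition), while $\rho_j(\alpha_j(x))\ge 2^{-j}/\#A_j$; using condition \eqref{cond} — precisely that $\sum_{i<m} N_i$ and $m+\#A_m+\max(A_m)$ are $o(\sqrt{N_m})$ hence $o(g_{m+1})$ — all the "error" contributions $\ell_j$, $-\log\rho_j$, $\epsilon_jN_j$, and $\sum_{i<m}N_i$ are absorbed into a term $2^{n\delta_n}$ with $\delta_n\to 0$. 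This yields \eqref{goodcontrols1} when $n=g_m$, and the matching lower bound (case $n=g_m$ of \eqref{goodcontrols2}) under the extra $\mathrm{Fix}(\gamma_j)$/$\widetilde F_j$ hypothesis on the last generation, noting $g_m\alpha_m(x)=\sum_{j\le m}N_j\alpha_j(x)+O(\sum_j\ell_j)$ with $\sum_j\ell_j=o(g_m)$ by \eqref{cond}.

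For $g_m<n\le g_{m+1}$ one conditions on the cube $I_{g_m}(x)$ and analyses the single new block. Here $I_n(x)$ is of the form $I_{g_m}(x)\cdot J$ where $J$ is a dyadic cube inside $L_{m+1,i}I_{m+1}'$ of appropriate generation; when $n\le g_m+\ell_{m+1}$ the new coordinate contributes nothing (we are inside the "spacer" $L_{m+1,i}$ and $\mu$ on $I_n(x)$ equals the $\mu_{m+1}$-mass of all descendants, i.e. $\approx\mu(I_{g_m}(x))$, giving the $n=g_m$ bound again up to $2^{n\delta_n}$); for $g_m+\ell_{m+1}<n\le g_{m+1}$ I use the definition of $\mu_{m+1}$ together with \eqref{concentration}, \eqref{controlmuq1} to replace the denominator $\sum_{\alpha\in A_{m+1}}\sum_I\rho_{m+1}(\alpha)\mu_\alpha(I)$ by a constant in $[1/2,3/2]$, so that $\mu(I_n(x))\approx\mu(I_{g_m}(x))\,\rho_{m+1}(\alpha_{m+1}(x))\,\mu_{\alpha_{m+1}}(J)$ where now $J$ is a generation-$(n-g_m-\ell_{m+1})$ subcube of $[0,1]^d$; applying the upper bound in \eqref{00} gives $\mu_{\alpha_{m+1}}(J)\le |J|^{\alpha_{m+1}(x)-\epsilon_{m+1}}$, i.e. $\le 2^{-(n-g_m)\alpha_{m+1}(x)}2^{g_m\cdot 0}2^{O(\ell_{m+1}+N_{m+1}\epsilon_{m+1})}$, which after absorbing the error terms into $2^{n\delta_n}$ is exactly \eqref{goodcontrols1}; the lower bound of \eqref{goodcontrols2} comes from the lower bound in \eqref{00} plus \eqref{000} under the stated hypothesis. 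Finally \eqref{goodcontrols3} is the \emph{refined} upper bound in the same regime, obtained by not throwing away cardinality information: $I_n(x)$ contains roughly $|J'|^{-\gamma_{m+1}(\alpha_{m+1}(x))}$ surviving generation-$N_{m+1}$ subcubes where $J'$ is the relevant sub-block, each of $\mu_{\alpha_{m+1}}$-mass $\approx 2^{-N_{m+1}\alpha_{m+1}(x)}$, which is precisely what \eqref{0''} quantifies; this gives the exponent $-g_m\alpha_m(x)-(n-g_m)\gamma_{m+1}(\alpha_{m+1}(x))-N_{m+1}(\alpha_{m+1}(x)-\gamma_{m+1}(\alpha_{m+1}(x)))$ with the announced error $2^{n\delta_n+2N_{m+1}\epsilon_{m+1}}$.

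For Corollary~\ref{dimloc}: part (1) follows because for $n=g_m$, \eqref{goodcontrols1} and \eqref{goodcontrols2} together (the lower bound at $n=g_m$ holds unconditionally since it only uses \eqref{tm}, not the $\widetilde F$ refinement at the last level) give $\frac{\log\mu(I_{g_m}(x))}{-g_m\log 2}=\alpha_m(x)+o(1)$, while for intermediate $n$ the estimate \eqref{goodcontrols1} shows the ratio $\frac{\log\mu(I_n(x))}{-n\log 2}$ is, up to $o(1)$, a convex combination of $\alpha_m(x)$ and $\alpha_{m+1}(x)$ (since $g_m\alpha_m(x)+(n-g_m)\alpha_{m+1}(x)$ with $g_m\le n\le g_{m+1}$ interpolates), hence $\geq\min(\alpha_m(x),\alpha_{m+1}(x))-o(1)$; taking $\liminf_{n\to\infty}$ and using that $\{g_m\}$ is a subsequence gives $\underline d(\mu,x)=\liminf_m\alpha_m(x)$. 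Part (2) is the same argument for $\limsup$ using the two-sided \eqref{goodcontrols2}, available exactly under the stated hypothesis. Part (3): if $\ell:=\liminf_m\alpha_m(x)\in\mathcal I\setminus\mathrm{Fix}(f)$, pick a subsequence $m_k$ with $\alpha_{m_k}(x)\to\ell$; then $\gamma_{m_k}(\alpha_{m_k}(x))=f(\alpha_{m_k}(x))$ (for $m_k$ large, since eventually $\alpha_{m_k}(x)\in\widetilde A_{m_k}$ and $f(\alpha_{m_k}(x))>0$ as $\ell\notin\mathrm{Fix}(f)$ forces... ) — more carefully, $\ell<\alpha_{m_k}(x)$-type gap is not needed: what matters is that $\gamma_{m_k}(\alpha_{m_k}(x))$ stays bounded away from $\alpha_{m_k}(x)$, i.e. $\alpha_{m_k}(x)-\gamma_{m_k}(\alpha_{m_k}(x))\ge c>0$; then evaluating \eqref{goodcontrols3} at $n=g_{m_k+1}=g_{m_k}+\ell_{m_k+1}+N_{m_k+1}$ and using $N_{m_k+1}\gg g_{m_k}$ and $N_{m_k+1}\epsilon_{m_k+1}=o(N_{m_k+1})$ shows $\frac{\log\mu(I_{g_{m_k+1}-1}(x))}{-(g_{m_k+1}-1)\log 2}$ is dominated by a term of order $\frac{N_{m_k+1}(\alpha_{m_k+1}(x)-\gamma_{m_k+1}(\alpha_{m_k+1}(x)))}{g_{m_k+1}}$... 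I would instead read off $\overline d(\mu,x)$ from the $n$ slightly larger than $g_{m}+\ell_{m+1}+n_{m+1}$ with $\alpha_{m+1}(x)$ having large gap; picking $n$ of order $g_m+N_{m+1}^{3/4}$ one gets $\frac{\log\mu(I_n(x))}{-n\log 2}\to\infty$ along a subsequence, whence $\overline d(\mu,x)=\infty$. The "In particular" statement is immediate: if $\alpha\in\mathcal I\setminus\mathrm{Fix}(f)$ and $x\in\underline E(\mu,\alpha)$ then $\liminf_m\alpha_m(x)=\alpha\in\mathcal I\setminus\mathrm{Fix}(f)$ by part (1), so $\overline d(\mu,x)=\infty$ by part (3), i.e. $x\in E(\mu,\alpha,\infty)$; the reverse inclusion $E(\mu,\alpha,\infty)\subset\underline E(\mu,\alpha)$ is trivial.

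\textbf{Main obstacle.} The delicate point is the uniform control of the error exponent $\delta_n$: one must check that \emph{every} accumulated error — the $\ell_j$ from the spacer cubes, the $-\log_2\rho_j=O(j+\log\#A_j)$ from the weights, the $\epsilon_j N_j$ from the Bernoulli deviation sets, the tail $\sum_{i\le m}N_i$ appearing when we pass from $\sum_{j\le m}N_j\alpha_j(x)$ to $g_m\alpha_m(x)$, and the genuinely large $N_{m+1}$-scale terms in \eqref{goodcontrols3} — is $o(n)$ uniformly in $x\in K$, and this is exactly what the three clauses of \eqref{cond} are engineered to guarantee; getting the bookkeeping right (especially that $g_m=\sum_{j\le m}(\ell_j+N_j)$ with $\sum_{j<m}(\ell_j+N_j)=o(N_m)=o(g_m)$, so that $g_m\sim N_m$ and $g_m\alpha_m(x)\sim N_m\alpha_m(x)$) is the crux. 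A secondary subtlety is that the lower bounds require the $\widetilde F_{m}$-hypothesis only to control the \emph{last} block: the earlier blocks are automatically fine because \eqref{tm}/\eqref{controlmuq00} give the two-sided cardinality bound \eqref{cardG} with no extra hypothesis, and the product telescopes cleanly.
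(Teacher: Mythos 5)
Your plan reproduces the paper's argument essentially step for step: Proposition~\ref{localestimate} by telescoping \eqref{mum+1'} against the Bernoulli estimates \eqref{0}--\eqref{0''} with the errors absorbed via \eqref{cond}, parts (1)--(2) of the corollary read off from \eqref{goodcontrols1}--\eqref{goodcontrols2} exactly as you describe, and part (3) by evaluating \eqref{goodcontrols3} at a scale $n$ with $g_m\ll n\ll N_{m+1}$ so that $N_{m+1}\bigl(\alpha_{m+1}(x)-\gamma_{m+1}(\alpha_{m+1}(x))\bigr)/n\to\infty$ (the paper takes $n\sim\sqrt{N_{m_k+1}}$ where you take $n\sim N_{m+1}^{3/4}$; both choices are compatible with \eqref{cond}).

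Three points need tightening. First, $\underline d(\mu,x)$ and $\overline d(\mu,x)$ are defined through balls $B(x,r)$, not dyadic cubes, so each part of the corollary requires the additional observation that a dyadic neighbour of $I_n(x)$ of the same generation either carries zero $\mu$-mass or obeys the same estimates as $I_n(x)$; this is precisely what the separation built into the collections $\mathcal{L}(k)$ guarantees, and the paper invokes it explicitly in every part of the proof. Second, your first paragraph grants the lower bound of \eqref{goodcontrols2} at $n=g_m$ only under the $\mathrm{Fix}(\gamma_j)$/$\widetilde F_j$ hypothesis, while your corollary paragraph (correctly, and as part (1) requires for \emph{all} $x\in K$) uses it unconditionally: the unconditional version holds because $I_j\in G_j(\alpha_j)$ already forces $2^{-N_j(\alpha_j+\epsilon_j)}\le\mu_{\alpha_j}(I_j)\le 2^{-N_j(\alpha_j-\epsilon_j)}$ from the definition of $F_j(\alpha_j,\gamma_j(\alpha_j))$, with no $\widetilde F_j$ needed; you should resolve this inconsistency in the stated direction. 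Third, in part (3) your justification of the gap $\alpha_{m_k+1}(x)-\gamma_{m_k+1}(\alpha_{m_k+1}(x))\ge c>0$ trails off; the clean argument is that $\mathrm{Fix}(f)$ is closed (since $f\le\mathrm{id}$ and $f$ is upper semi-continuous), so along a subsequence with $\alpha_{m_k+1}(x)\to\alpha\notin\mathrm{Fix}(f)$ one has, eventually, $\alpha_{m_k+1}(x)\in\widetilde A_{m_k+1}$ and hence $\limsup_k\gamma_{m_k+1}(\alpha_{m_k+1}(x))\le\max(f(\alpha),0)=f(\alpha)<\alpha$ by upper semi-continuity. With these repairs the proof is the paper's.
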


\noindent
{\it Proof of Proposition~\ref{localestimate}.} We will write $\alpha_j$ for $\alpha_j(x)$. 

At first we notice that by construction, and due to \eqref{mum+1'} we have 
\begin{equation*}\label{e4-0}
 c_m \prod_{j=1}^m \mu_{{\alpha_j}}(I_j)\le \mu(I_{g_m(x)})\le 2^m \prod_{j=1}^m \mu_{{\alpha_j}}(I_j),
\end{equation*}
where $c_m=\prod_{j=1}^m (1+2^{-m})^{-1}\prod_{j=1}^m \rho_j(\alpha_j)\ge e^{-1} \prod_{j=1}^m \rho_j(\alpha_j)$. Then, due to the definition of $G_j(\alpha_j)$
\begin{equation}\label{e4}
c_m\exp \Big (-\sum_{j=1}^m (\alpha_j+\epsilon_j)N_j\log(2)\Big )\le \mu (I_{g_m}(x))\le 2^m\exp \Big (-\sum_{j=1}^m (\alpha_j-\epsilon_j)N_j\log(2)\Big ).
\end{equation}

Proof of (1) and (2): We distinguish two cases. Let $g'_m=g_m+\ell_{m+1}$.

{\bf Case 1:} $g_m<n\le g'_m+n_{m+1}$.  Write $I_{g'_m+n_{m+1}}(x)=I_{g_m(x)}L_{m+1} J_{n_{m+1}}$, where $J_{n_{m+1}}=I_{n_{m+1}}(\{2^{g'_m}x\})$, $\{t\}$ standing for the vector whose entries are the fractional part of the entries of $t$. We have $J_{n_{m+1}}\supset I_{m+1}$, $I_{m+1}\cap F_{m+1}(\alpha_{m+1},\gamma_{m+1}(\alpha))\neq\emptyset$, and the generation of $J_{n_{m+1}}$ is $n_{m+1}\ge n_{m+1}(\alpha_{m+1},\alpha_{m+1})$. 

We obviously have $\mu(I_n(x))\le \mu(I_{g_m}(x))$, and by construction if $\alpha_{m+1}\in \mathrm{Fix}(\gamma_{m+1})$ (remembering that $\mu_{\alpha_{m+1}}=\nu_{\alpha_{m+1}}$ and using the equality in \eqref{0}), 
\begin{eqnarray*}
\mu(I_{g_m}(x))&\ge& \mu(I_n(x))\ge \mu(I_{g'_m+n_{m+1}}(x))= \sum_{J_{n_{m+1}}\supset I\in G_{m+1}(\alpha_{m+1})}\mu (I_{g_m}(x)L_{m}I)\\
&=&\mu(I_{g_m}(x)) \frac{\displaystyle\sum_{J_{n_{m+1}}\supset I\in G_{m+1}(\alpha_{m+1})}\rho_{m+1}(\alpha_{m+1})\mu_{\alpha_{m+1}} (I)}{\displaystyle\sum_{\alpha\in A_{m+1} }\sum_{I\in G_{m+1}(\alpha)}\rho_{m+1}(\alpha)\mu_{{\alpha}} (I)}\\
&=&\frac{\rho_{m+1}(\alpha_{m+1})}{\displaystyle\sum_{\alpha\in A_{m+1} }\sum_{I\in G_{m+1}(\alpha)}\rho_{m+1}(\alpha)\mu_{{\alpha}} (I)}\mu(I_{g_m}(x)) \nu_{\alpha_{m+1}}(J_{n_{m+1}})\\
&\ge &\frac{\rho_{m+1}(\alpha_{m+1})}{1+2^{-(m+1)}}\mu(I_{g_m}(x))\nu_{\alpha_{m+1}}(J_{n_{m+1}})\quad (\text{we have used \eqref{controlmuq1} and  \eqref{concentration} again}).
\end{eqnarray*}
Combining this with \eqref{e4},  if $\alpha_{m+1}\in\mathrm{Fix}(\gamma_{m+1})$, we thus get 
 $$
 c_{m+1}\widetilde c^{-1}_{m}\nu_{\alpha_{m+1}}(J_{n_{m+1}})   \le \frac{\mu(I_n(x))}{\exp \Big (-\displaystyle\sum_{j=1}^m \alpha_j N_j\log(2)\Big )}\le \widetilde c_{m}2^m,
 $$ 
with $\widetilde c_m=\exp(\sum_{j=1}^mN_j\epsilon_j\log(2))$. If, moreover, $I_{m+1}\cap \widetilde F_{m+1}(\alpha_{m+1},\alpha_{m+1})\neq\emptyset$,  then $J_{n_{m+1}}\cap \widetilde F_{m+1}(\alpha_{m+1},\alpha_{m+1})\neq\emptyset$,   so due to \eqref{000}  we have 
$$
 c_{m+1}\widetilde c^{-1}_{m}2^{-n_{m+1}(\alpha_{m+1}+\epsilon_{m+1})}   \le \frac{\mu(I_n(x))}{\exp \Big (-\displaystyle\sum_{j=1}^m \alpha_j N_j\log(2)\Big )}\le \widetilde c_{m}2^m,
 $$ 
which finally yields 
\begin{equation*}
C_m^{-1}\le \frac{ \mu (I_n(x))}{\exp \Big (-\alpha_{m+1}(n-g_m)\log(2)-\displaystyle\sum_{j=1}^m \alpha_jN_j\log(2)\Big )}\le  C_m,
\end{equation*}
with $ C_m= c_{m+1}^{-1}\widetilde c_{m} 2^m 2^{2(\ell_{m+1} +n_{m+1})(\max(A_{m+1})+\epsilon_{m+1})}$. Moreover,  it is readily seen that the previous upper bound holds whatever be $\alpha_{m+1}$.

Now, due to the conditions \eqref{cond} we have imposed to the sequence $N_m$ and the definition of $\rho_j(\alpha_j)$, we have $\log(\widetilde C_m)=o(g_m)$ and $ \sup\Big\{\sum_{j=1}^{m-1}\alpha_j N_j:(\alpha_j)_{1\le j\le m-1}\in\prod_{j=1}^{m-1}A_j\Big\} =o(\min(A_m) g_m)$. Consequently, there exists a sequence $(\delta_n)_{n\in\N}$ converging to $0$ as $n\to\infty$, such that 
\eqref{goodcontrols1} and \eqref{goodcontrols2} hold 
uniformly in $x\in K$ and $g_m<n\le g'_m+n_{m+1}$. 

\medskip

{\bf Case 2:} $ g'_m+n_{m+1}<n\le g_{m+1}$. Write $I_n(x)=I_{g_m}(x) L_{m+1} J_{n-g'_m}$, where $J_{n-g'_m}=I_{n-g'_m}(\{2^{g'_m}x\})$. We have $J_{n-g'_m}\supset I_{m+1}$, $I_{m+1}\cap F_{m+1}(\alpha_{m+1},\gamma_{m+1}(\alpha))\neq\emptyset$, and the generation of $J_{n-g'_m}$ is $n-g'_m\ge n_{m+1}\ge  n_{m+1}(\alpha_{m+1},\alpha_{m+1})$. 

By construction:
\begin{eqnarray*}
\mu(I_n(x))&=&\sum_{J_{n-g'_m}\supset I\in G_{m+1}(\alpha_{m+1})}\mu (I_{g_m}(x)L_{m}I)\\
&=&\mu(I_{g_m}(x)) \frac{\displaystyle\sum_{J_{n-g'_m}\supset I\in G_{m+1}(\alpha_{m+1})}\rho_{m+1}(\alpha_{m+1})\mu_{\alpha_{m+1}} (I)}{\displaystyle\sum_{\alpha\in A_{m+1}} \sum_{I\in G_{m+1}(\alpha)}\rho_{m+1}(\alpha)\mu_{{\alpha}} (I)}\\
&\le &\frac{\rho_{m+1}(\alpha_{m+1})}{\displaystyle\sum_{\alpha\in A_{m+1}} \sum_{I\in G_{m+1}(\alpha)}\rho_{m+1}(\alpha)\mu_{{\alpha}} (I)}\mu(I_{g_m}(x))\mu_{\alpha_{m+1}}(J_{n-g'_m}) \quad \text{using \eqref{00}},
\end{eqnarray*}
with equality if $\alpha_{m+1}\in \mathrm{Fix}(\gamma_{m+1})$, remembering that in this case $\mu_{\alpha_{m+1}}=\nu_{\alpha_{m+1}}$, and using the equality in \eqref{0}. 
Consequently,
$$
  \mu(I_n(x))\le 2 \rho_{m+1}(\alpha_{m+1})\mu(I_{g_m}(x)) \mu_{\alpha_{m+1}}(J_{n-g'_m}),
 $$
and
$$
 \frac{\rho_{m+1}(\alpha_{m+1})}{1+2^{-(m+1)}} \mu(I_{g_m}(x)) \nu_{{\alpha_{m+1}}}(J_{n-g'_m})\le  \mu(I_n(x))\le 2 \rho_{m+1}(\alpha_{m+1})\mu(I_{g_m}(x)) \nu_{\alpha_{m+1}}(J_{n-g'_m})
 $$
if $\alpha_{m+1}\in \mathrm{Fix}(\gamma_{m+1})$ (we have used \eqref{controlmuq1} and  \eqref{concentration} again). Set  $C_{m,n}=\widetilde C_m 2^{(n-g'_m)\epsilon_{m+1}}$. The previous estimates combined with \eqref{e4} and the estimates  \eqref{00} and \eqref{000} of $\mu_{\alpha_{m+1}}(J_{n-g'_m})$ and $\nu_{\alpha_{m+1}}(J_{n-g'_m})$ respectively, yield:
\begin{equation*}
\frac{ \mu (I_n(x))}{\exp \Big (\displaystyle-\alpha_{m+1}(n-g_m)\log(2)-\sum_{j=1}^m \alpha_jN_j\log(2)\Big )}\le  C_{m,n},
\end{equation*}
and 
\begin{equation*}\label{e8}
C_{m,n}^{-1}\le\frac{ \mu (I_n(x))}{\displaystyle\exp \Big (-\alpha_{m+1}(n-g_m)\log(2)-\sum_{j=1}^m \alpha_jN_j\log(2)\Big )}\le  C_{m,n}
\end{equation*}
if $\alpha_{m+1}\in\mathrm{Fix}(\gamma_{m+1})$ and $I_{m+1} \cap\widetilde F_{m+1}(\alpha_{m+1},\alpha_{m+1})\neq\emptyset$. 
Then, due to \eqref{cond} again, the above sequence $(\delta_n)_{n\in\N}$ can be modified so that \eqref{goodcontrols1} also holds 
uniformly in $x\in K$, and $ g'_m+n_{m+1}<n\le g_{m+1}$.

\medskip

Proof of (3): it suffices to use \eqref{0''} instead of \eqref{00} to estimate $\mu_{\alpha_{m+1}}(J_{n-g'_m})$ in the previous upper bounds for $\mu (I_n(x))$.  $\square$

\medskip

\noindent
{\it Proof of Corollary~\ref{dimloc}.} (1) and (2) follow readily from \eqref{goodcontrols1} and \eqref{goodcontrols2}, and the fact that by construction the neighboring dyadic cubes of generation $n$ of $I_n(x)$ have a $\mu$-mass equal to 0 or for which the estimates of Proposition~\ref{localestimate}(1)(2) also hold. 

For (3), suppose that $\alpha=\liminf_{m\to\infty}\alpha_m(x)\in \mathcal{I}\setminus \mathrm{Fix}(f)$. Since $\mathrm{Fix}(f)$ is closed (because $f(\alpha)\le \alpha$ and $f$ is upper semi-continuous), there exists a subsequence $(m_k)_{k\ge 1}$ such that  $\alpha_{m_k+1}(x)$ converges to $\alpha$ and $\gamma_{m_k+1}(\alpha_{m_k+1}(x))$ converges to $f(\alpha)<\alpha)$. Take $n=n(k)=g_{m_k}+\ell_{m_k+1}+n_{m_k+1}+\sqrt{N_{m_k+1}}$. By construction $g_{m_k}=o(n)$ and $n-g_m\sim n\sim \sqrt{N_{m_k+1}}$, and using \eqref{goodcontrols3} we have 
$$
\frac{  \mu(I_n(x))}{2^{o(n)-n\gamma_{m_k+1}(\alpha_{m_k+1}(x))-n^2(\alpha_{m_k+1}(x)-\gamma_{m_k+1}(\alpha_{m_k+1}(x))}}\le 2^{n\delta_n+2n^2\epsilon_{m_k+1}},
$$
so 
$$
\frac{  \mu(I_n(x))}{-n\log(2)}\ge \frac{\gamma_{m_k+1}(\alpha_{m_k+1}(x))}{\log(2)}+n\frac{\alpha_{m_k+1}(x)-\gamma_{m_k+1}(\alpha_{m_k+1}(x))}{\log(2)}+o(n).
$$
Letting $k$ tend to $\infty$ yields the desired conclusion on $\overline d(\mu,x)$, again because the neighboring dyadic cubes of generation $n$ of $I_n(x)$ have a $\mu$-mass equal to 0 or with the same behavior as $\mu(I_n(x))$. Now, if $\alpha\in\mathcal{I}\setminus \mathrm{Fix}(f)$ and $x\in \underline E(\mu,\alpha)$, then $(\alpha_m(x))_{m\ge 1}$ must take infinitely many values in  $\mathcal{I}\setminus \mathrm{Fix}(f)$ since $ \mathrm{Fix}(f)$ is a closed set, and consequently $ \underline E(\mu,\alpha)\subset E(\mu,\alpha,\infty)$.

\subsection{Auxiliary measures and lower bound for the lower Hausdorff spectrum}
\subsubsection{Construction of auxiliary measures}

Let $\widehat \alpha=(\alpha_m)_{m\ge 1}\in \prod_{m=1}^\infty A_m$.

Now, we construct a measure $ \nu_{\widehat\alpha}$ as follows: Let 
$$
{\bf G}_{\widehat\alpha,1}=\{L_{1,\alpha_1}I_1: I_1\in G_1(\alpha_1)\}, 
$$
and define on ${\bf G}_{\widehat\alpha,1}$ the measure 
\begin{equation}\label{mu1beta}
 \nu_{\widehat\alpha,1}(L_{1,\alpha_1} I_1)= \frac{\nu_{{\alpha_1}}(I_1)}{\displaystyle\sum_{I\in G_1(\alpha_1)}\nu_{{\alpha_1}}(I)}.
\end{equation}
Due to \eqref{controlmuq00}, \eqref{mu1beta} yields
\begin{equation*}\label{mu1'beta}
  \nu_{{\alpha_1}}(I_1)\le  \nu_{\widehat\alpha,1}(L_{1,\alpha_1}I_1)\le 2\nu_{{\alpha_1}}(I_1).
\end{equation*}

Then, recursively we define for $m\ge 1$ 
$$
{\bf G}_{\widehat\alpha,m+1}=\bigcup_{I_m\in {\bf G}_{\widehat \alpha,m}}{\bf G}_{\widehat\alpha,m+1}(I_m,\alpha_{m+1}), 
$$
where
$$
{\bf G}_{\widehat\alpha,m+1}(I_m,\alpha_{m+1})=\{I_mL_{m+1,\alpha_{m+1}} I_{m+1}:\, I_{m+1}\in G_{m+1}(\alpha_{m+1})\}
$$
and a measure $ \nu_{\widehat\alpha,m+1}$ on   $ {\bf G}_{\widehat\alpha,m+1}$ by
\begin{equation*}\label{mum+1beta}
 \nu_{\widehat\alpha,m+1}(I_m L_{m+1,\alpha_{m+1}} I_{m+1})= \nu_{\widehat\alpha,m}(I_m) \frac{\nu_{{\alpha_{m+1}}}(I_{m+1})}{\displaystyle\sum_{I\in G_{m+1}(\alpha_{m+1})}\nu_{{\alpha_{m+1}}}(I)}.
\end{equation*}
Due to  \eqref{controlmuq00} we have 
\begin{equation}\label{mum+1'beta}
1 \le \frac{ \nu_{\widehat\alpha,m+1}(I_m L_{m+1,\alpha_{m+1}} I_{m+1})}{  \nu_{\widehat\alpha,m}(I_m)  \nu_{{\alpha_{m+1}}}(I_{m+1})}\le 2 .
\end{equation}
Each measure $ \nu_{\widehat\alpha,m}$ can be trivially extended into a probability measure on $\mathcal{F}_{g_m}$;  we still denote this measure by $ \nu_{\widehat\alpha,m}$. This measure yields an absolutely continuous Borel measure on $[0,1]^d$, denote by $ \nu_{\widehat\alpha,m}$ again, whose density with respect to the Lebesgue measure is given by $2^{d g_m}\nu_{\widehat\alpha,m}(I)$ over each cube $I\in\mathcal F_{g_m}$. By construction,  the measures $ \nu_{\widehat\alpha,m}$ ($m\in\N$) converge weakly  to a Borel probability measure $ \nu_{\widehat\alpha}$ on $[0,1]^d$, supported on  the Cantor set $K_{\widehat \alpha}$ defined as
$$
K\supset K_{\widehat \alpha}=\bigcap_{m\ge 1}\bigcup_{J\in {\bf G}_{\widehat\alpha,m}}J,
$$
and satisfying $\nu_{\widehat\alpha}(I)=\nu_{\widehat\alpha,m}(I)$ for all $m\ge 1$ and $I\in \mathcal F_{g_m}$.

\noindent{\it Estimation of the local dimension of $ \nu_{\widehat\alpha}$.}
We use the same notations as in Section~\ref{s-estim}. 
Let $x\in K_{\widehat \alpha}$ and $n>g_1=N_1+\ell_1$. There exists a unique $m\in \N$ such that $g_m<n\le g_{m+1}$. By construction, we have $I_{g_{m}}(x)\in {\bf G}_{\widehat\alpha,m}$ and $I_{g_{m+1}}(x)\in {\bf G}_{\widehat\alpha,m+1}$, and  $I_{g_{m+1}}(x)\subset I_n(x)\subset I_{g_m}(x)$. Moreover, we have \eqref{Igm}. 

\begin{pro}\label{localestimate2}There exists a positive sequence $(\delta_n)_{n\ge 1}$ converging to $0$ as $n\to\infty$ such that, for $\nu_{\widehat\alpha}$-almost every $x\in K_{\widehat \alpha}$, for $n$ large enough, 
\begin{equation}\label{goodcontbeta}
2^{-n\delta_n} \le \frac{ \nu_{\widehat\alpha}(I_n(x))}{ 2^{-g_m\gamma_m (\alpha_m)-(n- g_m)\gamma_{m+1}(\alpha_{m+1})}}\le 2^{n\delta_n}.
\end{equation}
\end{pro}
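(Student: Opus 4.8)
The plan is to mirror the estimation of $\mu(I_n(x))$ carried out in Proposition~\ref{localestimate}, but now for the auxiliary measure $\nu_{\widehat\alpha}$, exploiting the crucial simplification that along $K_{\widehat\alpha}$ one only ever concatenates with a \emph{single} exponent $\alpha_{m+1}$ at step $m+1$, so no competing contributions from other exponents appear in the denominators. First I would iterate the two-sided bound \eqref{mum+1'beta} to obtain, for $x\in K_{\widehat\alpha}$,
\begin{equation*}
\prod_{j=1}^m \nu_{\alpha_j}(I_j)\le \nu_{\widehat\alpha}(I_{g_m}(x))\le 2^m\prod_{j=1}^m \nu_{\alpha_j}(I_j),
\end{equation*}
and then use that each $I_j\in G_j(\alpha_j)$ meets $F_j(\alpha_j,\gamma_j(\alpha_j))$, so by \eqref{0} we have $\nu_{\alpha_j}(I_j)\le 2^{-N_j(\gamma_j(\alpha_j)-\epsilon_j)}$; for the lower bound I would invoke that, by \eqref{controlmuq} and \eqref{000}, for $\nu_{\widehat\alpha}$-almost every $x$ one has $I_j\cap\widetilde F_j(\alpha_j,\gamma_j(\alpha_j))\neq\emptyset$ for all $j$ large enough (a Borel--Cantelli argument using $\sum_m 2^{-m}<\infty$ together with the fact that $\nu_{\widehat\alpha}$ is built from the $\nu_{\alpha_j}$'s), whence $\nu_{\alpha_j}(I_j)\ge 2^{-N_j(\gamma_j(\alpha_j)+\epsilon_j)}$. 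This gives
\begin{equation*}
2^{-\sum_{j=1}^m N_j(\gamma_j(\alpha_j)+\epsilon_j)}\le \nu_{\widehat\alpha}(I_{g_m}(x))\le 2^m 2^{-\sum_{j=1}^m N_j(\gamma_j(\alpha_j)-\epsilon_j)},
\end{equation*}
which, using the growth conditions \eqref{cond} (so that $\sum_{j<m}N_j\gamma_j(\alpha_j)=o(\gamma_m(\alpha_m)N_m)=o(g_m)$ and $\sum_j N_j\epsilon_j=o(g_m)$), yields $\nu_{\widehat\alpha}(I_{g_m}(x))=2^{-g_m\gamma_m(\alpha_m)(1+o(1))}$, i.e.\ the case $n=g_m$ of \eqref{goodcontbeta}.

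Next I would handle $g_m<n\le g_{m+1}$ by splitting $I_n(x)=I_{g_m}(x)\,L_{m+1,\alpha_{m+1}}\,J$ with $J=I_{n-g_m-\ell_{m+1}}(\{2^{g_m+\ell_{m+1}}x\})$ when $n>g_m+\ell_{m+1}$, and noting $J\supset I_{m+1}$, $J\cap F_{m+1}(\alpha_{m+1},\gamma_{m+1}(\alpha_{m+1}))\neq\emptyset$, with generation $\ge n_{m+1}$. From the recursive definition of $\nu_{\widehat\alpha,m+1}$ and the single-exponent normalization, combined with \eqref{controlmuq00},
\begin{equation*}
\tfrac12\,\nu_{\widehat\alpha}(I_{g_m}(x))\,\nu_{\alpha_{m+1}}(J)\le \nu_{\widehat\alpha}(I_n(x))\le 2\,\nu_{\widehat\alpha}(I_{g_m}(x))\,\nu_{\alpha_{m+1}}(J),
\end{equation*}
and then \eqref{0} gives $\nu_{\alpha_{m+1}}(J)\le |J|^{\gamma_{m+1}(\alpha_{m+1})-\epsilon_{m+1}}$ while \eqref{000}, valid $\nu_{\widehat\alpha}$-a.e.\ once $I_{m+1}\cap\widetilde F_{m+1}\neq\emptyset$ and $J\cap\widetilde F_{m+1}\neq\emptyset$, gives the matching lower bound $|J|^{\gamma_{m+1}(\alpha_{m+1})+\epsilon_{m+1}}$. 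Since $|J|=2^{-(n-g_m-\ell_{m+1})}$ and $\ell_{m+1}=o(N_{m+1})=o(n-g_m)$ in the relevant range (for $n$ close to $g_m$ one uses instead the trivial bounds $\nu_{\widehat\alpha}(I_{g_{m+1}}(x))\le\nu_{\widehat\alpha}(I_n(x))\le\nu_{\widehat\alpha}(I_{g_m}(x))$, absorbing the $\ell_{m+1}$-sized and $n_{m+1}$-sized discrepancies into $2^{n\delta_n}$ thanks to \eqref{cond}), one obtains $\nu_{\widehat\alpha}(I_n(x))=2^{-g_m\gamma_m(\alpha_m)-(n-g_m)\gamma_{m+1}(\alpha_{m+1})+o(n)}$, which is exactly \eqref{goodcontbeta} after folding all the $o(n)$ error terms and the $\nu$-masses of neighboring dyadic cubes (which are either $0$ or obey the same estimate) into a single sequence $\delta_n\to0$.

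The main obstacle is establishing that the two-sided estimate \eqref{000}, rather than merely its one-sided counterpart \eqref{0}, applies at \emph{every} scale $j$ large enough along $\nu_{\widehat\alpha}$-almost every trajectory: \eqref{000} requires $I_j\cap\widetilde F_j(\alpha_j,\gamma_j(\alpha_j))\neq\emptyset$, and \eqref{0000} only tells us the $\nu_{\alpha_j}$-mass of the cubes missing $\widetilde F_j$ is at most $2^{-j}$. One must therefore set up a Borel--Cantelli argument at the level of $\nu_{\widehat\alpha}$: the $\nu_{\widehat\alpha}$-measure of the set of $x$ for which $I_j(x)$ (the step-$j$ cube $I_j$ in the decomposition \eqref{Igm}) fails to meet $\widetilde F_j$ is controlled, via the recursion \eqref{mum+1'beta}, by the corresponding $\nu_{\alpha_j}$-mass, hence is summable in $j$; so $\nu_{\widehat\alpha}$-a.e.\ only finitely many steps are ``bad'', and for $n$ large the step $m+1$ containing the current scale is good. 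Once this is in place, the rest is the same bookkeeping with the conditions \eqref{cond} that already appeared in the proof of Proposition~\ref{localestimate}, and the uniformity in $\widehat\alpha$ is not needed here since the statement is for fixed $\widehat\alpha$.
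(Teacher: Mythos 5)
Your proposal is correct and follows essentially the same route as the paper's proof: the same iterated product bound from \eqref{mum+1'beta}, the same Borel--Cantelli argument (via \eqref{controlmuq00} and \eqref{0000}) to ensure $I_j\cap\widetilde F_j(\alpha_j,\gamma_j(\alpha_j))\neq\emptyset$ for $j$ large $\nu_{\widehat\alpha}$-a.e., the same case split according to whether $n$ is within $\ell_{m+1}+n_{m+1}$ of $g_m$ or deeper, and the same use of \eqref{0}, \eqref{000} and the growth conditions \eqref{cond} to absorb all errors into $\delta_n$. The obstacle you single out is precisely the point the paper addresses first in its proof, and your resolution of it matches the paper's.
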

The previous proposition, the fact that by construction the neighboring dyadic cubes of generation $n$ of $I_n(x)$ have a $\nu_{\widehat\alpha}$-mass equal to 0 or for which the estimates \eqref{goodcontbeta} hold,  and the mass distribution principle (Section~\ref{MDP}) yield the Hausdorff and packing dimensions of~$ \nu_{\widehat\alpha}$:
\begin{cor}\label{diminf}
We have $\underline d( \nu_{\widehat\alpha},x)= \liminf_{m\to\infty}\gamma_m(\alpha_m)$ and $\overline d( \nu_{\widehat\alpha},x)= \limsup_{m\to\infty}\gamma_m (\alpha_m)$ for $\nu_{\widehat\alpha}$-almost every $x$. Consequently, $\dim_H (\nu_{\widehat\alpha})=\liminf_{m\to\infty}\gamma(\alpha_m)$ and $\dim_P (\nu_{\widehat\alpha})=\limsup_{m\to\infty}\gamma(\alpha_m)$.
\end{cor}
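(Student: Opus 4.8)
The plan is to read the pointwise local dimensions of $\nu_{\widehat\alpha}$ off Proposition~\ref{localestimate2} and then feed them into the mass distribution principle recalled in Section~\ref{MDP}. First I would pass from dyadic cubes to balls. For $2^{-(n+1)}<r\le 2^{-n}$, the ball $B(x,r)$ is contained in $I_n(x)$ together with its $3^d-1$ dyadic neighbours of generation $n$, and contains $I_{n+c_d}(x)$ for a suitable dimensional constant $c_d$; as observed right after Proposition~\ref{localestimate2}, each of the cubes so involved has $\nu_{\widehat\alpha}$-mass either $0$ or satisfying the analogue of the two-sided estimate \eqref{goodcontbeta}. Absorbing into a new sequence $\delta'_n\to 0$ the constant $3^d$, the bounded number $c_d$ of extra generations (including the possibility that these cross the block boundary $g_{m+1}$), one gets, for $\nu_{\widehat\alpha}$-a.e.\ $x$ and $g_m<n\le g_{m+1}$,
\begin{equation*}
2^{-n\delta'_n}\le\frac{\nu_{\widehat\alpha}(B(x,r))}{2^{-g_m\gamma_m(\alpha_m)-(n-g_m)\gamma_{m+1}(\alpha_{m+1})}}\le 2^{n\delta'_n}.
\end{equation*}

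Next I would run the interpolation step. Taking logarithms and dividing by $-\log r\sim n\log 2$ turns the last display into
\begin{equation*}
\frac{\log\nu_{\widehat\alpha}(B(x,r))}{\log r}=s_n\,\gamma_m(\alpha_m)+(1-s_n)\,\gamma_{m+1}(\alpha_{m+1})+o(1),\qquad s_n:=g_m/n,
\end{equation*}
where $g_m/g_{m+1}\le s_n<1$ since $g_m<n\le g_{m+1}$. Because $g_{m+1}=g_m+\ell_{m+1}+N_{m+1}$ with $g_m=o(N_{m+1})$ by \eqref{cond}, one has $g_m/g_{m+1}\to 0$, while all the $\gamma_m(\alpha_m)$ lie in $[0,d]$; hence as $n$ runs through a block $(g_m,g_{m+1}]$ the convex combination above keeps the left-hand side within $o(1)$ of the interval with endpoints $\gamma_m(\alpha_m)$ and $\gamma_{m+1}(\alpha_{m+1})$, so that $\liminf_{n}\frac{\log\nu_{\widehat\alpha}(B(x,r))}{\log r}\ge\liminf_{m}\gamma_m(\alpha_m)$ and $\limsup_{n}\frac{\log\nu_{\widehat\alpha}(B(x,r))}{\log r}\le\limsup_{m}\gamma_m(\alpha_m)$. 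For the reverse inequalities I would pick a subsequence $(m_k)$ along which $\gamma_{m_k}(\alpha_{m_k})$ tends to $\liminf_{m}\gamma_m(\alpha_m)$ (resp.\ $\limsup$) and evaluate at the scale $n_k=g_{m_k}$, which lies in the block of index $m_k-1$ and carries weight $s_{n_k}=g_{m_k-1}/g_{m_k}\to 0$; the bounded quantity $\gamma_{m_k-1}(\alpha_{m_k-1})$ attached to this vanishing weight drops out, leaving the ratio $\to\liminf_{m}\gamma_m(\alpha_m)$ (resp.\ $\limsup$). This yields $\underline d(\nu_{\widehat\alpha},x)=\liminf_{m}\gamma_m(\alpha_m)$ and $\overline d(\nu_{\widehat\alpha},x)=\limsup_{m}\gamma_m(\alpha_m)$ for $\nu_{\widehat\alpha}$-a.e.\ $x$.

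Finally, since $\underline d(\nu_{\widehat\alpha},\cdot)$ is $\nu_{\widehat\alpha}$-a.e.\ equal to the constant $\liminf_{m}\gamma_m(\alpha_m)$, the mass distribution principle (Section~\ref{MDP}) gives both $\dim_H E\ge\liminf_{m}\gamma_m(\alpha_m)$ for every Borel set $E$ with $\nu_{\widehat\alpha}(E)>0$ and a Borel set of full $\nu_{\widehat\alpha}$-measure of Hausdorff dimension $\le\liminf_{m}\gamma_m(\alpha_m)$, whence $\dim_H(\nu_{\widehat\alpha})=\liminf_{m}\gamma_m(\alpha_m)$; applying its packing-dimension counterpart to $\overline d(\nu_{\widehat\alpha},\cdot)\equiv\limsup_{m}\gamma_m(\alpha_m)$ gives $\dim_P(\nu_{\widehat\alpha})=\limsup_{m}\gamma_m(\alpha_m)$. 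The only genuinely delicate point is the interpolation step: one must verify that the linear sweep of $\frac{\log\nu_{\widehat\alpha}(B(x,r))}{\log r}$ across each block $(g_m,g_{m+1}]$ faithfully reproduces the oscillations of $m\mapsto\gamma_m(\alpha_m)$ and does not create intermediate values affecting the $\liminf$ or $\limsup$, which is exactly where the growth condition $g_m=o(N_{m+1})$ of \eqref{cond} is used; the passage from cubes to balls, the treatment of the neighbouring cubes, and the bookkeeping of the error sequences $\delta_n,\delta'_n\to 0$ already supplied by Proposition~\ref{localestimate2} are routine.
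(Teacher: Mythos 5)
Your proposal is correct and follows exactly the route the paper takes (which it compresses into one sentence): Proposition~\ref{localestimate2} combined with the observation about the neighbouring dyadic cubes gives the two-sided ball estimates, the convex-combination sweep across each block $(g_m,g_{m+1}]$ — which cannot leave the interval between $\gamma_m(\alpha_m)$ and $\gamma_{m+1}(\alpha_{m+1})$, with the endpoint values recovered at $n=g_{m}$ since $g_{m-1}/g_m\to 0$ by \eqref{cond} — identifies $\underline d$ and $\overline d$ $\nu_{\widehat\alpha}$-a.e., and the mass distribution principle of Section~\ref{MDP} then yields the dimensions. Your write-up simply makes explicit the cube-to-ball passage and the interpolation bookkeeping that the paper leaves implicit.
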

\noindent{\it Proof of Proposition~\ref{localestimate2}.} At first, we prove the following fact: there exists  $\widetilde K_{\widehat \alpha}\subset K_{\widehat \alpha}$, of full  $\nu_{\widehat\alpha}$-measure, such that for all  $x\in \widetilde K_{\widehat \alpha}$, for $m$ large enough, we have $I_m\cap \widetilde F_m(\alpha_m,\gamma_m(\alpha_m))\neq\emptyset$, with $I_m$ defined as in \eqref{Igm}. 

Indeed, due to the multiplicative structure of $\nu_{\widehat\alpha}$, by construction we have  
\begin{eqnarray*}
&&\nu_{\widehat\alpha}(\{x\in K_{\widehat \alpha}: I_{m} \cap \widetilde F_{m}(\alpha_{m},\gamma_m(\alpha_{m}))=\emptyset\})\\
&=& \sum_{I\in \mathbf{G}_{\widehat\alpha,m-1}} \nu_{\widehat\alpha,m-1}(I) \frac{\displaystyle\sum_{J\in \mathcal{F}_{N_{m}},\, J\cap \widetilde F_{m}(\alpha_{m},\gamma_m(\alpha_{m}))=\emptyset}\nu_{\alpha_{m}}(J)}{\displaystyle\sum_{J\in \mathcal{F}_{N_{m}},\, J\cap F_{m}(\alpha_{m},\gamma_m(\alpha_{m}))\neq\emptyset}\nu_{{\alpha_{m}}}(J)}\\
&\le & \sum_{I\in \mathbf{G}_{\widehat\alpha,m-1}} \nu_{\widehat\alpha,m-1}(I) \frac{2^{-m}}{2^{-1}}= 2^{-(m-1)},
\end{eqnarray*}
where we have used the left hand side of \eqref{controlmuq00}, and \eqref{0000}. Then, by the Borel-Cantelli lemma, we have the desired conclusion. 
 
Now the proof of the proposition  follows from   lines similar  to those used to prove Proposition~\ref{localestimate}: 

Let $x\in \widetilde K_{\widehat \alpha}$. At first we notice that by construction, and due to \eqref{mum+1'beta} we have 
\begin{equation}\label{e4beta}
\prod_{j=1}^m \nu_{{\alpha_j}}(I_j)\le  \nu_{\widehat\alpha}(I_{g_m(x)})\le 2^{m} \prod_{j=1}^m \nu_{{\alpha_j}}(I_j).
\end{equation}
Then, due to the definition of $G_j(\alpha_j)$, 
\begin{equation*}\label{e4''}
\widetilde c_m^{-1}\le\frac{  \nu_{\widehat\alpha} (I_{g_m}(x))}{\exp \Big (-\sum_{j=1}^m \gamma_j(\alpha_j)N_j\log(2)\Big )}\le  2^m\widetilde c_m
\end{equation*}
where $\widetilde c_m$ is defined as in the proof of Proposition~\ref{localestimate}.

We distinguish two cases.

{\bf Case 1:} $g_m<n\le g'_m+n_{m+1}$.  Write $I_{g'_m+n_{m+1}}(x)=I_{g_m(x)}L_{m+1} J_{n_{m+1}}$, where $J_{n_{m+1}}=I_{n_{m+1}}(\{2^{g'_m}x\})$. 
We have 
\begin{eqnarray*}
\nu_{\widehat\alpha}(I_{g_m}(x))&\ge& \nu_{\widehat\alpha}(I_n(x))\ge \nu_{\widehat\alpha}(I_{g'_m+n_{m+1}}(x))= \sum_{J_{n_{m+1}}\supset I\in G_{m+1}(\alpha_{m+1})}\nu_{\widehat\alpha} (I_{g_m}(x)L_{m}I)\\
&=&\nu_{\widehat\alpha}(I_{g_m}(x)) \frac{\displaystyle\sum_{J_{n_{m+1}}\supset I\in G_{m+1}(\alpha_{m+1})}\nu_{\alpha_{m+1}} (I)}{\displaystyle\sum_{I\in G_{m+1}(\alpha_{m+1})}\nu_{\alpha_{m+1}} (I)}\\
& =&\frac{\nu_{\widehat\alpha}(I_{g_m}(x))\nu_{\alpha_{m+1}}(J_{n_{m+1}})}{\displaystyle\sum_{I\in G_{m+1}(\alpha_{m+1})}\nu_{\alpha_{m+1}} (I)} \ge \nu_{\widehat\alpha}(I_{g_m}(x))\nu_{\alpha_{m+1}}(J_{n_{m+1}}),
\end{eqnarray*}
where we have used the right hand side of \eqref{controlmuq00}. For $m$ large enough so that $I_{m+1}\cap\widetilde F_{m+1}(\alpha_{m+1},\gamma_{m+1}(\alpha_{m+1}))\neq\emptyset$ we have $J_{n_{m+1}}\cap\widetilde F_{m+1}(\alpha_{m+1},\gamma_{m+1}(\alpha_{m+1}))\neq\emptyset$ and the generation of $J_{n_{m+1}}$ is $n_{m+1}\ge n(\alpha_{m+1},\gamma_{m+1}(\alpha_{m+1}))$, so \eqref{000} holds for $J_{n_{m+1}}$, and this combined with  \eqref{e4beta} and \eqref{000} yields
 $$
\widetilde c_{m}^{-1} 2^{-n_{m+1}(\gamma_{m+1}(\alpha_{m+1})+\epsilon_{m+1})}  \le  \frac{\nu_{\widehat\alpha} (I_n(x))}{\exp \Big (-\sum_{j=1}^m \gamma_j(\alpha_j)N_j\log(2)\Big )}\le 2^m \widetilde c_m.
 $$ 
Consequently, 
\begin{equation}\label{e6}
 \widetilde C_m\le \frac{ \nu_{\widehat\alpha} (I_n(x))}{\exp \Big (-\gamma_{m+1}(\alpha_{m+1})(n-g_m)\log(2)-\sum_{j=1}^m \gamma_j(\alpha_j)N_j\log(2)\Big )}\le  \widetilde C_m,
\end{equation}
with $\widetilde C_m= \widetilde c_{m} 2^{m}  2^{2(\ell_{m+1} +n_{m+1})(\max(\gamma_{m+1}(A_{m+1}))+\epsilon_{m+1})}$. 

Due to the conditions \eqref{cond} we have imposed to the sequence $(N_m)_{m\ge 1}$, we have $\log(\widetilde C_m)=o(g_m)$ and $ \sup\Big\{\sum_{j=1}^{m-1}\gamma_j(\alpha_j) N_j:(\alpha_j)_{1\le j\le m-1}\in\prod_{j=1}^{m-1}A_j\Big\} =o(\min(\gamma_m(A_m)) g_m)$. Consequently, there exists a sequence $(\delta_n)_{n\in\N}$ converging to $0$ as $n\to\infty$, such that \eqref{goodcontbeta} holds 
for all $x\in \widetilde K_{\widehat \alpha}$, for $m$ large enough and  and $g_m<n\le g'_m+n_{m+1}$. 

\medskip

{\bf Case 2:} $ g'_m+n_{m+1}<n\le g_{m+1}$. Write $I_n(x)=I_{g_m}(x) L_{m+1} J_{n-g'_m}$, where $J_{n-g'_m}=I_{n-g'_m}(\{2^{g'_m}x\})$. We have  
\begin{eqnarray*}
 \nu_{\widehat\alpha}(I_n(x))&=&\sum_{J_{n-g'_m}\supset I\in G_{m+1}(\alpha_{m+1})} \nu_{\widehat\alpha} (I_{g_m}(x)L_{m}I)\\
&=& \nu_{\widehat\alpha}(I_{g_m}(x)) \frac{\displaystyle \sum_{J_{n-g'_m}\supset I\in G_{m+1}(\alpha_{m+1})}\nu_{{\alpha_{m+1}}} (I)}{\displaystyle\sum_{I\in G_{m+1}(\alpha_{m+1})}\nu_{{\alpha_{m+1}}} (I)},
\end{eqnarray*}
hence
$$
 \nu_{\widehat\alpha}(I_{g_m}(x)) \nu_{\alpha_{m+1}}(J_{n-g'_m})\le  \nu_{\widehat\alpha}(I_n(x))\le 2 \nu_{\widehat\alpha}(I_{g_m}(x)) \nu_{\alpha_{m+1}}(J_{n-g'_m})
$$
(we have used \eqref{controlmuq00}). For $m$ large enough we have $I_{m+1}\cap \widetilde F_{m+1}(\alpha_{m+1},\gamma_{m+1}(\alpha_{m+1}))\neq\emptyset$. This implies $J_{n-g'_m}\cap \widetilde F_{m+1}(\alpha_{m+1},\alpha_{m+1})\neq\emptyset$. Since, moreover,  $n-g'_m\ge  n(\alpha_{m+1},\gamma_{m+1}(\alpha_{m+1}))$, \eqref{000} holds for $J _{n-g'_m}$ and the previous estimates  combined with \eqref{e4beta}   yield
\begin{equation*}\label{e7}
\widetilde C_{m,n}^{-1}\le \frac{  \nu_{\widehat\alpha} (I_n(x))}{\exp \Big (-\gamma_{m+1}(\alpha_{m+1})(n-g_m)\log(2)-\sum_{j=1}^m \gamma_j(\alpha_j)N_j\log(2)\Big )}\le  \widetilde C_{m,n},
\end{equation*}
where 
$$
\widetilde C_{m,n}=\widetilde C_{m}2^{\max(\gamma_{m+1}(A_{m+1})) (g'_m-g_m)}2^{(n-g'_m)\epsilon_{m+1}} .
$$
Then, due to \eqref{cond}, the above sequence $(\delta_n)_{n\in\N}$ can be modified so that \eqref{goodcontrols1} also holds 
for all $x\in \widetilde K_{\widehat \alpha}$, for $m$ large enough and   $ g'_m+n_m<n\le g_{m+1}$. $\square$

\subsubsection{Lower bound for the lower Hausdorff spectrum}
\begin{pro}
For any closed ball $B$ whose interior intersects $K=\supp(\mu)$, we have $\dim_H (B\cap \underline E(\mu,\alpha))\ge f(\alpha)$ for all $\alpha\in\mathcal{I}$.
\end{pro}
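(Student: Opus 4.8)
The plan is to produce, for each $\alpha\in\mathcal I$, a sequence $\widehat\alpha\in\prod_{m\ge 1}A_m$ approximating $\alpha$ in a way that makes the auxiliary measure $\nu_{\widehat\alpha}$ witness the lower bound $f(\alpha)$ on $\dim_H\underline E(\mu,\alpha)$, even locally inside an arbitrary ball $B$. Using the density property of $\Delta$ chosen via upper semi-continuity of $f$, I would pick a sequence $(\beta_j)_{j\ge1}$ in $\Delta$ with $(\beta_j,f(\beta_j))\to(\alpha,f(\alpha))$ (with appropriate modifications if $\alpha\in\{0,\infty\}$ or if $f(\alpha)=0$, using instead the $D_m$, $\alpha_m(0)$, $\alpha_m(\infty)$ entries and noting $\gamma_m(\cdot)\to f(\cdot)$ along these). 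Then I build $\widehat\alpha=(\alpha_m)_{m\ge1}$ by choosing, for each $m$, an element of $A_m$ that lies close to $\beta_{j(m)}$ for a slowly growing index $j(m)\to\infty$; concretely one arranges $\alpha_m(x)=\alpha_m$ with both $\alpha_m\to\alpha$ and $\gamma_m(\alpha_m)\to f(\alpha)$ as $m\to\infty$. By Corollary~\ref{dimloc}(1), every $x\in K_{\widehat\alpha}$ then satisfies $\underline d(\mu,x)=\liminf_m\alpha_m=\alpha$, so $K_{\widehat\alpha}\subset\underline E(\mu,\alpha)$.

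Next, by Corollary~\ref{diminf} the measure $\nu_{\widehat\alpha}$ has $\dim_H(\nu_{\widehat\alpha})=\liminf_m\gamma_m(\alpha_m)=f(\alpha)$, and since $\nu_{\widehat\alpha}$ is carried by $K_{\widehat\alpha}\subset\underline E(\mu,\alpha)$, the mass distribution principle (Section~\ref{MDP}) gives $\dim_H\underline E(\mu,\alpha)\ge f(\alpha)$. To upgrade this to the localized statement about an arbitrary closed ball $B$ whose interior meets $K$, I would exploit the homogeneity of the construction: the interior of $B$ contains some cube $I_{g_m}(x_0)\in\mathbf G_m$ for $m$ large, and the restriction of the construction below that cube is, by the recursive definition of $\mathbf G_{m'}$ and $\mu_{m'}$ for $m'>m$, a rescaled copy of the same type of construction (only the finitely many initial generations $1,\dots,m$ are frozen). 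Hence one can run the auxiliary-measure argument \emph{inside} that sub-cube: fix the first $m$ coordinates of $\widehat\alpha$ to match $x_0$'s cube, and choose the tail $(\alpha_{m'})_{m'>m}$ approximating $(\alpha,f(\alpha))$ as above; the resulting $K_{\widehat\alpha}$ is then contained in $I_{g_m}(x_0)\subset B$, still lies in $\underline E(\mu,\alpha)$ by Corollary~\ref{dimloc}(1) (which only depends on the tail behavior of $\alpha_{m'}(x)$), and still carries a measure of exact dimension $f(\alpha)$ by Corollary~\ref{diminf} (again tail-dependent). The mass distribution principle applied to this measure yields $\dim_H(B\cap\underline E(\mu,\alpha))\ge f(\alpha)$.

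I would spell out the one technical point that Corollaries~\ref{dimloc} and~\ref{diminf} were stated for the global construction, but their proofs use only the estimates of Propositions~\ref{localestimate} and~\ref{localestimate2}, which are uniform and purely local (they compare $\mu(I_n(x))$ to $2^{-g_m\alpha_m-(n-g_m)\alpha_{m+1}}$ using only the cubes containing $x$); freezing the first $m$ generations changes the constants $c_m,\widetilde c_m,\ldots$ only by a fixed factor absorbed into the $o(g_{m'})$ terms, so the conclusions $\underline d(\mu,x)=\liminf_{m'}\alpha_{m'}(x)$ and $\dim_H(\nu_{\widehat\alpha})=\liminf_{m'}\gamma_{m'}(\alpha_{m'})$ persist verbatim. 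The case $f(\alpha)=0$ deserves a separate remark: then one approximates using entries with $\gamma_m(\alpha_m)=\epsilon_m\to0$ (so the $\nu_{\alpha_m}$ are genuinely continuous measures of small dimension), obtaining $\dim_H(\nu_{\widehat\alpha})=0$, which still gives the (trivial but required) bound $\dim_H(B\cap\underline E(\mu,\alpha))\ge 0=f(\alpha)$ while guaranteeing the set contains a Cantor set; if $\alpha\notin\mathcal I$ there is nothing to prove since $f(\alpha)=-\infty$.

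The main obstacle I anticipate is not any single estimate — all the hard analytic work is already packaged in Propositions~\ref{localestimate}--\ref{localestimate2} and Corollaries~\ref{dimloc}--\ref{diminf} — but rather the bookkeeping needed to choose the approximating sequence $\widehat\alpha\in\prod_m A_m$ so that \emph{simultaneously} $\alpha_m\to\alpha$, $\gamma_m(\alpha_m)\to f(\alpha)$, and the chosen entries actually belong to the finite sets $A_m$ (which only gradually exhaust $\Delta$ and which replace $f$ by the perturbed values $\gamma_m$ near the troublesome points $0$, $D$, $\infty$, and the zeros of $f$). One must check that the perturbations $D_m\to D$, $\alpha_m(0)\to0$, $\gamma_m(\alpha_m(0))\to0$, $\alpha_m(\infty)\to\infty$, $\gamma_m(\cdot)\to f(\cdot)$ are all compatible with extracting, for the given target $\alpha$, a valid diagonal sequence; this is where the density of $\Delta$ in $\mathcal I$ (and in $\mathrm{Fix}(f)$) and the joint convergence $(\alpha_n,f(\alpha_n))\to(\alpha,f(\alpha))$ are used, and it is essentially the same elementary selection argument already invoked for the sets $A_m$.
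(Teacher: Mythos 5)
Your proposal is correct and follows essentially the same route as the paper: freeze the first finitely many generations so that a cube of the construction lies inside $B$, choose the tail $(\alpha_m)$ in $\prod_m A_m$ (using $D_m$, $\alpha_m(0)$, $\alpha_m(\infty)$, or the $\epsilon_m$-perturbed $\gamma_m$ in the degenerate cases) so that $\alpha_m\to\alpha$ and $\gamma_m(\alpha_m)\to f(\alpha)$, and then combine Corollary~\ref{dimloc}(1), Corollary~\ref{diminf} and the mass distribution principle applied to $\nu_{\widehat\alpha}$ restricted to $B$. The only cosmetic difference is that the paper keeps the full Cantor set $K_{\widehat\alpha}$ and uses $\nu_{\widehat\alpha}(B\cap\widetilde K_{\widehat\alpha})>0$ rather than literally restricting the construction to the sub-cube, which amounts to the same thing.
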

\begin{proof}
Fix $B$, a closed ball whose interior intersects $K$. There exists $m_0\in\N$, a sequence $(\alpha_{j})_{1\le j\le m_0}\in\prod_{j=1}^{m_0}A_j$, as well as pairs of dyadic cubes $(\{L_{j},  {I}_{\alpha_{j}}\}_{1\le j\le m_0}$, with $L_j\in\mathcal{L}_j$ and $ {I}_{\alpha_{j}}\in G_j(\alpha_j)$ such that 
$$
L_1 {I}_{\alpha_{1}}\cdots L_{m_0} {I}_{\alpha_{m_0}}\subset B.
$$

Now fix $\alpha\in\mathcal{I}$. 

If $\alpha\in \mathcal{I}\setminus\{0,D,\infty\}$ then for each $m> m_0$, fix $\alpha_m\in A_m\setminus\{D_m\}$, so that $\lim_{m\to\infty} (\gamma_m(\alpha_m)=f(\alpha_m))=f(\alpha)$. If, moreover, $\alpha\in \mathrm{Fix}(f)\setminus D$, take $\alpha_m\in  \mathrm{Fix}(f)$. 

If $\alpha=0$ then for each $m> m_0$ let $\alpha_m=\alpha_m(0)$. We have  $\lim_{m\to\infty} (\gamma_m(\alpha_m)=\alpha_m)=0=f(\alpha)$. 

If $\alpha=D$  then for each $m>m_0$ let $\alpha_m=D_m$. We have $\lim_{m\to\infty} (\gamma_m(\alpha_m)=\alpha_m)=\alpha=f(\alpha)$. 

If $\alpha=\infty$ then for each $m>m_0$ let $\alpha_m=\alpha_m(\infty)$. We have $\gamma_m(\alpha_m)=f(\infty)$ if $f(\infty)>0$ and $\lim_{m\to\infty}(\gamma_m(\alpha_m)=\epsilon_m)=0=f(\infty)$ otherwise. 

Let $\widehat \alpha=(\alpha_m)_{m\ge 1}$, and consider the measure $\nu_{\widehat \alpha}$ constructed in the previous section. This measure is supported on the set $\widetilde K_{\widehat \alpha}\subset K$ exhibited at the beginning of  Proposition~\ref{localestimate2} proof, and by construction $\nu_{\widehat \alpha}(B\cap \widetilde K_{\widehat \alpha} )>0$, so due to Corollary~\ref{diminf} we have $\dim_H\widetilde  K_{\widehat \alpha}\cap B\ge f(\alpha)$. 

Moreover, due to Corollary~\ref{dimloc}(1), we have $\widetilde  K_{\widehat \alpha}\subset \underline E(\mu,\alpha)$, so $\dim_H (B\cap \underline E(\mu,\alpha))\ge f(\alpha)$. If, moreover, $\alpha\in  \mathrm{Fix}(f)$, then by Corollary~\ref{dimloc}(2), our choice of $(\alpha_m)_{m\ge 1}$ implies that $\widetilde K_{\widehat \alpha}\subset  E(\mu,\alpha)$, so $\dim_H (B\cap E(\mu,\alpha))=f(\alpha)$. 
\end{proof}

\subsection{Upper bound for the lower Hausdorff spectrum}
Recall that $f$ is upper semi-continuous. Also, its domain, denoted by $\mathcal{I}$ is a closed subset of $[0,\infty]$, so due to Corollary~\ref{dimloc}(1), if $\alpha\not\in \mathcal I$ then  $\underline E(\mu,\alpha)=\emptyset$, hence $\dim_H\underline E(\mu,\alpha)=-\infty=f(\alpha)$.

Let $\alpha\in\mathcal{I}\setminus\{\infty\}$. Fix $\eta>0$. Then choose $\delta>0$ such that $f(\beta)\le f(\alpha)+\eta$ if $|\beta-\alpha|\le \delta$. Due to Corollary~\ref{dimloc}(1), $\underline E(\mu,\alpha)=\{x\in K: \liminf_{m\to\infty} \alpha_m(x)=\alpha\}$, so 
$$
\underline E(\mu,\alpha)\subset \bigcap_{M\in\N}\bigcup_{m\ge M}\bigcup_{I_m\in {\bf G}_m}\bigcup_{\substack{\alpha'\in A_{m+1}:\\
\alpha'\in [\alpha-\delta,\alpha+\delta]}}\bigcup_{I\in  {G}_{m+1}(\alpha')}I_mL_{m+1,\alpha'} I.
$$
For a fixed $m\ge 1$, all the cubes $I_mL_{m+1,\alpha'} I$ are of the same generation $g_{m+1}$. Moreover the set $\mathcal C_m$ of these cubes has a cardinality 
$$
\#\mathcal C_m=\sum_{I_m\in{\bf G}_m} \sum_{\substack{\alpha'\in A_{m+1}:\\
\alpha'\in [\alpha-\delta,\alpha+\delta]}} \#{G}_{m+1}(\alpha')\le (\#{\bf G}_m) (\# A_{m+1} )\max_{\substack{\alpha'\in A_{m+1}:\\
\alpha'\in [\alpha-\delta,\alpha+\delta]}} \#{G}_{m+1}(\alpha').
$$ 
We can deduce from \eqref{cardG} and the definition of $\gamma_{m+1}$ that
\begin{equation*}\label{card1 }
  \#{G}_{m+1}(\alpha')\le 2^{N_{m+1}(f(\alpha')+3\epsilon_{m+1})}.
\end{equation*}
Thus, by using the upper semi-continuity of $f$ we get 
$$
\#\mathcal C_m \le (\#{\bf G}_m )(\# A_{m+1} ) 2^{N_{m+1}(f(\alpha)+\eta+3\epsilon_{m+1})}.
$$
Moreover, 
\begin{equation*}\label{card2}
\#{\bf G}_m =\prod_{j=1}^m \Big (\sum_{\beta\in A_j} \#{G}_{j}(\beta)\Big ).
\end{equation*}
Now, we deduce from \eqref{cond} that $\log (\#{\bf G}_m)+\log (\# A_{m+1} ) =o((f(\alpha+\eta)N_{m+1})$, and finally obtain 
$$
\limsup_{m\to\infty}\frac{\log (\# \mathcal C_m)}{N_{m+1} \log(2)}\le f(\alpha)+\eta.
$$
Noting that $\lim_{m\to\infty} g_{m+1}/N_{m+1}=1$, this is enough to conclude that for all $\epsilon>0$ we have 
$$
\sum_{M\in\N}\sum_{m\ge M}\sum_{I_m\in {\bf G}_m}\sum_{\substack{\alpha'\in A_{m+1}:\\
\alpha'\in [\alpha-\delta,\alpha+\delta]}}\sum_{I\in  {G}_{m+1}(\alpha')}|I_mL_{m+1,\alpha'} I|^{f(\alpha)+\eta+\epsilon}<\infty,
$$
so $ \mathcal H^{f(\alpha)+\eta+\epsilon}(\underline E(\mu,\alpha))=\lim_{m\to\infty} \mathcal H^{f(\alpha)+\eta+\epsilon}_{2^{-g_{m+1}}} (\underline E(\mu,\alpha))=0$, and 
$\dim_H \underline E(\mu,\alpha)\le  f(\alpha)+\eta+\epsilon$. Since this holds for all $\eta>0$ and $\epsilon>0$, we get the conclusion.

\smallskip

Now suppose that $\alpha=\infty\in \mathcal{I}$. Let $\eta>0$ and $A>0$ such that $f(\alpha)\le f(\infty)+\eta$ for $\alpha\ge A$ ($f$ is upper semi-continuous over $I$). We have 
$$
E(\mu,\infty)\subset \bigcap_{M\in\N}\bigcup_{m\ge M}\bigcup_{I_m\in {\bf G}_m}\bigcup_{\alpha'\in A_{m+1}:\\
\alpha'\ge A}\bigcup_{I\in  {G}_{m+1}(\alpha')}I_mL_{m+1,\alpha'} I
$$
and following the same lines as for the case $\alpha<\infty$ yields $\dim_H E(\mu,\infty)\le f(\infty)$, since $f$ is upper-semi continuous at $\infty$.

\subsection{$\mu$ is exact dimensional, with dimension $D$}

By construction, for each $m\ge 2$, we have 
\begin{eqnarray*}
&&
\mu(\{x\in K: \alpha_{m}(x)\neq D_{m}\})\\
&=& \sum_{I\in \mathbf{G}_{m-1}}\sum_{\alpha\in A_{m}\setminus \{D_{m}\}}\sum_{J\in G_{m}(\alpha)}\mu(I L_{m,\alpha}J)\\
&=& \sum_{I\in \mathbf{G}_{m-1}}\mu(I )\frac{ \sum_{\alpha\in A_{m }\setminus \{D_{m }\}}\sum_{J\in {G}_{m }(\alpha)}\rho_m(\alpha) \mu_{\alpha}(J)}{ \sum_{\alpha\in A_{m}}\sum_{J\in {G}_{m}(\alpha)}\rho_m(\alpha) \mu_{\alpha}(J)}\\
&\le& \sum_{I\in \mathbf{G}_{m-1}}\mu(I_m) \frac{2^{-m}}{2^{-1}}=2^{-(m-1)},
\end{eqnarray*}
where we have used \eqref{controlmuq1} and  \eqref{concentration}. By the Borel-Cantelli lemma we get that $\mu$-almost everywhere, $\alpha_m(x)=D_{m}$ for $m$ large enough. Moreover, denoting by $I_m$ the same interval as in \eqref{Igm}, we have 
\begin{eqnarray*}
&&\mu(\{x\in K: \alpha_m(x)=D_m,\,  I_{m} \cap \widetilde F_{m}(D_{m},D_{m})=\emptyset\} )\\
&=& \sum_{I\in \mathbf{G}_{\widehat\alpha,m-1}} \mu_{m-1}(I) \frac{\displaystyle\sum_{J\in \mathcal{F}_{N_{m}},\, J \cap \widetilde F_{m}(D_{m},D_{m})=\emptyset}\nu_{D_{m}}(J)}{ \sum_{\alpha\in A_{m}}\sum_{J\in {G}_{m}(\alpha)}\rho_m(\alpha) \mu_{\alpha}(J)}\\
&\le & \sum_{I\in \mathbf{G}_{\widehat\alpha,m-1}} \mu_{m-1}(I) \frac{2^{-m}}{2^{-1}}= 2^{-(m-1)},
\end{eqnarray*}
where we have used \eqref{0000} and \eqref{controlmuq1}. Then, a new application of the Borel-Cantelli lemma implies that for $\mu$-almost every $x$, for $m$ large enough, we have both $\alpha_m(x)=D_m$ and $ I_{m} \cap \widetilde F_{m}(D_{m},D_{m})\neq\emptyset$. We can then conclude from \eqref{goodcontrols2} and Corollary~\ref{dimloc}(1)(2) that  $\mu$ is exact dimensional with dimension $D$, since $D_m$ converges to $D$ as $m\to\infty$. 

%

\section{Full illustration of the multifractal formalism: Proof of Theorem~\ref{thmHP}}\label{proof2}

\subsection{Construction of $\mu$}\label{constrmu2}
We will modify the scheme used in the proof of Theorem~\ref{thm1} by repeating recursively, for all $m\ge 1$, for some integers  $R^f_m$ anf $R^g_m$ to be specified, $R^f_m$ times the $m$th step with $\gamma_m(\alpha)$  approximating $f(\alpha)$, followed by $R^g_m$ times the $m$th step with $\gamma_m(\alpha)$  approximating $g(\alpha)$; this will make it possible to both guaranty the non emptyness of the sets $E(\mu,\alpha)$, $\alpha\in\mathcal{I}$, and the control of the difference between the associated Hausdorff and packing spectra. Additional conditions on $\gamma_m(\alpha)$ will be also needed to obtain an exact dimensional measure.

Let $D$ be a fixed point of $f$ (it is automatically a fixed point of $g$). Due to our assumption on the upper semi-continuity of $f$, there exists a countable subset $\Delta_f$ of $\mathcal{I}\setminus \{\infty\}$ such that for all $\alpha\in\mathcal{I}\setminus \{\infty\}$, there exists a sequence $(\alpha_n)_{n\ge 1}$ in $\Delta_f^{\N_+}$ such that $\lim_{n\to\infty}(\alpha_n,f(\alpha_n))=(\alpha,f(\alpha))$. Similarly,  there exists a countable subset $\Delta_g$ of $\mathcal{J}\setminus \{\infty\}$ such that for all $\alpha\in \mathcal{J}\setminus \{\infty\}$, there exists a sequence $(\alpha_n)_{n\ge 1}$ in $\Delta_g^{\N_+}$ such that $\lim_{n\to\infty}(\alpha_n,g(\alpha_n))=(\alpha,g(\alpha))$.

Fix once for all such $\Delta_f$ and $\Delta_g$ and we can assume that they contain $D$. Also take
$$
\epsilon_m=(m+1)^{-2}.
$$

Set  $\alpha^f_{\min}=\min (\mathcal{I})$, $\alpha^f_{\max}=\max (\mathcal{I})\in {\R}_+\cup\{\infty\}$, $\alpha^g_{\min}=\min (\mathcal{J})$, and $\alpha^g_{\max}=\max (\mathcal{J})\in {\R}_+\cup\{\infty\}$.
For $h\in\{f,g\}$, if $\Delta_h\setminus\{0,D\}\neq\emptyset$, enumerate the elements of  $\Delta_h\setminus\{0,D\}$ in a sequence $(\alpha^{\Delta_h}_j)_{j\ge 1}$, and  for each $m\in \N$ set 
$$
\widetilde A^h_m=\big \{\alpha^{\Delta_h}_j:1\le j\le m,\ \alpha^{\Delta_h}_j\ge 4\epsilon_m^{1/3}\big\}; 
$$
otherwise, set $\widetilde A^h_m=\emptyset$. Also set 
$$
\begin{cases}
D_m= 2\epsilon_m^{1/3}\text{ if } D=0, \text{ and } D_m=D \text{ otherwise},\\
\alpha^h_m(0)=D_m \text{ if } D=0 \text{ and }\alpha_m(0)=\min(2\epsilon_m^{1/3},D)/2 \text{ otherwise},\\
\alpha^h_m(\infty)=\big (\max (d,m, \max (\alpha^{\Delta_h}_j:1\le j\le m)\big )^2 \text{ if }\alpha^h_{\max}=\infty
\end{cases}.
$$
Then let 
$$
A^h_m=\begin{cases}\widetilde A^h_m\cup\{D_m\}&\mbox{if } 0<\alpha^h_{\min}\le \alpha^h_{\max}<\infty\\
\widetilde A^h_m\cup\{D_m\}\cup \{\alpha^h_m(\infty)\}&\mbox{if }0<%
\alpha^h_{\min}\text{ and } \alpha^h_{\max}=\infty,\\
\widetilde A^h_m\cup\{D_m\}\cup \{\alpha^h_m(0)\}&\mbox{if }
\alpha^h_{\min}=0\text{ and } \alpha^h_{\max}<\infty,\\
\widetilde A^h_m\cup\{D_m\}\cup \{\alpha^h_m(0)\}\cup\{\alpha^h_m(\infty)\}&\mbox{if }
\alpha^h_{\min}=0\text{ and } \alpha^h_{\max}=\infty,
\end{cases}.
$$ 

For $\alpha\in A^h_m$ let 
$$
\widetilde {\gamma}^h_m(\alpha)=\begin{cases}
h(\alpha)&\text{if }\alpha\in \widetilde A^h_m\text{ and }h(\alpha)>0\\
h(\infty)&\text{if }\alpha= \alpha^h_m(\infty)\text{ and }h(\infty)>0,\\
\alpha &\text{if }\alpha=D_m,\\
 \alpha_m(0)&\text{if }\alpha=  \alpha^h_m(0),\\
 \epsilon_m^{1/3}&\text{if }\alpha\in\widetilde A^h_m\text{ and }h(\alpha)=0,\\
\epsilon_m^{1/3}&\text{if }\alpha=\alpha^h_m(\infty)\text{ and }h(\infty)=0
\end{cases}
$$
(notice that $f(\infty)=0$ (resp. $g(\infty)=0$) only if $f=0$ (resp. $g=0$) over $\mathcal{I}$ (resp. $\mathcal{J}$) due to our assumptions on $f$ (resp. $g$)). Then, set (this will be used to entail that $\mu$ is exact dimensional)  
$$
\gamma^h_m(\alpha)=\begin{cases}
\alpha=\widetilde \gamma^h_m(\alpha)&\text{if } \alpha=D_m\\
 (1-\theta^h_m)\widetilde \gamma^h_m(\alpha)&\text{otherwise}
 \end{cases},
$$
where $\theta^h_m\in (0,1)$ tends to $0$ slowly enough so that 
\begin{equation}\label{thetam}
\forall\,\alpha\in A_m^h\setminus \{D_m\},\ 
\begin{cases}
\alpha-\gamma^h_m(\alpha)=\alpha-(1-\theta^h_m)\widetilde\gamma^h_m(\alpha)\ge D_m\sqrt{\epsilon_m}\\
(1-\sqrt{\epsilon_m})\alpha-\gamma^h_m(\alpha)\ge 0
\end{cases}
.
\end{equation}
 We then have in particular  $\gamma_m^h(\alpha)<\alpha$ for all $\alpha\in A_m^h\setminus \{D_m\}$.
 
\medskip

Using the definitions of Section~\ref{sec2}, for $\alpha\in A^h_m$ set 
\begin{equation*}\label{defmunu}
\mu_{\alpha}=\mu_{{\alpha,\gamma^h_m(\alpha)}} \text{ and }\nu_{\alpha}=\nu_{{\alpha,\gamma^h_m(\alpha)}} \text{ for  }\alpha\in A_m^h\setminus \{D_m\}\text{ and } \mu_{D_m}=\nu_{D_m}=\nu_{{D_m,D_m}} \text{ for  }\alpha=D_m.
\end{equation*}
Strickly speaking, $\mu_\alpha$ and $\nu_\alpha$ should be written $\mu^h_{m,\alpha}$ and $\nu^h_{m,\alpha}$, but for the sake of readability we will omit the indices $m$ and $h$.

Also, set 
$$
n^h_m=\max\{n_{m}(\alpha,\gamma^h_m(\alpha)): \alpha\in A^h_m\}.
$$

Now let $(R^f_m)_{m\ge 1}$,  $(R^g_m)_{m\ge 1}$ and $(N_m)_{m\ge 1}$ be three increasing sequences of positive integers defined recursively  and satisfying the following properties:
\begin{equation}\label{condHP}
\begin{cases}(1) \ \forall\, m\ge 1, e^{m+1}\le R^f_{m}\le  R^g_m\le R_{m+1}^f;\\

(2) \ \forall\, m\ge 1,\ N_m\ge \max (n^h_m, \exp (\#A_m^h),\exp(m)) \text{ for }h\in\{f,g\},\text{ and, as }m\to\infty:\\
(3)\  (\max (\{1\}\cup A_m^f\cup A_m^g))N_m=o\Big ((\min (\{1\}\cup  A_{m-1}^f\cup A_{m-1}^g))\displaystyle\sum_{i=1}^{m-1}(R^f_i+R^g_i)N_i\Big );\\
(4)\  (\max (\{1\}\cup A_{m-1}^f\cup A_{m-1}^g))\displaystyle\sum_{i=1}^{m-1}(R^f_i+R^g_i)N_i =o\big (\min (\{1\}\cup\gamma_m(A^{f}_m))R^f_{m}N_m\big );\\
(5)\  (\max (\{1\}\cup A_m^f\cup A_m^g))\displaystyle(R^f_{m}N_m+\sum_{i=1}^{m-1}(R^f_i+R^g_i)N_i) =o\big (\min (\{1\}\cup\gamma_m(A^{g}_m))R^g_{m}N_m\big ).
\end{cases}
\end{equation}

Then, for $\alpha\in A^h_m$ set 
$$
G^h_m(\alpha)=\{I\in \mathcal{F}_{N_m}: I\cap F_{m}(\alpha,\gamma^h_m(\alpha))\neq\emptyset\}
$$ 
and 
\begin{equation}\label{rhof}
\rho_m^h(\alpha)=\begin{cases}
1&\text{ if }\alpha=D_m\\
(2^{-m}/\#A^h_m)^2&\text{ otherwise}
\end{cases}.
\end{equation}
 
We enumerate the elements of $A^h_m$ as $\alpha^h_{m,i}$, $1\le i\le \#A^h_m$ and denote by $L^h_{m,\alpha_i}$, $1\le i\le\#A^h_m$,  the disjoint closed dyadic cubes of generation $\ell^h_m=\ell(\#A_m^h)$ of the set $\mathcal{L}_m=\mathcal{L}(\#A_m^h)$ defined in Section~\ref{nota}.  We also denote $L^h_{m,\alpha_i}$ by~$L^h_{m,i}$.

Let us start the construction of $\mu$. We consider the same measure $\mu_1$ on $\mathbf{G}_1$ as in Section~\ref{constmu1}, except that take the sets $G_1^f(\alpha)$,  $\alpha\in A_1^f$, instead of  the sets $G_1(\alpha)$,  $\alpha\in A_m$,  and the collection $\mathcal{L}^f_1$ instead of $\mathcal{L}_1$.  Then, for $1\le s\le R^f_1-1$ we define recursively

$$
{\bf G}_{s+1}=\bigcup_{I_s\in {\bf G}_s}\bigcup_{\alpha_{1,i}\in A^f_{1}}  {\bf G}^f_{1}(I_s,\alpha_{1,i}), 
$$
where
$$
{\bf G}_{1}(I_s,\alpha_{1,i})=\big\{I_sL^f_{1,i} I_{1,i}:\, I_{1,i}\in G^f_{1}(\alpha_{1,i})\big \}
$$
and a measure $\mu_{s+1}$ on $ {\bf G}_{s+1}$ as
\begin{equation*}
\mu_{s+1}(I_s L^f_{1,i} I_{1,i})=\mu_s(I_s) \frac{\rho_1^f(\alpha_{1,i})\mu_{\alpha_{1,i}}(I_{1,i})}{\sum_{\alpha\in A_{1}}\sum_{I\in G^f_{1}(\alpha)}\rho_1^f(\alpha)\mu_{{\alpha}}(I)}.
\end{equation*}
Then, define recursively a sequence $ ({\bf G}_s)_{s\ge 1}$of sets of intervals of the same generation  and a sequence of measures $(\mu_s)_{s\ge 1}$ as follows: 

For all $m\ge 1$, and $R^f_{m}+\sum_{j=1}^{m-1} R^f_{j}+R^g_{j}<s+1\le  \sum_{j=1}^{m} R^f_{j}+R^g_{j}$, 
$$
{\bf G}_{s+1}=\bigcup_{I_s\in {\bf G}_s}\bigcup_{\alpha_{m,i}\in A^g_{m}}  {\bf G}_{m}(I_s,\alpha_{m,i}), 
$$
where
 $$
{\bf G}_{m}(I_s,\alpha_{m,i})=\big\{I_sL^g_{m,i} I_{m,i}:\, I_{m,i}\in G^g_{m}(\alpha_{m,i})\big \}
$$
and the measure $\mu_{s+1}$ on $ {\bf G}_{s+1}$ is defined as 
\begin{equation*}
\mu_{s+1}(I_s L^g_{m,i} I_{m,i})=\mu_s(I_s) \frac{\rho_m^g(\alpha_{m,i})\mu_{{\alpha_{m,i}}}(I_{m,i})}{\sum_{\alpha\in A^g_{m}}\sum_{I\in G^g_{m}(\alpha)}\rho_m^f(\alpha)\mu_{{\alpha}}(I)}.
\end{equation*}

For all $m\ge 2$, and $\sum_{j=1}^{m-1} R^f_{j}+R^g_{j}<s+1\le  R^f_{m}+\sum_{j=1}^{m} R^f_{j}+R^g_{j}$, 
$$
{\bf G}_{s+1}=\bigcup_{I_s\in {\bf G}_s}\bigcup_{\alpha_{m,i}\in A^f_{m}}  {\bf G}_{m}(I_s,\alpha_{1,i}), 
$$
where
 $$
{\bf G}_{m}(I_s,\alpha_{m,i})=\big\{I_sL^f_{m,i} I_{m,i}:\, I_{m,i}\in G^f_{m}(\alpha_{m,i})\big \}
$$
and the measure $\mu_{s+1}$ on $ {\bf G}_{s+1}$ is defined as 
\begin{equation*}
\mu_{s+1}(I_s L^f_{m,i} I_{m,i})=\mu_s(I_s) \frac{\rho_m^f(\alpha_{m,i})\mu_{{\alpha_{m,i}}}(I_{m,i})}{\sum_{\alpha\in A^f_{m}}\sum_{I\in G^f_{m}(\alpha)}\rho_m^f(\alpha)\mu_{{\alpha}}(I)}.
\end{equation*}

This yields (in the same way as in Section~\ref{constmu1}) a Borel probability measure $\mu$ supported on 
$$
K=\bigcap_{s\ge 1} \bigcup_{I\in\mathbf{G}_s} I
$$
such that $\mu(I)=\mu_s(I)$ for all $s\ge 1$ and $I\in\mathbf{G}_s$. 
\medskip

For each $m\ge 1$, we define 
$$
s_m= R^f_{m}+\sum_{i=1}^{m-1} R^f_{i}+R^g_{i} \quad\text{and}\quad s'_m=\sum_{i=1}^{m} R^f_{i}+R^g_{i}.
$$
Then, for $s\ge 1$, we denote by $n(s)$ the generation of the cubes belonging to $\mathbf{G}_s$, i.e.
$$
n(s)=(s-s'_{m-1})(N_m+\ell^f_m)+\sum_{i=1}^{m-1} R^f_{i}(N_i+\ell^f_i)+R^g_{i}(N_i+\ell^g_i)
$$
if  $s'_{m-1}<s\le s_m$, and 
 $$
 n(s)= (s-s_{m})(N_m+\ell^g_m)+R^f_m(N_m+\ell^f_m)+\sum_{i=1}^{m-1} R^f_{i}(N_i+\ell^f_i)+R^g_{i}(N_i+\ell^g_i)
$$ 
if $s_m<s\le s'_m$.
 
 The following property, which follows immediately from \eqref{condHP}  will be useful. If $s'_{m-1}<s\le s_m$, set 
 \begin{equation}\label{n'1}
 n'(s)=(s-s'_{m-1})N_m+\sum_{i=1}^{m-1} R^f_iN_i+R^g_{i}N_i,
\end{equation}
 and if $s_m<s\le s'_m$, set 
 \begin{equation*}\label{n'2}
 n'(s)=(s-s_m)N_m+R^f_mN_m+\sum_{i=1}^{m-1} R^f_iN_i+R^g_{i}N_i.
\end{equation*}
 \begin{pro}\label{n'} If $s_{m-1}<s\le s_m$, then
$ \max (1, \max(A_m^f\cup A^g_m)\sum_{i=1}^m R^f_m\ell^f_m+R^g_m\ell^g_m=o(n'(s))$. In particular, $n'(s)\sim n(s)$ as $s\to\infty$.   
 \end{pro}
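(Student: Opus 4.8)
The plan is to unwind the definitions of $n(s)$ and $n'(s)$, to identify $n(s)-n'(s)$ with the total contribution of the separating cubes $L^h_{i,\cdot}$, and then to reduce the required estimate to two elementary termwise comparisons, the only real input being that the generations $\ell^h_i$ of those cubes are logarithmically small compared with the $N_i$. Fix $m$ and a parameter $s$, and treat first the regime $s'_{m-1}<s\le s_m$, writing $t=s-s'_{m-1}\in\{1,\dots,R^f_m\}$; the regime $s_m<s\le s'_m$, with $t'=s-s_m\in\{1,\dots,R^g_m\}$, is entirely analogous (one carries along the extra complete block $R^f_m(N_m+\ell^f_m)$ and uses $g$ in place of $f$ at level $m$). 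Reading off the formulas for $n(s)$ and $n'(s)$ stated just before Proposition~\ref{n'}, in the first regime
\begin{equation*}
n'(s)=tN_m+\sum_{i=1}^{m-1}(R^f_i+R^g_i)N_i,\qquad n(s)-n'(s)=t\,\ell^f_m+\sum_{i=1}^{m-1}(R^f_i\ell^f_i+R^g_i\ell^g_i),
\end{equation*}
and similarly $n(s)-n'(s)=t'\ell^g_m+R^f_m\ell^f_m+\sum_{i<m}(R^f_i\ell^f_i+R^g_i\ell^g_i)$ in the second. In both cases $n(s)-n'(s)$ is bounded above by $\sum_{i=1}^m(R^f_i\ell^f_i+R^g_i\ell^g_i)$, hence a fortiori by the quantity appearing in the statement (which is at least this sum, since $\max(\{1\}\cup A^f_m\cup A^g_m)\ge1$); so the ``in particular'' conclusion $n(s)\sim n'(s)$ (as $s\to\infty$, hence $m\to\infty$) will follow at once from the displayed $o(n'(s))$ bound.

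The input I would isolate first is the smallness of the gap generations:
\begin{equation*}
\max(\{1\}\cup A^f_i\cup A^g_i)\,\ell^h_i=o(N_i)\qquad(i\to\infty,\ h\in\{f,g\}).
\end{equation*}
Since $\ell^h_i=\ell(\#A^h_i)=\lfloor d^{-1}\log_2(6^d\#A^h_i)\rfloor+3$ and the families $A^f_i,A^g_i$ — hence the sequences $i\mapsto\#A^h_i$, $i\mapsto\ell^h_i$ and $i\mapsto\max(\{1\}\cup A^f_i\cup A^g_i)$ — do not depend on $(N_j)_j,(R^f_j)_j,(R^g_j)_j$, the lower bounds $N_i\ge\exp(\#A^h_i)$ and $N_i\ge n^h_i$ imposed in \eqref{condHP}(2), which one is free to reinforce at each step of the recursive construction, can be taken large enough that this holds, and in fact large enough that $\max(\{1\}\cup A^f_m\cup A^g_m)\sum_{i<m}(R^f_i\ell^f_i+R^g_i\ell^g_i)=o\big(\sum_{i<m}(R^f_i+R^g_i)N_i\big)$ as well: here one splits the sum at a slowly growing cutoff $i_0$, bounds the finitely many initial terms by a constant, and bounds the tail by $\big(\sup_{i\ge i_0}\ell^h_i/N_i\big)$ times $\sum_{i<m}(R^f_i+R^g_i)N_i$. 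Alternatively, \eqref{condHP}(3), which gives $\max(\{1\}\cup A^f_m\cup A^g_m)N_m=o\big(\sum_{i<m}(R^f_i+R^g_i)N_i\big)$, directly absorbs the extra level-$m$ factor.

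Granting this, the estimate is immediate. In the first regime,
\begin{equation*}
\max(\{1\}\cup A^f_m\cup A^g_m)\Big(t\,\ell^f_m+\sum_{i<m}(R^f_i\ell^f_i+R^g_i\ell^g_i)\Big)
\end{equation*}
is the sum of $t\cdot o(N_m)$ and $o\big(\sum_{i<m}(R^f_i+R^g_i)N_i\big)$, so comparing with $n'(s)=tN_m+\sum_{i<m}(R^f_i+R^g_i)N_i$ through the elementary inequality $\frac{a'+a''}{b'+b''}\le\max(a'/b',a''/b'')$ (with $b'=tN_m$, $b''=\sum_{i<m}(R^f_i+R^g_i)N_i$) shows it is $o(n'(s))$; the second regime is handled the same way, the additional term $t'\ell^g_m$ contributing $t'\cdot o(N_m)=o(t'N_m)$ and $t'N_m$ being itself a summand of $n'(s)$. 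The step I expect to be the main (if mild) obstacle is precisely keeping the block parameter $t$ (resp.\ $t'$) in play to the end: it multiplies the leading term of $n'(s)$ as well as that of $n(s)-n'(s)$, so it cancels in the ratio and the bound is uniform over $s$ within a block, which is why it suffices to control the two endpoints $t=1$ and $t=R^f_m$. Conditions \eqref{condHP}(4)–(5) play no role in this proof; they enter the finer local dimension estimates of the following subsections.
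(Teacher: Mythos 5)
Your decomposition is the right one and, since the paper gives no argument beyond asserting that the proposition ``follows immediately from \eqref{condHP}'', your write-up is essentially the intended proof: $n(s)-n'(s)$ is exactly the total contribution of the gap generations $\ell^h_i$, the block parameter $t$ (resp. $t'$) multiplies both the leading term of $n'(s)$ and the leading term of $n(s)-n'(s)$, and the mediant inequality finishes. You are also right to read the (typographically garbled) displayed quantity as $\max(1,\max(A^f_m\cup A^g_m))\cdot\big(n(s)-n'(s)\big)$ rather than as the full sum $\sum_{i\le m}(R^f_i\ell^f_i+R^g_i\ell^g_i)$: the latter contains $R^g_m\ell^g_m$ even when $s$ lies in the first regime with $t$ small, and \eqref{condHP} imposes no upper bound on $R^g_m$, so the full-sum version cannot be a consequence of \eqref{condHP}; only the $n(s)-n'(s)$ version is what is used later (e.g. to pass from $n'(s)$ to $n(s)$ in \eqref{dimlocmu0} and in the large deviations estimates).

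There are two soft spots. First, the parenthetical claim that \eqref{condHP}(3) ``directly absorbs the extra level-$m$ factor'' fails for the term $\max(A^f_m\cup A^g_m)\,t\,\ell^f_m$ when $t$ is of order $R^f_m$: (3) compares $\max(A^f_m\cup A^g_m)N_m$ with $\sum_{i<m}(R^f_i+R^g_i)N_i$, whereas what is needed there is $\max(A^f_m\cup A^g_m)\ell^f_m=o(N_m)$; keep your primary route (reinforcing the lower bound on $N_m$ in (2), legitimate because $A^h_m$, $\ell^h_m$, $n^h_m$ are determined before $N_m$). Second, and more substantially, when $\infty\in\mathcal{J}$ the factor $M_m:=\max(A^f_m\cup A^g_m)$ is unbounded (it contains $\alpha^h_m(\infty)$), and in the bound for $M_m\sum_{i<m}(R^f_i\ell^f_i+R^g_i\ell^g_i)$ it multiplies level-$i$ data with $i<m$. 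Your isolated input controls $M_i\ell^h_i/N_i$, not $M_m\ell^h_i/N_i$, and your cutoff argument bounds the tail by $\big(\sup_{i\ge i_0}\ell^h_i/N_i\big)\sum_{i<m}(R^f_i+R^g_i)N_i$ but must still beat the extra factor $M_m$, which may grow much faster than anything fixed at levels $<m$. A clean repair is to isolate the term $i=m-1$, where one needs $N_{m-1}\gg M_m\max(\ell^f_{m-1},\ell^g_{m-1})$ (arrangeable, since $M_m$ is predetermined), and to absorb $M_m\sum_{i<m-1}$ using the domination $\sum_{i<m}(R^f_i+R^g_i)N_i\gg M_{m-1}\sum_{i<m-1}(R^f_i+R^g_i)N_i$ furnished by \eqref{condHP}(5), strengthened by the predetermined ratio $M_m/M_{m-1}$. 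In short, the proposition holds only because the recursive choice of $(N_m,R^f_m,R^g_m)_m$ can be made with foreknowledge of the sequences $(A^h_j,\ell^h_j,n^h_j)_j$; you invoke this freedom once, but it must be invoked at every place where a level-$m$ maximum multiplies level-$(<m)$ data.
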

 \begin{rem}\label{neigh}
By construction, due to our choice for the cubes $L^h_{m,i}$, if $J=I_{s-1} L^h_{m,i} I_{m,i}\in \mathbf{G}_{s}$ and $I$ is a dyadic cube in   $\mathcal{N}_2(n(s),J)$ (see Section~\ref{nota} for the definition),  either $\mu(I)=0$ or  $I$ takes the form $I_{s-1} L^h_{m,i} I'_{m,i}$ and $\mu(J)2^{-\epsilon(s) n(s)}\le \mu(I)\le \mu(J)2^{\epsilon(s) n(s)}$, where $\epsilon(s)$ is independent of $J$ and the sequence $(\epsilon(s))_{s\ge 1}$  tends to  0 as $s\to\infty$. 
\end{rem}
\subsection{Reduction of the problem} In this section we explain why the measure $\mu$ constructed in the previous section has the nice property that it is possible to replace centered balls by dyadic cubes in all the sets and quantities involved in the multifractal formalism without modifying them (it is of course impossible to do so for any measure in $\mathcal M^+_c(\R^d))$. 

\medskip

Let us start with two properties which easily follow from \eqref{condHP}(1--3) by construction:
\begin{eqnarray*}
\lim_{s\to\infty}\frac{n(s+1)}{n(s)}&=&1\\
\text{and}\quad \lim_{s\to\infty} \sup_{x\in K} \Big|\frac{\log_2(\mu(I_{n(s)}(x))}{n(s)}-\frac{\log_2(\mu(I_{n(s+1)}(x))}{n(s+1)}\Big |&=&0.
\end{eqnarray*} 
Moreover, if $2^{-n(s+1)}< r\le 2^{-n(s)}$ we have 
$$
I_{n(s+1)}(x)\subset B(x,r)\subset B(x,2r)\subset   \bigcup_{I\in \mathcal{N}_2(n(s),I_{n(s)}(x))} I,
$$
and  if $I\in \mathcal{N}_2(n(s),I_{n(s)}(x))$, either  $ \mu(I_{n(s)}) e^{-\epsilon(s) n(s)}\le \mu(I)\le \mu(I_{n(s)}) e^{\epsilon(s) n(s)}$ or  $\mu(I)=0$,  where $\epsilon(s)$ is independent of $x$ and $I$, by  Remark~\ref{neigh}.

It follows that for all $x\in K$ we have 
\begin{equation}\label{dimlocid}
\underline d(\mu,x)=\liminf_{s\to\infty} \frac{\log_2(\mu(I_{n(s)}(x))}{-n(s)}\text{ and } \overline d(\mu,x)=\limsup_{s\to\infty} \frac{\log_2(\mu(I_{n(s)}(x))}{-n(s)},
\end{equation}
and 
$\mu$ is weakly doubling in the sense that there exists a function $\widetilde\epsilon(r)$ tending to $0^+$ as $r\to 0^+$ such that 
$$
\forall\, x\in K,\ \mu(B(x,2r))\le r^{-\widetilde \epsilon(r)}\mu(B(x,r)).
$$
Also, the above properties and standard covering arguments (see in particular \cite[Section 4.6]{Olsen} where doubling measures are used) yield, for all $q\in\R$,
$$
\tau_\mu(q)= \liminf_{s\to \infty}   \frac{\log_2 \sum_{I_s\in \mathbf{G}_s}\mu(I_s)^q}{-n(s)}
\text{ and }
\overline \tau_\mu(q)=\limsup_{s\to \infty}   \frac{\log_2 \sum_{I_s\in \mathbf{G}_s}\mu(I_s)^q}{-n(s)}.
$$

Similarly,  for $\alpha\in \R_+$ we have
\begin{eqnarray*}
\underline{f}^{\rm LD}_\mu(\alpha)&=&\lim_{\epsilon\to 0^+} \liminf_{s\to \infty}\frac{\log_2 \#\Big \{I\in \mathbf{G}_s:2^{-n(s)(\alpha+\epsilon)}\le  \mu(I)\le2^{-n(s)(\alpha-\epsilon)}\Big\}}{n(s)},
\\
\overline{ f}^{\rm LD}_\mu(\alpha)&=&\lim_{\epsilon\to 0^+}\limsup_{s\to \infty}\frac{\log_2 \#\Big \{I\in \mathbf{G}_s:2^{-n(s)(\alpha+\epsilon)}\le  \mu(I)\le2^{-n(s)(\alpha-\epsilon)}\Big\}}{n(s)},
\end{eqnarray*}
and \begin{eqnarray*}
\underline{f}^{\rm LD}_\mu(\infty)&=&\lim_{A\to\infty} \liminf_{s\to \infty}\frac{\log_2 \#\Big \{I\in \mathbf{G}_s: \mu(I)\le 2^{-n(s)A}\Big\}}{n(s)},\\
\overline{ f}^{\rm LD}_\mu(\infty)&=&\lim_{A\to\infty} \limsup_{s\to \infty}\frac{\log_2 \#\Big \{I\in \mathbf{G}_s: \mu(I)\le 2^{-n(s)A}\Big\}}{n(s)},
\end{eqnarray*}
and for $0\le \alpha\le \beta\le \infty$ such that $(\alpha,\beta)\neq (\infty,\infty)$, 
$$
\underline{f}^{\rm LD}_\mu(\alpha,\beta)=\lim_{\epsilon\to 0^+} \liminf_{s\to \infty}\frac{\log_2 \#\Big \{I\in \mathbf{G}_s:2^{-n(s)(\beta+\epsilon)}\le  \mu(I)\le2^{-n(s)(\alpha-\epsilon)}\Big\}}{n(s)}.
$$

Finally, due to the multiplicative nature of the construction of $\mu$, defining for each $m\ge 1$ and $q\in \R$
$$
T^f_{m}(q)= \log_2\frac{\sum_{\alpha\in A_m^f} \sum_{I\in G^f_m(\alpha)} \rho^f_m(\alpha)^q\mu_\alpha(I)^q}{\Big (\sum_{\alpha\in A_m^f} \sum_{I\in G^f_m(\alpha)} \rho^f_m(\alpha)\mu_\alpha(I)\Big)^q}
$$
and 
$$
T^g_{m}(q)= \log_2\frac{\sum_{\alpha\in A_m^g} \sum_{I\in G^g_m(\alpha)} \rho^g_m(\alpha)^q\mu_\alpha(I)^q}{\Big (\sum_{\alpha\in A_m^g} \sum_{I\in G^g_m(\alpha)} \rho^g_m(\alpha)\mu_\alpha(I)\Big)^q},
$$
we have for $s\ge 1$ and $q\in\R$
$$
\log_2 \sum_{I_s\in \mathbf{G}_s}\mu(I_s)^q=\sum_{i=1}^{m-1} R^f_{i} T^f_{i}(q)+R^g_{i} T^g_{i}(q)+(s-s'_{m-1})\, T^f_{m}(q)
$$ 
if $s'_{m-1}<s\le s_m$
and 
$$
\log_2 \sum_{I_s\in \mathbf{G}_s}\mu(I_s)^q=\Big (\sum_{i=1}^{m-1} R^f_{i} T^f_{i}(q)+R^g_{i} T^g_{i}(q)\Big )+ R^f_{m}T^f_{m}(q)+ (s-s_m)\,T^g_{m}(q)
$$ 
if 
$
s_m<s\le s'_m$. It follows that
\begin{equation}\label{toto}
\tau_\mu(q)= \liminf_{S\ni s\to \infty}   \frac{\log_2 \sum_{I_s\in \mathbf{G}_s}\mu(I_s)^q}{-n(s)}\text{ and }\overline \tau_\mu(q)=\limsup_{S\ni s\to \infty}   \frac{\log_2 \sum_{I_s\in \mathbf{G}_s}\mu(I_s)^q}{-n(s)},
\end{equation}
where 
$S=\Big \{s_m,s'_m: m\ge 1\Big \}$.

\subsection{Local dimension estimates for $\mu$, auxiliary measures, and lower bounds for the different  spectra}
\subsubsection{Local dimension estimates for the measure $\mu$}

Let $x\in K$ and $s\ge 1$. 

If $s'_{m-1}<s\le s_m$, for all $1\le i\le m-1$, there are, uniquely determined, $R^f_i$ elements $(\alpha^f_{i,j}(x))_{1\le j\le R^f_i}\in (A^f_i)^{R^f_i}$, $R_i^g$ elements $(\alpha^g_{i,j}(x))_{1\le j\le R_i^g}\in (A^g_i)^{R^g_i}$, and $s'=s-s'_{m-1}$ elements $(\alpha^f_{m,j}(x))_{1\le j\le s'}\in (A^f_{m})^{s'}$, and for each exponent $\alpha^h_{i,j}(x)$ of this collection a unique element $I_{\alpha^h_{i,j}(x)}$ of $G^h_i(\alpha^h_{i,j}(x))$ and a unique element $L^h_{i,\alpha^h_{i,j}(x)}$ of $\mathcal{L}_i^h$ such that 
\begin{equation*}
\label{Ig1}I_{n(s)}(x)=\Big (\odot_{i=1}^{m-1} (\odot_{j=1}^{R^f_i}L^f_{j,\alpha^f_{1,j}(x)}I_{\alpha^f_{1,j}(x)})\cdot (\odot_{j=1}^{R^g_i}L^g_{j,\alpha^g_{1,j}(x)}I_{\alpha^g_{1,j}(x)})\Big )\cdot  (\odot_{j=1}^{s'}L^f_{m,\alpha^f_{m,j}(x)}I_{\alpha^f_{m,j}(x)}),
\end{equation*}
(where the notation $\odot_{p=1}^q I_p$ stands for the cube obtained by concatenation $I_1\cdot I_2\cdots I_q$ of the cubes $I_1,\ldots,I_q$). 

By construction, using the analogue of  \eqref{mum+1'} we get, writing $\alpha^h_{i,j}$ for  $\alpha^h_{i,j}(x)$, 
\begin{multline*}
\Big (\frac{1}{1+2^{-m}}\Big )^{s'} \Big (\prod_{i=1}^{m-1}\Big (\frac{1}{1+2^{-i}}\Big )^{R^f_i+R^g_i}\Big)\prod_{j=1}^{s'} \rho_m^f(\alpha^f_{m,j})\Big (\prod_{i=1}^{m-1} \Big (\prod_{j=1}^{R^f_i}\rho_i^f({\alpha^f_{i,j}})\Big ) \Big (\prod_{j=1}^{R^g_i}\rho_i^g({\alpha^g_{i,j}})\Big )\\ \le \frac{\mu(I_{n(s)})}{\prod_{j=1}^{s'}\mu_{{\alpha^f_{m,j}}}(I_{\alpha^f_{m,j})}) \Big (\prod_{i=1}^{m-1} \Big (\prod_{j=1}^{R^f_i}\mu_{{\alpha^f_{i,j}}}(I_{\alpha^f_{i,j}}) \Big ) \Big (\prod_{j=1}^{R^g_i}\mu_{{\alpha^g_{i,j}}}(I_{\alpha^g_{i,j}})}\Big )\\
\le 2^{m s'+\sum_{i=1}^{m-1} i (R^f_i+R^g_i)}.
\end{multline*}
Consequently, due to the fact that $I_{\alpha^h_{i,j}(x)}\in G^h_i(\alpha^h_{i,j}(x))$ for $h\in\{f,g\}$, using \eqref{condHP}(1)(2) and Proposition~\ref{n'} we get 
\begin{equation}\label{dimlocmu0}
2^{-\epsilon(s)n(s)} \le \frac{\mu(I_{n(s)}(x))}{2^{-n(s)\alpha_s(x)}}\le 2^{\epsilon(s)n(s)}
\end{equation}
with 
$$
\alpha_{s}(x)=\frac{\sum_{j=1}^{s'}N_{m}\alpha^f_{m,j}(x)+\Big (\sum_{i=1}^{m-1}  \sum_{j=1}^{R^f_i}N_i \alpha^f_{i,j}(x)+ \sum_{j=1}^{R^g_i}N_i\alpha^g_{i,j}(x)\Big )}{s'N_m+\sum_{i=1}^{m-1} R^f_{i}N_i+R^g_{i}N_i}$$
and $\lim_{s\to\infty}\epsilon(s)=0$. Due to \eqref{condHP}(3-5), 
\begin{equation}\label{dls1}
\text{if $s=s_m$, we have}\quad \alpha_{s}(x)=\frac{\sum_{j=1}^{R^f_m}N_{m}\alpha^f_{m,j}(x)}{R^f_mN_m}+\epsilon'(s),
\end{equation}
with $\lim_{s\to\infty}\epsilon'(s)=0$ uniformly in $x$. 

If now $s_m<s\le s'_m$ and $s'=s-s'_m$, with similar notations we get 
\begin{eqnarray}
\label{Ig2}I_{n(s)}(x)=\Big (\odot_{i=1}^{m-1} (\odot_{j=1}^{R^f_i}L^f_{j,\alpha^f_{1,j}(x)}I_{\alpha^f_{1,j}(x)})\cdot (\odot_{j=1}^{R^g_i}L^g_{j,\alpha^g_{1,j}(x)}I_{\alpha^g_{1,j}(x)})\Big )\\
\nonumber \cdot  (\odot_{j=1}^{R_m^f}L^f_{m,\alpha^f_{m,j}(x)}I_{\alpha^f_{m,j}(x)})\cdot  (\odot_{j=1}^{s'}L^g_{m,\alpha^g_{m,j}(x)}I_{\alpha^g_{m,j}(x)}) 
\end{eqnarray}
and 
\begin{equation}\label{dimlocmu0'}
2^{-\epsilon(s)n(s)} \le \frac{\mu(I_{n(s)}(x))}{2^{-n(s)\alpha_s(x)}}\le 2^{\epsilon(s)n(s)}
\end{equation}
with 
$$
\alpha_{s}(x)=\frac{\sum_{j=1}^{s'}N_{m}\alpha^g_{m,j}(x)+\sum_{j=1}^{R^f_m} N_{m}\alpha^f_{m,j}(x)+\sum_{i=1}^{m-1}  \sum_{j=1}^{R^f_i}N_i \alpha^f_{i,j}(x)+ \sum_{j=1}^{R^g_i}N_i \alpha^g_{i,j}(x)}{s'N_m+R^f_mN_m+\sum_{i=1}^{m-1} R^f_iN_i+R^g_{i}N_i}$$
and $\lim_{s\to\infty}\epsilon(s)=0$. In addition, 
\begin{equation}\label{dls2}
\text{if $s=s'_m$, we have}\quad \alpha_{s}(x)=\frac{\sum_{j=1}^{R^g_m}N_{m}\alpha^g_{m,j}(x)}{R^g_mN_m}+\epsilon'(s),
\end{equation}
with $\lim_{s\to\infty}\epsilon'(s)=0$ uniformly in $x$.

\subsubsection{Auxiliary measures}\label{secaux}
Let $\widehat \alpha=(\alpha^f_1,\alpha^g_1,\ldots,\alpha^f_m,\alpha^g_m,\ldots)\in \prod_{m=1}^\infty A^f_m\times A^g_m$. 

We construct a measure $ \nu_{\widehat\alpha}$ as follows: Let 
$$
{\bf G}_{\widehat\alpha,1}=\{L^f_{1,\alpha^f_1}I_1: I_1\in G^f_1(\alpha^f_1)\}, 
$$
and define 
\begin{equation*}
 \nu_{\widehat\alpha,1}(L^f_{1,\alpha^f_1} I_1)= \frac{\nu_{{\alpha^f_1}}(I_1)}{\displaystyle\sum_{I\in G^f_1(\alpha^f_1)}\nu_{{\alpha^f_1}}(I)}.
\end{equation*}

Then, for $1\le s\le R^f_1-1$ we define recursively

$$
{\bf G}_{\widehat\alpha,s+1}=\bigcup_{I_s\in {\bf G}_{\widehat\alpha,s} }{\bf G}_{\widehat\alpha,s+1}(I_s,\alpha^f_1), 
$$
where
$$
{\bf G}_{\widehat\alpha,s+1}(I_s,\alpha^f_1)=\big\{I_sL^f_{1,\alpha^f_1} I_1:\, I_{1}\in G^f_{1}(\alpha^f_1)\big \}
$$
and a measure $\nu_{\widehat \alpha,s+1}$ on ${\bf G}_{\widehat\alpha,s+1}$ as
\begin{equation*}
\nu_{\widehat \alpha,s+1}(I_s L^f_{1,\alpha^f_1} I_{1})=\nu_{\widehat \alpha,s}(I_s) \frac{\nu_{\alpha^f_{1}}(I_1)}{\sum_{I\in G^f_{1}(\alpha_1^f)}\nu_{\alpha^f_1}(I)}.
\end{equation*}
Then, define recursively a sequence $ ({\bf G}_{\widehat \alpha,s})_{s\ge 1}$ of sets of intervals of the same generation  and a sequence of measures $(\nu_{\widehat \alpha,s})_{s\ge 1}$ as follows: 

For all $m\ge 1$, and $s_m<s+1\le s'_m$, 
$$
{\bf G}_{\widehat \alpha,s+1}=\bigcup_{I_s\in {\bf G}_{\widehat \alpha,s}} {\bf G}_{\widehat \alpha,s+1}(I_s,\alpha^g_m), 
$$
where
 $$
{\bf G}_{\widehat \alpha,s+1}(I_s,\alpha^g_m)=\big\{I_sL^g_{m,\alpha_m^g} I_{m}:\, I_m\in G^g_{m}(\alpha_m^g)\big \}
$$
and the measure $\nu_{\widehat \alpha,s+1}$ on $ {\bf G}_{\widehat \alpha,s+1}$ is defined as 
\begin{equation*}\label{nus+1}
\nu_{\widehat \alpha,s+1}(I_s L^g_{m,\alpha^g_m} I_{m})=\nu_{\widehat \alpha,s}(I_s) \frac{\nu_{{\alpha^g_{m}}}(I_{m})}{\sum_{I\in G^g_{m}(\alpha_m^g)}\nu_{{\alpha^g_m}}(I)}.
\end{equation*}

For all $m\ge 2$, and $s'_{m-1}<s+1\le  s_m$, 
$$
{\bf G}_{\widehat \alpha,s+1}=\bigcup_{I_s\in {\bf G}_{\widehat \alpha,s}} {\bf G}_{\widehat \alpha,s+1}(I_s,\alpha^f_m), 
$$
where
 $$
{\bf G}_{\widehat \alpha,s+1}(I_s,\alpha^f_m)=\big\{I_sL^f_{m,\alpha_m^f} I_{m}:\, I_m\in G^f_{m}(\alpha_m^f)\big \}
$$
and the measure $\nu_{\widehat \alpha,s+1}$ on $ {\bf G}_{\widehat \alpha,s+1}$ is defined as 
\begin{equation*}\label{nus+1'}
\nu_{\widehat \alpha,s+1}(I_s L^f_{m,\alpha^f_m} I_{m})=\nu_{\widehat \alpha,s}(I_s) \frac{\nu_{{\alpha^f_{m}}}(I_{m})}{\sum_{I\in G^f_{m}(\alpha_m^f)}\nu_{{\alpha^f _m}}(I)}.
\end{equation*}
This yields a Borel probability measure $\nu_{\widehat \alpha}$ supported on 
$$
K\supset K_{\widehat\alpha}=\bigcap_{s\ge 1} \bigcup_{I\in{\bf G}_{\widehat \alpha,s}} I
$$
and such that $\nu_{\widehat \alpha}(I)=\nu_{\widehat \alpha,s}(I)$ for all $s\ge 1$ and $I\in\mathbf{G}_{\widehat \alpha,s}$. 
Moreover, estimates similar to those used to control the local dimension of $\mu$ show that there exists a positive  sequence $(\epsilon (s))_{s\ge 1}$ such that $\lim_{s\to\infty} \epsilon(s)=0$ and for all $x\in  K_{\widehat\alpha}$ and $s\ge 1$, if  $s'_{m-1}<s\le s_m$ and $s'=s-s'_{m-1}$, we have 
\begin{equation}\label{dimlocnu0}
2^{-\epsilon(s)n(s)} \le \frac{\nu_{\widehat\alpha}(I_{n(s)}(x))}{2^{-n(s)\gamma_s(x)}}\le 2^{\epsilon(s)n(s)}
\end{equation}
with 
$$
\gamma_{s}(x)=\frac{\sum_{j=1}^{s'}N_{m}\gamma^f_m(\alpha^f_{m,j}(x))+\Big (\sum_{i=1}^{m-1}  \sum_{j=1}^{R^f_i}N_i\gamma^f_i( \alpha^f_{i,j}(x))+ \sum_{j=1}^{R^g_i}N_i\gamma^g_i(\alpha^g_{i,j}(x))\Big )}{s'N_m+\sum_{i=1}^{m-1} R^f_iN_i+R^g_{i}N_i},
$$
and if $s_m<s\le s'_m$ and  $s'=s-s_m$, we have 
\begin{equation}\label{dimlocnu0'}
2^{-\epsilon(s)n(s)} \le \frac{\nu_{\widehat\alpha}(I_{n(s)}(x))}{2^{-n(s)\gamma_s(x)}}\le 2^{\epsilon(s)n(s)}
\end{equation}
with 
$$
\gamma_{s}(x)=\frac{\displaystyle \sum_{j=1}^{s'}N_{m}\gamma^g_m(\alpha^g_{m,j}(x))+\sum_{j=1}^{R^f_m} N_{m}\gamma^f_m(\alpha^f_{m,j}(x))+\sum_{i=1}^{m-1}  \sum_{j=1}^{R^f_i}N_i \gamma^f_i(\alpha^f_{i,j}(x))+ \sum_{j=1}^{R^g_i}N_i \gamma^g_i(\alpha^g_{i,j}(x))}{s'N_m+R^f_mN_m+\sum_{i=1}^{m-1} R^f_iN_i+R^g_{i}N_i}.
$$
Moreover, since by construction for $x\in K_{\widehat\alpha}$, we have $\alpha^f_{i,j}(x)=\alpha^f_i$ and $\alpha^g_{i,j}(x)=\alpha^g_i$ for all $1\le j\le R^f_i$ and  $1\le j\le R^g_i$ respectively, from \eqref{dimlocmu0} and \eqref{dimlocnu0} we get 
\begin{equation}\label{expo}
\begin{split}
\alpha_s(x)&= \frac{{s'}N_{m}\alpha^f_{m}+\sum_{i=1}^{m-1}R^f_iN_i \alpha^f_{i}+ {R^g_i}N_i\alpha^g_{i}}{s'N_m+\sum_{i=1}^{m-1} R^f_iN_i+R^g_{i}N_i}\\
\gamma_s(x)&= \frac{{s'}N_{m}\gamma^f_m(\alpha^f_{m})+\sum_{i=1}^{m-1} {R^f_i}N_i\gamma^f_i( \alpha^f_i)+ {R^g_i}N_i\gamma^g_i(\alpha^g_{i})}{s'N_m+\sum_{i=1}^{m-1} R^f_iN_i+R^g_{i}N_i}  
\end{split}
\end{equation}
if  $s'_{m-1}<s\le s_m$, and from \eqref{dimlocmu0'} and \eqref{dimlocnu0'} we get 
\begin{equation}\label{expo'}
\begin{split}
\alpha_s(x)&= \frac{{s'}N_{m}\alpha^g_{m}+R_m^f N_m \alpha^f_m+\sum_{i=1}^{m-1}R^f_iN_i \alpha^f_{i}+ {R^g_i}N_i\alpha^g_{i}}{s'N_m+R^f_mN_m+\sum_{i=1}^{m-1} R^f_iN_i+R^g_{i}N_i}\\
\gamma_s(x)&= \frac{{s'}N_{m}\gamma^g_m(\alpha^g_{m})+R_m^f N_{m}\gamma^f_m(\alpha^g_{m})+\sum_{i=1}^{m-1} {R^f_i}N_i\gamma^f_i( \alpha^f_i)+ {R^g_i}N_i\gamma^g_i(\alpha^g_{i})}{s'N_m+R^f_mN_m+\sum_{i=1}^{m-1} R^f_iN_i+R^g_{i}N_i}  
\end{split}
\end{equation}
if $s_m<s\le s'_m$.

We also have 
\begin{equation}\label{dimlocid2}
\displaystyle \underline d(\nu_{\widehat\alpha},x)=\liminf_{s\to\infty} \frac{\log_2(\nu_{\widehat\alpha}(I_{n(s)}(x))}{-n(s)}\quad\text{and}\quad\displaystyle\overline d(\nu_{\widehat\alpha},x)=\limsup_{s\to\infty} \frac{\log_2(\nu_{\widehat\alpha}(I_{n(s)}(x))}{-n(s)}
\end{equation}
for all $x\in K_{\widehat\alpha}$, for the same reasons as those leading to \eqref{dimlocid}. 

\subsubsection{Lower bounds for the dimensions}
Suppose that we have proven that for all $0\le\alpha\le \beta\le \infty$ we have $\underline f^{\mathrm{LD}}_\mu(\alpha,\beta)= \max\{f(\alpha'):\alpha'\in[\alpha,\beta]\}$, a property which will be  established in Section~\ref{fab}. Then, the lower bounds of Theorem~\ref{thmHP}(3) follow readily from the mass distribution principle (see Section~\ref{MDP}), property \eqref{dimlocid2},  and the following proposition, which is a direct consequence of the estimates \eqref{expo} and \eqref{expo'}, and the assumptions \eqref{condHP}(4-5). 
\begin{pro}\label{LB}
With the notations of the previous section, fix $ 0\le \alpha\le \beta\le \infty$ such that  $[\alpha,\beta]\subset \mathcal{J}$ and $[\alpha,\beta]\cap \mathcal{I}\neq\emptyset$. Let $\alpha'=\mathrm{argmax}(f_{|[\alpha,\beta]})$ and $\beta'=\mathrm{argmax}(g_{|[\alpha,\beta]})$. 

Fix a sequence $\widehat\alpha=(\alpha^f_1,\alpha^g_1,\ldots,\alpha^f_m,\alpha^g_m,\ldots)\in \prod_{m=1}^\infty A^f_m\times A^g_m$ such that 
$\lim_{m\to\infty}\alpha^f_m=\alpha'$, $\lim_{m\to\infty}\gamma(\alpha^f_m)=\gamma(\alpha')$, $\lim_{m\to\infty}\alpha^g_{3m-2}=\alpha$, $\lim_{m\to\infty}\gamma(\alpha^g_{3m-2})=g(\alpha)$, $\lim_{m\to\infty}\alpha^g_{3m-1}=\beta$, $\lim_{m\to\infty}\gamma(\alpha^g_{3m-1})=g(\beta)$, $\lim_{m\to\infty}\alpha^g_{3m}=\beta'$, and $\lim_{m\to\infty}\gamma(\alpha^g_{3m})=g(\beta')$. Then for all $x\in K_{\widehat\alpha}$, one has 
\begin{equation*}
\begin{split}
\underline d(\mu,x)=\alpha,\  \overline d(\mu,x)=\beta,\ 
\underline d(\nu_{\widehat\alpha},x)=\min\{f(\alpha'),g(\alpha),g(\beta)\},\text{ and }\overline d(\nu_{\widehat\alpha},x)=g(\beta').
\end{split}
\end{equation*}
Consequently, $\nu_{\widehat\alpha}(E(\mu,\alpha,\beta))=1$,  $\dim_H  \nu_{\widehat\alpha}= \min\{f(\alpha'),g(\alpha),g(\beta)\}$ and $\dim_P \nu_{\widehat\alpha}=g(\beta')$, so that $\dim_H E(\mu,\alpha,\beta)\ge \min\{f(\alpha'),g(\alpha),g(\beta)\}$ and $\dim_H E(\mu,\alpha,\beta)\ge f(\beta')$. 
\end{pro}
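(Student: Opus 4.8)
The plan is to reduce the whole statement to the asymptotics of the two weighted Cesàro-type averages $\alpha_s(x)$ and $\gamma_s(x)$ appearing in \eqref{expo}--\eqref{expo'}, which on $K_{\widehat\alpha}$ depend only on the fixed sequence $\widehat\alpha$ and not on $x$. First I would record that, since $\epsilon(s)\to 0$, combining \eqref{dimlocmu0}--\eqref{dimlocmu0'} with \eqref{dimlocid} gives $\underline d(\mu,x)=\liminf_{s\to\infty}\alpha_s(x)$ and $\overline d(\mu,x)=\limsup_{s\to\infty}\alpha_s(x)$ for every $x\in K$, and similarly \eqref{dimlocnu0}--\eqref{dimlocnu0'} together with \eqref{dimlocid2} give $\underline d(\nu_{\widehat\alpha},x)=\liminf_{s\to\infty}\gamma_s(x)$ and $\overline d(\nu_{\widehat\alpha},x)=\limsup_{s\to\infty}\gamma_s(x)$ for every $x\in K_{\widehat\alpha}$. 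Thus it suffices to compute these four quantities.

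Next I would prove the bookkeeping lemma describing $(\alpha_s(x))_s$ as $s$ runs through one block $(s'_{m-1},s'_m]$. Writing $\alpha_s(x)$ of \eqref{expo} as $\alpha^f_m+\dfrac{S_m(\bar\alpha_{m-1}-\alpha^f_m)}{s'N_m+S_m}$, with $S_m=\sum_{i<m}(R^f_i+R^g_i)N_i$, $s'=s-s'_{m-1}$, and $\bar\alpha_{m-1}=\alpha_{s'_{m-1}}(x)$, one sees that over the $f$-part of block $m$ the number $\alpha_s(x)$ moves monotonically from $\bar\alpha_{m-1}$ toward $\alpha^f_m$, remaining between them; by \eqref{condHP}(3) one has $\bar\alpha_{m-1}=\alpha^g_{m-1}+o(1)$, and by \eqref{condHP}(4) one has $\alpha_{s_m}(x)=\alpha^f_m+o(1)$ as $m\to\infty$ (all exponents involved are bounded when $\beta<\infty$; when $\beta=\infty$ the choice $\alpha^g_m(\infty)$ and \eqref{condHP} make the relevant averages tend to $\infty$). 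The $g$-part of block $m$ is handled identically via \eqref{condHP}(5), with the pair $(\alpha^f_m,\alpha^g_m)$ in place of $(\bar\alpha_{m-1},\alpha^f_m)$. Consequently the closure of $\{\alpha_s(x):s'_{m-1}<s\le s'_m\}$ equals, up to the $o(1)$ errors, the union of the intervals spanned by $\{\alpha^g_{m-1},\alpha^f_m\}$ and by $\{\alpha^f_m,\alpha^g_m\}$, whence $\liminf_{s}\alpha_s(x)=\liminf_m\min(\alpha^g_{m-1},\alpha^f_m,\alpha^g_m)$ and $\limsup_s\alpha_s(x)=\limsup_m\max(\alpha^g_{m-1},\alpha^f_m,\alpha^g_m)$; the same holds for $\gamma_s(x)$ with $\alpha^h_i$ replaced by $\gamma^h_i(\alpha^h_i)$.

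Finally I would plug in the prescribed limits of $\widehat\alpha$. Since $\alpha^f_m\to\alpha'$ while $(\alpha^g_{m-1},\alpha^g_m)$ runs (along residues mod $3$) through $(\beta',\alpha)$, $(\alpha,\beta)$, $(\beta,\beta')$, the triple-minima accumulate on $\{\min(\beta',\alpha',\alpha),\min(\alpha,\alpha',\beta),\min(\beta,\alpha',\beta')\}$, which (using $\alpha'\!,\beta'\in[\alpha,\beta]$) yields $\underline d(\mu,x)=\alpha$ and, symmetrically, $\overline d(\mu,x)=\beta$ for every $x\in K_{\widehat\alpha}$. For $\gamma_s(x)$ the checkpoint values are $\gamma^h_m(\alpha^h_m)$, which by the hypotheses of the proposition tend to $f(\alpha')$ along the $f$-blocks and to $g(\alpha),g(\beta),g(\beta')$ along the $g$-blocks; using $g(\beta')=\max g_{|[\alpha,\beta]}\ge g(\alpha),g(\beta)$ and $g(\beta')\ge f(\alpha')$ (because $f\le g$ gives $\max f_{|[\alpha,\beta]}\le\max g_{|[\alpha,\beta]}$) one gets $\underline d(\nu_{\widehat\alpha},x)=\min\{f(\alpha'),g(\alpha),g(\beta)\}$ and $\overline d(\nu_{\widehat\alpha},x)=g(\beta')$, constant on $K_{\widehat\alpha}\supset\supp(\nu_{\widehat\alpha})$. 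Since $\underline d(\mu,\cdot)\equiv\alpha$ and $\overline d(\mu,\cdot)\equiv\beta$ on the carrier of $\nu_{\widehat\alpha}$, we obtain $\nu_{\widehat\alpha}(E(\mu,\alpha,\beta))=1$; and since the local dimensions of $\nu_{\widehat\alpha}$ are constant on its carrier, the mass distribution principle (Section~\ref{MDP}) gives $\dim_H\nu_{\widehat\alpha}=\min\{f(\alpha'),g(\alpha),g(\beta)\}$ and $\dim_P\nu_{\widehat\alpha}=g(\beta')$, from which the announced lower bounds for $\dim_H E(\mu,\alpha,\beta)$ and $\dim_P E(\mu,\alpha,\beta)$ follow at once. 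The hard part is the bookkeeping lemma of the second paragraph: one must show that the rapid-growth conditions \eqref{condHP}(3)--(5) genuinely force the running average to forget its past at each block boundary and to stay trapped between consecutive boundary values inside a block, with errors small enough not to disturb the $\liminf$/$\limsup$ computation, while accommodating the degenerate ingredients of the construction (the factor $1-\theta^h_m$ in $\gamma^h_m$, the auxiliary exponents $D_m$, $\alpha^h_m(0)$, $\alpha^h_m(\infty)$, and the case $\beta=\infty$).
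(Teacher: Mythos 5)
Your proposal is correct and follows exactly the route the paper intends (the paper simply declares the proposition "a direct consequence" of \eqref{expo}, \eqref{expo'} and \eqref{condHP}(3)--(5), plus \eqref{dimlocid2} and the mass distribution principle); your bookkeeping lemma — the running averages $\alpha_s(x),\gamma_s(x)$ interpolate monotonically between block-boundary values which are forced to $\alpha^h_m+o(1)$, resp.\ $\gamma^h_m(\alpha^h_m)+o(1)$, so that $\liminf$/$\limsup$ reduce to extrema over the consecutive triples — is precisely the detail the paper leaves implicit. Note also that your conclusion $\dim_P E(\mu,\alpha,\beta)\ge g(\beta')$ is the intended reading of the statement's final inequality, where "$\dim_H E(\mu,\alpha,\beta)\ge f(\beta')$" is a typographical slip.
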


\subsection{Large deviations spectra and $L^q$-spectra}
\subsubsection{The large deviations spectra ${\underline{f}}^{\mathrm {LD}}_\mu(\alpha)$ and $\overline f^{\mathrm {LD}}_\mu(\alpha)$}\label{fa} It is clear from the construction of $\mu$,  (\ref{dimlocmu0},\ref{dimlocmu0'}) and \eqref{condHP} that $\underline{f}^{\rm LD}_\mu(\alpha)=-\infty$ if $\alpha\not\in \mathcal I$ (take $s=s_m$ and use \eqref{dls1}) and $\overline{ f}^{\rm LD}_\mu(\alpha)=-\infty$ if $\alpha\not \in \mathcal{J}$ (use \eqref{dimlocmu0} and \eqref{dimlocmu0'}). Moreover, by \eqref{MuFo1} we have $\underline{f}^{\rm LD}_\mu(\alpha)\ge \dim_H E(\mu,\alpha)$, hence $\underline{f}^{\rm LD}_\mu(\alpha)\ge f(\alpha)$ for all $\alpha\in\mathcal{I}$ by Proposition~\ref{LB}. Similarly,  $\overline{f}^{\rm LD}_\mu(\alpha)\ge \dim_P E(\mu,\alpha)$, hence Proposition~\ref{LB} yields $\overline{f}^{\rm LD}_\mu(\alpha)\ge g(\alpha)$ for all $\alpha\in\mathcal{I}$.  

Let us show that $\overline{f}^{\rm LD}_\mu(\alpha)\le g(\alpha)$ for $\alpha\in \mathcal {J}$. 

Suppose first that  $\alpha\in \mathcal {J}\setminus\{\infty\}$. Fix $\eta>0$ and $\epsilon_\eta>0$ such that $g(\alpha')\le g(\alpha)+\eta$ if $\alpha'\in[\alpha-2\epsilon_\eta,\alpha+2\epsilon_\eta]$.  Fix $\epsilon\in (0,\epsilon_\eta)$. If $s'_{m-1}<s\le s_m$, due to \eqref{dimlocmu0}, for $s$ large enough, if  $I_s\in \mathbf{G}_s$ satisfies $2^{-n(s)(\alpha+\epsilon)}\le  \mu(I_s)\le 2^{-n(s)(\alpha-\epsilon)}$, then for any $x\in I_s=I_{n(s)}(x)$ we have $\alpha-2\epsilon\le \alpha_s:= \alpha_s(x)\le \alpha+2\epsilon$,
and the exponents $\alpha^f_{i,j}(x)$ and $\alpha^g_{i,j}(x)$ do not depend on $x\in I_s$. 

Due to the multiplicative structure of the construction of $\mu$, for each such collection of exponents $\{\alpha^f_{i,j}, \alpha^g_{i,j}\}$, the set $\mathbf{G}_s(\{\alpha^f_{i,j}, \alpha^g_{i,j}\})$ of those dyadic cubes $I_s\in \mathbf{G}_s$ such that  $\alpha^h_{i,j}(x)=\alpha^h_{i,j}$ for all $x\in I_s$ and $h\in\{f,g\}$  is such that  (setting $s'=s-s'_{m-1}$)
\begin{eqnarray*}
\#\mathbf{G}_s(\{\alpha^f_{i,j}, \alpha^g_{i,j}\})&=& \prod_{j=1}^{s'} \#G^f_{m}(\alpha^f_{m,j}) \prod_{i=1}^{m-1}\Big (\prod_{j=1}^{R^f_i} \#G^f_{i}(\alpha^f_{i,j})\Big )\prod_{j=1}^{R^g_i} \#G^g_{j}(\alpha^g_{i,j})\\
&\le &\prod_{j=1}^{s'}2^{N_m(\gamma^f_m(\alpha^f_{m,j})+\epsilon_m)} \prod_{i=1}^{m-1}\Big (\prod_{j=1}^{R^f_i}2^{N_i(\gamma^f_i(\alpha^f_{i,j})+\epsilon_i)}\Big )\prod_{j=1}^{R^g_i} 2^{N_i(\gamma^g_i(\alpha^g_{i,j})+\epsilon_i)}
\end{eqnarray*}
 Since for $\alpha\in A^h_i$ by construction we have $\gamma^h_i(\alpha)\le h(\alpha)+2\epsilon_i^{1/3}$, we get 
\begin{eqnarray*}
\#\mathbf{G}_s(\{\alpha^f_{i,j}, \alpha^g_{i,j}\})&\le & 2^{\eta(s)n'(s)}\prod_{j=1}^{s'}2^{N_m f(\alpha^f_{m,j})} \prod_{i=1}^{m-1}\Big (\prod_{j=1}^{R^f_i}2^{N_if(\alpha^f_{i,j})}\Big )\prod_{j=1}^{R^g_i} 2^{N_i g(\alpha^g_{i,j})}\\
&\le &2^{\eta(s)n'(s)}\prod_{j=1}^{s'}2^{N_m g(\alpha^f_{m,j})} \prod_{i=1}^{m-1}\Big (\prod_{j=1}^{R^f_i}2^{N_ig(\alpha^f_{i,j})}\Big )\prod_{j=1}^{R^g_i} 2^{N_i g(\alpha^g_{i,j})}
\end{eqnarray*}
with $\lim_{s\to\infty}\eta(s)=0$ and $n'(s)$ defined like in \eqref{n'1}. 
Now recall that $g$ is concave over $\mathcal{J}\setminus\{\infty\}$. Thus 
\begin{multline*}
\sum_{j=1}^{s'} N_{m} g(\alpha^f_{m,j})+\sum_{j=1}^{R^f_i}N_i g(\alpha^f_{i,j})+\sum_{j=1}^{R^g_i}N_i g(\alpha^g_{i,j})\\
\le n'(s) g\left (\frac{ \sum_{j=1}^{s'} N_m\alpha_{m,j} +\sum_{i=1}^{m-1}\sum_{j=1}^{R^f_i}N_i\alpha^f_{i,j}+\sum_{j=1}^{R^g_i}N_i\alpha^g_{i,j}}{n'(s)}\right )\\
=n'(s)g(\alpha_s)\le n'(s) (g(\alpha)+\eta).
\end{multline*}
Consequently, due to Proposition~\ref{n'} we get 
$$
 \#\mathbf{G}_s(\{\alpha^f_{i,j}, \alpha^g_{i,j}\})\le 2^{(g(\alpha)+\eta+\eta(s))n(s)}.
$$
Moreover, the number of such collections cannot exceed the total number of  possible realisations of such a family when the condition $\alpha_s\in[\alpha-\epsilon_\eta,\alpha+\epsilon_\eta]$ is dropped, which by construction is equal to   $(\#A^f_{m})^{s'} \prod_{i=1}^{m-1}(\# A^f_i)^{R^f_i}(\# A^g_i)^{R^g_i}=2^{\eta'(s)n(s)}$, with $\lim_{s\to\infty}\eta'(s)=0$, by \eqref{condHP}(2)(3).   We can conclude that 
$$
\#\Big \{I_s\in\mathbf{G}_s: 2^{-n(s)(\alpha+\epsilon)}\le  \mu(I_s)\le 2^{-n(s)(\alpha-\epsilon)}\Big \}\le 2^{(g(\alpha)+\eta+\eta(s)+\eta'(s))n(s)}.
$$
The same estimates hold if $s_m<s\le s'_m$, and this yields $\overline{f}^{\rm LD}_\mu(\alpha)\le f(\alpha)+\eta$. Since this holds for all $\eta>0$, the have the desired conclusion.

Now suppose that $\alpha=\infty\in \mathcal {J}$. Since $g(\infty)\ge \sup\{g(\alpha):\alpha\in \mathcal {J}\setminus\{\infty\}\}$, with the same notations as above, the only change is that for any $A>0$ we must consider those intervals in $\mathbf{G}_s$ such that $\alpha_s\ge A$, and conditioning on the realization of $\{\alpha^f_{i,j}(x),\alpha^g_{i,j}(x)\}$, the same calculations as above yield, even without using the concavity of $g$,  $\#\Big \{I_s\in\mathbf{G}_s:   \mu(I_s)\le 2^{-n(s) A}\Big \}\le 2^{(g(\infty)+\eta+\eta(s)+\eta'(s))n(s)}$, hence the result. 

Let us prove that $\underline{f}^{\rm LD}_\mu(\alpha)\le f(\alpha)$ for $\alpha\in\mathcal{I}$. 

Suppose first that $\alpha<\infty$. Fix $\eta>0$ and $\epsilon_\eta>0$ such that $f(\alpha')\le f(\alpha)+\eta$ if $\alpha'\in[\alpha-2\epsilon_\eta,\alpha+2\epsilon_\eta]$.  Fix $\epsilon\in (0,\epsilon_\eta)$.  Suppose that $s=s_m$. Due to \eqref{dls1}, for $s$ large enough, if  $I_s\in \mathbf{G}_s$ satisfies $2^{-n(s)(\alpha+\epsilon)}\le  \mu(I_s)\le 2^{-n(s)(\alpha-\epsilon)}$, then for any $x\in I_s=I_{n(s)}(x)$ we have $\alpha-2\epsilon\le \widetilde \alpha_s:= \frac{\sum_{j=1}^{R^f_m}N_{m}\alpha^f_{m,j}(x)}{R^f_mN_m}\le \alpha+2\epsilon$. Also, given $\{\alpha^f_{i,j}, \alpha^g_{i,j}\}$, the set $\mathbf{G}_s(\{\alpha^f_{i,j}, \alpha^g_{i,j}\})$ of those dyadic cubes $I_s\in \mathbf{G}_s$ such that  $\alpha^h_{i,j}(x)=\alpha^h_{i,j}$ for all $x\in I_s$ and $h\in\{f,g\}$ is bounded as above by $\prod_{j=1}^{R^f_m}2^{N_m(\gamma^f_m(\alpha^f_{m,j})+\epsilon_m)} \prod_{i=1}^{m-1}\Big (\prod_{j=1}^{R^f_i}2^{N_i(\gamma^f_i(\alpha^f_{i,j})+\epsilon_i)}\Big )\prod_{j=1}^{R^g_i} 2^{N_i(\gamma^g_i(\alpha^g_{i,j})+\epsilon_i)}$, which yields 
$$
 \#\mathbf{G}_s(\{\alpha^f_{i,j}, \alpha^g_{i,j}\})\le 2^{o(R^f_mN_m)}2^{\sum_{j=1}^{R^f_m}N_m \gamma^f_m(\alpha^f_{m,j})}\le 2^{o(R^f_mN_m)}2^{\sum_{j=1}^{R^f_m}N_m f(\alpha^f_{m,j})}
 $$
 due to \eqref{condHP}(4). Using the concavity of $f$ this implies
$$
  \#\mathbf{G}_s(\{\alpha^f_{i,j}, \alpha^g_{i,j}\})\le 2^{R^f_mN_m(f(\widetilde \alpha_s)+o(1))}\le 2^{R^f_mN_m(f(\alpha)+2\eta)}
$$
  for $s$ large enough. Finally, counting the possible number of collections $\{\alpha^f_{i,j}, \alpha^g_{i,j}\}$ as above yields 
 \begin{equation}\label{fa'}
\#\Big \{I_s\in\mathbf{G}_{s}: 2^{-n(s)(\alpha+\epsilon)}\le  \mu(I_s)\le 2^{-n(s)(\alpha-\epsilon)}\Big \}\le 2^{(f(\alpha)+3\eta)n(s)}
 \end{equation}
for $s=s_m$ large enough.  Since $\eta$ is arbitrary, this is enough to conclude. 

If $\alpha=\infty$, the previous estimates with $\gamma^f_m(\alpha_{m,j})$ bounded by $f(\infty)+2\epsilon_m$ yield the conclusion.

\medskip

It remains to prove that $\overline{f}^{\rm LD}_\mu(\alpha)\ge g(\alpha)$ for $\alpha\in \mathcal {J}\setminus \mathcal {I}$. In fact the argument is valid for all $\alpha\in \mathcal {J}$. Let $ \alpha\in \mathcal {J}$. Suppose first that $\alpha<\infty$.  Choose a sequence $\widehat \alpha=(\alpha^f_1,\alpha^g_1,\ldots,\alpha^f_m,\alpha^g_m,\ldots)\in \prod_{m=1}^\infty A^f_m\times A^g_m$ such that $\lim_{m\to\infty}\alpha^g_m=\alpha$ and $\lim_{m\to\infty} \gamma^g_m(\alpha^g_m)=g(\alpha)$.  We leave the reader check, that for there exists a sequence $\epsilon''(s)$ converging to $0$ as $s=s'_m$  tends to $\infty$ such that, for all $x\in K_{\widehat\alpha}$ (the Cantor set constructed in Section~\ref{secaux}), we have $\alpha_s(x)=\alpha+\epsilon(s)$ (this uses  \eqref{dls2}) and $\#\{\mathbf{G}_{\widehat\alpha,s}=\prod_{i=1}^m(\#G^f(i,\alpha^f))^{R^f_i}(\#G^f(i,\alpha^g))^{R^f_i}\ge 2^{n(s)(g(\alpha)-\epsilon''(s))}$ (this uses the left hand side of \eqref{cardG}, and \eqref{condHP}(5)). This implies that  $\overline{f}^{\rm LD}_\mu(\alpha)\ge g(\alpha)$. 

If $\alpha=\infty$, the same argument with $\alpha^g_m=\alpha^g_m(\infty)$ yields the desired lower bound. 

We notice that a similar argument would give another proof of $\underline{f}^{\rm LD}_\mu(\alpha)\ge f(\alpha)$ for $\alpha\in\mathcal{I}$. 
\begin{rem}\label{crucialrem}
In fact, the arguments developed in this section provide us with the following precious information: for $\alpha\in \R_+$ we have
\begin{eqnarray*}
f(\alpha)&=&\lim_{\epsilon\to 0^+} \liminf_{m\to \infty}\frac{\log_2 \#\Big \{I\in \mathbf{G}_{s_m}:2^{-n(s_m)(\alpha+\epsilon)}\le  \mu(I)\le2^{-n(s_m)(\alpha-\epsilon)}\Big\}}{n(s_m)}\\
&=&\lim_{\epsilon\to 0^+} \limsup_{m\to \infty}\frac{\log_2 \#\Big \{I\in \mathbf{G}_{s_m}:2^{-n(s_m)(\alpha+\epsilon)}\le  \mu(I)\le2^{-n(s_m)(\alpha-\epsilon)}\Big\}}{n(s_m)};\\
\\
g(\alpha)&=&\lim_{\epsilon\to 0^+} \liminf_{m\to \infty}\frac{\log_2 \#\Big \{I\in \mathbf{G}_{s'_m}:2^{-n(s'_m)(\alpha+\epsilon)}\le  \mu(I)\le2^{-n(s'_m)(\alpha-\epsilon)}\Big\}}{n(s'_m)}\\
&=&\lim_{\epsilon\to 0^+} \limsup_{m\to \infty}\frac{\log_2 \#\Big \{I\in \mathbf{G}_{s'_m}:2^{-n(s'_m)(\alpha+\epsilon)}\le  \mu(I)\le2^{-n(s'_m)(\alpha-\epsilon)}\Big\}}{n(s'_m)},
\end{eqnarray*}
and we also have 
\begin{eqnarray*}
f(\infty)&=&\lim_{A\to\infty} \liminf_{m\to \infty}\frac{\log_2 \#\Big \{I\in \mathbf{G}_{s_m}:  \mu(I)\le2^{-n(s_m)A}\Big\}}{n(s_m)}\\
&=&\lim_{A\to\infty} \limsup_{m\to \infty}\frac{\log_2 \#\Big \{I\in \mathbf{G}_{s_m}: \mu(I)\le2^{-n(s_m)A}\Big\}}{n(s_m)}
\end{eqnarray*}
and 
\begin{eqnarray*}
g(\infty)&=&\lim_{A\to\infty}\liminf_{m\to \infty}\frac{\log_2 \#\Big \{I\in \mathbf{G}_{s'_m}: \mu(I)\le2^{-n(s'_m)A}\Big\}}{n(s'_m)}\\
&=&\lim_{A\to\infty} \limsup_{m\to \infty}\frac{\log_2 \#\Big \{I\in \mathbf{G}_{s'_m}: \mu(I)\le2^{-n(s'_m)A}\Big\}}{n(s'_m)}.
\end{eqnarray*}
\end{rem}

\subsubsection{The functions $\tau_\mu$ and $\overline \tau_\mu$} 

It follows from Remark~\ref{crucialrem} and standard large deviations estimates similar to those used for instance in the proof of \cite[Theorem 4.2]{Riedi}, that for all $q\in\R$ we have 
\begin{equation*}
\lim_{m\to\infty} \frac{\log_2 \sum_{I\in \mathbf{G}_{s_m}}\mu(I)^q}{-n(s_m)}=f^*(q)\quad\text{and}\quad \lim_{m\to\infty} \frac{\log_2 \sum_{I\in \mathbf{G}_{s'_m}}\mu(I)^q}{-n(s'_m)}=g^*(q)
\end{equation*}
(in case the domains of $f$ and $g$ are compact, this is a direct consequence of Laplace-Varadhan's integral lemma  \cite[Theorem 4.3.1]{De-Zei}). Consequently, due to \eqref{toto} we get $\tau_\mu=g^*$ and $\overline\tau_\mu=f^*$. Moreover, it is direct from the  duality between upper semi-continuous concave functions \cite[Theorem 12.2, Corollary 12.2.2]{Roc} that if $\infty\not\in \mathrm{dom}(f)$, we have  $\tau_\mu^*=g$ and $\overline\tau_\mu^*=f$, and if $\infty\in \mathrm{dom}(f)$, then $\mathrm{dom}(\tau_\mu=f^*)=\R_+$, and $\tau_\mu^*$ coincides with $f$ over $\R$. Then, by our definition of the extended concave conjugate function, we have  $\tau_\mu^*(\infty)=-\tau_\mu(0)=f(\infty)$.  

\subsubsection{The large deviations spectrum $\underline{f}^{\mathrm {LD}}_\mu(\alpha,\beta)$}\label{fab}

Recall that for $0\le \alpha\le\beta\le \infty$, $f(\alpha,\beta)$ is defined as $\max\{f(\alpha'):\alpha'\in[\alpha,\beta]\}$.

If $[\alpha,\beta]\subset \R_+\cup\{\infty\}\setminus \mathcal {I}$, due to \eqref{dls1}, we have $\underline f^{\mathrm {LD}}_\mu(\alpha,\beta)=-\infty=f(\alpha,\beta)$.

 Assume now  that  $[\alpha,\beta]\cap  \mathcal {I}\neq\emptyset$, and $(\alpha,\beta)\neq (\infty,\infty)$.  Denote the interval $[\alpha,\beta]\cap  \mathcal {I}$ by $[\alpha_1,\beta_1]$, and notice that $f(\alpha,\beta)=f(\alpha_1,\beta_1)$.
 
  Suppose first that $\beta_1<\infty$. Let $\eta>0$ and for each $\alpha'\in  [\alpha_1,\beta_1]$ let $\epsilon(\alpha')>0$ such that $f(\beta)\le f(\alpha')+\eta$ for all $\beta\in[\alpha'-\epsilon(\alpha'),\alpha'+\epsilon(\alpha')]$. There exists $\alpha'_1,\ldots,\alpha'_N$ in $ [\alpha_1,\beta_1]$ such that $ [\alpha_1,\beta_1]\subset \bigcup_{i=1}^N [\alpha_i'-\epsilon(\alpha'_i),\alpha'_i-\epsilon(\alpha'_i)]$. Let $\epsilon\le \min\{\epsilon(\alpha'_i):1\le i\le N\}$. Property  \eqref{dls1} implies that for $m$ large enough, if $I\in\mathbf{G}_{s_m}$ and $2^{-n(s_m)(\beta+\epsilon)} \le \mu(I)\le 2^{-n(s_m)(\alpha-\epsilon)}$, since there exists $x\in K$ such that $I=I_{n(s_m)}(x)$, in fact there exists $1\le i\le N$ such that $2^{-n(s_m)(\alpha'_i+\epsilon(\alpha'_i))} \le \mu(I)\le 2^{-n(s_m)(\alpha'_i-\epsilon(\alpha'_i))}$. Then, the estimate \eqref{fa'} achieved in Section~\ref{fa} yields
 $\#\Big \{I\in\mathbf{G}_{s_m}: 2^{-n(s_m)(\alpha'_i+\epsilon(\alpha'_i))} \le \mu(I)\le 2^{-n(s_m)(\alpha'_i-\epsilon(\alpha'_i))}\Big \}\le 2^{(f(\alpha'_i)+3\eta)n(s_m)}$ for $m$ large enough.  
This implies that for $m$ large enough 
$$
\#\Big \{I\in\mathbf{G}_{s_m}:  2^{-n(s_m)(\beta+\epsilon)} \le \mu(I)\le 2^{-n(s_m)(\alpha-\epsilon)}\Big\}\le \sum_{i=1}^N2^{(f(\alpha'_i)+3\eta)n(s_m)}\le N 2^{(f(\alpha,\beta)+3\eta)n(s_m)}.
$$
If follows that $\underline f^{\mathrm {LD}}_\mu(\alpha,\beta)\le f(\alpha,\beta)+3\eta$ for all $\eta>0$, hence the desired upper bound $f(\alpha,\beta)$ for $\underline f^{\mathrm {LD}}_\mu(\alpha,\beta)$. 

For the lower bound, we just use the fact that by Proposition~\ref{LB} we know that if $\alpha'=\mathrm{argmax}(f_{|[\alpha,\beta]})$ we have $f(\alpha')\le \dim_H E(\mu,\alpha')$,  and on the other hand $ \dim_H E(\mu,\alpha')\le \underline f_\mu^{\mathrm{LD}}(\alpha')\le \underline f_\mu^{\mathrm{LD}}(\alpha,\beta)$, the last inequality being obvious.  

If $\beta_1=\infty$,  the upper bound for $ \underline f_\mu^{\mathrm{LD}}(\alpha,\beta)$ just comes from the observation already done in Section~\ref{fa} that if $s=s_m$, we have $\#\mathbf{G}_s\le 2^{n(s)(f(\infty)+o(1))}$, and the lower bound comes from the lower bound $f(\infty)$ for $\dim_HE(\mu,\infty)$. 

\subsection{Upper bounds for the different spectra}
It is a direct application of Proposition~\ref{MFcomp}, using the fact that $\underline f^{\mathrm {LD}}_\mu(\alpha,\beta)=f(\alpha,\beta)$. 

\subsection{$\mu$ is exact dimensional, with dimension $D$}
Fix $\epsilon>0$. For each $s\ge 1$, if $s'_{m-1}< s\le s_m$ and $s'=s-s'_{m-1}$, an application of Markov's inequality yields, for any $\eta>0$:
\begin{eqnarray*}
&&\mu\Big (E_{s,+}=\Big \{x\in K: \frac{\mu(I_{n(s)}(x))}{\mu(I_{n(s'_{m-1})}(x))}\ge 2^{-(n'(s)-n'(s'_{m-1})) (D_m-\epsilon)}\Big \}\Big )\\
&\le&\sum_{I_{n(s)}\in \mathbf{G}_s}\mu(I_{n(s)}) \Big (\frac{\mu(I_{n(s)})}{\mu(I_{n(s'_{m-1})})}\Big )^\eta 2^{\eta (n'(s)-n'(s'_{m-1})) (D_m-\epsilon)}\\
&=& \sum_{I_{n(s)}\in \mathbf{G}_s}\mu(I_{n(s'_{m-1})}) \Big (\frac{\mu(I_{n(s)})}{\mu(I_{n(s'_{m-1})})}\Big )^{1+\eta} 2^{\eta (n'(s)-n'(s'_{m-1})) (D_m-\epsilon)},
\end{eqnarray*}
where $I_{n(s'_{m-1})}$ stands for the unique element of $\mathbf{G}_{s'_{m-1}}$ containing $I_{n(s)}$. We notice that by construction, given $I_{n(s'_{m-1})}$ in $\mathbf{G}_{s'_{m-1}}$, the distribution of the collection $\Big \{\frac{\mu(I_{n(s)})}{\mu(I_{n(s'_{m-1})})}\Big\}$, where $I_{n(s)}\in \mathbf{G}_s$ and $I_{n(s)}\subset I_{n(s'_{m-1})}$, is independent of $I_{n(s'_{m-1})}$, and  taking into account the fact that between the steps $s'_{m-1}$ and $s$ one uses $s'$ times the same motive in the recursion defining $\mu$, we can get   
\begin{eqnarray*}
\mu(E_{s,+})&\le &
\frac{ \Big (\sum_{\alpha\in A^f_m}\sum_{I\in G^f(m,\alpha)}\rho_m^f(\alpha)^{1+\eta}\mu_\alpha(I)^{1+\eta}\Big )^{s'}}{\Big (\sum_{\alpha\in A^f_m}\sum_{I\in G^f(m,\alpha)}\rho_m^f(\alpha)\mu_\alpha(I)\Big )^{(1+\eta)s'}} 2^{\eta (n'(s)-n'(s'_{m-1})) (D_m-\epsilon)}\\
&\le& 2^{(1+\eta)s'}\Big (\sum_{\alpha\in A^f_m}\sum_{I\in G^f(m,\alpha)}\rho_m^f(\alpha)^{1+\eta}\mu_\alpha(I)^{1+\eta}\Big )^{s'}2^{\eta (n'(s)-n'(s'_{m-1})) (D_m-\epsilon)}.
\end{eqnarray*}

We have 
\begin{multline*}
\sum_{I\in G^f(m,D_m)}\rho_m^f(D_m)^{1+\eta}\mu_{D_m}(I)^{1+\eta}=\sum_{I\in G^f(m,D_m)}\mu_{D_m}(I)^{1+\eta}\\ \le (\#G^f(m,D_m))2^{-N_m(D_m-\epsilon_m)(1+\eta)}\le 2^{N_m(D_m+\epsilon_m)}2^{-N_m(D_m-\epsilon_m)(1+\eta)}\\
=2^{-N_mD_m\eta}2^{N_m\epsilon_m(2+\eta)}.
\end{multline*}
On the other hand, if $A^f_m\setminus \{D_m\}\neq\emptyset$, fixing one of its elements $\alpha_0$, we have 
\begin{multline*}
\sum_{\alpha\in A^f_m\setminus\{D_m\}}\sum_{I\in G^f(m,\alpha)}\rho_m^f(\alpha)^{1+\eta}\mu_\alpha(I)^{1+\eta}\le \sum_{\alpha\in A^f_m\setminus\{D_m\}}\rho_m^f(\alpha)^{1+\eta}(\#G^f(m,\alpha))2^{-N_m(\alpha-\epsilon_m)(1+\eta)}\\\le \sum_{\alpha\in A^f_m\setminus\{D_m\}}\rho_m^f(\alpha)^{1+\eta}2^{N_m(\gamma^f_m(\alpha)+\epsilon_m)}2^{-N_m(\alpha-\epsilon_m)(1+\eta)}\\\le  \rho_m^f(\alpha_0)^{1+\eta}(\#A_m^f) (\sup_{\alpha\in A^f_m\setminus\{D_m\}} 2^{N_m(\gamma^f_m(\alpha) -\alpha)}) 2^{N_m\epsilon_m(2+\eta)}.
\end{multline*}
Now, take $\eta=\eta_m=\sqrt{\epsilon_m}$. Due to the definition \eqref{rhof}  of $\rho_m^f$ we have $\rho_m^f(\alpha_0)^{1+\eta}(\#A_m^f) \le 1$, and due to  \eqref{thetam}, we have $\sup_{\alpha\in A^f_m\setminus\{D_m\}} 2^{N_m(\gamma^f_m(\alpha) -\alpha)}\le 2^{-N_mD_m\eta_m}$.  Finally,
\begin{eqnarray}
\nonumber\mu (E_{s,+})
\nonumber&\le& 2^{s'}\cdot 2^{(1+\eta_m)s'} \big (2^{-N_mD_m\eta_m}2^{N_m\epsilon_m(2+\eta_m)}\big )^{s'} 2^{\eta_m (n'(s)-n'(s'_{m-1})) (D_m-\epsilon)}\\
\nonumber&=& 2^{s'}\cdot 2^{(1+\eta_m)s'} \big (2^{-N_mD_m\eta_m}2^{N_m\epsilon_m(2+\eta_m)}\big )^{s'} 2^{\eta_m N_m s' (D_m-\epsilon)}
\le  2^{3s'}2^{-N_m s'\eta_m(\epsilon-3\eta_m)}. 
\end{eqnarray}
Also, using a similar estimate as above, and  with the same choice $\eta=\eta_m=\sqrt{\epsilon_m}$, we have 
\begin{eqnarray*}
&&\mu\Big (\Big \{x\in K: \frac{\mu(I_{n(s)}(x))}{\mu(I_{n(s'_{m-1})}(x))}\le 2^{-(n'(s)-n'(s'_{m-1})) (D_m+\epsilon)}\Big \}\Big )\\
&\le&\frac{ \Big (\sum_{\alpha\in A^f_m}\sum_{I\in G^f(m,\alpha)}\rho_m^f(\alpha)^{1-\eta}\mu_\alpha(I)^{1-\eta}\Big )^{s'}}{\Big (\sum_{\alpha\in A^f_m}\sum_{I\in G^f(m,\alpha)}\rho_m^f(\alpha)\mu_\alpha(I)\Big )^{(1-\eta)s'}} 2^{-\eta (n'(s)-n'(s'_{m-1})) (D_m+\epsilon)}\\
&\le& 2^{(1-\eta)s'}\Big (\sum_{\alpha\in A^f_m}\sum_{I\in G^f(m,\alpha)}\rho_m^f(\alpha)^{1-\eta}\mu_\alpha(I)^{1-\eta}\Big )^{s'}2^{-\eta (n'(s)-n'(s'_{m-1})) (D_m+\epsilon)}.
\end{eqnarray*}
On the one hand, we have 
\begin{multline*}
\sum_{I\in G^f(m,D_m)}\rho_m^f(D_m)^{1-\eta}\mu_{D_m}(I)^{1-\eta}=\sum_{I\in G^f(m,D_m)}\mu_{D_m}(I)^{1-\eta}\\ \le 2^{N_m(D_m+\epsilon_m)}2^{-N_m(D_m-\epsilon_m)(1-\eta)}=2^{N_mD_m\eta}2^{2N_m\epsilon_m}.
\end{multline*}
On the other hand, if $A^f_m\setminus \{D_m\}\neq\emptyset$, fixing one of its elements $\alpha_0$, we have 
\begin{multline*}
\sum_{\alpha\in A^f_m\setminus\{D_m\}}\sum_{I\in G^f(m,\alpha)}\rho_m^f(\alpha)^{1-\eta}\mu_\alpha(I)^{1-\eta}\\\le \sum_{\alpha\in A^f_m\setminus\{D_m\}}\rho_m^f(\alpha)^{1-\eta}2^{N_m(\gamma^f_m(\alpha)+\epsilon_m)}2^{-N_m(\alpha-\epsilon_m)(1-\eta)}\\\le  \rho_m^f(\alpha_0)^{1-\eta}(\#A_m^f) (\sup_{\alpha\in A^f_m\setminus\{D_m\}} 2^{N_m(\gamma^f_m(\alpha) -(1-\eta)\alpha)}) 2^{2 N_m\epsilon_m}.
\end{multline*}
 Due to the definition \eqref{rhof}  of $\rho_m^f$, since $\sqrt{\epsilon_m}=(m+1)^{-1}\le 1/2$,  we have $\rho_m^f(\alpha_0)^{1-\eta_m}(\#A_m^f) \le 1$, and due to  \eqref{thetam}, we have $\sup_{\alpha\in A^f_m\setminus\{D_m\}} 2^{N_m(\gamma^f_m(\alpha) -(1-\eta_m\alpha)}\le 1$.  Consequently, the previous estimates yield
\begin{equation*}\label{muex'}
 \mu\Big (E_{s,-}=\Big \{x\in K: \frac{\mu(I_{n(s)}(x))}{\mu(I_{n(s'_{m-1})}(x))}\le 2^{-(n'(s)-n'(s'_{m-1})) (D_m+\epsilon)}\Big \}\Big )\le 2^{3s'}2^{-N_m s'\eta_m(\epsilon-3\eta_m)}.
 \end{equation*}
 
Similarly, if $s_m<s\le s'_m$ and $s'=s-s_m$ we can get 
\begin{equation*}\label{muex3}
\begin{split}
&\mu\Big (E_{s,+}=\Big \{x\in K: \frac{\mu(I_{n(s)}(x))}{\mu(I_{n(s_{m})}(x))}\ge 2^{-(n'(s)-n'(s_{m})) (D_m-\epsilon)}\Big \}\Big )\le 2^{3s'}2^{-N_m s'\eta_m(\epsilon-3\eta_m)}\\
 &\mu\Big (E_{s,-}=\Big \{x\in K: \frac{\mu(I_{n(s)}(x))}{\mu(I_{n(s_{m})}(x))}\le 2^{-(n'(s)-n'(s_{m})) (D_m+\epsilon)}\Big \}\Big )\le 2^{3s'}2^{-N_m s'\eta_m(\epsilon-3\eta_m)}.
\end{split}
\end{equation*}

Finally, for $m_0$ big enough so that $3\eta_m\le \epsilon/2$, and $N_m\eta_m\epsilon/2>4$ (remember that $N_m\ge e^m$ and $\eta_m=(m+1)^{-1}$), we have 
\begin{multline*}
\sum_{m\ge m_0}\sum_{s'_{m-1}<s\le s_m}\mu(E_{s,+}\cup E_{s,-)}+\sum_{s_{m}<s\le s'_m}\mu(E_{s,+}\cup E_{s,-)} \\\le 2 \sum_{m\ge m_0}\sum_{s'=1}^{R^f_m} 2^{3s'}2^{-N_m s'\eta_m\epsilon/2}+\sum_{s'=1}^{R^g_m} 2^{3s'}2^{-N_m s'\eta_m\epsilon/2}\\
\le 2 \sum_{m\ge m_0} 16 \cdot \frac{2^{-N_m\eta_m\epsilon/2}}{1-8\cdot 2^{-N_m\eta_m\epsilon/2}}\le 64 \sum_{m\ge m_0} 2^{-N_m\eta_m\epsilon/2}<\infty.
\end{multline*}
By the Borel-Cantelli lemma, we deduce that for $\mu$-almost every $x$, there exists  an integer $m_x$ such that for all $m\ge m_x$, for all $s'_{m-1}<s\le s_m$ one has 
$$
2^{-(n'(s)-n'(s'_{m-1})) (D_m+\epsilon)}\le \frac{\mu(I_{n(s)}(x))}{\mu(I_{n(s'_{m-1})}(x))}\le 2^{-(n'(s)-n'(s'_{m-1})) (D_m-\epsilon)},
$$
and for all $s_{m}<s\le s'_m$ one has 
$$
 2^{-(n'(s)-n'(s_{m})) (D_m+\epsilon)}\le \frac{\mu(I_{n(s)}(x))}{\mu(I_{n(s_{m})}(x))}\le 2^{-(n'(s)-n'(s_{m}))(D_m-\epsilon)}.
 $$
Using these inequalities telescopically and noting that $D_m$ converges to $D$ as $m\to\infty$ and $n'(s)\sim n(s)$ as $s\to\infty$ we get $D-\epsilon\le \underline d(\mu,x)\le\overline d(\mu,x) \le D+\epsilon$ for $\mu$-almost every $x$. Since $\epsilon$ is arbitrary, we have the desired exact dimensionality.

\subsection{Restrictions of $\mu$} Let $B$ be a closed ball whose interior intersects $\supp(\mu)$ at some point $x$. Let $\nu=\mu_{|B}$. We naturally have the inequalities $\dim_L(\nu,\alpha,\beta)\le \dim_L(\mu,\alpha,\beta)$ for $L\in\{H,P\}$ and $0\le\alpha\le \beta\le \infty$, as well as $\underline f_\nu^{\mathrm{LD}}(\alpha)\le\underline f_\mu^{\mathrm{LD}}(\alpha)$,  $\overline f_\nu^{\mathrm{LD}}(\alpha)\le\overline f_\mu^{\mathrm{LD}}(\alpha)$, $\underline f_\nu^{\mathrm{LD}}(\alpha,\beta)\le\underline f_\mu^{\mathrm{LD}}(\alpha,\beta)$,  $\tau_\nu\ge \tau_\mu$ and $\overline \tau_\nu\ge \overline\tau_\mu$. On the other hand, denoting by $s_0$ the smallest integer such that  $I_{n(s_0)}(x)\subset \mathrm{Int} (B)$,  we can modify the first terms of  the sequence $\widehat \alpha$ of Proposition~\ref{LB} so that  $I_{n(s_0)}(x)\in\mathcal{G}_{\widehat\alpha,s_0}$, hence  $\nu_{\widehat\alpha}(B\cap K_{\widehat \alpha})>0$, and we have $ \dim_H E(\nu,\alpha,\beta)\ge f(\alpha,\beta)$ and $ \dim_P E(\nu,\alpha,\beta)\ge g(\alpha,\beta)$. This is enough to reverse all the previous inequalities since we also have for $\nu$ the general inequalities provided by \eqref{MuFo1}, \eqref{MuFo2} and Proposition~\ref{MFcomp}.  

\section{Proofs of Propositions~\ref{condtau}(2),~\ref{condtau1} and~\ref{MFcomp}, and some inequalities in \eqref{MuFo1} and \eqref{MuFo2}}\label{proof3}

\subsection{Proofs of Propositions~\ref{condtau}(2) and~\ref{condtau1} } 

\subsubsection{Proposition~\ref{condtau}(2)}  The fact  that $\R_+\subset {\rm dom }(\tau_\mu)$ was explained after we defined $\tau_\mu$ in the introduction. Now suppose at first that there exists $\alpha\in \R_+$ and $r_0>0$ such that for all $r\in (0,r_0)$ and $x\in \supp(\mu)$ we have $\mu(B(x,r))> r^\alpha$. Then,  the definition of the $L^q$-spectrum yields $\tau_\mu(q)\ge \tau_\mu(0)+q\alpha$ for all $q<0$, hence $\tau_\mu$ is finite over $\R$. If, on the contrary,  for all $\alpha>0$, for all $r_0>0$, there exists $r\in (0,r_0)$ and $x\in\supp(\mu)$ such that $\mu(B(x,r))\le r^\alpha$, by using again the definition of the $L^q$-spectrum we have  $\tau_\mu(q)\le \alpha q$ for all $\alpha>0$ and $q<0$, hence $\tau_\mu(q)=-\infty$ for $q<0$, so ${\rm dom }(\tau_\mu)=\R_+$. 

Now let  $\alpha\in{\rm dom}(\tau_\mu^*)$ and suppose that $-\infty<\tau_\mu^*(\alpha)<0$. Necessarily $\alpha<\infty$. Indeed one always has $\tau_\mu^*(\infty)\in\{-\infty,-\tau_\mu(0)\}$. Then, suppose first that $\alpha<\tau_\mu'(0^+)$. Let $\alpha_0=\inf\{\beta\in  (\alpha,\tau_\mu'(0^+)):   \tau_\mu^*(\beta)\ge 0\}$. The continuity of $\tau_\mu^*$ over the interior of its domain implies $\tau_\mu^*(\alpha_0)=0$.  Then for all $\beta<\alpha_0$ we have  $\tau_\mu^*(\beta)<0$, hence $\overline f^{\rm LD}(\beta)<0$. Consequently, for all $\epsilon>0$ there exists $r_0>0$ such that for all $r\in(0,r_0)$ and $x\in \supp(\mu)$ we have $\mu(B(x,r))\le r^{\alpha_0-\epsilon}$. This implies that $\tau_\mu(q)\ge \tau_\mu(0) +\alpha_0 q$ for all $q\ge 0$, and finally $\tau_\mu^*(\beta)\le \inf\{\beta q-\alpha_0 q+\tau_\mu(0):q\ge 0\}=-\infty$ for all $\beta<\alpha_0$, which contradicts the fact that $-\infty<\tau_\mu^*(\alpha)$. Next suppose  $\alpha>\tau_\mu'(0^-)$ and ${\rm dom }(\tau_\mu)=\R$. The same lines as above also yield a contradiction. If now $\alpha\in [-\tau_\mu'(0^+),\tau_\mu'(0^-)]$ and ${\rm dom }(\tau_\mu)=\R$, then $\tau_\mu^*(\alpha)=-\tau_\mu(0)\ge 0$; new contradiction. It remains the case  ${\rm dom }(\tau_\mu)=\R_+$ and $\alpha\ge \tau_\mu'(0^+)$. In this case, we necessarily have $\tau^*_\mu(\alpha)=\lim_{q\to 0^+} -\tau_\mu(q)\ge 0$. Finally,  $\tau_\mu^*$ is non-negative on its domain.

\subsubsection{Proposition~\ref{condtau1}}
(1) If ${\rm dom }(\tau)=\R$, the property ${\rm dom}(\tau^*)=[\tau'(\infty),\tau'(-\infty)]$ follows from standard considerations in convex analysis.  Then, the fact that this interval is bounded from above follows from the boundedness from below of $\tau^*$. Also, since ${\rm dom}(\tau^*)\subset \R$, the equality $(\tau^*)^*=\tau$ on ${\rm dom }(\tau)=\R$ is just the usual duality between $\tau$ and its conjugate function (\cite[Theorem 12.2, Corollary 12.2.2]{Roc}) when this one only defined on $\R$ and not also on $\R\cup\{\infty\}$ as in the convention used in this paper. 

(2)(a) Since $\tau(0)=\tau(1)=0$, by concavity of the non decreasing function $\tau$, we have $\tau=0$ over $\R_+$, and a simple verification shows that $\tau^*=\tau$ over $\R\cup\{\infty\}$. 

(2)(b)  We suppose that $\tau(0)<0$ and $\tau$ is continuous at $0^+$. Here again, standard considerations in convex analysis show that $\min {\rm dom}(\tau^*)=\tau'_\mu(\infty)$ and $[\tau'(\infty),\lim_{q\to 0^+}\tau'(q^-)]\subset{\rm dom}(\tau^*)$,  as well as the continuity and the concavity of $\tau^*$ over $ [\tau'(\infty),\lim_{q\to 0^+}\tau'(q^-)]$. If $\lim_{q\to 0^+}\tau'(q^-)<\infty$, by using the definition of $\tau^*$ one checks that $\tau^*(\alpha)=-\tau(0^+)=-\tau(0)=\tau^*(\infty)$ for all $\alpha\in [\lim_{q\to 0^+}\tau'(q^-),\infty]$. So ${\rm dom}(\tau^*)=[\tau'(\infty),\infty]$. The continuity of $\tau^*$ over ${\rm dom}(\tau^*)$  comes from the fact that $\tau^*(\infty)=-\tau(0)$. The fact that $(\tau^*)^*=\tau$ over $\R_+$ is a direct consequence of the usual duality between $\tau$ and the restriction of $\tau^*$ to $\R$, and the fact that $\tau$ is continuous at $0^+$. The equality $(\tau^*)^*=\tau$ over $\R^*_-$ is obvious.

(2)(c) If $\tau$ is discontinuous at $0^+$, clearly  $\tau'(0^+)=\infty$. It is standard from convex analysis that $\min {\rm dom}(\tau^*)=\tau'_\mu(\infty)$, and $[\tau'(\infty),\lim_{q\to 0^+}\tau'(q^-)]\subset {\rm dom}(\tau^*)$. Moreover, if $\lim_{q\to 0^+}\tau'(q^-)<\infty$, by using the definition of $\tau^*$ one checks that $\tau^*(\alpha)=-\tau(0^+)<-\tau(0)=\tau^*(\infty)$ for all $\alpha\in [\lim_{q\to 0^+}\tau'(q^-),\infty)$.  Consequently, we have  ${\rm dom}(\tau^*)=[\tau'(\infty),\infty]$, as well as the concavity and continuity of $\tau^*$ over $[\tau'_\mu(\infty),\infty)$. By using the usual duality between $\tau$ and the restriction of $\tau^*$ to $\R$, we would find that $(\tau^*)^*$ is equal to $\tau$ over $\R_+^*$ and equal to $\tau(0^+)$ at $0$. Here, taking into account that $\alpha=\infty\in{\rm dom }(\tau^*)$, we find that $(\tau^*)^*(0)=-\tau^*(\infty)=\tau(0)$. Finally,  $(\tau^*)^*=\tau$ on $\R_+$. The equality $(\tau^*)^*=\tau$ over $\R_-$ is obvious.

(2)(d) It has been proved in the previous lines.

\subsection{Proof of Proposition~\ref{MFcomp}}\label{proof33} (1)
Fix $0\le \alpha< \beta\le \infty$ (the case $\alpha=\beta$ is covered by \eqref{MuFo1} and \eqref{MuFo2}). Without loss of generality we assume that $\underline f_\mu^{\mathrm{LD}}(\alpha,\beta)>-\infty$, for otherwise one clearly has $E(\mu,\alpha,\beta)=\emptyset$. 

We first show that $\dim_H F(\alpha,\beta)\le \underline f_\mu^{\mathrm{LD}}(\alpha,\beta)$,
where
$$
F(\alpha,\beta)=\{x\in \supp(\mu): \alpha \le \underline d(\mu,x)\le \overline d(\mu,x)\le \beta\}).
$$ 
Since $E(\mu,\alpha,\beta)\subset F(\alpha,\beta)$, this yields $\dim_HE(\mu,\alpha,\beta)\le \underline f_\mu^{\mathrm{LD}}(\alpha,\beta)$. 

Fix $\eta>0$. There exists $\epsilon>0$ such that for infinitely many $r>0$, we have $f_\mu(\alpha,\beta,\epsilon,r)\le \underline f_\mu^{\mathrm{LD}}(\alpha,\beta)+\eta$. Let $(r_{j})_{j\ge 1}$ be a sequence converging to 0 such that for all $j$ we have $f_\mu(\alpha,\beta,\epsilon,r_{j})\le \underline f_\mu^{\mathrm{LD}}(\alpha,\beta)+\eta$. 

By definition, we have 
$
F(\alpha,\beta)\subset \bigcup_{N\ge 1}   F_N,
$
where 
$$F_N=\bigcap_{0<r\le 2^{-N}}\Big \{x\in\supp(\mu): \, r^{\beta+\epsilon}\le  \mu(B(x,r))\le r^{\alpha-\epsilon}\Big\}.
$$
Fix $N\ge 1$. It follows from the previous line that for any $n\ge N$, there exists $j\ge 1$ such that $r_{j}\le 2^{-n}$  and we have 
$$
F_N\subset \Big \{x\in\supp(\mu): r_j^{\beta+\epsilon}\le  \mu(B(x,r_j))\le r_j^{\alpha-\epsilon}\Big\}.
$$
It follows from Besicovitch's covering theorem (see \cite{Mat}) that there exists an integer $Q(d)$ such that, defining $F_{j}(\epsilon)=\{x\in\supp(\mu): r_j^{\beta+\epsilon}\le  \mu(B(x,r_j))\le r_j^{\alpha-\epsilon}\Big\}$, we can extract from $\{B(x,r_{j}): x\in F_{j}(\epsilon)\}$, $Q(d)$ families $\mathcal F_k$ ($1\le k\le Q(d)$) of disjoint balls such that $F_{j}(\epsilon)\subset \bigcup_{k=1}^{Q(d)} \bigcup_{B\in\mathcal F_k}B$. 

Now, setting $\gamma=\underline f_\mu^{\mathrm{LD}}(\alpha,\beta) +2\eta$, and using the covering of $F_N$ by the balls in $\bigcup_{k=1}^{Q(d)}\mathcal F_k$, we see that for $j$ large enough we have 
\begin{multline*}
\mathcal H^\gamma_{2^{-n+1}}(F_N)\le \sum_{k=1}^{Q(d)}\sum_{B\in\mathcal F_k} |B|^\gamma\\
\le Q(d) (\#\mathcal F_k) (2r_{j})^\gamma
\le 2^\gamma Q(d)  r_{j}^{-(\underline f_\mu^{\mathrm{LD}}(\alpha,\beta)+\eta)+\gamma}\le 2^\gamma Q(d)2^{-n\eta}.
\end{multline*}
Letting $n$ tend to $\infty$ yields $\mathcal H^\gamma(F_N)=0$, so $\dim F_N\le \gamma$ for all $N\ge 1$, and finally $\dim F(\alpha,\beta)\le \underline f_\mu^{\mathrm{LD}}(\alpha,\beta) +2\eta$. Since $\eta$ was arbitrary, we are done. 

Now let us prove that $\dim_H E(\mu,\alpha,\beta)\le \min(\overline f_\mu^{\mathrm{LD}}(\alpha),\overline f_\mu^{\mathrm{LD}}(\beta))$. For $\alpha'\in \R\cup\{\infty\}$, define $G_{\alpha'}=\{x: \exists \ (n_j)\nearrow \infty: \lim_{j\to\infty} \frac{\log (\mu(B(x,2^{-n_j})))}{\log (2^{-n_j})}=\alpha'\}$. We have $E(\mu,\alpha,\beta)\subset  G_{\alpha}\cap G_\beta$. Consequently, the conclusion follows from the fact that $\dim G_{\alpha'}\le \overline f_\mu^{\mathrm{LD}}(\alpha')$. Indeed, if $\alpha<\infty$, fix $\eta>0$. Then let $\epsilon>0$ and $r_0\in\N_+$ such that  for all $0<r\le r_0$ we have $f(\alpha',\epsilon,r)\le  \overline f_\mu^{\mathrm{LD}}(\alpha')+\eta$, where 
$$
f(\alpha',\epsilon,r)=\frac{\log \sup \#\Big\{i:r^{\alpha+\epsilon}\le  \mu(B(x_i,r))\le r^{\alpha-\epsilon}\Big \}}{-\log(r)}, 
$$
the supremum being taken over the packings of $\supp(\mu)$ by balls of radii equal to $r$.  For each $n\ge 1$ such that $2^{-n}\le r_0$,  we have $G_{\alpha'}\subset \bigcup_{p\ge n}G_{\alpha',p}$, where $G_{\alpha',p}=\{x\in\supp(\mu): 2^{-p(\alpha'+\epsilon)}\le \mu(B(x,2^{-p}) \le  2^{-p(\alpha'+\epsilon)}\}$. Setting $\gamma= \overline f_\mu^{\mathrm{LD}}(\alpha')+2\eta$ and using Besocovitch's covering theorem as above, we get $\mathcal{H}^\gamma_{2^{-n+1}}(G_{\alpha'})\le 2^{\gamma}Q(d) \sum_{p\ge n} 2^{p(\overline f_\mu^{\mathrm{LD}}(\alpha')+\eta)-p\gamma}$. Letting $n$ tend to $\infty$ yields $\mathcal{H}^\gamma(G_{\alpha'})=0$, hence $\dim_H G_{\alpha'}\le \overline f_\mu^{\mathrm{LD}}(\alpha')+2\eta$, $\eta>0$ being arbitrary. The case $\alpha'=\infty$ can be treated similarly.

Now we prove that $\dim_P F(\alpha,\beta)\le f_P(\alpha,\beta)=\sup\{\overline f_\mu^{\mathrm{LD}}(\alpha'):\alpha'\in[\alpha,\beta]\}$.  Suppose first that $\beta<\infty$.

Fix $\eta>0$, and for each $\alpha'\in [\alpha,\beta]$ fix $\epsilon(\alpha')>0$ and $r(\alpha')>0$ such that for all $0<r<r(\alpha')$, one has $f(\alpha',\epsilon(\alpha'),r)\le  \overline f_\mu^{\mathrm{LD}}(\alpha')+\eta$. Then let $\alpha'_1,\ldots,\alpha'_k$ such that $[\alpha,\beta]\subset \bigcup_{i=1}^kB(\alpha'_i,\epsilon(\alpha'_i))$. Set $\epsilon=\min\{\epsilon(\alpha'_i):1\le i\le k\}$ and $r_\eta=\min\{r(\alpha'_i):1\le i\le k\}$.  

Fix $N\ge 1$ and define $F_N$ as above. We have 
$$
F_N\subset \bigcap_{ 0<2^{-p}<\min (2^{-N}, r_\eta)}\bigcup_{i=1}^k \Big \{x\in\supp(\mu): 2^{-p(\alpha_i'+\epsilon(\alpha_i'))}\le  \mu(B(x,2^{-p}))\le 2^{-p(\alpha_i'-\epsilon(\alpha_i'))}\Big\}.
$$
Let $A\subset [0,1]^d$. Let $n\in\N$ such that $2^{-n}\le \min (2^{-N}, r_\eta)$ and  $\{B(x_i,r_i)\}$ a $2^{-n}$-packing of $A\cap F_N$. For each integer $p\ge n+1$, set $S_p=\{i: 2^{-p}<r_i\le 2^{-p+1}\}$. The  balls in $\{B(x_i,2^{-p}):i\in S_p, \  2^{-p(\alpha_i'+\epsilon(\alpha_i'))}\le  \mu(B(x,2^{-p}))\le 2^{-p(\alpha_i'-\epsilon(\alpha_i'))}\}$ form a $2^{-p}$-packing of $\supp(\mu)$ of cardinality less than $2^{p(\overline f_\mu^{\mathrm{LD}}(\alpha'_i)+\eta)}$, so $\# S_p\le k 2^{p(f_P(\alpha,\beta)+\eta)}$.

Let $\gamma>f_P(\alpha,\beta)+2\eta$. We have
$$
\sum_{i} (2r_i)^\gamma\le \sum_{p\ge n}\sum_{i\in S_p} (2\cdot 2^{-p+1})^\gamma\le 4^\gamma \sum_{p\ge n}( \# S_p) 2^{-p\gamma}\le  4^\gamma k \sum_{p\ge n}2^{p(f(\alpha,\beta)+\eta-\gamma)}\le   4^\gamma k \sum_{p\ge n} 2^{-p\eta},
$$
the upper bound being  independent of the $2^{-n}$-packing $\{B(x_i,r_i)\}$, and going to $0$ as $n\to\infty$. It follows that the pre-packing measure $\overline P^\gamma(A\cap F_N)$ equals 0 for all $A$, hence $\mathcal P^\gamma (F_N)$=0, so $\dim_P(F_N)\le \gamma$. Since $\eta$ is arbitrary this yields $\dim_PF_N\le f_P(\alpha,\beta)$, hence  $\dim_PF(\alpha,\beta)\le f_P(\alpha,\beta)$, and finally $\dim_P E(\mu,\alpha,\beta)\le f_P(\alpha,\beta)$. 

If $\beta=\infty$, take $\beta_1>0$ and $r(\beta)$ such that for all $0<r\le r(\infty)$, $f(\beta_1,\infty,r)\le \overline f_\mu^{\mathrm{LD}}(\infty)+\eta$, where 
$$
f(\beta_1,\infty,r)=\frac{\log \sup \#\Big\{i: \mu(B(x_i,r))\le r^{\beta_1}\Big \}}{-\log(r)}, 
$$
the supremum being taken over the packings of $\supp(\mu)$ by balls of radii equal to $r$. Then use a covering of $[\alpha,\beta_1]$ by intervals $B(\alpha'_i,\epsilon(\alpha'_i))$ as above. The argument to conclude is the same as above, except that we have to bound the cardinality of $\{B(x_i,2^{-p}):i\in S_p, \    \mu(B(x,2^{-p}))\le 2^{-p\beta_1}\}$ by $2^{p(\overline f_\mu^{\mathrm{LD}}(\infty)+\eta)}$. 

(2) The result for packing dimensions easily follows from the inclusions $\underline E(\mu,\alpha)\subset \bigcup_{\beta\ge \alpha} \uparrow F(\alpha,\beta)$ and $\overline E(\mu,\alpha)\subset \bigcup_{0\le \beta\le \alpha} \uparrow F(\beta,\alpha)$, and the previous estimates for $\dim_P F(\alpha,\beta)$. 

The upper bound for $\dim_H  \underline E(\mu,\alpha)$ is obtained by writing $\underline E(\mu,\alpha)=\bigcup_{ \beta\ge \alpha}  \uparrow\{x\in\supp(\mu): \underline d(\mu,x)=\alpha,\,  \overline d(\mu,x)\le \beta\}$. If $\alpha=\infty$, there is nothing to prove, for $\underline E(\mu,\infty)=E(\mu,\infty)$. Suppose $\alpha<\infty$. Then,  for each $\beta<\infty$, due to the estimates achieved to find an upper bound for $\dim _H E(\mu,\alpha,\beta)$, given $\eta>0$, for each $\alpha'\in [\alpha,\beta]$ one can fix $\epsilon(\alpha')>0$ such that  $\dim_H \{x\in\supp(\mu): \underline d(\mu,x)=\alpha,\, \alpha'-\epsilon(\alpha')\le \overline d(\mu,x)\le \alpha'+\epsilon(\alpha')\}\le\min\{f_\mu^{\mathrm{LD}}(\alpha), \overline f_\mu^{\mathrm{LD}}(\alpha'),  \overline f_\mu^{\mathrm{LD}}(\alpha,\alpha')\}+\eta=f_H(\alpha,\alpha')+\eta$. Since we can cover $[\alpha,\beta]$ by finitely many intervals of the form $[ \alpha'-\epsilon(\alpha'), \alpha'+\epsilon(\alpha')]$, $\alpha'\in[\alpha,\beta]$, we get $\dim_H \{x\in\supp(\mu): \underline d(\mu,x)=\alpha,\, \overline d(\mu,x)\le \beta\}\le \sup\{f_H(\alpha,\alpha'): \alpha'\in[\alpha,\beta]\}+\eta$ for any $\eta>0$, hence $\dim_H \{x\in\supp(\mu): \underline d(\mu,x)=\alpha,\, \overline d(\mu,x)\le \beta\}\le \sup\{f_H(\alpha,\alpha'): \alpha'\in[\alpha,\beta]\}$. Since we also know that $\dim_H E(\mu,\alpha,\infty)\le f_H(\alpha,\infty)$, writing $\underline E(\mu,\alpha)=E(\mu,\alpha,\infty)\bigcup \bigcup_{\infty> \beta\ge \alpha}  \uparrow\{x\in\supp(\mu): \underline d(\mu,x)=\alpha,\,  \overline d(\mu,x)\le \beta\}$ and using the previous estimates for the Hausdorff dimensions yields $\dim_H  \underline E(\mu,\alpha)\le \sup\{f_H(\alpha,\beta): \beta\ge \alpha\}$. 

The upper bound for $\dim_H  \overline E(\mu,\alpha)$ is obtained by using similar arguments.

\subsection{Proof of some inequalities in \eqref{MuFo1} and \eqref{MuFo2}} \label{proof34}We justify the  inequalities $f^H_\mu(\alpha)\le {\underline{f}}^{\rm LD}_\mu(\alpha)\le  \overline\tau_\mu^*(\alpha)$ and  $\dim_P  E(\mu,\alpha)\le \overline{f}^{\rm LD}_\mu(\alpha)$ for $\alpha\in\R_+\cup\{\infty\}$, and $ \overline{f}^{\rm LD}_\mu(\infty)\le \tau_\mu^*(\infty)$.

Let $\alpha\in \R_+\cup\{\infty\}$. Suppose that $E(\mu,\alpha)\neq\emptyset$. The inequality $f^H_\mu(\alpha)\le\underline{ f}^{\rm LD}_\mu(\alpha)$ is a special case of the upper bound established for $\dim_H F(\alpha,\beta)$ in Section~\ref{proof33}.

Similarly, the inequality $\dim_P  E(\mu,\alpha)\le \overline{ f}^{\rm LD}_\mu(\alpha)$ follows from lines similar to those used to bound $\dim_PE(\mu,\alpha,\beta)$ in Section~\ref{proof33}. Also, due to  by Proposition~\ref{condtau}(2), if $\mathrm{dom} (\tau_\mu)=\R$, then one has  $\overline{ f}^{\rm LD}_\mu(\infty)=\tau_\mu^*(\infty)=-\infty$; otherwise $\overline \dim_B (\supp(\mu))=-\tau_\mu(0)=\tau_\mu^*(\infty)$, and by definition we have $\overline{ f}^{\rm LD}_\mu(\infty)\le \overline \dim_B (\supp(\mu))$. In any case,  $\dim_P  E(\mu,\infty)\le \overline{ f}^{\rm LD}_\mu(\infty)\le \tau_\mu^*(\infty)$.

To prove that $ \underline f^{\mathrm{LD}}_\mu(\alpha)\le \overline\tau_\mu^*(\alpha)$, we assume without loss of generality that  $\underline f^{\mathrm{LD}}_\mu(\alpha)>-\infty$, hence $\underline f^{\mathrm{LD}}_\mu(\alpha)\ge 0$. The case $\alpha=\infty$ then occurs only if $\overline \tau_\mu(q)=-\infty$ if $q<0$ (same proof as when $\tau_\mu(q)=-\infty$ for $q<0$); then it is direct that $ \underline f^{\mathrm{LD}}_\mu(\infty)\le -\overline\tau_\mu(0)=\overline\tau_\mu^*(\infty)$. Suppose now that $\alpha<\infty$.  It is  enough to prove that $\overline\tau_\mu (q)\le (\underline{ f}^{\rm LD}_\mu)^*(q)$ for all $q\in \R$. Then the result follows by taking the Legendre-transform and using (with $h= \underline{ f}^{\rm LD}_\mu)$ the general inequality  $(h^*)^*\ge h$ valid for any function whose domain is not empty.  Fix $q\in\R$, $\beta\in \R$ and $\epsilon>0$. If $\{B(x_i,r)\}$ is a packing of $\supp(\mu)$ by disjoint balls, we have 
$$
\sum_{i}\mu(B(x_i,r))^q\ge(\#\{i: r^{\beta+\epsilon}\le \mu(B(x_i,r))\le r^{\beta-\epsilon}\})\cdot 
 \begin{cases} 
 r^{q(\beta-\epsilon)}&\text{if }q\ge 0\\
  r^{q(\beta+\epsilon)}&\text{otherwise}
  \end{cases}.
  $$

Taking the supremum over the packings, dividing by $\log(r)$,  and taking the $\limsup$ as $r\to 0^+$ yields $\overline\tau_\mu(q)\le q(\beta\mp\epsilon) -\liminf_{r\to 0^+} f(\beta,\epsilon,r)$, and taking the limit as $\epsilon\to 0^+$ gives $\overline\tau_\mu(q)\le q\beta- \underline f^{\mathrm{LD}}_\mu(\beta)$ for all $\beta$, hence $\overline\tau_\mu(q)\le (\underline f^{\mathrm{LD}}_\mu)^*(q)$. 

\section{Dimensions of measures and mass distribution principle}\label{MDP}Given $\nu\in\mathcal M_c^+(\R^d)$, if  $\underline d(\nu,x)$ (resp. $\overline d(\nu,x)$) takes the same value $\underline D$ (resp. $\overline D$)  at $\nu$-almost every $x$, then $\dim_H \nu$ (resp. $\dim_P\nu$) stands for the Hausdorff  (resp. packing) dimension of the measure $\nu$, defined as the number $\underline D$ (resp. $\overline D$). Then, $\nu(E)>0$ implies $\dim_H E\ge \underline D$ (resp.  $\dim_P E\ge \overline D$) for any Borel set $E$. This is the mass distribution principle we use to get lower bounds for the Hausdorff and packing dimensions of the  level sets studied in this paper (about mass distribution principle and dimensions of measures, see the accounts proposed in  \cite[Section 4.2]{Falcbook}, \cite[Ch. 2]{Pesin} and \cite{He2} (other possible references being \cite[Section 14]{Bil}, \cite{You82}, \cite{Cut0,Cut} and in connection with multifractal formalism \cite{He1}, \cite{Ngai} and \cite{Ol00}).


\begin{thebibliography}{50}


\bibitem{AP} M. Arbeiter, N. Patszchke, Random
self-similar multifractals, Math. Nachr., \textbf{181} (1996), 5-42.

\bibitem{AB} N. Attia, J. Barral, Hausdorff and packing spectra, large deviations, and free energy for branching random walks in $\R^d$, arXiv:arXiv:1305.2034v2, To appear in Comm. Math. Phys.

\bibitem{AvB} V. Aversa, C. Bandt,   The multifractal spectrum of discrete measures. 18th Winter School on Abstract Analysis (1990). Acta Univ. Carolin. Math. Phys. {\bf 31(2)} (1990),  5--8.

\bibitem{BOS} I. S. Baek, L. Olsen, N. Snigireva, Divergence points of self-similar measures and packing dimension, Adv. Math., {\bf 214} (2007), 267--287.


\bibitem{B2} J. Barral, Continuity of the multifractal spectrum of a statistically self-similar measure, J. Theor. Probab., {\bf 13} (2000), 1027-1060.


\bibitem{BaFe} J. Barral, D.-J. Feng, Weighted thermodynamic formalism and applications, Asian J. Math., {\bf 16} (2012), 319--352. 

\bibitem{BF13} J. Barral, D.-J. Feng, Multifractal formalism for almost all self-affine measures, Comm. Math. Phys., 318 (2013), 473--504.

\bibitem {barman} J. Barral, B. Mandelbrot, Multifractal products of cylindrical pulses, Probab. Theory Relat. Fields. \textbf{124} (2002), 409--430.

\bibitem{barman2} J. Barral, B. Mandelbrot, Non-degeneracy, moments, dimension, and multifractal analysis for random multiplicative measures (Random multiplicative multifractal measures, Part II), pp 17--52,  In Proc. Symp. Pures Math., 72, Part 2. AMS, Providence, RI (2004).


\bibitem{BaMe07} J.~Barral, M.~Mensi, Gibbs measures on self-affine Sierpinski carpets and their singularity spectrum,
{\it Ergod. Th. $\&$ Dynam. Sys.} {\bf  27} (2007), no. 5, 1419--1443.

\bibitem{barhova12} J. Barral, R. Rhodes, V. Vargas: Limiting laws of supercritical branching random walks, C. R. Acad. Sci. Paris, Ser., I 350 (2012), 535--538.


\bibitem{BSw} J. Barral, S. Seuret, From multifractal measures to multifractal wavelet series, J. Fourier Anal. Appl., {\bf 11} (2005), 589--614.

\bibitem{BaSe07} J.~Barral, S.~Seuret: The singularity spectrum of
  L\'evy processes in multifractal time, Adv.\ Math.,  214
  (2007), 437--468.

\bibitem{BS2}J. Barral et S. Seuret, The singularity spectrum of the inverse of cookie-cutters,  Ergod. Th. $\&$ Dynam. Sys., 29 (2009), 1075--1095.

\bibitem{BaSc00} L. Barreira, J.  Schmeling,  Sets of ``non-typical'' points have full topological entropy and full Hausdorff dimension.
{\it Israel J. Math.} {\bf
116} (2000), 29--70.


\bibitem{BatTes} A. Batakis, B. Testud, Multifractal analysis of inhomogeneous Bernoulli products, J. Stat. Phys. (2011), {\bf 142}, 1105--1120. 

\bibitem{Bay} F. Bayard, Multifractal spectra of typical and prevalent measures, Nonlinearity, {\bf 26} (2013), 353--367.

\bibitem{Be} F. Ben Nasr, Analyse multifractale de mesures,
C.R.~Acad.\ Sci.\ Paris,\ S\'erie I, {\bf 319} (1994), 807--810.

\bibitem{bbh} F.~Ben~Nasr, I.~Bhouri, and Y.~Heurteaux, The
  validity of the multifractal formalism: results and examples,
  Adv.\ in Math.~\textbf{165} (2002),~264--284.


\bibitem{BP} F. Ben Nasr, J. Peyri\`ere, Revisiting multifractal formalism, Rev. Mat. Iberoamericana, {\bf 29} (2013), 315--328.

\bibitem{Bil} P. Billingsley. Ergodic Theory and Information. Wiley: New York, 1965.

\bibitem {BMP} G. Brown, G. Michon, J. Peyri\`ere, On the
multifractal analysis of measures, J.~Stat.\ Phys., {\bf 66} (1992),~775-790.

\bibitem{BucNa} Z. Buczolich, J. Nagy,
 H\"older spectrum of typical monotone continuous functions, 2905--2918.
  Real Anal. Exch., {\bf 26} (2000) 133--156.
  
\bibitem{BucS} Z. Buczolich, S. Seuret,
Typical measures on $[0,1]^d$ satisfy a multifractal formalism, Nonlinearity, {\bf 23} (2010) .

\bibitem{BucSeu13} Z. Buczolich, S. Seuret, Measures and functions with prescribed homogeneous spectrum, arXiv:1302.2421v1, to appear in J. Fractal. Geom.


\bibitem{collet} P.~Collet, J.L.~Lebowitz, A.~Porzio, The
    dimension spectrum of some dynamical systems,  J.~Stat.\ Phys., \textbf{47}
  (1987), p. 609--644.
  
\bibitem{Cut0} C.D. Cutler, The Hausdorff dimension distribution of finite measures in Euclidean spaces, Can. J. Math., {\bf 88} (1986), 1459--1484. 

\bibitem{Cut} C.D. Cutler, Measure  disintegrations with respect to $\sigma$-stable monotone indices and pointwise representation of packing dimension, in ``Proceedings of the 1990 Measure Theory Conference at Oberwalfach.'' Supplemento Ai Rendiconti del Circolo Matematico di Palerma, Ser. II, Vol. 28, 319--340, 1992.  
    
 \bibitem{De-Zei} A. Dembo,  O.  Zeitouni,  Large deviations techniques and applications, vol. 38 of Applications of Mathematics. Springer-Verlag, New York, $2^{\text{nd}}$ ed., 1998. 
 
 \bibitem{EM} G.~A.~Edgar and R.\ D.~Mauldin, Multifractal
  decompositions of digraph recursive fractals, Proc.\
    London Math.\ Soc., \textbf{ 65} (1992),~604--628. 
  
  \bibitem{Falcbook} K. J. Falconer, Fractal Geometry. Mathematical Foundations and Applications, John Wiley $\&$ Sons, 1990.

\bibitem {Falc} K. J. Falconer, The multifractal spectrum of statistically
self-similar measures, J. Theor. Probab., \textbf{7} (1994), 681-702.

\bibitem{Fal99} K. J. Falconer, Generalized dimensions of measures on self-affine sets. {\it Nonlinearity} {\bf 12} (1999),  877--891.

\bibitem{Falc2} K. J. Falconer, Representation of families of sets by measures, dimension spectra and Diophantine approximation, Math. Proc. Cambridge Philos. Soc., {\bf128} (2000), 111-121.


\bibitem{Fengadv} D.-J.~Feng, The limit Rademacher functions and
Bernoulli convolutions associated with Pisot numbers,
Adv.\ Math., \textbf{ 195} (2005), 24-101.

\bibitem{Feng} D.-J. Feng, Gibbs properties of self-conformal measures and the multifractal formalism, Ergod. Th. $\&$ Dynam. Sys., {\bf 27} (2007), 787-812.

\bibitem{Feng2} D.-J. Feng, Multifractal analysis of Bernoulli convolutions associated with Salem numbers, Adv. Math., {\bf 229} (2012), 3052--3077.

\bibitem{FengLau} D.-J. Feng, K. S. Lau, Multifractal formalism for
self-similar measures with weak separation condition, J. Math. Pures Appl., {\bf 92} (2009), 407--428. 

\bibitem{FengO} D.-J. Feng, E. Olivier, Multifractal
  analysis of the weak Gibbs measures and phase transition--
  Application to some Bernoulli convolutions, Ergod.\ Th.\
    $\&$ Dynam.\ Syst., \textbf{23}, 1751--1784.

\bibitem{FW} D.-J. Feng, J. Wu, The Hausdorff dimension of recurrent sets in symbolic spaces, Nonlinearity, {\bf 14} (2001), 81-85.

\bibitem{FrischParisi} U. Frisch and G. Parisi, Fully
developped turbulence and intermittency in turbulence, and
predictability in geophysical fluid dynamics and climate dymnamics,
International school of Physics ``Enrico Fermi", course 88, edited by
M.~Ghil, North Holland (1985), 84-88.

\bibitem{HaJeKaPrSh} T.C. Halsey, M.H. Jensen,
L.P. Kadanoff, I. Procaccia, and B.I. Shrai\-man, Fractal
measures and their singularities: the characterisation of strange
sets, Phys.\ Rev.~A, \textbf{33} (1986), 1141-1151.

\bibitem{hentschel} H.G.~Hentschel and I.~Procaccia, The infinite
    number of generalized dimensions of fractals and strange
    attractors, Physica 8D (1983), 435-444.

\bibitem{He1} Y.~Heurteaux, Estimations de la dimension
  inf\'erieure et de la dimension sup\'erieure des mesures,
  Ann.\ Inst.\ H.~Poincar\'e Probab.\ Statist. \textbf{ 34}
  (1998), 309--338.

\bibitem{He2} Y. Heurteaux, Dimension of measures : the
  probabilistic approach, Publ.\ Mat., \textbf{ 51} (2007), 243--290.



\bibitem {HoWa} R. Holley and E.C. Waymire, Multifractal
dimensions and scaling exponents for strongly bounded random
fractals, Ann. Appl. Probab., \textbf{2} (1992), 819-845.

\bibitem{Iommi} G. Iommi, Multifractal Analysis for countable Markov shifts, Ergod. Th. and Dynam. Sys., {\bf 25} (2005), 1881-1907.

\bibitem{J} S. Jaffard, Old friends revisited: The multifractal
  nature of some classical functions, J. Fourier Anal. Appl.,  {\bf
  3}(1), (1997) 1--22.

\bibitem{J2} S. Jaffard, Construction de fonctions multifractales ayant un spectre de singularit\'es prescrit, C.R. Acad. Sci. Paris, S\'er. I, {\bf 315} (1992), 19--24. 


\bibitem{Jafsiam1} Jaffard, S.:
Multifractal formalism for functions. I. Results valid for all functions. II Self-similar functions, SIAM J. Math. Anal. \textbf{28}, 944--970 $\&$ 971--998 (1997)

\bibitem{JAFFLevy} S.~Jaffard, The multifractal nature of  L\'evy processes, Probab.\ Th.\ Related Fields, {\bf 114} (1999), 207--227.
  
\bibitem{JAFFJMPA} S.~Jaffard, On the Frisch-Parisi conjecture, J. Math. Pures Appl., {\bf 79} (2000), 525--552.

\bibitem{JAFFJMP} S. Jaffard,  Wavelet techniques in multifractal analysis. In M. Lapidus and M. van Frankenhuijsen, editors, Fractal Geometry and Applications: A Jubilee of Beno\^\i t Mandelbrot, Proc. Symp. Pure Math., volume 72(2), pp 91--152. AMS, 2004.

\bibitem{JAFFMEYER} S. Jaffard, Y. Meyer, On the pointwise regularity of functions in
critical Besov spaces, J. Func. Anal., {\bf 175} (2000), 415--434. 

\bibitem{JR} T. Jordan, M. Rams, Multifractal analysis of weak Gibbs measures for non-uniformly expanding C1 maps, Ergod. Th. $\&$ Dyn. Sys., {\bf 31} (2011), 143--164.

\bibitem{Kes} M. Kesseb\"ohmer, Large deviation for weak Gibbs measures and multifractal
spectra,  Nonlinearity, {\bf 14} (2001), 395-409.



\bibitem{Kin95} J. F. King, The singularity spectrum for general Sierpinski carpets,  Adv. Math., {\bf 116} (1995), 1--8.

\bibitem{LN} K. S. Lau, S.-M. Ngai, Multifractal measures and a weak
separation condition,  Adv. Math., {\bf 141} (1999), 45--96.

\bibitem{L} G. Lawler, The Frontier of a Brownian Path Is Multifractal, preprint, 1998.

\bibitem{LP1} F. Ledrappier, A. Porzio, On the multifractal analysis of Bernoulli convolutions. I. Large-deviation results, J. Statist. Phys., {\bf 82} (1996), 367--395.

\bibitem{LP2}  F. Ledrappier, A. Porzio, On the multifractal analysis of Bernoulli convolutions. II. Dimensions, J. Statist. Phys., {\bf 82} (1996), 397--420.

\bibitem{JLV-Vojak} J. L\'evy-V\'ehel, R. Vojak, Multifractal analysis of Choquet capacities, Adv. Appl. Math., {\bf 20} (1998), 1--43.

\bibitem{JLV-T} J. L\'evy-V\'ehel, C. Tricot, On various multifractal spectra, Progress in Probability, Vol. 57, 23--42, 2004 Birkh\"auser, Verlag Basel.


\bibitem{Ma} N.G. Makarov,
Fine structure of harmonic measure,  St. Peterburg Math. J.,
{\bf 10}(2) (1999), 217--268.

\bibitem{MEH} B. B. Mandelbrot, C.J.G. Evertsz and Y. Hayakawa, Exactly
self-similar left sided multifractal measures, Phys. Rev. A, {\bf 42} (1990), 4528--4536.

\bibitem{MR} B. B. Mandelbrot, R. Riedi, Multifractal formalism for infinite multinomial measures, Adv. Appl. Math., {\bf 16} (1995), 132--150. 

\bibitem{Mat} P. Mattila, Geometry of Sets and Measures in Euclidean Spaces, Fractals and Rectifiability, Cambridges studies in advanced mathematics, 44, Cambridge  University Press, 1995. 

\bibitem{Me} Y. Meyer, Ondelettes et Op\'erateurs I, Hermann, 1990.

\bibitem{Mol} G.M. Molchan, Scaling exponents and multifractal dimensions for independent random cascades, Comm. Math. Phys. Volume 179, Number 3 (1996), 681--702.

\bibitem{MAB} J. F. Muzy, A. Arneodo, and E. Bacry, A multifractal formalism revisited with wavelets, Internat. J. Bifur. Chaos Appl. Sci. Engrg., {\bf 4} (1994), p. 245.

\bibitem{Ngai} S.-M.~Ngai, A dimension result arising from the
  $L^q$-spectrum of a measure, Proc.\ Amer.\ Math.\ Soc.,
  \textbf{ 125} (1997), 2943--2951.


\bibitem{Olsen} L. Olsen, A multifractal formalism,  Adv. Math., {\bf 116} (1995), 82-196. 

\bibitem{Ols98} L. Olsen, Self-affine multifractal Sierpinski sponges in $\mathbb{R}^d$,
Pacific J. Math., {\bf 183} (1998), 143--199.

\bibitem{Ol00} L. Olsen, Dimensions inequalities of multifractal Hausdorff measures and multifractal packing measures, Math. Scand., {\bf 86} (2000), 109-129.  

\bibitem{Ol03} L. Olsen, Multifractal analysis of divergence points of deformed measure theoretical Birkhoff averages, J. Math. Pures Appl., {\bf 82}  (2003), 1591--1649.

\bibitem{OS} L. Olsen, N. Snigireva, Multifractal spectra of in-homogeneous self-similar measures,  Indiana Univ. Math. J., {\bf 57} (2008), 1789--1844.

\bibitem{Pat} N. Patzschke, Self-conformal multifractal measures, Adv. Appl. Math., {\bf 19} (1997), 486-513.



\bibitem{Pesin} Y. Pesin, Dimension theory in dynamical systems: Contemporary views and applications (Chicago lectures in Mathematics, The University of Chicago Press), 1997. 

\bibitem{PW} Y. Pesin, H. Weiss, A multifractal analysis of equilibrium measures for conformal expanding maps and Markov Moran geometric constructions, J. Stat. Phys., {\bf 86} (1997), 233-275.

\bibitem{Pey} J.~Peyri\`ere, A Vectorial Multifractal
  Formalism, Proceedings of Symposia in Pure Mathematics,
  {\bf 72.2} (2004),~217--230.

\bibitem{Po}  A. Porzio, The dimension spectrum of Axiom A attractors,  J. Statist. Phys., {\bf 58} (1990), 923--937. 

\bibitem{Rand} D. A.~Rand, The singularity spectrum $f(\alpha)$ for
 cookie-cutters, Ergod.\ Th.\ $\&$ Dynam.\ Syst., \textbf{9} (1989), 527-541.

\bibitem{Riedi} R. Riedi, Multifractal Processes, in ``Long range dependence : theory and applications'', eds. Doukhan, Oppenheim and Taqqu, pp 625--715, Birkh\"auser, 2002.

\bibitem{RV13}  R. Rhodes, V. Vargas, Gaussian multiplicative chaos and applications: a review, arXiv:1305.6221.

\bibitem{Roc} R.T. Rockafellar, Convex Analysis. Princeton University Press, 1970. 

\bibitem{Shen} S. Shen, Multifractal analysis of some inhomogeneous multinomial measures with distinct analytic Olsen's $b$ and $B$ functions, arxiv.1311.7386.

\bibitem{Shm} P.~Shmerkin, A modified multifractal formalism for a
 class of self-similar measures with overlap, Asian J.~Math., \textbf{ 9}
 (2005), 323--348.
 
 \bibitem{Testud1} B.~Testud, Mesures quasi-Bernoulli au sens faible:
 r\'esultats et exemples, Ann.\ Inst.\ Poincar\'e Probab.\ Stat., \textbf{ 42}
 (2006), 1--35.

\bibitem{Tricot} C. Tricot, Two definitions of fractional dimension, Math. Proc. Cambridge Math. Soc., {\bf 91} (1982), 57--74.

\bibitem{ZC12} X. Zhou, E. Chen, Packing dimensions of the divergence points of self-similar measures with OSC, Monatsh. Math., DOI 10.1007/s00605-012-0464-z,  arXiv:1203.0095v1.

\bibitem{You82}
L.-S. Young,  Dimension, entropy and Lyapunov exponents,  Ergodic Theory Dynamical Systems, {\bf 2} (1982), 109--124.


\end{thebibliography}
\end{document}